\theoremstyle{plain}
\newtheorem{thm}{Theorem}[section]
\newtheorem{cor}[thm]{Corollary}
\newtheorem{lem}[thm]{Lemma}
\newtheorem{claim}[thm]{Claim}
\newtheorem{fact}[thm]{Fact}
\newtheorem{ass}[thm]{Assumption}
\newtheorem{Rest}[thm]{Restriction}
\newtheorem{defn}[thm]{Definition}
\newtheorem{rem}[thm]{Remark}
\newtheorem{rems}[thm]{Remarks}
\newtheorem{exm}[thm]{Example}
\newcommand{\x}{x}
\newcommand{\y}{y}
\newcommand{\R}{{\mathbb R}}
\newcommand{\Z}{{\mathbb Z}}
\newcommand{\Rn}{{\R^n}}
\newcommand{\LL}{{L}}
\newcommand{\Leb}{{\mathcal L}}
\newcommand{\cont}{{\mathcal C}}
\newcommand{\norm}[1]{\left\lVert#1\right\rVert}
\renewcommand{\epsilon}{\varepsilon}
\begin{document}
	
\title{Solving Semi-Discrete Optimal Transport Problems:\\
 star shapedeness and Newton's method}
\author[Dieci]{Luca Dieci}
\address{School of Mathematics, Georgia Institute of Technology,
	Atlanta, GA 30332 U.S.A.}
\email{dieci@math.gatech.edu}
\author[Omarov]{Daniyar Omarov}
\address{School of Mathematics, Georgia Institute of Technology,
	Atlanta, GA 30332 U.S.A.}
\email{domarov3@gatech.edu}

\subjclass{65D99, 65K99, 49M15}

\keywords{Semi-discrete optimal transport, Laguerre tessellation, star-shaped, 
Newton's method}

\begin{abstract}
In this work, we propose a novel implementation of Newton's method
for solving  semi-discrete optimal transport (OT) problems for cost functions
which are a positive combination of $p$-norms, $1<p<\infty$.
It is well understood that the solution of a semi-discrete OT problem
is equivalent to finding a partition of a bounded region in
Laguerre cells, and we prove that the Laguerre cells are star-shaped
with respect to the target points.  
By exploiting the geometry of the Laguerre cells, we obtain an efficient and 
reliable implementation of Newton's method to find the sought network
structure.
We provide implementation details and extensive results in support of our
technique in 2-d problems, as well as comparison with other 
approaches used in the literature.
\end{abstract}
	
\begin{flushright} \small Version of \today $, \,\,\,$ \xxivtime \end{flushright}

\maketitle

\pagestyle{myheadings}
\thispagestyle{plain}
\markboth{Dieci and Omarov}{Semi-Discrete OT Problems}
	
%

\section{Introduction}\label{Intro}
In this work we provide an implementation of Newton's method to solve
semi-discrete optimal transport (OT) problems, for several cost functions
given by combination of $p$-norms ($p\ne 1,\infty$), 
several source and target probabilities, and
several different domains.  Our main goals are: (i) to justify and exploit
the geometry
of the decomposition in Laguerre cells of the underlying solution of the
problem, (ii) to provide robust algorithmic development, (iii) to give extensive
numerical testing of our implementation on several examples, and
(iv) to give quantitative measure of the goodness of our results, also
in comparison with existing approaches.

Optimal transport problems have
been receiving considerable attention from the analytical community for
many years.  The history of the field dates back 200+ years with the work
of Monge in 1781, \cite{Monge1781a}, though it was only in the 1940's with 
the works of Kantorovich, \cite{Kantorovich1942a, Kantorovich1948a},
that the very fruitful relation to optimization, duality, and linear programming 
ideas made it possible to settle important theoretical questions as well
as open the door to development of numerical methods.  Because of this history,
it is common to use the naming of Monge-Kantorovich (MK) problem for
the classical OT problem.  Numerical
techniques for approximating the solution of OT problems have followed suite
in the past 30-35 years, and new methods continue being developed and
analyzed, in no small part due to the many
applications (e.g., in imaging, economics, machine learning) where
OT problems arise.
Excellent expository books of both theory and numerics have recently
appeared and we refer to these as general introduction to the
topic; in particular, see the book of Santambrogio,
\cite{santambrogio2015optimal}, and also the more computationally
oriented monograph of Peyr\'e and Cuturi, \cite{PeyreCuturi}.

A fairly general formulation is the following.  We are given
two compactly supported measures $\mu$ and $\nu$ on $\Rn$, $n>1$,
absolutely continuous with respect to Lebesgue measure,
and with respective supports $X$ and $Y$.  Further, let
$c: X\times Y \to \R^+$ be a cost function, whereby
$c(x,y)$ describes the cost of transporting one unit of mass 
$x$ to $y$.   The original Monge problem problem is to find
$$\inf_{T:\ T_\sharp \mu=\nu}  \ \int_X c(\x,T(\x))d\mu(x) \ , $$
where $T_\sharp$ is the push-forward of $\mu$ through $T$,
that is $(T_\sharp\mu)(A)=\mu(T^{-1}(A))$.
As stated, it is far from obvious that a map (Monge map) $T$ can
be found, and in fact the theoretical problem stayed open until the relaxation
introduced by Kantorovich in the 1940's.
After the work of Kantorovich, the MK problem was reformulated 
as finding a transport plan $\pi$ of probability densities
while minimizing a cost functional.  

That is, the MK problem consists in determining the 
{\emph{optimal transport plan}}, that is the
joint pdf $\pi \in {\mathcal{M}}(X \times Y)$, with marginals $\mu$ and
$\nu$, which realizes the $\inf$ of
the functional $J(\pi)$ below:
\begin{equation}\label{InfOT}
\inf_{\pi} J  := \int_{X\times Y} c(x,y) \,d\pi\,.
\end{equation}
Under suitable assumptions on $\mu,\nu,c$, as well explained in the book
of Santambrogio \cite{santambrogio2015optimal}, 
thanks to works by Ambrosio, Benamou, Brenier, Gangbo, McCann, Villani, and others,
(see \cite{Ambrosio2003, BenamouBrenier, GangboMcCann, Villani}), 
it was established that an optimal plan exists and can be
chosen as a measure concentrated in $\Rn \times \Rn$ on the graph of a map 
$T : \Rn \to \Rn$, the optimal transport map.  Oversimplifying the theory,
a key point was to restrict to a strictly convex cost like $c(\x,\y)=\|\x-\y\|_2^2$.
In this case, the map is obtained
as gradient of a smooth potential $\phi$ that satisfies the Monge-Ampere partial
differential equation; numerical methods in this case have also been studied,
e.g., see \cite{BFO, Froese} as well as our recent
comparison \cite{DieciOmarov-Comparison} of different methods for solving the 
continuous OT problem with cost given by the 2-norm square.

However, here our own interest is when $\nu$ is discrete, that is it is supported at finitely
many distinct points $\y_i$'s:  
$\nu(\y)=\nu_i\delta( \y_i)$, $i=1,\dots, N$, $\nu_i>0$ and $\sum_{i=1}^N\nu_i = 1$.  
These are called {\emph{semi-discrete}} optimal transport problems, and one
has the restriction that $\y_i$'s are in $X$.
In this case, 
a remarkable
consequence of duality affords a major simplification of the structure
of the optimal map, under much less stringent conditions on the cost
function.  As recalled in \cite[Chapter 5]{PeyreCuturi}
(and see also Section \ref{SubS_MinProb} and Theorems 
\ref{Thm_ExistUniq} and \ref{Thm_Minimizer} below), 
under suitable assumptions on the cost, 
there is an optimal transport plan, and it is realized as a decomposition of $X$
in Laguerre cells, which are fully determined by a vector of optimal weights
$w\in \R^N$ (see \eqref{LagOmega} below)
$$A(\y_i)=\{\x\in X \ \ | \ \ c(\x,\y_i) - w_i\leq c(\x,\y_j)-w_j, \ \ j\neq i\}, \ \
i=1,2,\dots,N.$$
Once an optimal vector of weights is determined, and for as long as the
cells' boundaries have $\mu$-measure $0$, it is a simple observation
that there is an optimal map and that it is piecewise constant, in that
it maps every $\x\in A_i$ to the same $\y_i$, up to sets of $\mu$-measure $0$.

Given the relevance in applications, there are several works concerned 
with numerical solution of semi-discrete OT problems, the chief
differences between them being the cost functions considered.
By far, the most widely studied case is for the cost function given by
the 2-norm square: $c(\x,\y)=\|\x-\y\|_2^2$.  In this case, the Laguerre
tessellation is also known as ``{\sl Power Diagram}'', see \cite{AurenhammerPowerDiagrams}.  
The beauty of this case is in the
simplicity of the cells' structure: they are polytopes, their boundaries are given by 
(hyper) planes.  Unsurprisingly, very elegant and efficient tools of
computational geometry are used to compute power diagrams, with
remarkable success and stunning display of the resulting subdivision.
A likely incomplete list of computational works which tackle semi-discrete
optimal transport in the case of the 2-norm squared includes the works
\cite{AurenhammerPowerDiagrams},  \cite{SolomonGoesEtc},
\cite{Cuturi2013a}, \cite{Sinkhorn1967a}, \cite[Chapter 5]{PeyreCuturi},
\cite{Levy2015a}, \cite{Merigot2013a}, 
\cite{Ruschendorf2000a}, \cite{de2019differentiation}, \cite{kitagawa2019convergence},
\cite{BourneRoper}.
Conceivably, the reason for the algorithmic successes in this
case of the 2-norm squared as cost is given by the intimate relation of
power diagrams and Voronoi tessellation.  In fact, a most useful
algorithmic result in this case allows to lift a power diagram to
be the same as a Voronoi tessellation in one higher space dimension, 
see \cite{Levy2015a}, and computation of Voronoi tessellation has reached
remarkable maturity in computational geometry: in the plane, the classical sweeping algorithm 
of Fortune \cite{Fortune1986a} is still very popular, though there
are of course several more modern presentations, e.g. \cite{DuFaberGunz}, and there
are also widely available codes for commonly used
computational environments, like the {\tt Matlab} functions {\tt voronoi} and
{\tt voronoin}.

The situation for other cost functions is much less developed, and it is our present
concern.  To be sure, there are important theoretical results for more general
costs than the 2-norm squared, for example see \cite{geiss2013optimally}.  However,
computational techniques do not appear to be very well developed, probably because of
the lack of simple geometrical shapes of the  Laguerre cells.  Notable exceptions
are the work \cite{LucaJD}, that considered general $p$-norm costs ($p\ne 1,\infty$), 
and the work \cite{Hartmann2020SemidiscreteOT} that considered the case of $p=2$,
the standard Euclidean norm.  As in \cite{LucaJD}, we will also
restrict to $p$-norms as costs ($1<p<\infty$), but we will follow a very
different algorithmic path from \cite{LucaJD}.    (Comparison with the approach
of \cite{Hartmann2020SemidiscreteOT} is done in the main body of
this paper; in particular, see Section \ref{2dInt}).
The key ideas of the ``boundary method'' of \cite{LucaJD}
were to track/refine only the boundary-zone of the Laguerre cells, and not their
interiors, by a grid-subdivision/refinement algorithm, and to improve upon the
weights $w$ through an original adaptation of the auction algorithm.  In the present
work, our key insight is that --although the shape of the Laguerre cells is not as simple
as having boundary given by intersection of hyper-planes-- the Laguerre cells are
star-shaped.  This is the most we can say for general cost functions of the
type we consider.  Moreover, 
we will adopt a Newton's method approach to refine the weights, and very accurate 
manifold techniques to track the boundaries between cells.  As a result, in the end 
our method appears to be much more accurate than any other method we know of, but the
price we pay is that we need the density $\rho$ associated to the measure $\mu$
(i.e., $d\mu=\rho(\x)d\x$) to be a sufficiently smooth density. That being the case, we
develop a fully adaptive algorithm with error control, and we show that it performs
extremely well both in terms of speed and accuracy on a variety of problems.  
Our main contributions are the following: (i) new algorithms for Newton's method, 
exploiting star-shapedness of the Laguerre cells, 
(ii) unified treatment of the problem, (iii) comparison with
other techniques, (iv) a full set of replicable results, with accuracy assessment
well beyond the graphical display of results witnessed in the literature (how can
one distinguish two close figures with the naked eye?), and (v) a rigorous
numerical analysis, including the introduction of a proposal for a condition
number of semi-discrete OT problems.

A plan of the paper is as follows.  In Chapter \ref{Theory}, we first present
results on Laguerre cells, under generous conditions on the cost
function, see Section \ref{SubS_Star}.  Then, in Section \ref{SubS_Smooth} we
further restrict to cost functions that allow us to obtain smooth boundaries
between the cells.  These first two set of results hold regardless of the
measure $\mu$ (and $\nu$).  Then, in Section \ref{SubS_MinProb}, we finally
recall results identifying the solution of the semi-discrete problem as the
minimum of a convex functional.  Chapter \ref{Lag2d} details the algorithms we
implemented in 2-d, Chapter \ref{Newton} discusses Newton's method in more
details, and Chapter \ref{CompExamp} gives computational examples.  Some
theoretical results are given in an Appendix, and pseudo-codes of all of our
algorithms are given at the end so that our results can be replicated by others.

\section{Theoretical Results}\label{Theory}
In this Chapter, we give results about the geometrical and
analytical (measure theoretical) properties of the partition in
Laguerre cells we are after.    Some of these
results have appeared before, often for 
more specialized cost functions
than those we consider, and appropriate references will be given.

Although Laguerre tessellations, and optimal transport maps,
can be --and are-- defined in the entire ambient space $\R^n$, 
in practice we will restrict to working on a bounded set $\Omega$,
that is henceforth chosen to satisfy the following characterization.

\begin{ass}[Bounded domain $\Omega$]\label{WhatOmega}
$\Omega$ will always refer to a compact convex domain
of $\R^n$, of nonzero $n$-dimensional Lebesgue measure,
and with $\cont^k$ piecewise smooth boundary, $k\ge 1$.
\end{ass}

For completeness, recall that a $\cont^k$ piecewise smooth boundary of
$\Omega$ means that
$$\partial \Omega = \bigcup_{i=1}^N \bar \Gamma_i\ ,\,\ $$
and each $\Gamma_i$ is a $\cont^k$ $(n-1)$-dimensional surface.
Moreover, two of the $\bar \Gamma_i$'s may intersect each other along
a $\cont^k$ $(n-2)$-dimensional surface, etc..

\begin{rems}\label{Cubes}
\item[(a)]
The value of
$k$ in Assumption \ref{WhatOmega} should be at least the same value
of $k$ as we have for the function $F$ in Lemma \ref{Lem_Smooth}.
\item[(b)]
In practice, 
$\Omega$ is often given by a (hyper)-cube, which surely
satisfies Assumption \ref{WhatOmega} with piecewise $\cont^\infty$
boundary.  To witness, the standard unit square $S$ will be our
prototypical choice in $2$-d, and its boundary is clearly piecewise
smooth, being the union
of four closed intervals, each intersecting two other intervals at
a vertex. 
\end{rems}

Although the results below hold regardless of the measures
$\mu$ and $\nu$, we will need to restrict to appropriate classes
of cost functions.
At first, we will consider cost functions that satisfy the following
assumption, which is sufficient to prove geometrical properties
of the Laguerre cells.  Later, see Restriction \ref{Rest_Cost2}, we will
further restrict the class of cost functions considered, in order to
obtain analytical results on the cells themselves.

\begin{ass}[Cost function]\label{Rest_Cost1}
The cost function $c:\Rn\times \Rn \to \R^+$
satisfies the requirements of a distance function:
	\begin{equation*}\begin{split}
			\text{positivity:} \quad &
	\begin{cases}
	\, &	c(\x,\y) \geq 0 , \ \ \forall \x,\y\in\Rn,\\
	\, &	c(\x,\y) = 0\iff \x = \y, \ \ \forall \x,\y\in\Rn,
	\end{cases}\\
	{\text{symmetry:}}\quad &		c(\x,\y) = c(\y,\x),\ \ \forall \x,\y\in\Rn,\\
	{\text{triangular inequality:}}\quad &		c(\x,\y) \leq c(\x,z) + c(z,\y),\ \ \forall \x,\y,z\in\Rn, \\
\end{split}	\end{equation*}
and additionally we also assume that the cost function satisfies these
two properties:
	\begin{equation*}\begin{split}
	{\text{homogeneity:}}\quad &	c(t\x,t\y) = |t| c(\x,\y),\ \ \forall t\in\mathbb{R},\ \forall \x,\y\in\mathbb{R}^n,\\
	{\text{shift invariance:}}\quad &	c(\x+z,\y+z) = c(\x,\y),\ \ \forall \x,\y,z\in\mathbb{R}^n.\\
\end{split}	\end{equation*}
\end{ass}
\begin{rems}
\item[(a)]
It is an interesting theoretical consequence 
of the last two properties above 
that (e.g., see \cite[p.21]{bollobas1999linear}) 
there exists a vector norm $\| \cdot \|$ such that $c(\x,0)=||\x||$, 
$ \forall \x\in\mathbb{R}^n$, and therefore our cost is a continuous
function.
\item[(b)]
Below, see Restriction \ref{Rest_Cost2}, we will give very concrete examples
of the cost functions we consider in practice.  Presently, we 
remark that commonly adopted choices, like the Euclidean norm squared: 
$c(\x,\y)=\|\x-\y\|_2^2$, do not
satisfy Assumption \ref{Rest_Cost1}, and hence some of our results below
(in particular, Lemma \ref{Lem_yi} and Theorem \ref{Thm_Star})
do not apply to this case.
\end{rems}

\subsection{Star-Shapedness Results}\label{SubS_Star}
In this Section, we define Laguerre cells and tessellation and 
show some important geometrical properties of Laguerre cells, that are
relevant for our later algorithmic development to solve semi-discrete
optimal transport problems.  Our main result is Theorem \ref{Thm_Star},
that appears to be new, at least in the given generality.  The results in
this section are derived irrespective of the measure theoretic setting
of optimal transport and only require that the cost function satisfy
Assumption \ref{Rest_Cost1}.

\begin{defn}[Laguerre tessellation]\label{Def_Lag}
Given a cost function $c$ satisfying Assumption \ref{Rest_Cost1}, 
given a set of $N$ distinct 
points $\y_i\in\mathbb{R}^n$, $i=1,2,\dots, N$, $N\ge 2$,
and given a shift vector $w\in\mathbb{R}^N$, the Laguerre tessellation of
$\mathbb{R}^n$ associated to $c(\cdot, \cdot)$, 
to the $\y_i$'s, and to $w$, 
is given by the set of $N$ regions ({\emph{Laguerre cells}})
\begin{equation}\label{Eq_LagRd}
    \LL (\y_i) = \{\x\in\mathbb{R}^n\ \ | \ \ c(\x,\y_i) - w_i\leq c(\x,\y_j)-w_j, \ \ \forall \ j=1,\dots, N, \ j\neq i\}, \ \ i=1,2,\dots,N.
\end{equation}
The points $y_i$, $i=1,2,\dots, N$, are called {\emph{target points}}. 
See Figure \ref{Laguerre}. \\
Moreover, let $\Omega$ be as in Assumption \ref{WhatOmega} and let
$\y_i\in\Omega^\mathrm{o}$, $i=1,2,\dots, N$. Then, the Laguerre 
tessellation of $\Omega$ is the set of {\emph{Laguerre cells}} (see Figure \ref{Laguerre})
\begin{equation}\label{LagOmega}
    A(\y_i) = \LL(\y_i)\cap \Omega = \{\x\in\Omega \ \ | \ \ c(\x,\y_i) - w_i\leq c(\x,\y_j)-w_j, \ \ j\neq i\}, \ \ i=1,2,\dots,N.
\end{equation}
\end{defn}

\begin{figure}[ht]
	\centering
	\begin{subfigure}[b]{0.45\linewidth}
		\centering
		\includegraphics[width=\linewidth]{./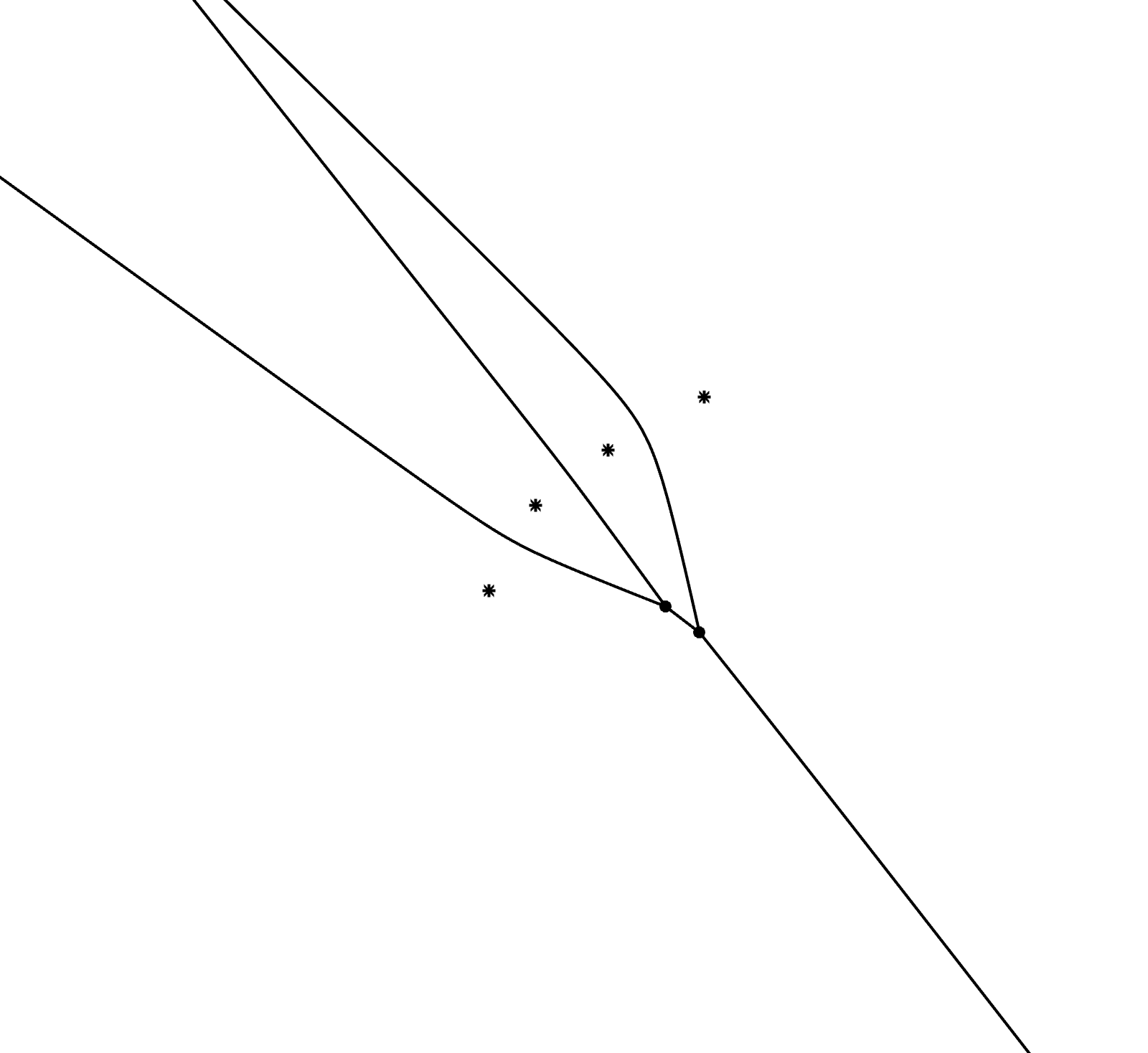}
		\caption{}
	\end{subfigure}
	\hfill
	\begin{subfigure}[b]{0.45\linewidth}
		\centering
		\includegraphics[width=\linewidth]{./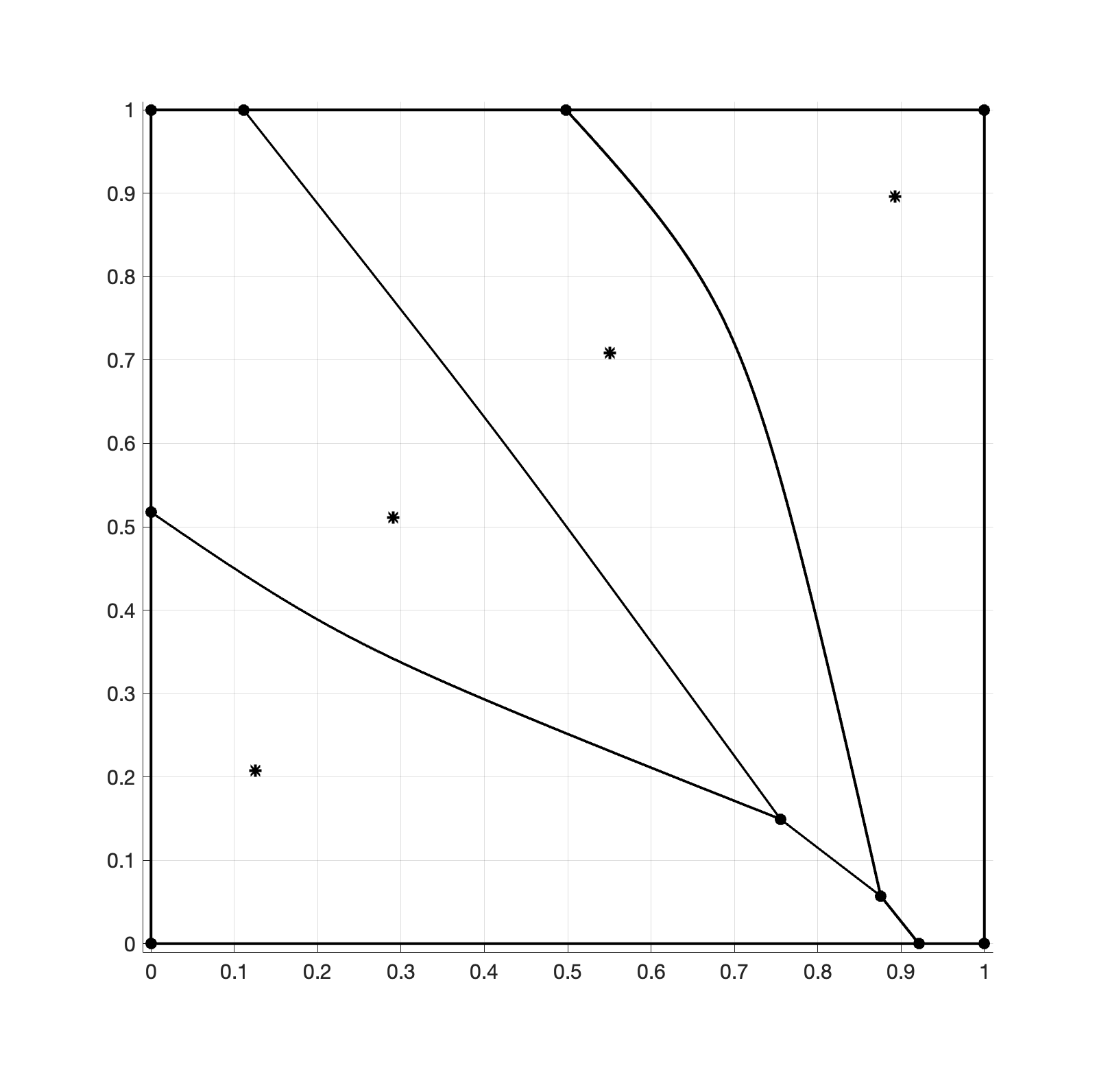}
		\caption{}
	\end{subfigure}
	\caption{For the same target points, $w$, and cost:  (A)
		Laguerre cells in $\R^2$ and (B) Restricted to the unit square.}
	\label{Laguerre}
\end{figure}

\begin{rem}
The special case of $w=0$ gives the tessellation in terms of {\emph{Voronoi cells}}. 
This is the standard {\emph{proximity}} problem. 
\end{rem}

\begin{lem}[Every $x\in \R^n$ is in some $\LL(\y_i)$]\label{Lem_Allx}
For any given $\x\in\Rn$, there exist an index $i$ such that $\x\in\LL(\y_i)$.
\end{lem}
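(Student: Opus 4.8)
The plan is to reduce membership in some Laguerre cell, for a fixed point, to the elementary fact that a finite set of real numbers attains its infimum. First I would fix $\x\in\Rn$ and form the finite collection of real numbers $c(\x,\y_j)-w_j$ for $j=1,\dots,N$; these are genuine (finite) real numbers because, by Assumption \ref{Rest_Cost1}, the cost takes values in $\R^+$, and in fact it is continuous by the remark following that assumption. Since $N$ is finite, this collection has a smallest element, so there is an index $i\in\{1,\dots,N\}$ with $c(\x,\y_i)-w_i\le c(\x,\y_j)-w_j$ for every $j$. In particular the inequality holds for all $j\ne i$, which is precisely the condition in \eqref{Eq_LagRd} defining $\x\in\LL(\y_i)$, and the lemma follows.

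There is essentially no obstacle here; the statement holds by construction of the cells, and the only point deserving a word of justification is that each $c(\x,\y_j)$ is finite, which is immediate from Assumption \ref{Rest_Cost1}. I would also note in passing two consequences visible from the same argument: the cells $\LL(\y_i)$ cover $\Rn$ (their union is all of $\Rn$), and a point $\x$ lies in more than one cell exactly when the minimum of $c(\x,\y_j)-w_j$ over $j$ is attained at several indices, i.e. on the shared boundaries of the cells — a fact that will be relevant when we later discuss the boundaries between adjacent Laguerre cells.
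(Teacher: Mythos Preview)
Your argument is correct and in fact more complete than the paper's own proof. The paper simply asserts that the result follows from $\Rn=\bigcup_{i=1}^N \LL(\y_i)$, which is logically equivalent to the statement being proved and so amounts to a restatement rather than a justification. Your proof supplies the missing step: for fixed $\x$, the finite family $\{c(\x,\y_j)-w_j\}_{j=1}^N$ attains its minimum at some index $i$, and the defining inequalities in \eqref{Eq_LagRd} are then satisfied. This is the natural and standard argument; your additional remarks about finiteness of the cost and about ties corresponding to shared boundaries are accurate and well placed.
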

\begin{proof}
This result follows from the fact that $\Rn = \bigcup_{i=1}^N\LL(\y_i)$.
\end{proof}

\begin{defn}[Active boundary and interior]\label{BdryLag}
The {\emph{active boundary}}, or simply {\emph{boundary}}, 
between two Laguerre cells $\LL(\y_i)$ and $\LL(\y_j)$, $i\ne j$, 
such that
$\LL(\y_i)\cap \LL(\y_j) \ne \emptyset$, 
is indicated with $\LL_{ij}$, and it is given by the set of
all $\x\in \Rn$ satisfying the relation
$$ c(\x,\y_i) - w_i = c(\x,\y_j) - w_j\implies w_{ij} = c(\x,\y_i) - c(\x,\y_j)\ ,
\,\ w_{ij}=w_i-w_j \ , $$
as well as the $(N-2)$ inequalities
$$ c(\x,\y_i) - w_i \le c(\x,\y_k) - w_k \ ,\,\, k\ne i,j\ . $$
The boundary of a Laguerre cell $\LL(\y_i)$, indicated with
$\partial \LL(\y_i)$, is given by
$$\partial \LL(\y_i)=\bigcup_{\begin{smallmatrix} j=1,\  j\ne i: \\ \LL(\y_i)\cap \LL(\y_j) \ne \emptyset 
\end{smallmatrix}}^N \LL_{ij}\ .$$
Accordingly, we define the interior of $\LL(\y_i)$, and indicate it
with $\LL(\y_i)^\mathrm{o}$, to be the set
$$ \LL(\y_i)^\mathrm{o}= \LL(\y_i)\backslash\partial\LL(\y_i) \ .$$
\end{defn}

\begin{lem}[Existence of boundary between cells: necessary condition]\label{Lem_wij}
Let the cost function $c$ satisfy Assumption \ref{Rest_Cost1}.
The necessary condition for having an active boundary  $\LL_{ij}$
in $\Rn$ between $\LL(\y_i)$ and $\LL(\y_j)$ is that
$$|w_i - w_j|\leq c(\y_i,\y_j) = c(\y_j,\y_i)\ .$$
\end{lem}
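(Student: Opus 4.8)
The plan is to argue by contraposition from the definition of the active boundary: if $\LL_{ij}$ is nonempty, then by Definition \ref{BdryLag} there exists a point $\x\in\Rn$ with
$$c(\x,\y_i)-c(\x,\y_j)=w_i-w_j=w_{ij}\,,$$
so it suffices to bound the left-hand side in absolute value by $c(\y_i,\y_j)$ for \emph{every} $\x\in\Rn$. The only tool needed is the triangle inequality from Assumption \ref{Rest_Cost1}, applied twice with the ``intermediate'' point alternately taken to be $\y_i$ or $\y_j$.

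Concretely, first I would apply $c(\x,\y_i)\le c(\x,\y_j)+c(\y_j,\y_i)$ to get $c(\x,\y_i)-c(\x,\y_j)\le c(\y_j,\y_i)$, and then apply $c(\x,\y_j)\le c(\x,\y_i)+c(\y_i,\y_j)$ to get $c(\x,\y_j)-c(\x,\y_i)\le c(\y_i,\y_j)$. Using symmetry $c(\y_i,\y_j)=c(\y_j,\y_i)$, these two inequalities combine into
$$\bigl|c(\x,\y_i)-c(\x,\y_j)\bigr|\le c(\y_i,\y_j)\qquad\text{for all }\x\in\Rn\,.$$
Substituting the defining relation of the boundary point into the left side yields $|w_{ij}|=|w_i-w_j|\le c(\y_i,\y_j)$, which is the claimed necessary condition. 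Equivalently, one can phrase the conclusion as: if $|w_i-w_j|>c(\y_i,\y_j)$, then the set of $\x$ satisfying $c(\x,\y_i)-c(\x,\y_j)=w_{ij}$ is empty, hence $\LL_{ij}=\emptyset$.

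There is essentially no obstacle here; the statement is a one-line consequence of the (reverse) triangle inequality, and the homogeneity/shift-invariance assumptions are not even used. The only point worth a remark is that the condition is merely \emph{necessary}: it guarantees that the equality surface is nonempty, but the $(N-2)$ additional inequalities in Definition \ref{BdryLag}, together with restriction to $\Omega$, may still leave $\LL_{ij}$ (or $A(\y_i)\cap A(\y_j)$) empty, so no converse should be claimed.
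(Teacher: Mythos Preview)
Your proof is correct and follows essentially the same route as the paper: take a point on the boundary so that $w_{ij}=c(\x,\y_i)-c(\x,\y_j)$, apply the triangle inequality once in each direction to bound this difference in absolute value by $c(\y_i,\y_j)$, and conclude. Your added remark that the condition is only necessary (because of the other $N-2$ inequalities) is a nice clarification not made explicit in the paper's proof.
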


\begin{proof}
Let $w_{ij} = w_i - w_j$. 
By Definition \ref{BdryLag}, 
the values $x\in \LL_{ij}$ must satisfy the following relation:
\begin{equation*}
    c(\x,\y_i) - w_i = c(\x,\y_j) - w_j\implies w_{ij} = c(\x,\y_i) - c(\x,\y_j).
\end{equation*}
From the triangular inequality:
\begin{equation*}
    c(\x,\y_i)\leq c(\x,\y_j) + c(\y_j,\y_i)\implies w_{ij}\leq c(\x,\y_j) + c(\y_j,\y_i) - c(\x,\y_j) = c(\y_j,\y_i).
\end{equation*}
Similarly, with $w_{ji} = w_j - w_i = -w_{ij}$, one has
$$w_{ji} = c(\x,\y_j) - c(\x,\y_i)\le c(\x,\y_i) + c(\y_i,\y_j)- c(\x,\y_i)=
c(\y_i,\y_j)$$
and since $c(\y_i,\y_j) = c(\y_j,\y_i)$, we obtain 
\begin{equation*}
    \begin{Bmatrix}w_{ij}\leq c(\y_j,\y_i)\\-w_{ij}\leq c(\y_j,\y_i)\end{Bmatrix}\implies |w_{ij}|\leq c(\y_j,\y_i)
\end{equation*}
and the result follows.
\end{proof}

\begin{lem}[$\y_i\in \LL(\y_i)$]\label{Lem_yi}
Let the cost function $c$ satisfy Assumption \ref{Rest_Cost1}.
If $\LL(\y_i)\ne \emptyset$, each target point $\y_i$ is contained in its own
Laguerre cell $\LL(\y_i)$.
\end{lem}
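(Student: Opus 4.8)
The plan is to unwind the definition of the Laguerre cell in \eqref{Eq_LagRd} and to transfer the defining inequalities from a point already known to lie in $\LL(\y_i)$ to the target point $\y_i$ itself, using only positivity and the triangular inequality from Assumption \ref{Rest_Cost1}. Concretely, since $\LL(\y_i)\ne\emptyset$ by hypothesis, I would fix some $\x\in\LL(\y_i)$; by \eqref{Eq_LagRd} this point satisfies
\begin{equation*}
c(\x,\y_i)-w_i\le c(\x,\y_j)-w_j\,,\qquad j\ne i\,,
\end{equation*}
which rearranges to $w_j-w_i\le c(\x,\y_j)-c(\x,\y_i)$ for every $j\ne i$.

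The next step is to bound the right-hand side independently of $\x$: the triangular inequality $c(\x,\y_j)\le c(\x,\y_i)+c(\y_i,\y_j)$ gives $c(\x,\y_j)-c(\x,\y_i)\le c(\y_i,\y_j)$, hence $w_j-w_i\le c(\y_i,\y_j)$ for all $j\ne i$. By positivity, $c(\y_i,\y_i)=0$, so this last inequality is exactly $c(\y_i,\y_i)-w_i\le c(\y_i,\y_j)-w_j$ for every $j\ne i$; that is, $\y_i$ satisfies every defining inequality of $\LL(\y_i)$, and therefore $\y_i\in\LL(\y_i)$.

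I do not expect any real obstacle here; the argument is a two-line estimate. The one point to be careful about is that we should not simply quote Lemma \ref{Lem_wij}: its conclusion $|w_i-w_j|\le c(\y_i,\y_j)$ is proved under the hypothesis that an active boundary $\LL_{ij}$ exists, whereas here we assume only the (a priori weaker) condition $\LL(\y_i)\ne\emptyset$. So instead of invoking that lemma we reproduce just the one-sided half of its estimate, evaluated along an arbitrary point $\x\in\LL(\y_i)$. As in Lemma \ref{Lem_wij}, the proof uses only positivity and the triangular inequality, and not the homogeneity or shift-invariance parts of Assumption \ref{Rest_Cost1}.
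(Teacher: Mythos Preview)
Your argument is correct and is in fact cleaner than the paper's. The paper argues by contradiction: assuming $\y_i\notin\LL(\y_i)$, it picks $j$ with $\y_i\in\LL(\y_j)\setminus\partial\LL(\y_i)$ and splits into two cases according to whether $\LL(\y_i)\cap\LL(\y_j)$ is empty; in Case~1 it invokes Lemma~\ref{Lem_wij} (which needs an active boundary), while in Case~2 it picks an auxiliary $\x_0\in\LL(\y_i)$ and runs essentially your triangular-inequality estimate in contrapositive form. Your direct argument collapses this into a single step: from any $\x\in\LL(\y_i)$ you get $w_j-w_i\le c(\x,\y_j)-c(\x,\y_i)\le c(\y_i,\y_j)$ at once, which is exactly the membership condition for $\y_i$. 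You thereby avoid both the case split and the appeal to Lemma~\ref{Lem_wij}, and your remark that one should not quote that lemma here (its hypothesis is not guaranteed) is well taken. The paper's route does have one incidental byproduct: its Case~1 highlights the link with Lemma~\ref{Lem_wij}, but for the purpose of proving the present lemma your approach is more economical.
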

\begin{proof}
Assume $\y_i\not\in \LL(\y_i)$. Then, because of Lemma \ref{Lem_Allx}, 
$\y_i\in \LL(\y_j)\backslash \partial \LL(\y_i)$, for some $j\neq i$.\\
\textit{Case 1.} Let $\LL(\y_i)\cap \LL(\y_j)\neq \emptyset$. Then, by construction of Laguerre cells, it follows that:
\begin{equation*}
    \y_i\not\in \LL(\y_i),\ \y_i\in \LL(\y_j) 
    \implies c(\y_i,\y_i) - w_i > c(\y_i,\y_j) - w_j\implies - w_i + w_j > c(\y_i, \y_j).
\end{equation*}
It follows that $-w_{ij} > c(\y_i,\y_j)$. But this contradicts Lemma \ref{Lem_wij}.\\

\noindent\textit{Case 2.} Let $\LL(\y_i)\cap \LL(\y_j)= \emptyset$. Then,
since $\y_i\in \LL(\y_j)\backslash \partial \LL(\y_i)$:
\begin{equation*}
    c(\y_i, \y_j) - w_j < c(\y_i,\y_i) - w_i = -w_i\implies w_j - w_i > c(\y_i, \y_j).
\end{equation*}
Next, let $\x_0\in \LL(\y_i)\backslash \partial \LL(\y_j)$. Such point $x_0$ exists since $\LL(\y_i)\neq \emptyset$.  Then:
\begin{equation*}
    c(\x_0, \y_i) - w_i < c(\x_0, \y_j) - w_j\implies w_j - w_i < c(\x_0, \y_j) - 
    c(\x_0, \y_i).
\end{equation*}
Combining the above two equations:
\begin{equation*}
    \begin{Bmatrix}
        w_j - w_i > c(\y_i, \y_j)\\
        w_j - w_i < c(\x_0, \y_j) - c(\x_0, \y_i)
    \end{Bmatrix}\implies c(\y_i,\y_j)<c(\x_0,\y_j)-c(\x_0,\y_i).
\end{equation*}
Using the triangular inequality:
\begin{equation*}
    c(\x_0,\y_j)\leq c(\x_0,\y_i)+c(\y_i,\y_j)\implies c(\y_i,\y_j)<c(\x_0,\y_i)+c(\y_i,\y_j)-c(\x_0,\y_i)=c(\y_i,\y_j),
\end{equation*}
which is the sought contradiction.
\end{proof}

\begin{lem}\label{Lem_yi_bound}
Let the cost function $c$ satisfy Assumption \ref{Rest_Cost1}.
If $\y_i\in\partial\LL(\y_i)$, then $\LL(\y_i)$ is degenerate, i.e. 
$\LL(\y_i)^\mathrm{o} = \emptyset$.
\end{lem}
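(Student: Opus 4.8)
The plan is to argue by contradiction: being on the boundary $\partial\LL(\y_i)$ forces an equality on the target point $\y_i$, and the triangle inequality then makes this equality incompatible with the existence of an interior point of $\LL(\y_i)$. The argument closely parallels the one already used in the proof of Lemma \ref{Lem_yi}.

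First I would unwind the hypothesis. By Definition \ref{BdryLag}, $\y_i\in\partial\LL(\y_i)$ means $\y_i\in\LL_{ij}$ for some index $j\ne i$ with $\LL(\y_i)\cap\LL(\y_j)\ne\emptyset$; in particular $\y_i$ satisfies the defining equality of $\LL_{ij}$, namely $c(\y_i,\y_i)-w_i=c(\y_i,\y_j)-w_j$. Since $c(\y_i,\y_i)=0$ by the positivity requirement of Assumption \ref{Rest_Cost1}, this reads $w_j-w_i=c(\y_i,\y_j)$. I would also record that every point of $\LL_{ij}$ satisfies, besides this equality with index $j$, the $(N-2)$ inequalities $c(\x,\y_i)-w_i\le c(\x,\y_k)-w_k$ for $k\ne i,j$, hence all the defining inequalities \eqref{Eq_LagRd} of $\LL(\y_i)$; thus $\LL_{ij}\subseteq\LL(\y_i)$ and, by the very definition of $\partial\LL(\y_i)$ as the union of the $\LL_{ij}$'s, also $\LL_{ij}\subseteq\partial\LL(\y_i)$.

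Now suppose, for contradiction, that $\LL(\y_i)^\mathrm{o}\ne\emptyset$ and pick $\x_0\in\LL(\y_i)^\mathrm{o}=\LL(\y_i)\backslash\partial\LL(\y_i)$. Since $\LL_{ij}\subseteq\partial\LL(\y_i)$, we have $\x_0\notin\LL_{ij}$, so the inequality $c(\x_0,\y_i)-w_i\le c(\x_0,\y_j)-w_j$ coming from $\x_0\in\LL(\y_i)$ must be strict: $w_j-w_i<c(\x_0,\y_j)-c(\x_0,\y_i)$. Combining with $w_j-w_i=c(\y_i,\y_j)$ from the first step gives $c(\y_i,\y_j)<c(\x_0,\y_j)-c(\x_0,\y_i)$, whereas the triangle inequality gives $c(\x_0,\y_j)\le c(\x_0,\y_i)+c(\y_i,\y_j)$, i.e. $c(\x_0,\y_j)-c(\x_0,\y_i)\le c(\y_i,\y_j)$. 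These two are contradictory, so no such $\x_0$ exists and $\LL(\y_i)^\mathrm{o}=\emptyset$.

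I do not expect a genuine obstacle here; the only care needed is bookkeeping with Definition \ref{BdryLag} — checking that a point of $\LL_{ij}$ does lie in $\LL(\y_i)$, and that a point cannot be simultaneously in $\partial\LL(\y_i)$ and in $\LL(\y_i)^\mathrm{o}$ — after which the normalization $c(\y_i,\y_i)=0$ and a single application of the triangle inequality finish the proof.
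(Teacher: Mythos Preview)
Your proof is correct and follows essentially the same route as the paper's: from $\y_i\in\LL_{ij}$ deduce $w_j-w_i=c(\y_i,\y_j)$, then show that any interior point $\x_0$ would force the strict inequality $c(\y_i,\y_j)<c(\x_0,\y_j)-c(\x_0,\y_i)$, contradicting the triangle inequality. Your version is slightly more explicit in justifying why the inequality at $\x_0$ is strict (via $\LL_{ij}\subseteq\partial\LL(\y_i)$), which the paper leaves implicit.
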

\begin{proof}
W.l.o.g., we can assume that $\y_i\in\LL_{ij}$, for some $j\neq i$.
Then, it follows that:
\begin{equation*}
    c(\y_i,\y_i) - w_i = c(\y_i,\y_j) - w_j \implies w_j - w_i = c(\y_i,\y_j).
\end{equation*}
Next, assume by contradiction that there exist some point $\x_0\in\LL(\y_i)^\mathrm{o}$. Then $x_0$
needs to satisfy the following relation:
\begin{equation*}
    c(\x_0,\y_i) - w_i < c(\x_0,\y_j) - w_j\implies w_j - w_i< c(\x_0,y_j) - c(\x_0, \y_i) \implies c(\y_i,\y_j) < c(\x_0,y_j) - c(\x_0, \y_i).
\end{equation*}
But this contradicts the triangular inequality.
\end{proof}

\begin{thm}[$\LL(\y_i)$ is star shaped]\label{Thm_Star}
Let the cost function $c$ satisfy Assumption \ref{Rest_Cost1}.
Let $\x\in \LL(\y_i)$ and 
$S(t) = \begin{Bmatrix}t(\x-\y_i)+\y_i,\hspace{0.5em}0\leq t\leq 1\end{Bmatrix}$. Then, for all $t\in [0,1]$, 
$S(t)\in \LL(\y_i)$.  \\
As a consequence,
each Laguerre cell $\LL(\y_i)$ is star-shaped in $\Rn$
with respect to its own target point $\y_i$.
\end{thm}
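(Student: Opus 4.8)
The plan is to fix $\x\in\LL(\y_i)$, set $S(t)=t(\x-\y_i)+\y_i$ for $t\in[0,1]$, and show directly that $S(t)$ satisfies all the defining inequalities \eqref{Eq_LagRd}. Since $\x\in\LL(\y_i)$ we know $c(\x,\y_i)-w_i\le c(\x,\y_j)-w_j$ for every $j\ne i$; we must deduce $c(S(t),\y_i)-w_i\le c(S(t),\y_j)-w_j$, equivalently
\[
c(S(t),\y_i)-c(S(t),\y_j)\le w_i-w_j=w_{ij}.
\]
The key is to compute $c(S(t),\y_i)$ exactly and to bound $c(S(t),\y_j)$ from below, using only the properties in Assumption \ref{Rest_Cost1}.

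First I would evaluate $c(S(t),\y_i)$. By shift invariance, $c(S(t),\y_i)=c(S(t)-\y_i,0)=c(t(\x-\y_i),0)$, and then by homogeneity this equals $|t|\,c(\x-\y_i,0)=t\,c(\x,\y_i)$ for $t\in[0,1]$ (again using shift invariance to return to $c(\x,\y_i)$). So $S(t)$ lies ``$t$ of the way'' along the cost-segment from $\y_i$ to $\x$ in a genuinely metric sense. Next I would lower-bound $c(S(t),\y_j)$. From the triangle inequality applied to the triple $\y_j$, $S(t)$, $\x$,
\[
c(\x,\y_j)\le c(\x,S(t))+c(S(t),\y_j),
\]
so $c(S(t),\y_j)\ge c(\x,\y_j)-c(\x,S(t))$. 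The quantity $c(\x,S(t))$ is again computed by shift invariance and homogeneity: $c(\x,S(t))=c(\x-\y_i-t(\x-\y_i),0)=c((1-t)(\x-\y_i),0)=(1-t)c(\x,\y_i)$.

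Combining, we get
\[
c(S(t),\y_i)-c(S(t),\y_j)\le t\,c(\x,\y_i)-\bigl(c(\x,\y_j)-(1-t)c(\x,\y_i)\bigr)=c(\x,\y_i)-c(\x,\y_j)\le w_{ij},
\]
where the last step is precisely the hypothesis $\x\in\LL(\y_i)$. Since this holds for every $j\ne i$ and every $t\in[0,1]$, we conclude $S(t)\in\LL(\y_i)$, which is the first assertion. The ``as a consequence'' clause is then immediate: $\y_i\in\LL(\y_i)$ by Lemma \ref{Lem_yi} (whenever $\LL(\y_i)\ne\emptyset$; the empty case is trivial), and the displayed segment property says every point of $\LL(\y_i)$ can be joined to $\y_i$ by a segment lying entirely in $\LL(\y_i)$, which is exactly star-shapedness with respect to $\y_i$. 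For the cell $A(\y_i)=\LL(\y_i)\cap\Omega$ restricted to $\Omega$, convexity of $\Omega$ (Assumption \ref{WhatOmega}) together with $\y_i\in\Omega$ keeps the segment inside $\Omega$ as well.

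The only mild obstacle is making sure the algebraic manipulations with shift invariance and homogeneity are applied in the right order and that the factor $|t|$ is correctly replaced by $t$ on $[0,1]$; there is no deep difficulty, since the metric-type axioms were chosen precisely to make the Euclidean-segment intuition go through verbatim. One should just be careful that homogeneity is stated for all real $t$ with $|t|$, so both $c(t(\x-\y_i),0)=t\,c(\x-\y_i,0)$ and $c((1-t)(\x-\y_i),0)=(1-t)c(\x-\y_i,0)$ are legitimate for $t\in[0,1]$, and that we never needed symmetry of $c$ beyond what is already packaged into Assumption \ref{Rest_Cost1}.
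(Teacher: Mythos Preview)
Your proof is correct and uses exactly the same ingredients as the paper's: shift invariance and homogeneity to obtain $c(S(t),\y_i)=t\,c(\x,\y_i)$ and $c(\x,S(t))=(1-t)\,c(\x,\y_i)$, then the triangle inequality to bound $c(S(t),\y_j)$ from below. The only difference is organizational: the paper wraps the same computation in a contradiction argument (assuming some $S(t_0)\in\LL(\y_j)\setminus\partial\LL(\y_i)$ and deriving the opposite inequality), whereas your direct verification of the defining inequalities is cleaner.
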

\begin{proof}
The case $t = 0$ follows from Lemma \ref{Lem_yi} and the
case $t = 1$ holds since $\x\in \LL(\y_i)$. 
Next, by contradiction, assume that there exists $t_0\in(0,1)$ such that 
the point $\x_0 = t_0(\x-\y_i) + \y_i$ is not in $\LL(\y_i)$. 
W.l.o.g., let $\x_0\in \LL(\y_j)\backslash \partial \LL(\y_i)$, for some
$j\neq i$.  Then, it follows that, since
\begin{equation*}\begin{split}
         & \x\in \LL(\y_i) \implies c(\x,\y_i) - w_i\leq c(\x,\y_j) - w_j,
 \\
& \x_0=S(t_0) \implies c(\x, \y_i) = c(\x, \x_0) + c(\x_0, \y_i),
\end{split}\end{equation*}
where the second equality comes about from the following
\begin{equation*}\begin{split}
&  c(\x, \x_0) + c(\x_0, \y_i) = c(\x,\y_i+t_0(\x-\y_i))+c(\y_i+t_0(\x-\y_i),\y_i) 
= \\
& c((1-t_0)(\x-\y_i),0)+c(t_0(\x-\y_i),0)=(1-t_0)c(\x-\y_i,0)+t_0c(\x-\y_i,0)=
c(\x-\y_i,0)=c(\x,\y_i)\ .
\end{split}\end{equation*}
As a consequence, we get
$$c(\x, \x_0) + c(\x_0, \y_i)-w_i \leq c(\x,\y_j) - w_{j}\implies c(\x_0,\y_i) - w_i\leq c(\x,\y_j) - c(\x,\x_0) -w_j.
$$
From the triangular inequality, it follows that:
\begin{align}\label{Eq_Star}
    &c(\x_0,\y_i) - w_i\leq c(\x,\y_j) - c(\x,\x_0) -w_j\leq c(\x,\x_0) + c(\x_0,\y_j) - c(\x,\x_0) -w_j =  c(\x_0,\y_j) - w_j,\nonumber\\
    &\hspace{12em}\implies c(\x_0,\y_i) - w_i\leq c(\x_0,\y_j) - w_j.
\end{align}
In addition, since $\x_0\in \LL(\y_j)\backslash \partial \LL(\y_i)$,
it holds that
\begin{equation*}
    \x_0\in \LL(\y_j) \implies c(\x_0,\y_j) - w_j< c(\x_0,\y_i) - w_i\implies c(\x_0,\y_i) - w_i>c(\x_0,\y_j) - w_j.
\end{equation*}
But this inequality contradicts \eqref{Eq_Star}, and hence the entire
segment $S(t)\in \LL(\y_i)$.\\
The fact that $\LL(\y_i)$ is star-shaped with respect to $\y_i$ follows at once
by the fact that every point in $\LL(\y_i)$ can be reached via a line segment
from $\y_i$ entirely contained in $\LL(\y_i)$.
\end{proof}

\begin{cor}\label{Cor_Star}
Let $\Omega$ be as in Assumption \ref{WhatOmega}, let $\y_i\in\Omega^\mathrm{o}$,
$i=1,2,\dots,N$, and let the cost function $c$ satisfy Assumption \ref{Rest_Cost1}.
Then each Laguerre cell $A(\y_i)$ is star-shaped in $\Omega$ with respect 
to the point $\y_i$.
\end{cor}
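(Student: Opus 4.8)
The plan is to deduce the corollary directly from Theorem \ref{Thm_Star} by intersecting with $\Omega$ and using convexity of $\Omega$. First I would fix $i$ and take an arbitrary point $\x\in A(\y_i)=\LL(\y_i)\cap\Omega$; I want to show that the segment $S(t)=t(\x-\y_i)+\y_i$, $t\in[0,1]$, lies entirely in $A(\y_i)$. Theorem \ref{Thm_Star} applies since $c$ satisfies Assumption \ref{Rest_Cost1}, and it gives $S(t)\in\LL(\y_i)$ for all $t\in[0,1]$. It remains to check $S(t)\in\Omega$ for all such $t$: this is immediate from Assumption \ref{WhatOmega}, because $\Omega$ is convex, $\x\in\Omega$, and $\y_i\in\Omega^{\mathrm o}\subset\Omega$, so the whole segment joining $\y_i$ and $\x$ is contained in $\Omega$. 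Hence $S(t)\in\LL(\y_i)\cap\Omega=A(\y_i)$ for every $t\in[0,1]$.

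Since $\x\in A(\y_i)$ was arbitrary, every point of $A(\y_i)$ can be joined to $\y_i$ by a line segment lying entirely in $A(\y_i)$; moreover $\y_i\in A(\y_i)$ because $\y_i\in\Omega^{\mathrm o}$ and, by Lemma \ref{Lem_yi}, $\y_i\in\LL(\y_i)$ (the hypothesis $A(\y_i)\neq\emptyset$ being automatic here, as $\y_i$ itself is a witness once we know $\y_i\in\LL(\y_i)\cap\Omega$). Therefore $A(\y_i)$ is star-shaped in $\Omega$ with respect to $\y_i$, which is the claim.

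I do not expect any genuine obstacle: the corollary is essentially a restriction of the already-proved theorem, and the only extra ingredient is the convexity of $\Omega$, which is built into Assumption \ref{WhatOmega}. The one point worth stating carefully is that $A(\y_i)$ is nonempty (so that the star-center $\y_i$ actually belongs to it), and that follows from $\y_i\in\Omega^{\mathrm o}$ together with Lemma \ref{Lem_yi}.
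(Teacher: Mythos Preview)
Your proof is correct and follows essentially the same approach as the paper's: both deduce the corollary from Theorem \ref{Thm_Star} by intersecting with the convex set $\Omega$. The paper simply invokes the general fact that the intersection of a star-shaped set with a convex set (containing the star-center) is star-shaped, whereas you unwind that fact explicitly; the content is the same. Your remark about nonemptiness is slightly roundabout---the clean way is: if $A(\y_i)=\emptyset$ the claim is vacuous, and if $A(\y_i)\neq\emptyset$ then $\LL(\y_i)\neq\emptyset$, so Lemma \ref{Lem_yi} gives $\y_i\in\LL(\y_i)$ and hence $\y_i\in A(\y_i)$---but this is a cosmetic point, not a gap.
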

\begin{proof}
This result follows from Theorem \ref{Thm_Star} and the fact that the 
intersection of a convex set with a star-shaped set is a star-shaped set (e.g., see \cite{bounceur2018boundaries}).
\end{proof}

\begin{rem}\label{RemStar}
For Voronoi diagrams, star-shapedness is simple to infer; 
e.g., see \cite{Gao_Voronoi} for results about even more general Voronoi diagrams.
But, for Laguerre cells, we have found much
fewer conclusive results, except for the standard Euclidean distance.
Indeed, that $\LL(\y_i)$ are star-shaped when the cost function is the
$2$-norm has been observed before, 
for example see \cite[Example 2.12]{Bourne2018SemidiscreteUO}, 
\cite[p.21]{Dobrin}.   In this case of  
$c(\x,\y)=\|\x-\y\|_2$, Laguerre tessellation are often called
{\emph{Apollonius diagrams}} or also {\emph{additively weighted Voronoi 
diagrams}}, and in 2-d it has been
observed many times that the boundaries between cells are arcs of 
hyperbolas (e.g., see \cite{Bourne2018SemidiscreteUO}, \cite{Dobrin},
\cite{Hartmann2020SemidiscreteOT}, or \cite{Sharir}), although we
are not aware of a similar result for cost functions being a $p$-norm, 
with $p\ne 2$.   
\end{rem}

\subsection{Smoothness results: what cost function?}\label{SubS_Smooth}
In this section we give analytical results on smoothness
of the boundaries $\LL_{ij}$.  To get these results, we further restrict the class
of appropriate cost functions, as well as an appropriate measure
$\mu$.  As said in the introduction,
$\mu$ will always be assumed to be 
equivalent to Lebesgue measure
and with bounded density $\rho$ in $\Omega$: 
$d\mu=\rho(\x)d\x$.
These requests on $\mu$ 
allow to infer, under mild conditions on the cost function
$c(x,y)$ (namely continuity, or even lower semi-continuity), that in 
\eqref{InfOT} the
``$\inf$'' is realized as a ``$\min$'', a fact noted  several times
(e.g., see \cite{Ambrosio2003, Cuesta1993a, Pratelli}).
However, we need that there is a unique partition (map), and 
to obtain this result we need to place further
restrictions on the cost function.  Indeed, a fundamental result 
of Cuesta-Albertos and Tuero-Diaz
\cite[Corollary 4]{Cuesta1993a} states that there exists a unique optimal 
partition (map) {\bf if} the cost function $c(x,y)$ gives $\mu$-measure
$0$ to the cells boundaries.  Because of this, the key concern 
will be to determine that the boundaries have indeed $0$-measure, in
particular under what conditions on the cost function this is possible.

Several authors have addressed the issue of selecting an appropriate
cost, and proposed 
restrictions on the cost function in order to obtain a desired end result.
For example, in \cite{geiss2013optimally} the authors restricted to
what they called {\emph{admissible costs}} (see below), 
in \cite{Bourne2018SemidiscreteUO} they restricted to
so-called  {\emph{radial cost}} functions, whereas in 
\cite{kitagawa2019convergence} the authors required the cost
to satisfy conditions that they called (Twist)-(Reg)-(QC), and in 
\cite{de2019differentiation} the authors required the cost
to satisfy their conditions (Diff-2) and (Cont-2); more on all these
cost functions is below.
We should note that the restrictions placed on the
cost in the works \cite{Bourne2018SemidiscreteUO},  
\cite{kitagawa2019convergence}, and \cite{de2019differentiation},
appear to have been motivated
by the cost given by the $2$-norm square, a case that
violates our original Assumption \ref{Rest_Cost1} and that therefore is
not of specific interest in the present work, since we may fail to have
star-shaped Laguerre cells, a fact which plays a very important role
in our algorithmic development.  A different approach was taken in
\cite{LucaJD} and in \cite{Hartmann2020SemidiscreteOT}.  Namely, in
\cite{LucaJD} , Dieci and Walsh considered cost given by
$p$-norms ($1<p<\infty$) and
in \cite{Hartmann2020SemidiscreteOT} Hartmann and Schuhmacher 
restricted to the $2$-norm cost, $\|\x-\y\|_2$; these choices
surely satisfies Assumption 
\ref{Rest_Cost1} and thus lead to star-shaped Laguerre cells, although
this fact was not noticed/used in \cite{LucaJD}
or \cite{Hartmann2020SemidiscreteOT}.
The choice we will make is just slightly more general than
the choice made in \cite{LucaJD}, though more general than that in
\cite{Hartmann2020SemidiscreteOT}: 
we will henceforth restrict to cost functions
satisfying the following Restriction \ref{Rest_Cost2}, which
satisfy of course our previous Assumption \ref{Rest_Cost1}.

\begin{Rest}\label{Rest_Cost2}
Hereafter, the cost functions considered are given by positive,
finite, combinations of $p$-norms, with $1<p<\infty$.  That is,
\begin{equation}\label{OurCost}
c(\x,\y)=\sum_{l=1}^m \alpha_l \|\x-\y\|_{p_l},\quad {\rm{where}}\quad
\alpha_l>0,\,\ p_l\in (1,\infty)\ .
\end{equation}
\end{Rest}

The relation between our class of cost functions, and the choices made
in previous works is elaborated on in Remark \ref{CompareCosts} below.

\begin{rem}\label{CompareCosts}
The radial cost functions of \cite{Bourne2018SemidiscreteUO} are
functions of the form $c(\x,\y)=g(\|\x-\y\|_2)$, where $g$ is an
increasing function, e.g., $\|\x-\y\|_2^2$; our cost functions do not
fit in this class.  Conditions (Reg) of 
\cite{kitagawa2019convergence} can be generally satisfied by our cost
functions, in light of Lemma \ref{SmothNorms}; however, 
conditions (Twist) and (QC) of \cite{kitagawa2019convergence} are more
difficult to verify, and it may 
possibly not hold for our cost functions.  Finally, the authors
of \cite{de2019differentiation} require sufficiently regular cost
functions (which we can require, see Lemma \ref{SmothNorms}), 
satisfying (in their notation) conditions (Diff-2-a), (Diff-2-b),
(Diff-2-c) and (Cont-2); it is not difficult to verify that, with our
cost functions, conditions (Diff-2-b)-(Diff-2-c)-(Cont-2) can be
enforced, but condition (Diff-2-a) is more problematic.   This condition
asks for the existence of $\epsilon>0$ such that
$\|\nabla_{\x} c(\x,\y_i)-\nabla_{\x} c(\x,\y_j)\| >\epsilon$
for all $\x$ in $\LL_{ij}$, and thus the
authors of \cite{de2019differentiation} ask for this condition to hold
in $\R^n$, and not just in a bounded set $\Omega$.  For completeness,
in Lemma \ref{Lem_NNZGrad} we show that this condition holds for us
for even values of $p$, with $c(\x,\y)=\|\x-\y\|_p$,
when we look at it on a bounded convex set $\Omega$, which is
what we do in practice in our numerical computations.
Finally, our choice of cost function always gives 
an admissible cost according to the definition of \cite{geiss2013optimally},
as we prove below.  
\end{rem}

\begin{defn}[Admissible cost; \cite{geiss2013optimally}]\label{Def_AdmCost}
Let $\Leb$ be Lebesgue measure, let $\Omega$ be as in Assumption 
\ref{WhatOmega}, and let $\y_1, \y_2$ be two distinct target points
in $\Omega^\mathrm{o}$.  A cost function satisfying Assumption
\ref{Rest_Cost1} is called {\emph{admissible}} if 
there exist values $w_{a}\leq w_{b}$ such that:
\begin{itemize}
\item 
$\Leb(A(\y_1))$ is continuously increasing from $0$ to $\Leb(\Omega)$ as
$w_{12} = w_1 - w_2$ increases from $w_{a}$ to $w_{b}$, where
$A(\y_1) = \{\x\in\Omega \ \ | \ \ c(\x,\y_1) - w_1 \leq c(\x,\y_2) - w_2\}$;
\item likewise, 
$\Leb(A(\y_2))$ is continuously decreasing from $\Leb(\Omega)$ to $0$ as 
$w_{12} = w_1 - w_2$ increases from $w_{a}$ to $w_{b}$, where
$	A(\y_2) = \{\x\in\Omega \ \ | \ \ c(\x,\y_2) - w_2 \leq c(\x,\y_1) - w_1\}$.
\end{itemize}
\end{defn}

\begin{lem}\label{Claim_pNorm}
All $p$-norms,  $1<p<\infty$, are admissible cost functions as in
Definition \ref{Def_AdmCost}, that is
$c(\x,\y) = ||\x-\y||_p$, for $1<p<\infty$, is an admissible cost function.
Moreover, a positive combination of admissible cost functions is an
admissible cost function, and in particular a cost function satisfying 
Restriction \ref{Rest_Cost2} is admissible according to
Definition \ref{Def_AdmCost}.
\end{lem}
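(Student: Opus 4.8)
The plan is to reduce the statement to a one–variable continuity/strict–monotonicity fact about a distribution function. Fix distinct $\y_1,\y_2\in\Omega^\mathrm{o}$ and set $f(\x):=c(\x,\y_1)-c(\x,\y_2)$, so that $A(\y_1)=\{\x\in\Omega:f(\x)\le w_{12}\}$ and $A(\y_2)=\{\x\in\Omega:f(\x)\ge w_{12}\}$; thus $w_{12}\mapsto\Leb(A(\y_1))$ is the function $G(w_{12})=\Leb(\{\x\in\Omega:f(\x)\le w_{12}\})$, and $\Leb(A(\y_2))=\Leb(\Omega)-\Leb(\{f<w_{12}\})$. The triangle inequality gives $|f(\x)|\le c(\y_1,\y_2)$ on all of $\R^n$, the two extremes being attained at $f(\y_1)=-c(\y_1,\y_2)$ and $f(\y_2)=c(\y_1,\y_2)$ (both points lie in $\Omega^\mathrm{o}$); so I would take $w_a=-c(\y_1,\y_2)$ and $w_b=c(\y_1,\y_2)$. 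Then $\{f\le w_b\}=\Omega$, so $G(w_b)=\Leb(\Omega)$; and $\{f\le w_a\}=\{f=w_a\}$, which for a $p$-norm is, by the equality case of the triangle inequality for a \emph{strictly convex} norm, exactly the half–line $\{\y_1+\lambda(\y_1-\y_2):\lambda\ge0\}$, hence Lebesgue–null since $n>1$, i.e. $G(w_a)=0$. It remains to show $G$ is continuous and strictly increasing on $[w_a,w_b]$.

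The engine is monotonicity of $f$ along rays from $\y_1$ together with a polar–coordinate integration centered at $\y_1$. Along $\x(t)=\y_1+t\v$, $t\ge0$, homogeneity and shift invariance give $c(\x(t),\y_1)=t\,c(\v,0)$, while the (sub)gradient of the underlying norm lies in its dual unit ball, so $\tfrac{d}{dt}f(\x(t))=c(\v,0)-\langle\nabla_\x c(\x(t),\y_2),\v\rangle\ge0$; thus $f$ is nondecreasing along every ray from $\y_1$ (this uses only Assumption \ref{Rest_Cost1}, and reproves Corollary \ref{Cor_Star}). For $c=\|\cdot\|_p$ with $1<p<\infty$ the equality case of Hölder's inequality makes this strict for every direction $\v$ not parallel to $\y_2-\y_1$. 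Since $\Omega$ is convex with $\y_1\in\Omega^\mathrm{o}$, I would write $G(s)=\tfrac1n\int_{S^{n-1}}\tau(\v,s)^n\,d\v$, where $\tau(\v,s)$ is the radius at which the ray in direction $\v$ exits $\{f\le s\}$ (capped at the exit radius $R(\v)$ of $\Omega$). Strict radial monotonicity of $f$ for a.e. $\v$ makes $s\mapsto\tau(\v,s)$ continuous and nondecreasing, so dominated convergence ($\tau\le R(\v)$, $\int_{S^{n-1}}R(\v)^n\,d\v=n\Leb(\Omega)<\infty$) gives continuity of $G$. For strict monotonicity between $s_1<s_2$ in $[w_a,w_b]$: on a neighborhood of the direction $\y_2-\y_1$ (positive spherical measure) one has $R(\v)>\|\y_2-\y_1\|$ because $\y_2\in\Omega^\mathrm{o}$, and $f(\x(t))$ sweeps strictly and continuously through $(s_1,s_2)$ before the ray leaves $\Omega$, so $\tau(\v,s_2)>\tau(\v,s_1)$ there and $G(s_2)>G(s_1)$. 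The claim for $A(\y_2)$ follows by the same argument with $-f$ and rays from $\y_2$ (or from $\Leb(A(\y_1))+\Leb(A(\y_2))=\Leb(\Omega)$, once all level sets $\{f=s\}$ are known null). This proves admissibility of every $p$-norm, $1<p<\infty$.

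For the second assertion, I would observe that the only place the $p$-norm structure entered is the \emph{strict} radial monotonicity of $f$ — equivalently, given Assumption \ref{Rest_Cost1} and the polar identity, the absence of ``flat rays'', i.e. that the active boundaries $\{f=s\}\cap\Omega$ are Lebesgue–null. This passes to positive combinations: writing $c=\sum_l\alpha_l c_l$ with each $c_l$ admissible and $f=\sum_l\alpha_l f_l$, $f_l(\x):=c_l(\x,\y_1)-c_l(\x,\y_2)$, each $f_l$ is nondecreasing along every ray from $\y_1$ and (a consequence of admissibility of $c_l$) strictly increasing along almost every such ray; since the $\alpha_l$ are positive and a finite intersection of full–measure sets of directions is still of full measure, $f$ is strictly increasing along a.e. ray from $\y_1$. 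Moreover $c$ again satisfies Assumption \ref{Rest_Cost1}, with $f(\y_1)=-c(\y_1,\y_2)$, $f(\y_2)=c(\y_1,\y_2)$, and $\{f=w_a\}$ is again the single half–line from $\y_1$ away from $\y_2$ (the collinearity condition is the same for every $\|\cdot\|_{p_l}$). Hence the argument of the first two paragraphs applies verbatim to $c$, and taking $c_l=\|\cdot\|_{p_l}$ gives admissibility of every cost of Restriction \ref{Rest_Cost2}. I expect the main obstacle to be exactly this null–boundary step: strict convexity of the constituent norms cannot be dropped, since for $\|\cdot\|_1$ or $\|\cdot\|_\infty$ the set $\{f=c(\y_1,\y_2)\}$ contains a half–space and those costs fail to be admissible — which is precisely why $p\in\{1,\infty\}$ is excluded in Restriction \ref{Rest_Cost2}.
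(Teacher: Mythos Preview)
Your argument for the $p$-norm case is correct and takes a genuinely different, more substantive route than the paper's. The paper's proof is essentially a bare assertion: it says that choosing $w_a=-\|\y_1-\y_2\|_p$ and $w_b=\|\y_1-\y_2\|_p$ ``will work'', and for the combination simply declares $w_a=\sum_k\alpha_k w_a^{(k)}$, $w_b=\sum_k\alpha_k w_b^{(k)}$, without verifying continuity or strict monotonicity of $w_{12}\mapsto\Leb(A(\y_1))$. You actually carry this verification out, via the distribution function $G$ and a polar parametrization from $\y_1$: the radial monotonicity of $f$ along rays (from the dual-norm bound on $\nabla_\x c$) and its \emph{strictness} for non-collinear directions (from the H\"older equality case) are exactly the right mechanisms, and they tie the lemma to the paper's star-shapedness theme. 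The dominated-convergence step for continuity and the direction-neighborhood step for strict monotonicity are sound.

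There is, however, a gap in your treatment of the general combination statement. You claim that admissibility of $c_l$ forces $f_l$ to be strictly increasing along almost every ray from $\y_1$, but this does not follow: continuity of $G_l$ only gives $\Leb(\{f_l=s\})=0$ for each fixed $s$, which is compatible with $f_l$ having a terminal plateau $[t_0(v),\infty)$ (at a $v$-dependent level $s_0(v)$) on a positive-measure cone of directions---indeed, for a cost satisfying Assumption~\ref{Rest_Cost1} the flat part of $t\mapsto f_l(\y_1+tv)$, when present, is always such a terminal ray, and for non-strictly-convex norms it can occur for a whole cone of $v$ while each level set $\{f_l=s\}$ remains null. So ``admissible $\Rightarrow$ strictly radially increasing a.e.'' is not justified. (The paper's proof of this part is equally incomplete, so neither argument settles the general claim.) Your route is easily repaired for the case that matters, Restriction~\ref{Rest_Cost2}: each $\|\cdot\|_{p_l}$ with $1<p_l<\infty$ is strictly convex, so the H\"older equality-case argument you already gave applies to every $f_l$, yielding strict monotonicity of each $f_l$ (hence of $f=\sum_l\alpha_l f_l$) along every direction not parallel to $\y_2-\y_1$. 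Invoke that directly rather than routing through admissibility, and the proof closes.
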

\begin{proof}
For $1<p<\infty$, it is enough to observe that when 
$c(\x,\y) = ||\x-\y||_p$,  choosing
$w_a =-c(\y_1,\y_2)= -||\y_1-\y_2||_p$ and 
$w_b=c(\y_1,\y_2)=||\y_1-\y_2||_p$ will work in Definition \ref{Def_AdmCost}.  \\
Next, take a cost function 
$c(\x,\y)=\sum_{k=1}^m \alpha_k c_k(\x,\y)$ where
$\alpha_k>0$ and $c_k(\cdot,\cdot)$ are admissible cost functions.
Let $w_a^{(k)}$ and $w_b^{(k)}$ be values satisfying the requirements for
admissibility of the $c_k(\cdot,\cdot)$ functions in Definition \ref{Def_AdmCost}.
Then, in Definition \ref{Def_AdmCost}, we can take 
$w_a=\sum_{k=1}^m \alpha_k w_a^{(k)}$ and 
$w_b=\sum_{k=1}^m \alpha_k w_b^{(k)}$ 
to obtain that $c(\cdot,\cdot)$ is admissible. 
\end{proof}

An interesting an useful result for our cost functions is
the following.

\begin{lem}\label{Lem_Point}
	Let $\Omega$ be as in Assumption \ref{WhatOmega} and let $\y_1,\y_2\in\Omega^\mathrm{o}$ be two distinct target points. 
	Let $w_{12} = w_1 - w_2$ satisfy $|w_{12}|\leq c(\y_1,\y_2)$
	(see Lemma \ref{Lem_wij}). 
	If $c(\cdot,\cdot)$ is an admissible cost function as in Restriction 
	\ref{Rest_Cost2}, then $A(\y_1)\cap A(\y_2) \neq \emptyset$, where
	\begin{align*}
		&A(\y_1) = \{\x\in\Omega \ \ | \ \ c(\x,\y_1) - w_1 \leq c(\x,\y_2) - w_2\},\\
		&A(\y_2) = \{\x\in\Omega \ \ | \ \ c(\x,\y_2) - w_2 \leq c(\x,\y_1) - w_1\}.
	\end{align*}
\end{lem}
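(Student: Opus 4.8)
The plan is to exhibit an explicit point in $A(\y_1)\cap A(\y_2)$ by a continuity/intermediate-value argument along the segment joining $\y_1$ and $\y_2$. For $t\in[0,1]$ set $z(t)=(1-t)\y_1+t\y_2=\y_1+t(\y_2-\y_1)$. Since $\y_1,\y_2\in\Omega^{\mathrm o}$ and $\Omega$ is convex, $z(t)\in\Omega$ for all $t$. Define the scalar function
\[
  g(t)=\bigl(c(z(t),\y_1)-w_1\bigr)-\bigl(c(z(t),\y_2)-w_2\bigr)
      = c(z(t),\y_1)-c(z(t),\y_2)-w_{12}.
\]
This $g$ is continuous on $[0,1]$ because $c$ is continuous (Remark following Assumption \ref{Rest_Cost1}: $c(\x,\cdot)$ agrees with a norm up to a shift). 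A point $\x$ with $\x\in A(\y_1)\cap A(\y_2)$ is exactly a point where the two shifted costs are equal, i.e. $g=0$ for $\x=z(t)$. So it suffices to show $g$ changes sign (weakly) on $[0,1]$, i.e. $g(0)\le 0\le g(1)$ or $g(1)\le 0\le g(0)$.

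Next I would evaluate $g$ at the endpoints using the structural properties of the cost. At $t=0$ we have $z(0)=\y_1$, so $c(z(0),\y_1)=0$ and, using shift invariance and the norm representation, $c(z(0),\y_2)=c(\y_1,\y_2)$; hence $g(0)=-c(\y_1,\y_2)-w_{12}$. At $t=1$, $z(1)=\y_2$, so $g(1)=c(\y_1,\y_2)-w_{12}$. The hypothesis $|w_{12}|\le c(\y_1,\y_2)$ gives precisely $g(0)=-c(\y_1,\y_2)-w_{12}\le 0$ and $g(1)=c(\y_1,\y_2)-w_{12}\ge 0$. By the Intermediate Value Theorem there is $t_*\in[0,1]$ with $g(t_*)=0$, and then $\x^*=z(t_*)\in\Omega$ satisfies $c(\x^*,\y_1)-w_1=c(\x^*,\y_2)-w_2$, so $\x^*\in A(\y_1)\cap A(\y_2)$, proving the claim.

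The main point requiring care — rather than a genuine obstacle — is the endpoint computation $c(\y_1,\y_2)=\|\y_1-\y_2\|$: this uses homogeneity and shift invariance (Assumption \ref{Rest_Cost1}), which give $c(\y_1,\y_2)=c(\y_1-\y_2,0)=\|\y_1-\y_2\|$ for the associated norm, and in particular that the endpoints are finite and that $c$ restricted to the segment is continuous. One should also note that although the statement invokes admissibility (Definition \ref{Def_AdmCost}) and Restriction \ref{Rest_Cost2}, the argument above actually needs only Assumption \ref{Rest_Cost1} together with $|w_{12}|\le c(\y_1,\y_2)$; the stronger hypotheses are presumably quoted because they are the standing assumptions of the section. (Alternatively, one could derive the conclusion directly from admissibility: as $w_{12}$ ranges over $[w_a,w_b]\supseteq[-c(\y_1,\y_2),c(\y_1,\y_2)]$, $\Leb(A(\y_1))$ moves continuously from $0$ to $\Leb(\Omega)$ while $\Leb(A(\y_2))$ moves from $\Leb(\Omega)$ to $0$, so for the given intermediate $w_{12}$ both cells have positive measure and, being complementary closed subsets of the connected set $\Omega$, must intersect — but the segment argument is cleaner and more explicit.)
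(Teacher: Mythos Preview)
Your proof is correct and follows essentially the same approach as the paper: both locate a boundary point on the segment from $\y_1$ to $\y_2$. The paper goes one step further by observing that, thanks to homogeneity and shift invariance, $c(z(t),\y_1)=t\,c(\y_1,\y_2)$ and $c(z(t),\y_2)=(1-t)\,c(\y_1,\y_2)$, so $g(t)=(2t-1)c(\y_1,\y_2)-w_{12}$ is linear and one can write down the explicit root $t_*=\tfrac{1}{2}\bigl(1+w_{12}/c(\y_1,\y_2)\bigr)$ rather than invoking the Intermediate Value Theorem.
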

\begin{proof}
	By direct computation, it is straightforward to show that the point 
	$x_0 = \y_1 +\frac{k+1}{2}(\y_2-\y_1)$, 
	$k = \frac{w_{12}}{c(\y_1,\y_2)}\in[-1,1]$, is contained in the boundary 
	set $A(\y_1)\cap A(\y_2)$, where
	\begin{equation*}
		A(\y_1)\cap A(\y_2) =  \{\x\in\Omega \ \ | \ \ c(\x,\y_1) - w_1 = c(\x,\y_2) - w_2\}.
	\end{equation*}
	Hence the result follows.
\end{proof}

Next, in order to obtain $\mu$-measure $0$ for the boundaries between
Laguerre cells, we will use some results of differential geometric
flavor, and for these we need some preliminary results.

\begin{fact}\label{Claim_Limit}
Let $a\in \R$.  For $s<1$ and $r\ge 1$:
$$\lim_{a\to 0} \frac{a^r}{|a|^s} \ = \ 0\ .$$
\end{fact}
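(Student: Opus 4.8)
The statement to prove is Fact~\ref{Claim_Limit}: for $a \in \R$, $s < 1$, $r \ge 1$, one has $\lim_{a\to 0} a^r/|a|^s = 0$.

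\medskip

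The plan is to reduce everything to the behavior of a single power of $|a|$ and then invoke the elementary fact that $|a|^\sigma \to 0$ as $a \to 0$ whenever the exponent $\sigma$ is positive. First I would observe that, since $r \ge 1$ is an integer-free real exponent but $a$ may be negative, the quantity $a^r$ should be read as having absolute value $|a|^r$ (for odd-integer $r$ and negative $a$ the expression is negative, but in all cases $|a^r| = |a|^r$); hence it suffices to show $|a|^r/|a|^s \to 0$, because $0 \le |a^r/|a|^s| = |a|^r/|a|^s$ and the squeeze theorem then forces the signed limit to be $0$ as well. Next I would combine the powers: for $a \ne 0$, $|a|^r/|a|^s = |a|^{r-s}$. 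The key numerical step is the inequality $r - s > 0$, which follows immediately from $r \ge 1 > s$, so that $r - s \ge 1 - s > 0$. Finally, setting $\sigma := r - s > 0$, the map $t \mapsto t^\sigma$ is continuous on $[0,\infty)$ with value $0$ at $t = 0$, so $|a|^\sigma \to 0$ as $|a| \to 0$, i.e.\ as $a \to 0$; this gives the claim.

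\medskip

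There is essentially no obstacle here — the only point requiring a moment's care is the interpretation of $a^r$ for non-integer $r$ and negative $a$, which is why I pass to absolute values at the outset and use the squeeze argument; once that is done, the proof is the one-line computation $|a|^{r-s} \to 0$ using $r - s > 0$.
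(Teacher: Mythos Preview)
Your proof is correct and follows essentially the same approach as the paper's own argument: both reduce the quotient to a positive power of $|a|$ (you via $|a|^{r-s}$ with $r-s>0$, the paper via writing $a/|a|^s = |a|^{1-s}\operatorname{sign}(a)$ and using that $\operatorname{sign}$ is bounded) and then appeal to the elementary fact that $|a|^\sigma \to 0$ for $\sigma>0$. Your version is in fact slightly more complete, since the paper only spells out the case $r=1$ explicitly.
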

\begin{proof}
This follows from the fact that 
$\frac{a}{|a|^s} =|a|^{1-s} \text{sign}(a) \to 0$ since $\text{sign}(a)$ is 
bounded and $s<1$.
\end{proof}

\begin{lem}\label{SmothNorms}
Consider the function $f(\x) = ||\x||_p$, $1<p<\infty$, for $\x$ 
in an open subset $B$ of $\R^n$ not containing the origin.
Then, $f$ is a $\cont^k$ function in $B$, where 
$$k=\begin{cases}  \ \infty\ , & {\text{if}} \,\ p \,\ {\text{even}} \\
	 \ m \ , & \text{if} \,\ m<p\le m+1,  \,\, m\in \Z^+ . \end{cases} $$
\end{lem}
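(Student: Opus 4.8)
The plan is to write $f(\x) = \|\x\|_p = g\!\left(\sum_{i=1}^n |x_i|^p\right)$ with $g(s)=s^{1/p}$, and analyze smoothness factor by factor. Since $B$ is open and bounded away from the origin, on $B$ the quantity $s(\x):=\sum_i |x_i|^p$ is bounded below by a positive constant, so $g$ composed with $s$ is as smooth as $s \mapsto s^{1/p}$ is on a neighborhood of any point of the range, i.e. $\cont^\infty$ there; hence the only possible loss of regularity comes from the inner functions $x_i \mapsto |x_i|^p$, and by the chain rule the smoothness of $f$ on $B$ is exactly the minimum over $i$ of the smoothness of $x_i\mapsto |x_i|^p$ near the relevant values of $x_i$. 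So the whole lemma reduces to the one-variable claim: the function $h(a)=|a|^p$ on $\R$ is $\cont^\infty$ when $p$ is an even integer (then $|a|^p=a^p$ is a polynomial), and otherwise is exactly $\cont^m$ where $m$ is the integer with $m<p\le m+1$.

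First I would dispose of the even-integer case: if $p=2\ell$ then $|a|^p = a^{2\ell}$ is a polynomial, so $h\in\cont^\infty(\R)$, and therefore $f$ is a finite sum of $\cont^\infty$ functions composed with the $\cont^\infty$ map $g$, giving $f\in\cont^\infty(B)$. For the non-even case, I would compute derivatives of $h(a)=|a|^p = a\,|a|^{p-1}\,\mathrm{sign}(a)$-style expressions away from $a=0$: for $a\ne 0$ one has $h'(a) = p\,|a|^{p-1}\mathrm{sign}(a)$, $h''(a)=p(p-1)|a|^{p-2}$, and inductively the $j$-th derivative is a constant times $|a|^{p-j}$ (times $\mathrm{sign}(a)$ when $j$ is odd). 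The point is to show $h$ is $\cont^m$ but not $\cont^{m+1}$ at $a=0$ when $m<p\le m+1$. For $j\le m$ we have $p-j > 0$, so $|a|^{p-j}\to 0$ as $a\to 0$; a standard induction (each time checking that the candidate $j$-th derivative extends continuously to $0$ with value $0$, using the difference-quotient definition at $a=0$ together with Fact \ref{Claim_Limit} to control ratios of the form $a^r/|a|^s$) shows $h\in\cont^m(\R)$ with $h^{(j)}(0)=0$ for $1\le j\le m$. To see $h\notin \cont^{m+1}$: if $p=m+1$ exactly, then $h^{(m)}(a)= c\,|a|\,\mathrm{sign}(a)\cdot(\text{const})= c\,a$-type but actually $h^{(m)}(a)=c|a|$ up to sign, whose derivative jumps at $0$; if $m<p<m+1$, then $h^{(m)}(a)= c|a|^{p-m}(\mathrm{sign}(a))^{?}$ with exponent $p-m\in(0,1)$, and the difference quotient $h^{(m)}(a)/a \sim |a|^{p-m-1}\to\infty$, so $h^{(m+1)}(0)$ does not exist. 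Fact \ref{Claim_Limit} is exactly the tool tailored for these limit computations, with $r\ge 1$ a surviving power and $s<1$ the offending negative exponent.

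Then I would transfer back: by the chain rule, for $\x\in B$ with some coordinate $x_i=0$, $f$ inherits the regularity of $a\mapsto |a|^p$ at $a=0$, so $f$ is $\cont^m$ (resp.\ $\cont^\infty$) on $B$ and no better in general — e.g.\ a point of $B$ lying on a coordinate hyperplane $\{x_i=0\}$ witnesses the failure of $\cont^{m+1}$, using that near such a point $f$ differs from a $\cont^\infty$ function by a $\cont^\infty$-multiple of $|x_i|^p$. One must note $B$ need not meet a coordinate hyperplane, in which case $f$ is locally $\cont^\infty$; but the lemma's statement is the global/worst-case regularity claim, and the bound $\cont^k$ with the stated $k$ is what holds in general and is what is used later, so I would phrase the conclusion as: $f\in\cont^k(B)$ with the stated $k$, this being sharp.

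The main obstacle is the careful bookkeeping in the non-even case: correctly identifying the $j$-th derivative of $|a|^p$ for $a\ne 0$ (tracking the $\mathrm{sign}(a)$ factors by parity of $j$), and then rigorously verifying via the limit-of-difference-quotients argument — leaning on Fact \ref{Claim_Limit} — both that the first $m$ derivatives extend continuously through $0$ and that the $(m+1)$-st does not. Everything else (the even case, the reduction through $g(s)=s^{1/p}$, and the multivariable-to-univariable reduction by the chain rule on $B$ bounded away from $0$) is routine.
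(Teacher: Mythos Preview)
Your proposal is correct and follows essentially the same route as the paper: write $f(\x)=\bigl(\sum_j |x_j|^p\bigr)^{1/p}$, observe that the outer $1/p$-power is $\cont^\infty$ away from zero so the regularity is governed by the one-variable map $a\mapsto |a|^p$, dispatch the even case as a polynomial, and use Fact~\ref{Claim_Limit} for the remaining case. You supply considerably more detail than the paper's terse proof (including the sharpness direction, which the paper does not spell out); one small imprecision is that $B$ need not be globally bounded away from the origin, but since smoothness is a local property this is harmless.
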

\begin{proof}
For all $\x$, the function $f(\x)$ can be rewritten as:
\begin{align*}
f(\x) = [|\x_1|^p + |\x_2|^p + \dots + |\x_d|^p]^{1/p}\ .
\end{align*}
Then, it is enough to check smoothness of the single term $|\x_j|^p$, 
$j\in[1,d]$, since $\x\neq 0$. 
Note that $|\x_j|^p = \x_j^p$ when $p$ is an even integer, and so the
result follows for $p$ even.
Otherwise, using Fact \ref{Claim_Limit} it is simple
to show that the function $|\x_j|^p$ is $t$ times continuously 
differentiable where $t<p\leq t+1$.
\end{proof}

Let $F(\x)$ be the function corresponding to the boundary set between
two Laguerre cells $L(\y_1)$ and $L(\y_2)$ in $\R^n$:
\begin{equation}\label{FunctionF}
	F(\x) = c(\x,\y_1) - c(\x,\y_2) \implies L(\y_1)\cap L(\y_2) = 
	\{\x\in\R^n \ \ | \ \ F(\x) = w_{12}\}.
\end{equation}

\begin{lem}\label{Lem_Smooth}
	Let 
	$c(\cdot,\cdot)$ be an admissible cost function as in Restriction 
	\ref{Rest_Cost2}, 
	$\y_1, \y_2\in \R^n$ be two distinct target points, and 
	$w_{12} = w_1 - w_2$ satisfy $|w_{12}|< c(\y_1,\y_2)$ (see 
	Lemma \ref{Lem_wij}).
	Assume that $w_{12}$ is a regular value for the function $F(\x)$,
	that is that $\nabla F(\x_0)\neq 0$, $\forall \x_0\in L(\y_1)\cap L(\y_2)$.
	Then, the boundary set $\{x: \ F(\x) = w_{12}\}$ is a
	$\cont^k$ submanifold of dimension $(n-1)$ in $\mathbb{R}^n$.
\end{lem}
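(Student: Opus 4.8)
The plan is to recognize this as a direct application of the regular value theorem (preimage theorem) from differential geometry, with the only real work being to justify that the hypotheses of that theorem are met in our setting. First I would observe that, by Lemma \ref{Lem_Point}, the set $L(\y_1)\cap L(\y_2) = \{\x\in\R^n : F(\x)=w_{12}\}$ is nonempty (applied here on all of $\R^n$ rather than on $\Omega$, which only makes the set larger). Next I would establish the regularity of $F$ as a map: away from the two target points $\y_1$ and $\y_2$, each of the functions $\x\mapsto c(\x,\y_1)$ and $\x\mapsto c(\x,\y_2)$ is $\cont^k$ by Restriction \ref{Rest_Cost2} together with Lemma \ref{SmothNorms} (a positive finite combination of $p$-norms centered at a fixed point is $\cont^k$ on any open set avoiding that point, with $k$ as in the statement of Lemma \ref{SmothNorms}, taking the minimum over the finitely many $p_l$). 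Therefore $F=c(\cdot,\y_1)-c(\cdot,\y_2)$ is $\cont^k$ on $\R^n\setminus\{\y_1,\y_2\}$.

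The key step is then to check that the points $\y_1$ and $\y_2$ themselves do not lie on the level set $\{F=w_{12}\}$, so that $F$ is $\cont^k$ in a neighborhood of the entire level set. At $\x=\y_1$ we have $F(\y_1)=c(\y_1,\y_1)-c(\y_1,\y_2)=-c(\y_1,\y_2)$, and at $\x=\y_2$ we have $F(\y_2)=c(\y_2,\y_1)-c(\y_2,\y_2)=c(\y_1,\y_2)$, using positivity and symmetry from Assumption \ref{Rest_Cost1}. Since by hypothesis $|w_{12}|<c(\y_1,\y_2)$ (strict inequality), neither $-c(\y_1,\y_2)$ nor $c(\y_1,\y_2)$ equals $w_{12}$, so $\y_1,\y_2\notin\{F=w_{12}\}$. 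Consequently $\{F=w_{12}\}$ is a closed subset of the open set $\R^n\setminus\{\y_1,\y_2\}$ on which $F$ is $\cont^k$.

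With these preliminaries in place, the conclusion follows from the standard regular value theorem: if $F:U\to\R$ is $\cont^k$ on an open set $U\subseteq\R^n$, $w_{12}\in\R$, and $\nabla F(\x_0)\ne 0$ for every $\x_0\in F^{-1}(w_{12})$ (which is exactly the assumed regular-value hypothesis), then $F^{-1}(w_{12})$ is a $\cont^k$ embedded submanifold of $\R^n$ of dimension $n-1$. I would cite a standard reference for this (e.g.\ a differential topology text) rather than reproving it; the local picture is that near each $\x_0$ one completes $F$ to a $\cont^k$ diffeomorphism straightening the level set, via the implicit function theorem applied to the coordinate in which $\partial F/\partial \x_i(\x_0)\ne 0$.

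The main obstacle, and the only place requiring genuine care, is the smoothness bookkeeping near the target points: one must ensure the regularity class $\cont^k$ claimed is the same $k$ delivered by Lemma \ref{SmothNorms}, and that the level set genuinely stays away from $\y_1$ and $\y_2$ — which is precisely why the hypothesis is the strict inequality $|w_{12}|<c(\y_1,\y_2)$ rather than the non-strict one of Lemma \ref{Lem_wij}. Everything else is a routine invocation of the preimage theorem, so I expect the proof to be short. I would also note in passing that the regular-value hypothesis cannot in general be dropped, which motivates the separate discussion (e.g.\ Lemma \ref{Lem_NNZGrad}) of when $\nabla F$ is nonzero.
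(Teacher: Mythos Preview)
Your proof is correct and follows essentially the same approach as the paper: invoke Lemma \ref{Lem_Point} for nonemptiness, observe $F$ is $\cont^k$ near the level set via Restriction \ref{Rest_Cost2} and Lemma \ref{SmothNorms}, and apply the regular value (preimage) theorem. Your argument is in fact more explicit than the paper's on one point the paper leaves implicit: you check directly that $\y_1,\y_2\notin\{F=w_{12}\}$ using the strict inequality $|w_{12}|<c(\y_1,\y_2)$, thereby justifying that $F$ is indeed $\cont^k$ on a full neighborhood of the level set.
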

\begin{proof}
Lemma  \ref{Lem_Point} gives the existence of at least one point
$\x_0: F(\x_0)=w_{12}$, where $F$ is given in \eqref{FunctionF}.
By our Restriction \ref{Rest_Cost2}, the function $F(\x)$
is a $\cont^k$ function, with $k\ge 1$, in an open neighborhood
of $\LL_{ij}$.
Since $\nabla F(\x_0)\neq 0$ is assumed, 
then the smooth manifold theorem tells us that
$S=F^{-1}(w_{12}) \subset \R^n$ is a
$(n-1)$-dimensional $\cont^k$ manifold embedded in $\R^n$.
\end{proof}

\begin{rem}\label{Rem_EvenP}
We notice that the request that $w_{12}$ is a regular value for $F$ is not a 
severe restriction, since the Morse-Sard Theorem guarantees
that almost all values $w_{12}$ are regular values for $F$.
Furthermore, in the Appendix, we show in Claim \ref{p-even} 
that $\nabla F(\x_0)\neq 0$, 
$\forall \x_0\in L(\y_1)\cap L(\y_2)$, whenever the cost function is a
$p-$norm with $p$ an even number.  As a consequence, for 
cost functions given by positive combination of $p$-norms with
$p$ even, there is no need to assume that $\nabla F(\x_0)\neq 0$, since
it is always satisfied.
\end{rem}

\begin{rem}\label{Curve2d}
In 2-d, Lemma \ref{Lem_Smooth} says that the set $F^{-1}(w_{12})$ is a 
smooth curve.   Cfr. with Remark \ref{RemStar} for $c(\x,\y)=\|\x-\y\|_2$.
\end{rem}

\begin{thm}\label{Thm_Smooth}
Let $\y_i\in\R^n$, $i=1,2,\dots,N$, be a set of $N$
distinct target points, 
and $c(\cdot,\cdot)$ be an admissible cost function as in Restriction 
\ref{Rest_Cost2}.  In addition, assume that $\y_i\not\in\partial L(\y_i)$, 
$i=1,2,\dots,N$.  Then, the boundary of each Laguerre cell $L(y_i)$
consists of at most $(N-1)$ 
sections of $\cont^k$ $(n-1)$-dimensional manifolds of $\R^n$.

Let $\Omega$ be as in Assumption \ref{WhatOmega}, and let 
$\y_i\in\Omega^\mathrm{o}$, $i=1,2,\dots,N$, be a set of $N$
distinct target points, such that $\y_i\not\in\partial A(\y_i)$, 
$i=1,2,\dots,N$.  Then, the boundary of each Laguerre cell $A(y_i)$
consists of 
sections of $\cont^k$ $(n-1)$-dimensional manifolds of $\R^n$.
\end{thm}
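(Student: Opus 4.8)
The plan is to combine the two structural results already in hand: Theorem~\ref{Thm_Star}/Corollary~\ref{Cor_Star} (star-shapedness), which controls the global shape of each cell, and Lemma~\ref{Lem_Smooth}, which controls the local analytic nature of each pairwise boundary $\LL_{ij}$. The boundary $\partial\LL(\y_i)$ is, by Definition~\ref{BdryLag}, the union over $j\ne i$ of the sets $\LL_{ij}$, and since there are $N$ target points there are at most $N-1$ indices $j$ for which $\LL(\y_i)\cap\LL(\y_j)\ne\emptyset$; this immediately gives the ``at most $N-1$ sections'' count in the first paragraph of the statement. So the real content is to verify, for each such $j$, that $\LL_{ij}$ (or the portion of it that actually lies on $\partial\LL(\y_i)$) is a subset of a $\cont^k$ $(n-1)$-dimensional submanifold of $\R^n$.

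First I would fix a pair $i\ne j$ with $\LL(\y_i)\cap\LL(\y_j)\ne\emptyset$ and set $F(\x)=c(\x,\y_i)-c(\x,\y_j)$ as in \eqref{FunctionF}, so that $\LL_{ij}\subseteq F^{-1}(w_{ij})$ with $w_{ij}=w_i-w_j$. By Lemma~\ref{Lem_wij}, $|w_{ij}|\le c(\y_i,\y_j)$, and I would first dispose of the degenerate boundary case $|w_{ij}|=c(\y_i,\y_j)$: by Lemma~\ref{Lem_yi_bound} (after noting, via the computation in Lemma~\ref{Lem_Point}, that equality forces $\y_i$ or $\y_j$ onto the common boundary) the corresponding cell has empty interior, and the hypothesis $\y_i\not\in\partial\LL(\y_i)$ rules this out; so we may assume $|w_{ij}|<c(\y_i,\y_j)$, which is exactly the hypothesis of Lemma~\ref{Lem_Smooth}. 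Away from the locus where $\x,\y_i,\y_j$ are collinear, $F$ is $\cont^k$ by Restriction~\ref{Rest_Cost2} together with Lemma~\ref{SmothNorms} (applied to $\|\x-\y_i\|_{p_l}$ and $\|\x-\y_j\|_{p_l}$, both of which have smooth-enough summands since the relevant arguments are nonzero on $\LL_{ij}$). Invoking Lemma~\ref{Lem_Smooth} with $w_{ij}$ a regular value — which, as Remark~\ref{Rem_EvenP} notes, is automatic for even $p$ and generic otherwise, and is part of the standing hypotheses here — we conclude $F^{-1}(w_{ij})$ is a $\cont^k$ $(n-1)$-dimensional embedded submanifold of $\R^n$. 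Hence $\LL_{ij}$ is a (closed) subset of such a manifold, i.e. a ``section'' of it, and taking the union over the at most $N-1$ relevant $j$ proves the first assertion.

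For the second assertion I would intersect with $\Omega$. Writing $A(\y_i)=\LL(\y_i)\cap\Omega$, the boundary $\partial A(\y_i)$ decomposes into the interior-boundary pieces $\LL_{ij}\cap\Omega$, each a section of the $\cont^k$ manifold $F^{-1}(w_{ij})$ just as above, together with the pieces $\LL(\y_i)\cap\partial\Omega$ coming from the ambient boundary. The latter sit inside $\partial\Omega$, which by Assumption~\ref{WhatOmega} is itself a union of $\cont^k$ $(n-1)$-dimensional surfaces $\Gamma_l$ (with $k$ at least that of $F$, by Remark~\ref{Cubes}(a)); so $\LL(\y_i)\cap\partial\Omega$ is again a union of sections of $\cont^k$ $(n-1)$-dimensional manifolds. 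Combining the two types of pieces gives the claim; note one does not bound the number of such sections by $N-1$ in the restricted case, because each cell may also meet several faces of $\partial\Omega$, which is why the statement is phrased without a count in the second paragraph.

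I expect the main obstacle — really the only subtle point — to be the smoothness of $F$ on all of $\LL_{ij}$, specifically handling the collinear configurations where $\x$ lies on the line through $\y_i$ and $\y_j$. At such points one of $\x-\y_i$, $\x-\y_j$ could in principle have a zero coordinate, so Lemma~\ref{SmothNorms}'s ``origin excluded'' hypothesis needs care: what is actually needed is that neither $\x-\y_i$ nor $\x-\y_j$ is the zero vector on $\LL_{ij}$, which follows because $\x=\y_i\in\LL_{ij}$ would force $w_{ij}=c(\y_i,\y_j)$ (the excluded degenerate case), and similarly for $\y_j$; a zero in an individual coordinate is harmless since $\|\cdot\|_{p_l}$ is only singular at the full zero vector. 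I would make this explicit rather than sweep it into the appeal to Lemma~\ref{Lem_Smooth}. The rest is bookkeeping: counting indices, and assembling the manifold pieces from $\partial\Omega$ via Assumption~\ref{WhatOmega}.
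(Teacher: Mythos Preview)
Your proposal is correct and follows essentially the same route as the paper: invoke Lemma~\ref{Lem_Smooth} for each pairwise boundary $\LL_{ij}$, count the at most $N-1$ relevant indices $j$ via Definition~\ref{BdryLag}, and for the restricted version add in the $\cont^k$ pieces of $\partial\Omega$ from Assumption~\ref{WhatOmega}. Your treatment is in fact more careful than the paper's, which simply cites Lemma~\ref{Lem_Smooth} and the decomposition of $\partial\LL(\y_i)$ without spelling out (as you do) how the hypothesis $\y_i\not\in\partial\LL(\y_i)$ is used to exclude $|w_{ij}|=c(\y_i,\y_j)$ or why $F$ is genuinely $\cont^k$ along $\LL_{ij}$.
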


\begin{proof}
The first statement follows from Lemma \ref{Lem_Smooth}
and the form of $\partial \LL(\y_i)$:
$$\partial \LL(\y_i)=\bigcup_{\begin{smallmatrix} j=1,\  j\ne i: \\ \LL(\y_i)\cap \LL(\y_j) \ne \emptyset 
\end{smallmatrix}}^N \LL_{ij}\ .$$
	
The second statement follows from the fact that
\begin{equation*}
\partial A(\y_i) = \partial (\LL(\y_i)\cap \Omega), \, 
\ \ i = 1,2,\dots,N,
\end{equation*}
from Lemma \ref{Lem_Smooth},
and from the assumed piecewise smoothness of $\partial \Omega$.	
\end{proof}


\begin{minipage}[l]{.40\textwidth}
\begin{rem}\label{2Sections}
In general, the union of several pieces
of the same smooth $(n-1)$-dimensional manifold can be part of
$A(\y_i)\cap A(\y_j)$.   See the Figure on the right.
It is guaranteed to be a single smooth section when there are 
exactly two target points.
\end{rem}
\end{minipage}\hfill
\begin{minipage}[c]{.60\textwidth}
	\hskip1cm
	\includegraphics[width=0.83\linewidth]{./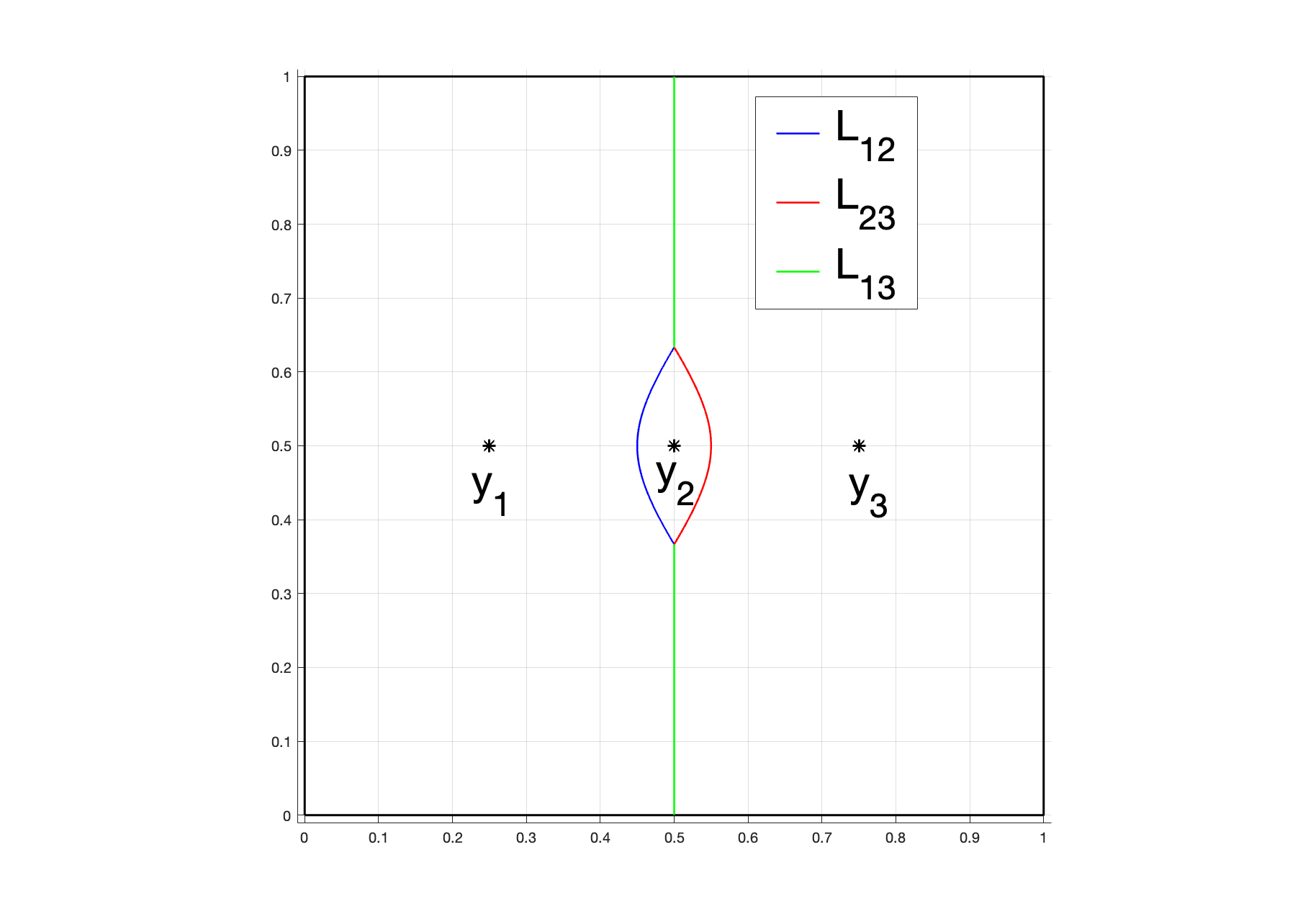}
\end{minipage}

\begin{lem}\label{Lem_ZeroMeas}
Let $\Leb$ be Lebesgue measure.
Let $\Omega\subset\Rn$ be as in Assumption \ref{WhatOmega},
and let $\y_1,\y_2\in\Omega^\mathrm{o}$ be two distinct target points.
Let $c(\cdot,\cdot)$ be a cost function satisfying
Restriction \ref{Rest_Cost2}.
Then, $\Leb(A(\y_1)\cap A(\y_2)) = 0$  holds, where
\begin{align*}
    &A(\y_1) = \{\x\in\Omega \ \ | \ \ c(\x,\y_1) - w_1 \leq c(\x,\y_2) - w_2\},\\
    &A(\y_2) = \{\x\in\Omega \ \ | \ \ c(\x,\y_2) - w_2 \leq c(\x,\y_1) - w_1\}.
\end{align*}
\end{lem}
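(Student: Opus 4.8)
The plan is to show that the boundary set $B := A(\y_1)\cap A(\y_2) = \{\x\in\Omega : F(\x) = w_{12}\}$, with $F(\x) = c(\x,\y_1) - c(\x,\y_2)$ as in \eqref{FunctionF}, has Lebesgue measure zero. The natural strategy is to reduce to the smooth-manifold picture already developed: if $w_{12}$ were a regular value for $F$, then by Lemma \ref{Lem_Smooth} the set $\{\x : F(\x) = w_{12}\}$ is a $\cont^k$ submanifold of dimension $(n-1)$ in $\R^n$, and an $(n-1)$-dimensional $\cont^1$ submanifold of $\R^n$ is a Lebesgue-null set (it is locally a graph of a $\cont^1$ function over an $(n-1)$-dimensional domain, hence null). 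Intersecting with $\Omega$ only shrinks the set, so $\Leb(B)=0$ would follow immediately. The subtlety is that Restriction \ref{Rest_Cost2} does not by itself guarantee $\nabla F \ne 0$ on all of $B$ (this is only known unconditionally for $p$ even, cf.\ Remark \ref{Rem_EvenP}), so one cannot simply quote Lemma \ref{Lem_Smooth} verbatim.

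To handle the possible critical points, I would first stratify $B$ as $B = B_{\mathrm{reg}} \cup B_{\mathrm{crit}}$, where $B_{\mathrm{reg}} = \{\x\in B : \nabla F(\x)\ne 0\}$ and $B_{\mathrm{crit}} = \{\x\in B : \nabla F(\x)= 0\}$. On $B_{\mathrm{reg}}$, the implicit function theorem applies locally (since $F$ is $\cont^k$, $k\ge 1$, in an open neighborhood of $\LL_{ij}$ by Restriction \ref{Rest_Cost2}), so $B_{\mathrm{reg}}$ is locally an $(n-1)$-dimensional $\cont^1$ graph and hence $\Leb(B_{\mathrm{reg}}) = 0$ by covering with countably many such coordinate patches. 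For $B_{\mathrm{crit}}$, I would invoke Sard's theorem: the set of critical values of the $\cont^1$ map $F:\R^n\to\R$ has Lebesgue measure zero in $\R$, but this alone controls critical \emph{values}, not the measure of a single critical level set. So instead I would argue directly that $B_{\mathrm{crit}}$ itself is null, using the more refined fact (a standard consequence of the arguments behind Sard's theorem, or of the coarea/Eilenberg inequality) that for a $\cont^1$ function $F$ on $\R^n$, the set of critical points contained in any single level set $\{F = c\}$ is Lebesgue-null; alternatively, for our concrete cost one can show $\nabla F$ vanishes only on a lower-dimensional set. In fact, for the cost in Restriction \ref{Rest_Cost2}, $\nabla_{\x} c(\x,\y_1)$ and $\nabla_{\x}c(\x,\y_2)$ are the (scaled) dual-norm unit vectors pointing away from $\y_1$ and $\y_2$ respectively, so $\nabla F = 0$ forces an algebraic relation that confines $\x$ to a set of dimension $\le n-1$; combined with the defining equation $F(\x)=w_{12}$ this generically cuts the dimension further.

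The cleanest route, which I would present as the main line of argument, is to bypass critical points entirely by perturbation: for the two-point problem, $\Leb(A(\y_1))$ varies continuously and monotonically with $w_{12}$ over $[-c(\y_1,\y_2), c(\y_1,\y_2)]$ by admissibility (Lemma \ref{Claim_pNorm} together with Definition \ref{Def_AdmCost}). Hence $w_{12}\mapsto \Leb(A(\y_1))$ is a monotone function of a real variable, so it is differentiable for almost every $w_{12}$, and at such points the "thickened boundary" $\{|F(\x) - w_{12}| < \epsilon\}$ has measure $O(\epsilon)$, forcing $\Leb(\{F = w_{12}\}) = 0$. To upgrade this to \emph{every} admissible $w_{12}$, one uses that $F$ is independent of $w_{12}$: if $\Leb(\{F = w_0\}) > 0$ for some particular value $w_0$, then monotonicity of $\Leb(A(\y_1))$ in $w_{12}$ would exhibit a jump discontinuity at $w_0$, contradicting the \emph{continuity} clause in the definition of admissible cost. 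Thus $\Leb(\{F = w_{12}\}) = 0$ for all admissible $w_{12}$, and in particular $\Leb(B) \le \Leb(\{F = w_{12}\}) = 0$, which is the claim. The main obstacle, and the place I would be most careful, is exactly this interplay between the level-set measure and the monotone function $\Leb(A(\y_1))$: one must verify that a positive-measure level set genuinely produces a discontinuity (not merely a non-differentiability) in $w_{12}\mapsto \Leb(A(\y_1))$, which follows because moving $w_{12}$ past $w_0$ adds or removes the full positive-measure set $\{F = w_0\}$ at once.
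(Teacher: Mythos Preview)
Your proposal is correct, and your main line of argument (the ``cleanest route'' via admissibility) is genuinely different from the paper's own proof. The paper simply invokes Lemma~\ref{Lem_Smooth}: since the boundary set is an $(n-1)$-dimensional $\cont^k$ submanifold, it has Lebesgue measure zero. That is the entire proof in the paper. As you correctly observed, this leaves the regular-value hypothesis of Lemma~\ref{Lem_Smooth} unaddressed in full generality (the paper handles it only for even $p$ in the Appendix, and otherwise appeals to Morse--Sard in Remark~\ref{Rem_EvenP}, which does not settle a \emph{fixed} value of $w_{12}$).

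Your admissibility argument sidesteps this issue entirely: the map $w_{12}\mapsto \Leb(\{\x\in\Omega: F(\x)\le w_{12}\})$ is the distribution function of $F$ on $\Omega$, and by Definition~\ref{Def_AdmCost} and Lemma~\ref{Claim_pNorm} it is \emph{continuous} on $[-c(\y_1,\y_2),c(\y_1,\y_2)]$. A positive-measure level set $\{F=w_0\}$ would produce a jump of that size in the distribution function, contradicting continuity. This is more elementary (no implicit function theorem, no Sard) and covers every admissible $w_{12}$ uniformly, not just regular values. What the paper's approach buys in exchange is the stronger geometric conclusion that the boundary is actually a smooth hypersurface, which is needed downstream for the Hessian formula and the boundary-tracing algorithm; your argument yields only the measure-zero statement. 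Your stratification sketch (approach~1) is closer in spirit to the paper but, as you noted, the claim that $B_{\mathrm{crit}}$ is null does not follow from Sard alone and would need a separate argument; it is good that you did not rest the proof on it.
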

\begin{proof}
From Lemma \ref{Lem_Smooth}, $\Leb (L_{ij})=0$, and
thus the result follows.
\end{proof}

\begin{cor}\label{Cor_ZeroMeas}
Let $\Leb$ be Lebesgue measure.
Let $\Omega\subset\Rn$ be as in Assumption \ref{WhatOmega},
and let $\y_i\in\Omega^\mathrm{o}$, $i=1,2,\dots,N$, be $N$ distinct 
target points. If $c(\cdot,\cdot)$ is an admissible cost function, then 
$\Leb(\partial A(\y_i)) = 0$, $i=1,2,\dots,N$.
\end{cor}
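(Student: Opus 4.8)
The plan is to cover $\partial A(\y_i)$ by finitely many $\Leb$-null sets and then invoke finite subadditivity of Lebesgue measure.

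First I would establish the set-theoretic inclusion
\[
\partial A(\y_i)\ \subseteq\ \Big(\bigcup_{\substack{j=1\\ j\neq i}}^{N}\big(A(\y_i)\cap A(\y_j)\big)\Big)\ \cup\ \partial\Omega\ .
\]
To prove it, take $\x\in\partial A(\y_i)$. If $\x\in\partial\Omega$ we are done, so assume $\x\in\Omega^{\mathrm o}$. Each cell $A(\y_k)$ is closed, being the intersection of the compact set $\Omega$ with the finitely many sets $\{\x:\ c(\x,\y_k)-w_k\le c(\x,\y_j)-w_j\}$, which are closed by continuity of $c$ (Remark following Assumption \ref{Rest_Cost1}); hence $\x\in A(\y_i)$. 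Since $\x$ is not interior to $A(\y_i)$, every ball around $\x$ meets $\R^n\setminus A(\y_i)$; and because $\x\in\Omega^{\mathrm o}$, for small enough balls this intersection equals $(\Omega\setminus A(\y_i))\cap B$. By Lemma \ref{Lem_Allx} we have $\Omega\setminus A(\y_i)\subseteq\bigcup_{j\neq i}A(\y_j)$, so $\x$ lies in the closure of $\bigcup_{j\neq i}A(\y_j)$. Being a finite union of closed sets, this closure equals $\bigcup_{j\neq i}A(\y_j)$, whence $\x\in A(\y_j)$ for some $j\neq i$ and therefore $\x\in A(\y_i)\cap A(\y_j)$.

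Next I would bound the measure of each piece on the right. By Lemma \ref{Lem_ZeroMeas}, $\Leb\big(A(\y_i)\cap A(\y_j)\big)=0$ for every $j\neq i$ (here the cost satisfies Restriction \ref{Rest_Cost2}, so Lemma \ref{Lem_ZeroMeas} applies to each pair). Moreover $\Leb(\partial\Omega)=0$: by Assumption \ref{WhatOmega}, $\partial\Omega$ is a finite union of $\cont^k$ $(n-1)$-dimensional surfaces, each of $n$-dimensional Lebesgue measure zero. Since the right-hand side of the inclusion is a finite union of $\Leb$-null sets, finite subadditivity of $\Leb$ yields $\Leb(\partial A(\y_i))=0$, which is the assertion.

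The argument is essentially routine; the only step requiring a bit of care is the inclusion above, specifically using the closedness of the cells together with the covering property (Lemma \ref{Lem_Allx}) to guarantee that a boundary point of $A(\y_i)$ lying in $\Omega^{\mathrm o}$ genuinely belongs to some neighbouring cell, rather than being merely a limit of points of the other cells. Once that is in hand, everything reduces to subadditivity of Lebesgue measure over a finite family of null sets.
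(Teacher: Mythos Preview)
Your proof is correct and follows essentially the same route as the paper: cover $\partial A(\y_i)$ by the pairwise intersections $A(\y_i)\cap A(\y_j)$ (each null by Lemma~\ref{Lem_ZeroMeas}) together with $\partial\Omega$, then use finite subadditivity. You are actually more careful than the paper's one-line argument, which invokes the decomposition $\partial\LL(\y_i)=\bigcup_{j\ne i}\LL_{ij}$ and $A(\y_i)=\LL(\y_i)\cap\Omega$ without explicitly isolating the $\partial\Omega$ contribution or verifying the set inclusion as you do.
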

\begin{proof}
This result follows from Lemma \ref{Lem_ZeroMeas}, and the facts that
$\displaystyle{\partial \LL(\y_i)=\bigcup_
{\begin{smallmatrix} j=1,\  j\ne i: \\ \LL(\y_i)\cap \LL(\y_j) \ne \emptyset 
\end{smallmatrix}}^N \LL_{ij}}$, and that
$A(\y_i)=\LL(\y_i)\cap \Omega$.
\end{proof}

\begin{exm}\label{Rem_CountExamp}
We show next why the $1$-norm and the $\infty$-norm must be
excluded from consideration in our restriction on allowed costs.  The reason is
that they violate Lemma \ref{Lem_ZeroMeas} and lead to lack of uniqueness, in
general.  This fact is actually well understood, 
and already true for Voronoi diagrams, but we give the 
following counterexamples because they are simple and
can be worked out by hand.
\begin{itemize}
\item 
{\emph{Counterexample for $1$-norm}}. Let $\Omega = [0,3]^2$, 
$\y_1 = \begin{pmatrix}1\\2\end{pmatrix}$, 
$\y_2 = \begin{pmatrix}2\\1\end{pmatrix}$, $w_1 = w_2 = 0$. 
The boundary between the two Laguerre cells with respect to the $1$-norm will 
satisfy the following equation:
\begin{align*}
    \x = \begin{pmatrix}x_1\\x_2\end{pmatrix}\in A(\y_1)\cap A(\y_2)& \implies||\x-\y_1||_1 = ||\x-\y_2||_1 \\
    & \implies |x_1-1| + |x_2-2| = |x_1-2| + |x_2-1|.
\end{align*}
But, to illustrate,
any point in the square $0\leq x_1\leq 1$ and $0\leq x_2\leq 1$ will satisfy 
the above relation:
$(1-x_1) + (2-x_2) = (2-x_1) + (1-x_2)$.  
As a result, $\mu(A(\y_1)\cap A(\y_2)) \neq 0$.
See Figure \ref{CounterExs}: any point in the shaded regions is both
in $A(\y_1)$ and $A(\y_2)$.
In particular, 
we cannot satisfy $\mu(A(\y_1))+\mu(A(\y_2))=1$. 

\item 
{\emph{Counterexample for $\infty$-norm}}. Let  $\Omega = [0,3]^2$, 
$\y_1 = \begin{pmatrix}1\\2\end{pmatrix}$, $\y_2 = 
\begin{pmatrix}3\\2\end{pmatrix}$, $w_1 = w_2 = 0$. The boundary 
between the two Laguerre cells with respect to $\infty$-norm will satisfy the 
following equation:
\begin{gather*}
\x= \begin{pmatrix}x_1\\x_2\end{pmatrix}\in A(\y_1)\cap A(\y_2)\implies||\x-\y_1||_{\infty} = ||\x-\y_2||_{\infty},\\
\max\{|x_1-1|, |x_2-2|\} = \max\{|x_1-3|, |x_2-2|\}.
\end{gather*}
Now, take a point $\x$ in the square $1\leq x_1\leq 2$ and $0\leq x_2\leq 1$.
Then:
\begin{align*}
    \max\{(x_1-1), (2-x_2)\} = \max\{(3-x_1), (2-x_2)\}\implies (2-x_2) = \max\{(3-x_1), (2-x_2)\}.
\end{align*}
But, to illustrate,
any point in the region $x_1 - x_2 > 1$ will satisfy this relation and
thus $\mu(A(\y_1)\cap A(\y_2)) \neq 0$. 
See Figure \ref{CounterExs}: any point in the shaded regions is both
in $A(\y_1)$ and $A(\y_2)$. 
In particular, 
we cannot satisfy $\mu(A(\y_1))+\mu(A(\y_2))=1$. 
\end{itemize}
\begin{figure}[ht]
	\centering
	\begin{subfigure}[b]{0.45\linewidth}
		\centering
		\includegraphics[width=\linewidth]{./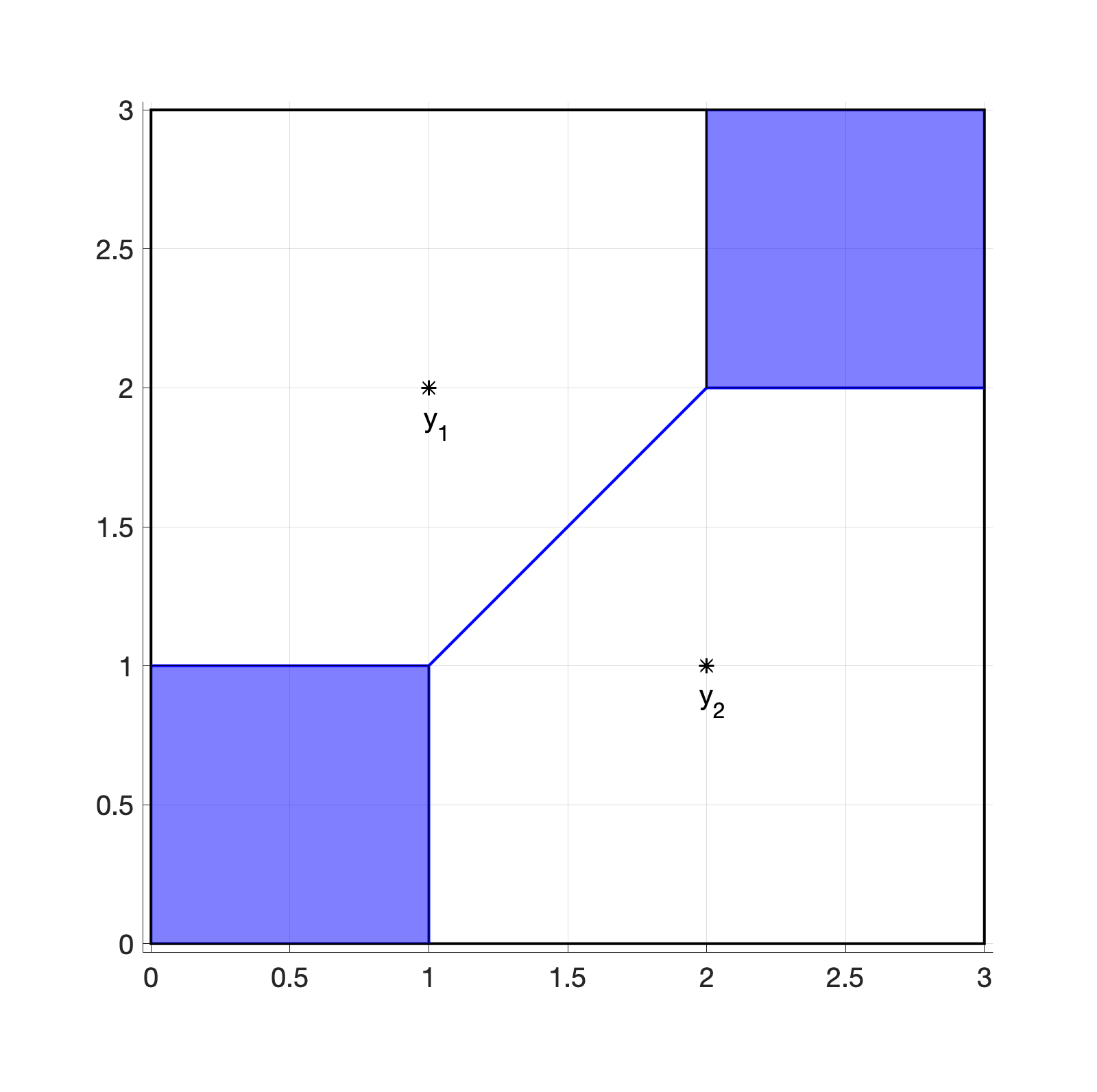}
		\caption{$c(\x,\y)=\| \x-\y\|_1$}
	\end{subfigure}
	\hfill
	\begin{subfigure}[b]{0.45\linewidth}
		\centering
		\includegraphics[width=\linewidth]{./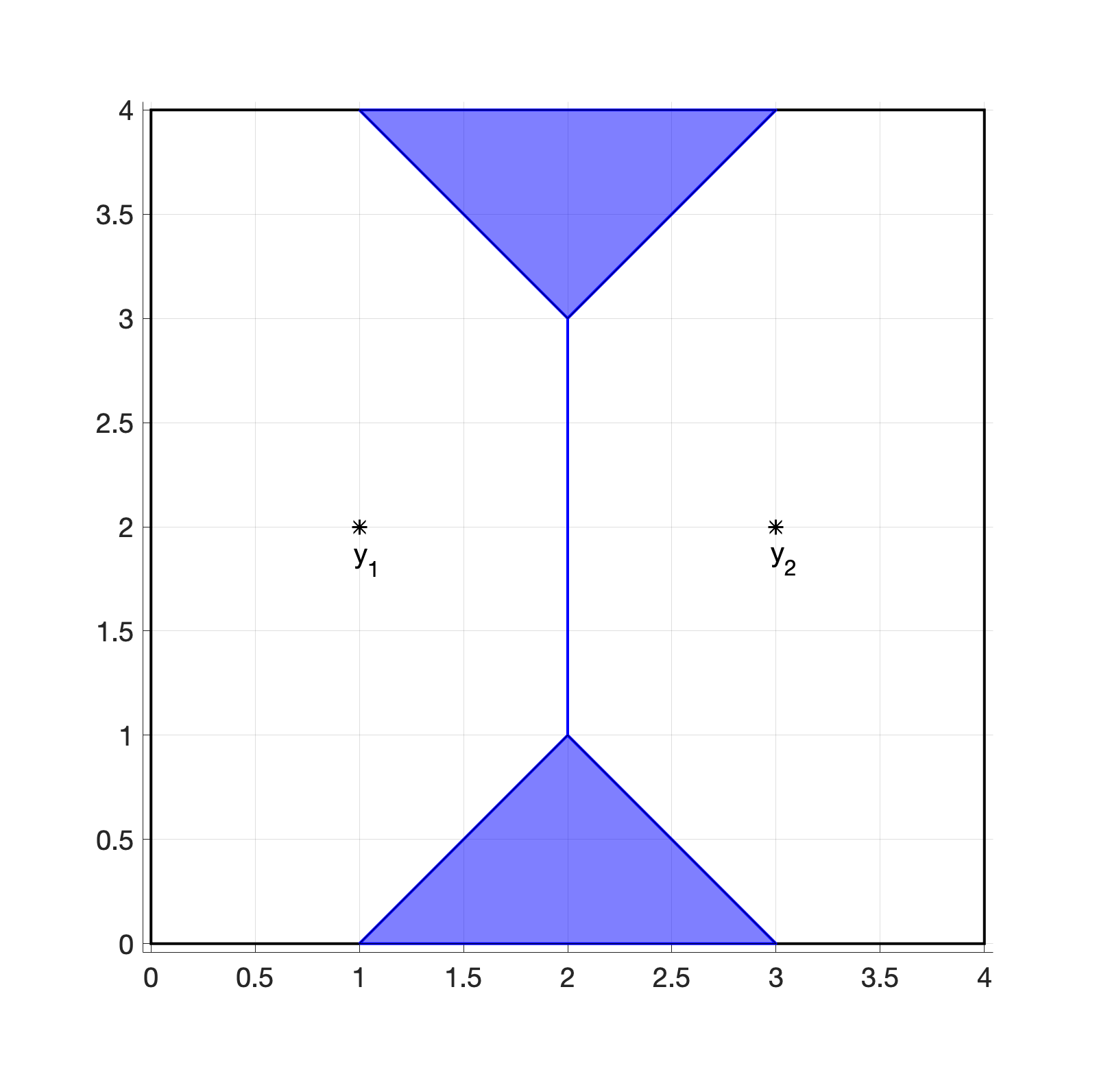}
		\caption{$c(\x,\y)=\| \x-\y\|_\infty$}
	\end{subfigure}
	\caption{Counterexamples to uniqueness $w=0$  The shaded regions
	are in both $A(\y_1)$ and $A(\y_2)$.}
	\label{CounterExs}
\end{figure}
\end{exm}

Finally, the following elementary result is useful to justify some steps of our
algorithm (see Section \ref{Shooting}).

\begin{cor}\label{Cor_Inter}
	Let $\Omega\subset\Rn$ be as in Assumption \ref{WhatOmega},
	let $c$ satisfy Restriction \ref{Rest_Cost2},
	and let $\y_i\in\Omega^\mathrm{o}$, $i=1,2,\dots,N$, be $N$ distinct 
	target points.
	Then, any open line segment from a 	point on $\partial A(\y_i)$ 
	to the point $\y_i$ is contained in the interior of $A(\y_i)$.
\end{cor}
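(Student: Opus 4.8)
The plan is to combine the two main geometric facts already established: that each cell $A(\y_i)$ is star-shaped with respect to $\y_i$ (Corollary \ref{Cor_Star}), and that the cell boundaries have zero Lebesgue measure (Corollary \ref{Cor_ZeroMeas}); the latter actually carries more information, namely that each $\LL_{ij}$ is a $\cont^k$ $(n-1)$-submanifold (Lemma \ref{Lem_Smooth}), which we will use to rule out the degenerate possibility that the open segment from $\y_i$ to a boundary point re-enters $\partial A(\y_i)$. First I would fix $x \in \partial A(\y_i)$ and set $S(t) = t(x - \y_i) + \y_i$, $t \in [0,1]$, noting that $S(1) = x \in A(\y_i)$ and, by Lemma \ref{Lem_yi}, $S(0) = \y_i \in \LL(\y_i)$ with $\y_i \notin \partial \LL(\y_i)$ (indeed Lemma \ref{Lem_yi_bound} shows $\y_i \in \partial \LL(\y_i)$ would force $\LL(\y_i)^{\mathrm o} = \emptyset$, which we may exclude since the cell has positive measure). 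Since $x \in A(\y_i) = \LL(\y_i)\cap \Omega$, the proof of Theorem \ref{Thm_Star} already gives $S(t) \in \LL(\y_i)$ for all $t \in [0,1]$, and convexity of $\Omega$ with $\y_i, x \in \Omega$ gives $S(t) \in \Omega$; hence $S(t) \in A(\y_i)$ for all $t \in [0,1]$. The only thing left is to show $S(t) \in A(\y_i)^{\mathrm o}$ for $t \in (0,1)$, i.e., no interior point of the segment lies on $\partial A(\y_i)$.

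The key step is therefore to argue that if $S(t_0) \in \partial A(\y_i)$ for some $t_0 \in (0,1)$, we reach a contradiction. Suppose $S(t_0) \in \LL_{ij} = A(\y_i) \cap A(\y_j)$ for some $j \neq i$ (or $S(t_0) \in \partial\Omega$; that subcase is handled separately by the convexity of $\Omega$, since an interior point of a chord of a convex body lies in $\Omega^{\mathrm o}$ unless the whole chord lies on $\partial\Omega$, which cannot happen here as $\y_i \in \Omega^{\mathrm o}$). For the genuine inter-cell case, I would feed $S(t_0)$ into the equality chain of Theorem \ref{Thm_Star}: from $x \in \LL(\y_i)$, the homogeneity/shift-invariance identity $c(x,\y_i) = c(x, S(t_0)) + c(S(t_0), \y_i)$, and the triangle inequality, one gets $c(S(t_0), \y_i) - w_i \le c(S(t_0), \y_j) - w_j$. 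If this were a strict inequality, then $S(t_0) \in \LL(\y_i)^{\mathrm o}$, contradicting $S(t_0) \in \LL_{ij}$. So it must be an equality, which forces equality throughout the triangle-inequality step, i.e. $c(x, \y_j) = c(x, S(t_0)) + c(S(t_0), \y_j)$ — geometrically, $S(t_0)$ lies "between" $x$ and $\y_j$ in the metric $c$. The main obstacle is converting this metric-betweenness statement into the desired contradiction: I expect to combine it with the corresponding identity $c(x,\y_i) = c(x,S(t_0)) + c(S(t_0),\y_i)$ to deduce $c(S(t_0),\y_i) - c(S(t_0),\y_j) = c(x,\y_i) - c(x,\y_j) = w_{ij}$, and then to observe that the same holds for \emph{every} point of the subsegment $\{S(t): t_0 \le t \le 1\}$ (each such point is metrically between $x$ and $\y_j$ as well, by the same computation applied with $S(t)$ in place of $x$). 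This exhibits a nondegenerate line segment contained in $\LL_{ij}$.

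To finish, I would invoke Lemma \ref{Lem_Smooth}: $\LL_{ij} = F^{-1}(w_{ij})$ is (at a regular value, which by Remark \ref{Rem_EvenP} is automatic for even $p$ and otherwise may be assumed) a $\cont^k$ submanifold of dimension $n-1$, hence has empty interior relative to any line unless the line is tangent to it everywhere along that segment; but a straight segment lying inside a level set of the strictly-radial-type function $F$ forces $F$ to be affine (indeed constant) along a direction it has no reason to be, and more directly, a line segment through $\y_i$'s star-center contained in $\partial A(\y_i)$ contradicts the star-shapedness being "proper" — alternatively, and more cleanly, I would note that the segment $\{S(t): t_0 \le t \le 1\} \subset \LL_{ij} \subset \partial A(\y_j)$ would then also be a chord of the star-shaped cell $A(\y_j)$ that fails to reach $\y_j$, and by applying Theorem \ref{Thm_Star} to $A(\y_j)$ one gets that the open segment from any of these points to $\y_j$ lies in $A(\y_j)^{\mathrm o}$, while the equality $c(\cdot,\y_j) = c(\cdot, S(t_0)) + c(S(t_0),\y_j)$ we derived puts $S(t_0)$ on that very segment — the contradiction being that $S(t_0) \in A(\y_j)^{\mathrm o} \cap \partial A(\y_j) = \emptyset$. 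The delicate point to get right is exactly this last identification of $S(t_0)$ as an interior point of a segment of the form "$[z, \y_j]$" for a suitable $z \in A(\y_j)$; once that is pinned down, Theorem \ref{Thm_Star} applied to cell $j$ closes the argument, and the corollary follows.
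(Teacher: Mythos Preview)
Your setup and the first two-thirds of the argument are sound and, in fact, far more detailed than the paper's own proof, which is a one-line sketch (``follows from Corollary~\ref{Cor_Star}, piecewise smoothness of $\partial A(\y_i)$, and the fact that $\Leb(\partial A(\y_i))=0$'') that does not spell out the mechanism. Your reduction to the case $S(t_0)\in\LL_{ij}$, the derivation of equality throughout the triangle-inequality chain of Theorem~\ref{Thm_Star}, and the conclusion that the full subsegment $\{S(t):t_0\le t\le 1\}$ lies in $\LL_{ij}$ are all correct and constitute a genuine fleshing-out of the paper's sketch.

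The gap is in your final contradiction step. When you write ``by applying Theorem~\ref{Thm_Star} to $A(\y_j)$ one gets that the open segment from any of these points to $\y_j$ lies in $A(\y_j)^{\mathrm o}$,'' you are invoking precisely the statement being proved (for cell $j$ rather than cell $i$): Theorem~\ref{Thm_Star} only gives containment in $\LL(\y_j)$, not in its interior, so this step is circular. Your other proposed finishers (that a line segment cannot sit inside an $(n-1)$-manifold, or that $F$ constant along a segment forces affineness) are not decisive either --- a smooth hypersurface can certainly contain straight segments.

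The clean way to close the argument is the one you almost reach but do not isolate. Under Restriction~\ref{Rest_Cost2} the induced norm is \emph{strictly convex}, so the metric equality $c(\x,\y_j)=c(\x,S(t_0))+c(S(t_0),\y_j)$ you derived forces $S(t_0)$ to lie on the \emph{Euclidean} segment $[\x,\y_j]$. Combined with $S(t_0)\in(\y_i,\x)$ and $S(t_0)\ne\x$, this makes $\y_i,\x,\y_j$ collinear; a short case split on the position of $\y_j$ along that line then places either $\y_j\in[\y_i,\x]\subset\LL(\y_i)$ or $\y_i\in[\y_j,S(t_0)]\subset\LL(\y_j)$, and in either case one target point lands on its own active boundary, which by Lemma~\ref{Lem_yi_bound} forces a degenerate cell. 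That is the contradiction --- but note it needs the (implicit) hypothesis that all cells are nondegenerate, equivalently strict feasibility $|w_{ij}|<c(\y_i,\y_j)$, which the corollary as stated omits but which holds in the OT setting the paper actually uses it for.
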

\begin{proof}
This result follows from Corollary \ref{Cor_Star}, piecewise smoothness of
$\partial A(\y_i)$ and the fact that $\Leb(\partial A(\y_i))=0$.
\end{proof}

\subsection{Minimization Problem Results}\label{SubS_MinProb}
The results in this section are needed to justify use of our numerical
method for finding the OT partition.  With the exception of Theorem
\ref{feas-w-Thm}, and possibly point \textit{(2)} of Theorem 
\ref{HessProp}, 
the results below are known in the literature on semi-discrete
optimal transport.

\begin{thm}\label{Thm_ExistUniq}
Let $\Omega\subset\Rn$ be as in Assumption \ref{WhatOmega},
$\mu$ be a probability measure with support on $\Omega$, equivalent to 
Lebesgue measure, let 
$\y_i\in\Omega^\mathrm{o}$, $i=1,2,\dots,N$, be a set of $N$ 
distinct target points, 
and $c(\cdot,\cdot)$ be an admissible cost function as in Restriction
\ref{Rest_Cost2}.  Given a discrete measure $\nu$ supported at
the $\y_i$'s, that is $\nu(\x)=\nu_i\delta( \y_i)$, $i=1,\dots, N$,
$\nu_i>0$ and $\sum_{i=1}^N\nu_i = 1$, then 
there exists a shift vector $w$ (unique up to adding the same constant 
to all its entries), and a partition of $\Omega$: $\{A_i\}$, 
such that the partition is unique (up to sets of $\mu$-measure $0$), and 
such that $\mu(A(\y_i)) = \nu_i$ is satisfied for all
$i=1,2,\dots, N$.
\end{thm}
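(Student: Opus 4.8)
The plan is to obtain $w$ as a maximizer of the concave Kantorovich dual functional and to recover the partition from its gradient. Set, for $w\in\R^N$,
\[
\Phi(w)=\sum_{i=1}^N\nu_i w_i+\int_\Omega\min_{1\le j\le N}\bigl(c(\x,\y_j)-w_j\bigr)\,d\mu(\x).
\]
First I would record the structural properties of $\Phi$. It is finite, since the cost is continuous (see the remark following Assumption \ref{Rest_Cost1}) hence bounded on $\Omega\times\{\y_1,\dots,\y_N\}$ while $\mu$ is a probability measure; it is concave, being the integral of a pointwise infimum of affine functions of $w$ plus a linear term; it is continuous; and it is invariant under $w\mapsto w+t\mathbf 1$ because $\sum_i\nu_i=1$. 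Furthermore, since $\mu$ is equivalent to Lebesgue measure and, by Corollary \ref{Cor_ZeroMeas}, $\mu(\partial A(\y_i))=0$ for every $i$, dominated convergence applied to the difference quotients (which are bounded by $1$) shows that $\Phi$ is differentiable with
\[
\frac{\partial\Phi}{\partial w_i}(w)=\nu_i-\mu\bigl(A(\y_i)\bigr),\qquad i=1,\dots,N.
\]
Hence any critical point of $\Phi$ is a weight vector realizing $\mu(A(\y_i))=\nu_i$ for all $i$.

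Next I would establish existence of a maximizer. By the translation invariance it suffices to work on the hyperplane $H=\{w\in\R^N:\ \sum_i w_i=0\}$. On $H$ the functional $\Phi$ is coercive: if $\|w\|\to\infty$ within $H$ then some coordinate tends to $-\infty$, the linear term $\sum_i\nu_i w_i$ drives $\Phi$ toward $-\infty$, and the admissibility of the cost (Lemma \ref{Claim_pNorm}, Definition \ref{Def_AdmCost}), which quantifies how $\mu(A(\y_i))$ moves with the weight differences, guarantees that the integral term cannot offset this decay. Concavity, continuity and coercivity on the finite-dimensional space $H$ give, by the Weierstrass theorem, a maximizer $w^\star\in H$; at $w^\star$ one has $\nabla\Phi(w^\star)=0$, i.e. $\mu(A(\y_i))=\nu_i$ for all $i$. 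This $w^\star$, together with the cells $A_i=A(\y_i)$, is the sought pair.

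It remains to prove the two uniqueness statements. For the partition: $w^\star$ induces the map $T$ which, off the $\mu$-null set $\bigcup_i\partial A(\y_i)$ (Corollary \ref{Cor_ZeroMeas}), sends each $\x\in A(\y_i)$ to $\y_i$; since $\mu(A(\y_i))=\nu_i$ we have $T_\sharp\mu=\nu$, and the Laguerre inequalities are exactly the Kantorovich optimality (complementary slackness) conditions, so $T$ is an optimal map. As the admissible cost assigns $\mu$-measure $0$ to the cell boundaries (Lemma \ref{Lem_ZeroMeas}, Corollary \ref{Cor_ZeroMeas}), \cite[Corollary 4]{Cuesta1993a} yields uniqueness of the optimal partition up to $\mu$-null sets. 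For $w$: if $w,w'\in H$ both satisfy $\mu(A(\y_i))=\nu_i>0$, every cell has positive Lebesgue measure, hence nonempty interior, so no cell is degenerate (cf. Lemma \ref{Lem_yi_bound}); since $\Omega$ is connected, the adjacency graph of the cells is connected, and on each nonempty shared boundary the identity $c(\x,\y_i)-w_i=c(\x,\y_j)-w_j$ forces $w_i-w_j=c(\x,\y_i)-c(\x,\y_j)$, the same function for $w$ and $w'$ because the two partitions coincide up to $\mu$-null sets. Propagating these equalities along the connected adjacency graph shows that $w-w'$ is constant, and $w,w'\in H$ then forces $w=w'$.

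The step I expect to be the main obstacle is the coercivity of $\Phi$ on $H$: it must be drawn carefully from the admissibility property (Definition \ref{Def_AdmCost}, Lemma \ref{Claim_pNorm}) and not merely assumed, since a priori the integral term might compensate the linear decay; relatedly, the claim that the adjacency graph of the non-degenerate cells is connected, used in the uniqueness of $w$, relies on connectedness of $\Omega$ together with the $\mu$-nullity of the boundaries, and warrants a careful argument.
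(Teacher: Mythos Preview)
Your approach is sound but takes a genuinely different route from the paper. The paper does not prove this theorem from scratch: it simply observes that, by Lemma \ref{Claim_pNorm}, the cost is admissible in the sense of \cite{geiss2013optimally}, and then invokes \cite[Theorem 1]{geiss2013optimally} directly. Your argument instead builds the result variationally, essentially anticipating the content of the paper's Theorem \ref{Thm_Minimizer} (your $\Phi$ is the negative of the functional in \eqref{ToMinimize}) and combining it with the Cuesta-Albertos--Tuero-D\'iaz uniqueness criterion. What you gain is a self-contained existence proof and a transparent link between the dual maximizer and the Laguerre partition; what the paper gains is brevity and the ability to defer the analytic work to \cite{geiss2013optimally} and \cite{de2019differentiation}.

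On the two points you flag as obstacles: coercivity on $H=\{w:\sum_i w_i=0\}$ does \emph{not} actually require admissibility. Let $M=\max_i w_i$, $m=\min_i w_i$, and $C=\max_{\x\in\Omega,\,i}c(\x,\y_i)$. Since $\min_j(c(\x,\y_j)-w_j)\le C-M$ and $\sum_i\nu_i w_i\le M-\nu_{\min}(M-m)$ with $\nu_{\min}=\min_i\nu_i>0$, one has $\Phi(w)\le C-\nu_{\min}(M-m)$; on $H$, $\|w\|\to\infty$ forces $M-m\to\infty$, and coercivity follows. Admissibility is needed elsewhere (zero-measure boundaries, hence differentiability and the Cuesta-Albertos argument), but not here. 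For the uniqueness of $w$, your adjacency argument works once you note that the cells, being star-shaped with $(n-1)$-dimensional boundaries (Theorems \ref{Thm_Star}, \ref{Thm_Smooth}), are closures of their interiors; hence two partitions that agree up to $\mu$-null sets agree as closed sets, their pairwise boundaries coincide, and the equation $w_i-w_j=c(\x,\y_i)-c(\x,\y_j)$ at any common boundary point propagates along the connected adjacency graph of the partition of the connected set $\Omega$. Alternatively, you could bypass this and appeal to the second-order information the paper develops later: Theorem \ref{HessProp} shows the Hessian of \eqref{ToMinimize} has a one-dimensional kernel spanned by $\mathbf{1}$, giving strict convexity transverse to $\mathbf{1}$ and hence uniqueness of the critical point on $H$.
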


\begin{proof}
Since our cost function is an admissible cost (Lemma \ref{Claim_pNorm}), 
this result is in \cite[Theorem  1]{geiss2013optimally}.
\end{proof}

A very important fact, relative to the solution $w$ of Theorem 
\ref{Thm_ExistUniq}, is the following {\emph{feasibility result}},
which strengthens Lemma \ref{Lem_wij} and also has crucial 
algorithmic implications.

\begin{thm}[Feasibility of $w$]\label{feas-w-Thm}
With the notation of Theorem \ref{Thm_ExistUniq}, the vector $w$
must satisfy the following relation
\begin{equation}\label{w-feasible}
| w_i-w_j | < c(\y_i, \y_j)\ , \,\ i\ne j\ .
\end{equation}
\end{thm}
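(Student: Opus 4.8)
The plan is to prove \eqref{w-feasible} by showing that, for the optimal $w$ of Theorem \ref{Thm_ExistUniq}, each target point $\y_i$ must lie in the \emph{interior} of its own Laguerre cell $\LL(\y_i)$, and then simply reading off the strict inequalities that characterize that interior.

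First I would exploit that the optimal partition assigns positive mass to every cell: $\mu(A(\y_i)) = \nu_i > 0$, and since $\mu$ is equivalent to Lebesgue measure, $\Leb(A(\y_i)) > 0$ for all $i$. In particular $\LL(\y_i) \ne \emptyset$, so Lemma \ref{Lem_yi} gives $\y_i \in \LL(\y_i)$. I then claim $\y_i \notin \partial\LL(\y_i)$. Indeed, if $\y_i \in \partial\LL(\y_i)$, Lemma \ref{Lem_yi_bound} would force $\LL(\y_i)^{\mathrm o} = \emptyset$, i.e. $\LL(\y_i) = \partial\LL(\y_i) = \bigcup_{j\ne i}\LL_{ij}$, and hence $A(\y_i) = \LL(\y_i)\cap\Omega \subseteq \bigcup_{j\ne i}\bigl(\LL_{ij}\cap\Omega\bigr)$. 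Each set $\LL_{ij}\cap\Omega$ is contained in $\{\x\in\Omega : c(\x,\y_i)-w_i = c(\x,\y_j)-w_j\}$, which has Lebesgue measure zero by Lemma \ref{Lem_ZeroMeas} (this is where Restriction \ref{Rest_Cost2} enters, via the smoothness of the boundaries in Lemma \ref{Lem_Smooth}); a finite union of null sets being null, this contradicts $\Leb(A(\y_i)) > 0$. Therefore $\y_i \in \LL(\y_i)\setminus\partial\LL(\y_i) = \LL(\y_i)^{\mathrm o}$.

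Now I would use the description of the interior coming from Definition \ref{BdryLag}: a point of $\LL(\y_i)$ lies in $\partial\LL(\y_i)$ precisely when one of its defining inequalities holds with equality, so $\x\in\LL(\y_i)^{\mathrm o}$ if and only if $c(\x,\y_i)-w_i < c(\x,\y_j)-w_j$ for every $j\ne i$. Evaluating at $\x=\y_i$ and using $c(\y_i,\y_i)=0$ yields
\[
-w_i < c(\y_i,\y_j) - w_j \quad\Longleftrightarrow\quad w_j - w_i < c(\y_i,\y_j),\qquad \forall\, j\ne i .
\]
Since this argument applies to every index, applying it with the roles of $i$ and $j$ interchanged gives also $w_i - w_j < c(\y_j,\y_i) = c(\y_i,\y_j)$. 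The two inequalities together give $|w_i - w_j| < c(\y_i,\y_j)$, which is \eqref{w-feasible}.

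The only genuinely delicate step is the measure-theoretic one that upgrades $\y_i \in \LL(\y_i)$ (Lemma \ref{Lem_yi}) to $\y_i \in \LL(\y_i)^{\mathrm o}$: this is what rules out a degenerate cell and produces the \emph{strict} inequality, and it relies on combining the positivity $\nu_i>0$ of the target masses with the zero-measure results for cell boundaries (Lemmas \ref{Lem_Smooth}–\ref{Lem_ZeroMeas}, available under Restriction \ref{Rest_Cost2}) and with Lemma \ref{Lem_yi_bound}. Once that is in hand, the rest is immediate from the definitions, and in particular no separate appeal to Lemma \ref{Lem_wij} is needed, as this theorem strengthens it.
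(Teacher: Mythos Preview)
Your proof is correct and follows essentially the same route as the paper's: both hinge on Lemma~\ref{Lem_yi_bound} to show that failure of the strict inequality forces $\LL(\y_i)^{\mathrm o}=\emptyset$, which is incompatible with $\mu(A(\y_i))=\nu_i>0$. Your version is somewhat more explicit than the paper's in spelling out why an empty interior implies measure zero (via Lemma~\ref{Lem_ZeroMeas}) and in obtaining both signs of $w_i-w_j$ by symmetry, whereas the paper's proof is terser and treats only the borderline equality case directly.
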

\begin{proof}
From Lemma \ref{Lem_yi_bound}, we have that if
$$w_j-w_i =c(y_i,y_j) \ \Rightarrow \ L^{\mathrm{o}}(y_i)=\emptyset \ .$$
But then, $A^{\mathrm{o}}(\y_i)=\emptyset$ and $\mu(A(\y_i))=0$ and we
cannot satisfy $\mu(A(\y_i))=\nu_i$ with $\nu_i>0$.
\end{proof}

\begin{thm}\label{Thm_Minimizer}
Let $\mu$ be a bounded probability measure on 
$\Omega\subset\Rn$, with $\Omega$
satisfying Assumption \ref{WhatOmega},
let $\y_i\in\Omega^\mathrm{o}$,
$i=1,2,\dots,N$, be a set of $N$ distinct target points, let
$c(\cdot,\cdot)$ be an admissible cost function satisfying Restriction 
\ref{Rest_Cost2}, and let $\nu$ be a discrete measure supported at
the $\y_i$'s, that is $\nu(\x)=\nu_i \delta(\y_i)$, $i=1,\dots, N$,
$\nu_i>0$ and $\sum_{i=1}^N\nu_i = 1$.
Then, the function $\Phi(w)$
\begin{equation}\label{ToMinimize}
    \Phi(w) = -\sum_i^N \int_{A(\y_i)} [c(\x,\y_i) - w_i]d\mu - \sum_i^N w_i\nu_i,
\end{equation}
is convex, $\mathcal{C}^1$-smooth, 
and has gradient
\begin{equation}\label{Gradient}
\nabla\Phi(w) = \{\mu(A(\y_i)) - \nu_i\}_{i=1:N}.
\end{equation}
Moreover, a minimizer of $\Phi$ gives the optimal transport map (partition),
up to sets of $\mu$-measure $0$.
\end{thm}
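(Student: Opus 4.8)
The plan is to establish the three analytic claims about $\Phi$ (convexity, $\mathcal{C}^1$-smoothness with the stated gradient) and then connect a minimizer to the optimal partition via the first-order conditions together with Theorem \ref{Thm_ExistUniq}. First I would rewrite $\Phi$ in a form that makes the dependence on $w$ transparent. Observe that for a fixed $\x$, the integrand $c(\x,\y_i)-w_i$ restricted to the cell it belongs to is exactly $\min_i\,[c(\x,\y_i)-w_i]$, since $\x\in A(\y_i)$ precisely when $c(\x,\y_i)-w_i\le c(\x,\y_j)-w_j$ for all $j$ (and the overlaps have $\mu$-measure $0$ by Corollary \ref{Cor_ZeroMeas}, so they do not affect the integral). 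Hence
\begin{equation*}
\Phi(w) = -\int_\Omega \min_{1\le i\le N}\bigl[c(\x,\y_i)-w_i\bigr]\,d\mu(\x) \;-\; \sum_{i=1}^N w_i\nu_i .
\end{equation*}
The first term is an integral of a pointwise maximum of the affine-in-$w$ functions $w_i - c(\x,\y_i)$ (up to the sign), hence a supremum of affine functions of $w$, hence convex in $w$; integrating against the nonnegative measure $\mu$ preserves convexity. The second term is linear in $w$, so $\Phi$ is convex.

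For smoothness and the gradient formula, I would differentiate under the integral sign. The function $w\mapsto \min_i[c(\x,\y_i)-w_i]$ is, for each fixed $\x$, differentiable at every $w$ for which the minimizing index is unique, with partial derivative $+1$ in the coordinate of the active index and $0$ otherwise; i.e. its $w_i$-derivative equals $\mathbf{1}[\x\in A(\y_i)^{\mathrm o}]$. The set of $\x$ where the minimizer is not unique is exactly $\bigcup_{i\ne j}(A(\y_i)\cap A(\y_j))$, which has $\mu$-measure zero by Corollary \ref{Cor_ZeroMeas}. Combined with a uniform bound on the integrand (the $c(\x,\y_i)$ are continuous on the compact $\Omega$, cf. Remark after Assumption \ref{Rest_Cost1}, so the family is dominated), a standard dominated-convergence / differentiation-under-the-integral argument gives that $\Phi$ is differentiable with
\begin{equation*}
\frac{\partial \Phi}{\partial w_i}(w) = \mu\bigl(A(\y_i)\bigr) - \nu_i .
\end{equation*}
Continuity of these partials in $w$ — hence $\mathcal{C}^1$-smoothness — follows because $w\mapsto \mu(A(\y_i))$ is continuous: a small perturbation of $w$ changes each cell only near its boundary $\partial A(\y_i)$, and those boundaries have $\mu$-measure $0$ (again Corollary \ref{Cor_ZeroMeas}), so by dominated convergence $\mu(A(\y_i))$ varies continuously with $w$.

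Finally, for the optimality statement: $\Phi$ is convex and $\mathcal{C}^1$, so $w^\star$ minimizes $\Phi$ if and only if $\nabla\Phi(w^\star)=0$, i.e. $\mu(A(\y_i))=\nu_i$ for all $i$. By Theorem \ref{Thm_ExistUniq} such a $w^\star$ exists (so a minimizer exists) and the resulting partition $\{A(\y_i)\}$ is the unique optimal transport partition up to $\mu$-null sets; the associated map sending each $\x\in A(\y_i)$ to $\y_i$ is the optimal transport map, again up to $\mu$-null sets, by the duality characterization recalled in the introduction (the Laguerre partition with shift $w^\star$ realizes the optimal plan). I expect the main obstacle to be the rigorous justification of differentiating under the integral sign and, relatedly, the continuity of $w\mapsto\mu(A(\y_i))$: both hinge on controlling the behaviour of the integrand near the cell boundaries, which is precisely where the zero-measure results of Section \ref{SubS_Smooth} (Corollary \ref{Cor_ZeroMeas}, built on Lemma \ref{Lem_Smooth}) do the real work; without them the pointwise-a.e. differentiability of the $\min$ would not transfer to differentiability of the integral.
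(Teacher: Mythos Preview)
Your argument is correct and self-contained, whereas the paper does not give an independent proof at all: it simply invokes \cite[Proposition~1]{de2019differentiation} (with a cross-reference to \cite{kitagawa2019convergence}) and moves on. What you do---rewriting $\Phi(w)=-\int_\Omega\min_i[c(\x,\y_i)-w_i]\,d\mu-\sum_i w_i\nu_i$, reading off convexity from the pointwise maximum of affine functions, differentiating under the integral with the zero-measure boundary result (Corollary~\ref{Cor_ZeroMeas}) supplying the a.e.\ differentiability and the continuity of $w\mapsto\mu(A(\y_i))$, and then closing with the first-order condition plus Theorem~\ref{Thm_ExistUniq}---is essentially the standard direct proof and is more informative for a reader than the bare citation. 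One cosmetic slip: the partial derivative of $w\mapsto\min_j[c(\x,\y_j)-w_j]$ in the active coordinate is $-1$, not $+1$; the extra minus sign in front of the integral in $\Phi$ then gives the correct $\partial\Phi/\partial w_i=\mu(A(\y_i))-\nu_i$, so your final formula is right even though the intermediate sentence has the sign reversed.
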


\begin{proof}
With our Restriction \ref{Rest_Cost2}, the result follows from 
from \cite[Proposition 1]{de2019differentiation}.  (See also
\cite[Theorem 1.1 and Corollary 1.2]{kitagawa2019convergence}.) 
\end{proof}

\begin{rems}\label{ShiftByConstant}
\item[(1)]
In agreement with the uniqueness statement of Theorem \ref{Thm_ExistUniq},
it is plainly apparent that in Theorem \ref{Thm_Minimizer} neither
$\Phi(w)$ nor $\nabla\Phi(w)$ change if a constant value is added to all
entries of $w$ (the same constant, of course).   For our problem,
this is the only lack of uniqueness we have.
\item[(2)]
It is worth emphasizing that the dependence on $w$ in the gradient
$\nabla \Phi$ is through the regions $A(\y_i)$, recall \eqref{LagOmega}.
\end{rems}

Theorem \ref{Thm_Minimizer} is the basis of minimization approaches
to find the OT partition; e.g., see \cite{Hartmann2020SemidiscreteOT}
for the $2$-norm cost.
Our numerical method  is based on seeking a root of the gradient in 
\eqref{Gradient}, and we will use Newton's method to this end.
Of course,  use of Newton's method requires the Jacobian of
$\nabla \Phi$ (the Hessian of $\Phi$), and we will use the
formulas for the Hessian  given in
\cite{de2019differentiation}, 
\cite{kitagawa2019convergence}, \cite{PeyreCuturi}.  Indeed, in spite
of slightly different notations, the expressions of the Hessian given
in these works are actually the same.
One of the merits of our work will be to show that our implementation
of Newton's method, whereby we exploit the geometrical features
of Laguerre's cells and their star-shapedeness, is very
efficient and --in principle-- it leads to arbitrarily accurate results,
at least in $\R^2$ and for  sufficiently smooth density function $\rho$.


\begin{thm}\label{Thm_Hessian}
The Hessian matrix of $\Phi(w)$ is given by:
\begin{equation}\begin{split}\label{Hessian}
\frac{\partial^2\Phi}{\partial w_i\partial w_j} &= \int_{A_{ij}=A(\y_i)\cap A(\y_j)} 
\frac{-\rho(x)}{||\hspace{0.25em}\nabla_{\x} c(\x,\y_i) - \nabla_{\x} c(\x,\y_j)\hspace{0.25em}||}d\sigma, \hspace{1em}j\neq i; \hspace{0.5em} i,j =1,2,\dots,N,\\
\frac{\partial^2\Phi}{\partial w_i^2} & = 
-\sum_{j\neq i} \frac{\partial^2\Phi}{\partial w_i\partial w_j}, 
\hspace{1em}i=1,2,\dots,N,
\end{split}\end{equation}
where $d\sigma$ is the $(n-1)$-dimensional infinitesimal surface element.
\end{thm}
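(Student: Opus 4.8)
The plan is to differentiate the gradient formula \eqref{Gradient} with respect to the components of $w$. Since $\nabla\Phi(w) = \{\mu(A(\y_i)) - \nu_i\}_{i=1:N}$ and the $\nu_i$ are constants, the Hessian entry $\partial^2\Phi/\partial w_i\partial w_j$ equals $\partial_{w_j}\,\mu(A(\y_i)) = \partial_{w_j}\!\int_{A(\y_i)}\rho(\x)\,d\x$. The only dependence on $w$ is through the region of integration $A(\y_i)$, so this is a problem of differentiating a volume integral with respect to a parameter that moves the boundary — a Reynolds-transport / Leibniz-rule computation. First I would recall from Theorem \ref{Thm_Smooth} and Lemma \ref{Lem_Smooth} that, under Restriction \ref{Rest_Cost2} and the regular-value hypothesis, each active boundary piece $A(\y_i)\cap A(\y_j)$ is a $\cont^k$ $(n-1)$-dimensional submanifold, so the boundary $\partial A(\y_i)$ is piecewise smooth and the transport formula applies.

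Next I would make the motion of the boundary explicit. Fix $i\ne j$ and perturb only $w_j \mapsto w_j + \delta$. The defining inequality for $A(\y_i)$ that involves the $j$-th constraint is $c(\x,\y_i) - w_i \le c(\x,\y_j) - w_j$; increasing $w_j$ relaxes this constraint, so $A(\y_i)$ gains a thin layer along the face $A_{ij} = A(\y_i)\cap A(\y_j)$, while every other face $A_{ik}$ ($k\ne i,j$) stays fixed to first order. The normal velocity of the face $A_{ij}$ is obtained by implicitly differentiating $F(\x) := c(\x,\y_i) - c(\x,\y_j) = w_i - w_j$: a displacement $d\x$ of a boundary point satisfies $\nabla F(\x)\cdot d\x = -\delta$, so the signed normal speed is $-1/\|\nabla F(\x)\| = -1/\|\nabla_\x c(\x,\y_i) - \nabla_\x c(\x,\y_j)\|$ in the direction of the outward normal $\nabla F/\|\nabla F\|$ of $A(\y_i)$ across that face (sign chosen so the cell grows). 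The transport theorem then gives
\[
\frac{\partial}{\partial w_j}\int_{A(\y_i)}\rho(\x)\,d\x \;=\; \int_{A_{ij}} \rho(\x)\,\Big(\!-\tfrac{1}{\|\nabla_\x c(\x,\y_i) - \nabla_\x c(\x,\y_j)\|}\Big)\,d\sigma,
\]
which is exactly the claimed off-diagonal entry. The diagonal entry then follows from the identity $\sum_{i=1}^N \mu(A(\y_i)) = \mu(\Omega) = 1$ for every feasible $w$: differentiating in $w_i$ gives $\sum_j \partial^2\Phi/\partial w_j\partial w_i = 0$, i.e. $\partial^2\Phi/\partial w_i^2 = -\sum_{j\ne i}\partial^2\Phi/\partial w_i\partial w_j$. (Alternatively: the $\partial/\partial w_i$ shrinks $A(\y_i)$ across each of its faces $A_{ij}$, and one reads off the same sum directly; cross-terms between distinct faces of the same cell do not overlap on a set of positive surface measure since the $(n-2)$-skeleton has $\sigma$-measure zero.)

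The main obstacle is the rigorous justification of the differentiation under the integral sign, i.e. that the set-derivative formula is valid and that the error terms are genuinely $o(\delta)$. One must check: (a) that the symmetric difference $A(\y_i)(w_j+\delta)\,\triangle\,A(\y_i)(w_j)$ is, up to higher order, a tubular neighborhood of width proportional to $\delta$ about $A_{ij}$ with the stated normal speed — this uses the implicit function theorem on the $\cont^k$ boundary manifold ($k\ge 1$) and the non-vanishing of $\nabla F$ there, plus the boundedness of $\rho$; (b) that contributions near the lower-dimensional edges where two faces $A_{ij}$ and $A_{ik}$ meet, and near $\partial\Omega$, are of order $o(\delta)$ because those sets have lower Hausdorff dimension and $\rho$ is bounded; and (c) the edge cases are excluded by the standing hypotheses ($\y_i\notin\partial A(\y_i)$, feasibility \eqref{w-feasible}, regular value). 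Rather than redo this, I would cite that under Restriction \ref{Rest_Cost2} this is precisely the content of \cite[Proposition 1 and its proof]{de2019differentiation} (see also \cite[Theorem 1.1]{kitagawa2019convergence} and \cite[Chapter 5]{PeyreCuturi}), which establish the same Hessian formula, and note that the star-shapedness (Corollary \ref{Cor_Star}) and piecewise $\cont^k$-smoothness (Theorem \ref{Thm_Smooth}) we have proved guarantee the geometric regularity those arguments require. This keeps the proof short while making the derivation transparent.
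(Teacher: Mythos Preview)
Your proposal is correct and aligns with the paper's own proof, which is simply the one-line citation ``See \cite[Theorem 1]{de2019differentiation}''; your Reynolds-transport sketch is a faithful unpacking of what that reference proves, and your ultimate deferral to \cite{de2019differentiation}, \cite{kitagawa2019convergence}, \cite{PeyreCuturi} matches the paper exactly. One slip in the prose (not the computation): increasing $w_j$ \emph{tightens} the constraint $c(\x,\y_i)-w_i\le c(\x,\y_j)-w_j$ since the right-hand side decreases, so $A(\y_i)$ \emph{loses} a thin layer along $A_{ij}$---your signed normal speed $-1/\|\nabla F\|$ and the resulting negative off-diagonal entry are correct, but the words ``relaxes'', ``gains a thin layer'', and ``sign chosen so the cell grows'' are reversed.
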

\begin{proof}
See \cite[Theorem 1]{de2019differentiation}. 
\end{proof}

\begin{thm}\label{HessProp}
    Let $H$ be the Hessian matrix of $\Phi(w)$ from Theorem \ref{Thm_Hessian}
    associated to a feasible $w$.  Then, $H$ has the following properties:
    \begin{itemize}
        \item[(1)] $H$ is symmetric positive semi-definite, and
        the vector of all ones is in the nullspace of $H$;
        \item[(2)] $H$ has rank $(N-1)$. In particular, $0$ is a simple eigenvalue.
    \end{itemize}
\end{thm}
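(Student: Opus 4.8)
The plan is to identify $H$ with a weighted graph Laplacian and then to reduce both claims to connectivity of the associated adjacency graph; the only substantial point is that connectivity, which I would obtain from a non-separation argument on the convex domain $\Omega$. Throughout I read ``feasible $w$'' as $|w_i-w_j|<c(\y_i,\y_j)$ for all $i\ne j$, as in \eqref{w-feasible}. \emph{Part (1).} From Theorem \ref{Thm_Hessian}, for $i\ne j$ the entry $H_{ij}=-\int_{A_{ij}}\rho(\x)\,\|\nabla_{\x} c(\x,\y_i)-\nabla_{\x} c(\x,\y_j)\|^{-1}\,d\sigma$ is $\le 0$ since the integrand is nonnegative, it is symmetric in $i,j$ because $A_{ij}=A_{ji}$ and the norm in the denominator is symmetric, and the diagonal is defined so that every row of $H$ sums to $0$. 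Hence $H=H^\top$ and $H\mathbf 1=0$. Setting $\ell_{ij}:=-H_{ij}\ge 0$, the standard Laplacian identity $v^\top H v=\sum_{i<j}\ell_{ij}(v_i-v_j)^2\ge 0$ shows $H$ is symmetric positive semidefinite with $\mathbf 1$ in its kernel. I would not dwell on this computation.

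\emph{Part (2), reduction.} Let $G$ be the graph on $\{1,\dots,N\}$ with an edge $ij$ precisely when $\ell_{ij}>0$, i.e.\ when $H_{ij}\ne 0$. The identity above gives $v\in\ker H$ iff $v$ is constant along every edge of $G$, i.e.\ constant on each connected component of $G$; thus $\dim\ker H$ equals the number of components of $G$, and (2) -- rank $N-1$, $0$ a simple eigenvalue -- is equivalent to $G$ being connected. Moreover, since $\rho>0$ ($\mu$ is equivalent to Lebesgue) and the integrand is positive on the hypersurface $A_{ij}$ -- which, under Restriction \ref{Rest_Cost2} and the regularity hypothesis that makes Theorem \ref{Thm_Hessian} applicable, is a $\cont^k$ $(n-1)$-manifold (Theorem \ref{Thm_Smooth}) -- one has $H_{ij}\ne 0\iff\sigma(A_{ij})>0$. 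So it remains to prove: for feasible $w$, the cell-adjacency graph (edge $ij\iff\sigma(A(\y_i)\cap A(\y_j))>0$) is connected.

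\emph{Part (2), connectivity.} Feasibility forces every cell to be non-degenerate: $\y_i\in A(\y_i)$ follows directly from $w_j-w_i<c(\y_i,\y_j)$, while $\y_i\notin\partial \LL(\y_i)$ (otherwise $w_j-w_i=c(\y_i,\y_j)$, contradicting feasibility, cf.\ Lemma \ref{Lem_yi_bound}), so $\y_i\in \LL(\y_i)^\mathrm o$ and, since $\y_i\in\Omega^\mathrm o$, also $\y_i\in A(\y_i)^\mathrm o\ne\emptyset$; by Lemma \ref{Lem_Allx} the cells cover $\Omega$ and their interiors are pairwise disjoint. Suppose $G$ is disconnected, with a vertex partition $I\sqcup J$, both nonempty, such that $\sigma(A_{ij})=0$ for all $i\in I$, $j\in J$. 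Put $U=\bigcup_{i\in I}A(\y_i)$, $V=\bigcup_{j\in J}A(\y_j)$, so $U\cup V=\Omega$ and $K:=U\cap V=\bigcup_{i\in I,\,j\in J}A_{ij}$ is closed with $\mathcal H^{n-1}(K)=0$ (each $A_{ij}$ in the union lies on a smooth hypersurface and has zero $(n-1)$-area). Then $\Omega^\mathrm o\setminus K=(U\cap\Omega^\mathrm o\setminus K)\sqcup(V\cap\Omega^\mathrm o\setminus K)$, each piece relatively open in $\Omega^\mathrm o\setminus K$ (each is the complement of the other, $U,V$ closed) and nonempty (for $i\in I$ one has $\y_i\in A(\y_i)^\mathrm o\subset\Omega^\mathrm o$, and $A(\y_i)^\mathrm o$ meets neither $V$ nor $K\subset V$, so $\y_i\in U\cap\Omega^\mathrm o\setminus K$; likewise for $J$). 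Hence $\Omega^\mathrm o\setminus K$ is disconnected -- contradicting the classical fact that removing a relatively closed $\mathcal H^{n-1}$-null set from a connected open subset of $\Rn$ leaves it connected. Therefore $G$ is connected, and (2) follows.

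\emph{Main obstacle.} Part (1) and the Laplacian bookkeeping are routine. The real work is the connectivity of the adjacency graph: one must use feasibility in exactly the right way to exclude degenerate cells and then invoke the non-separation property of $\mathcal H^{n-1}$-null closed sets; identifying ``$H_{ij}\ne 0$'' with ``$\sigma(A_{ij})>0$'', which relies on positivity of $\rho$, is a small but necessary point. A more hands-on alternative to the non-separation step would use star-shapedness of the cells about the $\y_i$ (Corollary \ref{Cor_Star}) together with convexity of $\Omega$ to build, between any two target points, a path crossing only positive-area interfaces; but the $\mathcal H^{n-1}$ argument is the shortest route.
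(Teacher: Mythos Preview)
Your argument is correct and, in several respects, more complete than the paper's. For part~(1), the paper simply invokes the convexity of $\Phi$ from Theorem~\ref{Thm_Minimizer}, whereas you derive positive semidefiniteness directly from the Laplacian identity $v^\top H v=\sum_{i<j}\ell_{ij}(v_i-v_j)^2$; both are fine. For part~(2), the paper identifies $H$ as a generalized graph Laplacian and argues via Perron--Frobenius applied to $-(H-\eta I)$, while you use the more elementary (and equivalent) fact that the kernel of a weighted Laplacian is spanned by the indicator vectors of the connected components. The substantive difference is the connectivity step: the paper asserts that the adjacency graph is ``strongly connected by construction \dots\ since $w$ is feasible'' without further justification, whereas you supply an actual proof via the non-separation property of closed $\mathcal H^{n-1}$-null sets in a connected open set. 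Your graph (edges where $H_{ij}\ne 0$, i.e.\ $\sigma(A_{ij})>0$) is moreover the one actually required for irreducibility, which is a slight sharpening over the paper's graph (edges where $A_{ij}\ne\emptyset$). In short: same destination, but you bypass Perron--Frobenius and you close the connectivity gap that the paper leaves open.
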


\begin{proof}
The fact that $H$ is symmetric positive semi-definite
follows immediately from the structure of the Hessian matrix in  
\eqref{Hessian} and the fact that the function $\Phi(w)$ is convex. 
Likewise, the fact that $H\begin{bmatrix}  1 \\ \vdots \\1 \end{bmatrix}=0$
is a simple verification. \\
Next, let $\lambda_1(H)=0\le \lambda_2(H) \le \dots \le \lambda_N(H)$ be
the eigenvalues of $H$.
To show statement \textit{(2)}, we need to argue that $\lambda_2(H)>0$.
Consider the undirected graph $\Gamma$ 
whose vertices are the target points $\y_i$, and where 
there is an edge between vertices $\y_i$ and $\y_j$ (because of
our assumptions on $\mu$) if 
$A(\y_i)\cap A(\y_j)\neq\emptyset$.   Then, this graph is strongly
connected by construction (there is a path from any vertex to any other 
vertex moving along the graph edges), since $w$ is feasible.
By construction of $H$, $H$ is a {\emph{generalized Laplacian}} 
associated to this graph (see \cite[Section 13.9]{godsil2001algebraic} for
the concept of generalized Laplacian).  Then, we can argue like in 
\cite[Lemma 13.9.1]{godsil2001algebraic} to infer that $\lambda_1(H)$ is
a simple eigenvalue.  Indeed, take the matrix $H-\eta I$ for some value of
$\eta$ so that the diagonal entries of $H-\eta I$ are non-positive and consider
the matrix $-(H-\eta I)$ which has nonnegative entries and it is irreducible,
since the graph associated to it is strongly connected (because $\Gamma$ 
was).  Then, by the Perron-Frobenius theorem,
the largest eigenvalue of  $-(H-\eta I)$ is simple, and it is its spectral radius
and therefore it is the positive value $\eta$, and 
we can take the vector of all $1$'s as the associated Perron eigenvector. 
Therefore, $0$  is a simple eigenvalue of $H$ and
the rank of the Hessian matrix is $(N-1)$.
\end{proof}

\section{Laguerre Cell Computation: 2D}\label{Lag2d}
Here we describe the numerical algorithms we implemented.
Since all of the computations
we will present are in $\R^2$, we will delay discussion of appropriate
techniques in $\R^3$ to a future work, and our 
algorithmic description below is done relative to 2-d only. 

The fundamental task is to compute $\nabla \Phi$ in \eqref{Gradient} and
then form the Hessian in \eqref{Hessian}.  In essence, this requires being
able to compute the $\mu(A_i)$'s, the boundaries $A_{ij}$'s, and the
line integrals in \eqref{Hessian}.
Below, we will describe fast and precise algorithm for these tasks, and for ease
of exposition,  we are going to make two simplifications in our presentation: 
(i) the cost function is a $p-$norm (for $1<p<\infty$), 
$c(\x,\y) = ||\x-\y||_p$, and (ii) the domain $\Omega$ is the square 
$[a,b]\times [a,b]$.
It is fairly straightforward to adapt the description of our algorithms to 
the case of cost given by a positive combination of $p$-norms, and
for several choices of $\Omega$.  Our computations in
Chapter \ref{CompExamp} will exemplify these two points; see
Sections \ref{DifferentCosts} and \ref{DifferentDomains}.

First of all, we note a main advantage of our technique: it is fully parallelizable.
To witness, the computation of $\mu(A(\y_i))$ is independent of the 
computation of $\mu(A(\y_j))$, $\forall j\neq i$. 
Hence, we can formulate the task as follows.  Let $Y=\y_i$, for some $i$, then:
\begin{align*}
	&\text{Compute } \mu(A(Y)) = \int_{A(Y)}d\mu,\\
	&\text{where } A(Y) = \{\x\in\Omega \ \ | \ \ ||\x-Y||_p - W \leq ||\x-\y_j||_p - w_j,\ \forall j : \ \y_j\ne Y \}\ .
\end{align*}
The second feature of our algorithm is that it exploits star-shapedness result 
from Section \ref{SubS_Star}. Thus:
\begin{gather*}
	A(Y) = \{\x\in\Omega \ \ | \ \ ||\x-Y||_p - W \leq ||\x-\y_j||_p - w_j,\ \ \forall jj : \ \y_j\ne Y \}
	\ ,\quad \text{or} \\
	A(Y) = \{\x = Y + z\in\Omega \ \ | \ \ ||z||_p - W \leq ||z+ Y-\y_j||_p - w_j,\ \ \forall j : \ \y_j\ne Y
	\}\ .
\end{gather*}
Since $A(Y)$ is star shaped with respect to $Y$, we can parametrize in a unique
way the values on a ray from $Y$ in terms of distance from $Y$ along the 
direction determined by an angle $\theta$; in other words, in the above we 
can write  
$z=D(\theta)\begin{pmatrix}\cos{(\theta)}\\\sin{(\theta)}\end{pmatrix}$,
and thus write for $A(Y)$:
\begin{equation}\label{PolarArea1}
	A(Y) = \{(\theta, D(\theta)), \theta \in[0, 2\pi] \ \ | \ \
	 ||D(\theta) \begin{pmatrix}\cos{(\theta)}\\\sin{(\theta)}\end{pmatrix}||_p - W \leq ||D(\theta)\begin{pmatrix}\cos{(\theta)}\\\sin{(\theta)}\end{pmatrix}+ Y-\y_j||_p - w_j,\ \ 
	 \forall j : \ \y_j\ne Y \}\ .
\end{equation}
Clearly, 
the boundary of the Laguerre cell, $\partial A(Y)$, is also 
parametrizable in terms of  the angle $\theta$.  Namely, reserving the
notation $r(\theta)$ for the points on the boundary, we have
\begin{equation}\label{PolarBdry1}
\x \in\partial A(Y) \iff  
x = Y + r(\theta)\begin{pmatrix}\cos{(\theta)}\\ \sin{(\theta)}\end{pmatrix} \,\
\text{for some unique} \,\ \theta\in[0,2\pi) .
\end{equation}
When determining $\partial A(Y)$, 
in addition to the value of $r$ corresponding to each angle $\theta$, we will 
monitor also an index $k$, which indicates the neighboring cell or the portion of
the physical boundary for that value of the angle $\theta$ :
\begin{equation}\begin{split}\label{index}
	x & = Y + r(\theta) \begin{pmatrix}\cos{(\theta)}\\\sin{(\theta)}\end{pmatrix}
	\in A(Y)\cap A(\y_j)\implies k = j \\
	x & = Y+r(\theta) \begin{pmatrix}\cos{(\theta)}\\\sin{(\theta)}\end{pmatrix}
	\in A(Y)\cap \partial \Omega\implies k = \begin{cases}
		-1, & \text{when $\x$ is on the bottom side of $\Omega$}\\
		-2, & \text{when $\x$ is on the right side of $\Omega$}\\
		-3, & \text{when $\x$ is on the top side of $\Omega$}\\
		-4, & \text{when $\x$ is on the left side of $\Omega$.}\\
	\end{cases}
\end{split}\end{equation}

Below, we give a high level description of the algorithms we used to compute
$\partial A(Y)$, $\mu(A(Y))$, and so forth, and then at the end we give a descriptions
of the algorithms we used in the form of easily replicable pseudo-codes.
Although we implemented our algorithms in {\tt Matlab}, by providing these
pseudo-codes in a language-independent manner, it will be possible for
others to implement the techniques in the environment of their
choosing.  

\subsection{Boundary Tracing Technique}\label{BoundTrace}
Here we describe how we find the boundary of $A(Y)$.  Our approach below
is very different from that used in \cite{LucaJD}.  Namely, in \cite{LucaJD},
for a given set of weights, the boundary of $A(Y)$ was found by a 
subdivision algorithm, whereby the boundary itself was covered by (smaller
and smaller) squares up to a desired resolution.  As explained in \cite{LucaJD},
this approach was inherently limited to a resolution of the boundary of
${\mathcal{O}}(\sqrt{{\mathtt{eps}}})$ accuracy, but did not need to distinguish
a-priori the different smooth arcs making up the boundary, which we instead
need to do, see below.  At the same time, our present technique is able to
find the boundary to arbitrary accuracy (in principle).

\noindent
\begin{minipage}[l]{.40\textwidth}
\begin{rem}\label{BoundTraceRem}
We have to track the boundary of $A(Y)$, by making sure we also
identify where changes can occur.  In essence, 
the basic task is encoded in the figure on the right, where we
identify the smooth arcs and the breakpoints making up $\partial A(Y)$.
\end{rem}
\end{minipage}\hfill
\begin{minipage}[c]{.60\textwidth}
	\hskip2cm
	\centering
	\includegraphics[width=0.5\linewidth]{./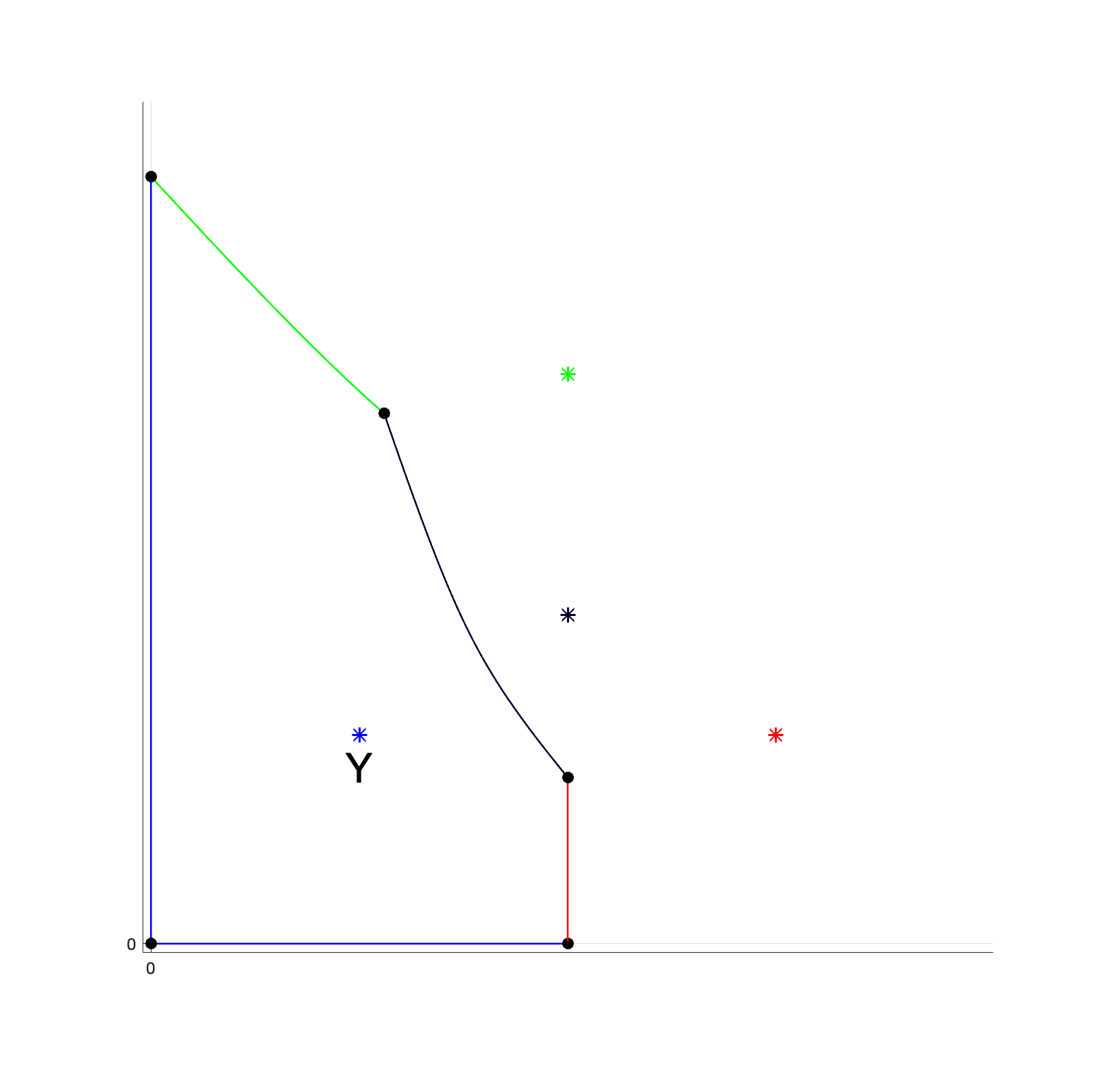}
\end{minipage}

%
%
%

\subsubsection{Shooting Step}\label{Shooting}
The goal of this step is to find a point on $\partial A(Y)$, 
for a given value $\theta_0$.  That is, we will identify
the value $r_0$, and the index $k$ in \eqref{index}, such that
\begin{align*}
	\x = Y + r_0\begin{pmatrix}\cos{(\theta_0)}\\\sin{(\theta_0)}\end{pmatrix}
	\in \partial A(Y)\ .
\end{align*}
The idea is simple: we move along the ray in the direction of the angle 
$\theta_0$ until we hit the boundary. 
Algorithm \ref{Alg_Shooting}, \textit{Shooting},
shows implementation details of this technique.
Algorithm \ref{Alg_BisectR}, \textit{BisectR}, finds the root of the boundary 
function $F(r,\theta_0)$ given in \eqref{Frtheta} below 
using the bisection method on an interval $[r_a, r_b]$ until 
$\frac{r_b-r_a}{2}$ is sufficiently small.  It is assumed/enforced
that $F(r_a,\theta_0)<0$ and $F(r_b,\theta_0)>0$.
Finally, Algorithm \ref{Alg_Bound}, \textit{Bound},
identifies the value on $\partial \Omega$ intersected by the ray from $Y$ in the 
direction of the angle $\theta_0$.
\medskip

Although with the shooting technique of Section \ref{Shooting}
we can find, for any value $\theta$, the value of $r$ on $\partial A(Y)$,
and the index $k$, it is more efficient to use a curve continuation technique 
when the index $k$ is not changing (see Section \ref{breakpoint} on this
last issue).  Our technique is a classical predictor-corrector approach,
with tangent predictor and Newton corrector, described next.

\subsubsection{Predictor-Corrector Method}\label{pred-corr}
Here, we exploit the fact that the value of $r$ on the boundary of $\partial A(Y)$
is a smooth function of $\theta$, and a solution of $F(r(\theta),\theta)=0$
for the function $F(r,\theta)$ below:
\begin{equation}\label{Frtheta}
F(r,\theta) = ||r\begin{pmatrix}\cos{(\theta)}\\\sin{(\theta)}\end{pmatrix}||_p - ||r \begin{pmatrix}\cos{(\theta)}\\\sin{(\theta)}\end{pmatrix}+ Y-\y_j||_p + w_j - W .
\end{equation}
Therefore, a simple application of the implicit function theorem gives that
$$r_\theta=-\frac{F_\theta}{F_r}\ ,$$
and this allows for an immediate computation of a tangent predictor, followed
by a Newton corrector. 

Namely, given $\theta_0$, $r(\theta_0)$, and an angular increment $\Delta$,
we compute $r(\theta+\Delta)$ by tangent predictor and Newton corrector
using $F_r$ and $F_\theta$ below:
\begin{align*}
	&F_r(r,\theta) = \frac{1}{p}||r \begin{pmatrix}\cos{(\theta)}\\\sin{(\theta)}\end{pmatrix}||_p^{1-p}\bigg{[}\frac{\partial}{\partial r}\big{|}r\cos(\theta)\big{|}^{p}+\frac{\partial}{\partial r}\big{|}r\sin(\theta)\big{|}^{p}\bigg{]}-\\
	&\hspace{3em}-\frac{1}{p}||Y - \y_j + r\begin{pmatrix}\cos{(\theta)}\\\sin{(\theta)}\end{pmatrix}||_p^{1-p}\bigg{[}\frac{\partial}{\partial r}\big{|}Y_1 - \y_{j,1}+r\cos(\theta)\big{|}^{p}+\frac{\partial}{\partial r}\big{|}Y_2 - \y_{j,2}+r\sin(\theta)\big{|}^{p}\bigg{]},\\
	&F_{\theta}(r,\theta) = \frac{1}{p}||r\begin{pmatrix}\cos{(\theta)}\\\sin{(\theta)}\end{pmatrix}||_p^{1-p}\bigg{[}\frac{\partial}{\partial\theta}\big{|}r\cos(\theta)\big{|}^{p}+\frac{\partial}{\partial\theta}\big{|}r\sin(\theta)\big{|}^{p}\bigg{]}-\\
	&\hspace{3em}-\frac{1}{p}||Y - \y_j + r \begin{pmatrix}\cos{(\theta)}\\\sin{(\theta)}\end{pmatrix}||_p^{1-p}\bigg{[}\frac{\partial}{\partial\theta}\big{|}Y_1 - \y_{j,1}+r\cos(\theta)\big{|}^{p}+\frac{\partial}{\partial\theta}\big{|}Y_2 - \y_{j,2}+r\sin(\theta)\big{|}^{p}\bigg{]}.
\end{align*}
Algorithm \ref{Alg_PredCorr}, \textit{PredCorr},
gives implementation details of the PC method. 
In addition to the value of $r_1 = r(\theta_0 + \Delta)$, this algorithm outputs 
also a new value for the angular increment $\Delta$, 
which reflects how close the approximation given by the tangent predictor was 
to the final value $r_1$.

\begin{rem}
We performed extensive experiments also with the trivial, the secant, and
the parabolic predictor.  But, in our experiments, the tangent predictor
outperformed these alternatives, not only in accuracy, but also in
overall computational time.
\end{rem}

Through use of the techniques of Sections \ref{Shooting} and \ref{pred-corr},
we compute smooth arcs of $\partial A(Y)$.  To guarantee that these are
indeed smooth arcs, we need to locate points where there is a change.

\subsubsection{Breakpoint Location}\label{breakpoint}
During the curve continuation process, we always monitor the index $k$
in \eqref{index}.  When this index changes 
in the interval $[\theta_0, \theta_0 + \Delta]$,
this means there is a {\emph{breakpoint}}, i.e. there is a value
$\theta\in[\theta_0, \theta_0 + \Delta]$ such that boundary is changing from
one branch to another. There are three things that can happen: (i) the boundary
changes from being portion of a curve to a portion of another curve
(i.e., the neighboring cell is changing  from $A(\y_j)$ to another cell $A(\y_i)$);
(ii) the boundary is changing from being
a portion of the curve between two/more cells to being a portion of the 
boundary of $\Omega$, or vice versa; and (iii) the boundary changes from
one smooth branch of $\partial \Omega$ to another branch of $\partial \Omega$
(presently, this means that two sides of $\partial \Omega$ are meeting at
a corner).  The goal is to identify the breakpoint in these three cases. 

In the first situation, we use bisection method on the interval 
$[\theta_0, \theta_0 + \Delta]$ (see Algorithm \ref{Alg_BisectTheta},
\textit{BisectTheta}). 
In the second situation, we use a standard root finding technique. 
In fact, since 
we can express $r$ in terms of  $\theta$,
we used the {\tt MATLAB} function {\tt fzero} to find the root,
 in the interval $[\theta_0, \theta_0 + \Delta]$, of the function
$F(r(\theta),\theta)$ in \eqref{Frtheta}; see Algorithm \ref{Alg_CurveBound},
\textit{CurveBound}.
Finally, for the last situation, when the breakpoint is at the intersection of two
branches of $\partial \Omega$, using the relation between indices it is
easy to identify exactly the intersection point of the two branches
(in our case, the corner of the square). See Algorithm \ref{Alg_BoundBound},
\textit{BoundBound}.

\subsection{2-d Integral Computation}\label{2dInt}
The other main task is to compute the integral, $\mu(A(Y))$.   This step is
common to all method that want to minimize the functional $\Phi(w)$ in
\eqref{ToMinimize} (see also \eqref{Gradient}), whether or not by using
Newton's method.  The approach taken in \cite{kitagawa2019convergence} and
\cite{de2019differentiation}, where Newton's method is used for
$c(\x,\y)=\|\x-\y\|_2^2$, is not detailed in their works, but it is most likely based on
triangulation of the cell $A(Y)$ in order to take advantage of the polygonal shape
of the latter.  In \cite{Hartmann2020SemidiscreteOT}, instead, where a quasi-Newton
approach is used for $c(\x,\y)=\|\x-\y\|_2$, the authors adopt a quadrature based
on subdivision of the given domain (the square $\Omega$) in a uniform 
way and approximate all integrals on this uniform grid.  No adaptivity,
error estimation, nor error control, is given in 
\cite{Hartmann2020SemidiscreteOT}.  
In our algorithm, instead, we will exploit the parametrization in $\theta$
expressed in
\eqref{PolarArea1}, in particular, points inside $A(Y)$ along a ray from $Y$
are parametrized by the distance from $Y$ and the angular direction $\theta$ 
of the ray.   The main advantages of our method are that it will provide
error estimates, work in a fully adaptive fashion, and be much more
efficient and accurate than competing techniques.  The main drawback
is the need to have a sufficiently smooth density $\rho$, $\cont^4$ to
be precise, since we use a composite Simpson's rule.
Conceivably, there may be situation when this is a stringent
requirement and alternatives will need to be adopted, either by using a
quadrature rule requiring less smoothness\footnote{We also
implemented a version of composite trapezoidal rule, but for smooth
$\rho$ it was much less efficient than Simpson's.}, or by accepting
answers which are accurate to just a few digits.

\smallskip
\noindent
\begin{minipage}[l]{.42\textwidth}
Consider a smooth arc of the boundary $\partial A(Y)$, comprised
between two breakpoints, say at the angles $\alpha$ and $\beta$, 
and let $r(\theta)$
be the smooth function expressing the boundary curve.  Then, we have that
the integral giving $\mu(A(Y))$ restricted to the values of $x$ falling in the sector
determined by $(\alpha, r(\alpha))$ and $(\beta, r(\beta))$, and center $Y$,
is given by (see the figure on the right)
$$\int_\alpha^\beta \int_0^{r(\theta)} d\mu=\int_\alpha^\beta \int_0^{r(\theta)}
\rho(\x)d\x =\int_\alpha^\beta \int_0^{r(\theta)} \rho(D,\theta)D d Dd\theta\ .$$
\end{minipage}\hfill
\begin{minipage}[c]{.58\textwidth}
	\vskip-1cm
	\hskip2cm
	\includegraphics[width=0.55\textwidth]{./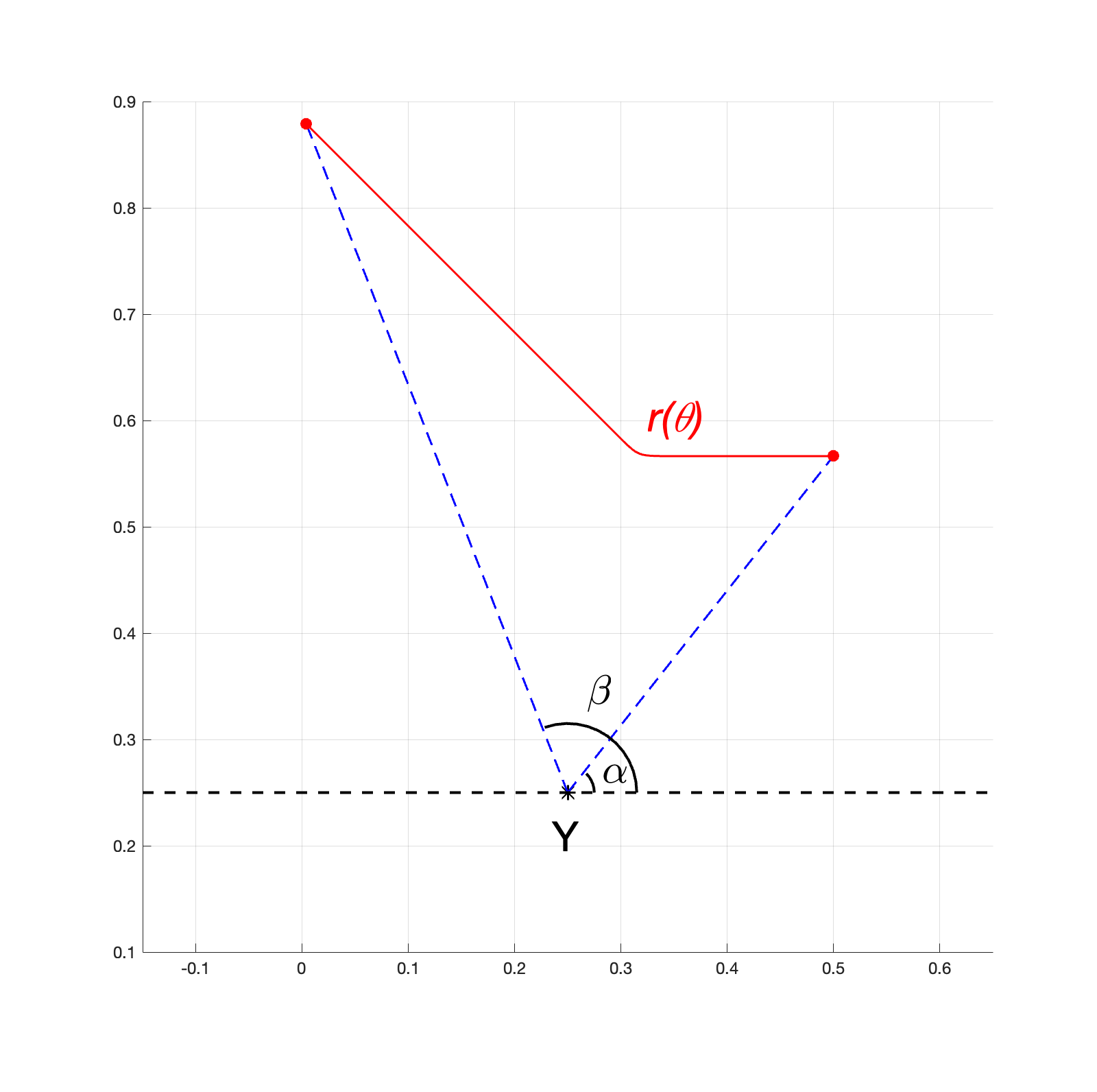}
\end{minipage}
\medskip


As a consequence, for $\mu(A(Y))$, we have
\begin{equation}\label{PolarArea2}\begin{split}
	\mu(A(Y)) & = \int_{A(Y)}d\mu = \int_{A(Y)}\rho(\x)d\x = 
	\sum_{l=1}^{N_l}\int_{\theta_{l,0}}^{\theta_{l,1}}\int_0^{r(\theta)}
	\rho(D,\theta)DdDd\theta \\
	& = \sum_{l=1}^{N_l}\int_{\theta_{l,0}}^{\theta_{l,1}}F(\theta) d\theta\ ,\quad\text{with}
	\quad F(\theta)= \int_0^{r(\theta)}\rho(D,\theta)DdD,
\end{split}\end{equation}
where $N_l$ is the number of breakpoints and $\theta_{l,0}$ and $\theta_{l,1}$ 
are the angular values corresponding to consecutive (in $\theta$) breakpoints.
As a result, the integration is always performed along smooth sections of the 
boundary $\partial A(Y)$ and it involves 1-d integrals (in nested fashion).
Because of this fact, 
to evaluate each integral we implemented an ``adaptive composite Simpson's rule ''
with error control, 
see Algorithm \ref{Alg_AdaptSimpson}, \textit{AdaptSimpson}, and
Algorithm \ref{Alg_CompSimpson}, \textit{CompSimpson}.
Recall that, on an interval $\theta \in [\alpha, \beta]$, where $r(\theta)$
is smooth, the basic composite Simpson's rule  with $n$ subdivisions reads
$$\int_{\alpha}^\beta F(\theta) d\theta =  \frac{h}{3}\bigg{[}F(t_0) + 4\sum_{i=1}^{n/2}F(t_{2i-1}) + 
2\sum_{i=1}^{n/2-1}F(t_{2i})+F(t_n)\bigg{]}+E(F)\ ,
$$
where $h = \frac{\beta-\alpha}{n}$ and $t_i = \alpha + ih$. When $n = 2k$ for some 
$k\in\mathbb{N}$, the error term $E(F)$ has the form:
$$
E(F) = -\frac{h^4}{180}(\beta-\alpha)F^{(4)}(\xi)\ ,
$$
justifying why we are requiring $\rho$ to be $\cont^4$.  In the nested form of
\eqref{PolarArea2}, the function $F(\theta)$ is of course approximated by
adaptive composite Simpson's rule as well.

\begin{rem}\label{RemSimpson}
Adaptivity with error control turned out to be essential in order to resolve
a lot of problems, since the boundary of the Laguerre cells often have
large, localized, curvature values, and local refinement turned out
to be appropriate.  Of course, the caveat to use an accurate
integral approximation, as we did, is that we are asking that
the density $\rho$ is $\cont^4$.
See Example \ref{DifferentSources}-(D) for a case where it is
needed to smooth out a given non-smooth density. 
\end{rem}

%
%

\section{Implementation of Newton's Method and Conditioning of the OT 
problem}\label{Newton}
In the context of semi-discrete optimal transport problem, both 
\cite{de2019differentiation} and \cite{kitagawa2019convergence}
give implementation and testing of 
Newton's method when the cost function is the 2-norm square;
of course, in this case the effort to compute Laguerre cells is greatly
simplified since they happen to be polygons.
We are not aware of any effort for the cost functions we considered
in this work, in particular for cost given by a $p$-norm, $1<p<\infty$.
Indeed, even in the important case of the $2$-norm, 
Hartmann and Schumacher in 
\cite{Hartmann2020SemidiscreteOT} used a quasi-Newton
minimization technique 
(LBFGMS) and, referring to the expression of the Hessian in \eqref{Hessian},
they remarked
 ``{\sl  It remains to be seen, however, if the advantage from using the Hessian 
rather than performing a quasi Newton method outweighs the considerably 
higher computational cost due to computing  the above integrals numerically
 ... }''.  The main computational goal of ours in this work is
to show that Newton's method can be implemented very efficiently, 
and we will see in Section \ref{CompExamp} that is quite a bit more effective
than quasi-Newton approaches.  In fact, computation of the Hessian itself
using \eqref{Hessian} turns out to be inexpensive (often much less 
expensive than computing the gradient), ultimately
because it requires 1-d integrals to be approximated, rather than 2-d integrals;
this is particularly apparent in the increase in cost when approximating the
Hessian by divided differences rather than by using the analytic expression
\eqref{Hessian}, though of course the divided difference approximations
lead to just as accurate end results.  To witness, see Table \ref{Tab_Hesscomp}
below, which refers to Examples (E1)-(E4) of Section \ref{CompExamp}; 
in Table \ref{Tab_Hesscomp}, ``Forward Difference'' refers to the 
approximate Hessian whose columns are obtained as standard difference 
quotient: $\frac{\nabla \Phi(w+e_ih)-\nabla \Phi(w)}{h}$, $h=\sqrt{\mathtt{eps}}$.
``Centered Difference'' refers to the approximate Hessian where
each column is obtained as centered difference quotient:
$\frac{\nabla \Phi(w+e_ih)-\nabla \Phi(w-e_ih)}{2h}$, $h=({\mathtt{eps}})^{1/3}$.
When using these divided difference approximation, we
explicitly enforce symmetry and the row-sum condition in \eqref{Hessian}.

\begin{table}[ht]
	\centering\scalebox{.75}{\renewcommand*{\arraystretch}{2}
		\begin{tabular}{||c||c|c||c|c|c||c|c|c||}\hline\hline
			& \multicolumn{2}{|c||}{\textbf{Analytic \eqref{Hessian}}} & \multicolumn{3}{|c||}{\textbf{Forward Difference}} & \multicolumn{3}{|c||}{\textbf{Centered Difference}}\\\hline\hline
			Example & \textbf{Iterations} & \textbf{Time } & \textbf{Iterations} & \textbf{Time} & \textbf{Difference} & \textbf{Iterations} & \textbf{Time} & \textbf{Difference} \\\hline\hline
			\eqref{E_1} & 2 & 6.2872 & 2 & 9.5086 & $2.5724*10^{-9}$ & 2 & 13.268 & $1.1513*10^{-10}$\\\hline
			\eqref{E_2} & 6 & 38.559 & 6 & 66.805 & $5.3836*10^{-8}$ & 6 & 96.908 & $2.5479*10^{-8}$\\\hline
			\eqref{E_3} & 2 & 8.8570 & 2 & 17.115 & $2.6139*10^{-8}$ & 2 & 26.294 & $2.4851*10^{-10}$\\\hline
			\eqref{E_4} & 3 & 23.908 & 3 & 61.79 & $1.2420*10^{-7}$ & 3 & 109.24 & $1.4503*10^{-7}$\\\hline
			\hline
	\end{tabular}}
	\caption{Iterations refer to Newton's steps, Difference is the $\infty$-norm of 
	the difference between analytic and divided differences 
	Hessians.}\label{Tab_Hesscomp}
\end{table}

As usual, an implementation of Newton's method requires three main ingredients:
forming the Hessian, solving for the updates and iterate, and
of course providing a good initial guess.  We look at these issues
next.

\subsection{Hessian Computation}\label{SubS_HessianComp}
To compute the Hessian, we exploit the parametrization
in terms of the angular variable $\theta$.  Namely, looking at
the expression of $\frac{\partial^2\Phi}{\partial w_i\partial w_j}$ in
\eqref{Hessian}, we note that $x$ is restricted to the boundary $A_{ij}$
between $A(\y_i)$ and $A(\y_j)$, and thus can be given as
\begin{equation*}
	\x(r(\theta),\theta) = \y_i + r(\theta)\begin{pmatrix}\cos{(\theta)}\\\sin{(\theta)}\end{pmatrix}
\end{equation*}
for an appropriate range of $\theta$-values, and 
where $r(\theta)$ refers to the associated part of the boundary curve,
that is the distance from $\y_i$ along the boundary curve.
Thus, the term $\frac{\partial^2\Phi}{\partial w_i\partial w_j}$ can be
rewritten as an arc-length integral:
\begin{equation}\begin{split}\label{HessianPolar}
	& \frac{\partial^2\Phi}{\partial w_i\partial w_j} = 
	\sum_{\text{smooth arcs}} \int_{\theta_0}^{\theta_1} 
	\frac{-\rho(r(\theta),\theta)}{||\hspace{0.25em}\nabla_{\x} c(\x(r(\theta),\theta),\y_i) - \nabla_{\x} 
		c(\x(r(\theta),\theta),\y_j)\hspace{0.25em}||} 
		\sqrt{r^2(\theta) + (r'(\theta)^2)}d\theta \\
& \hspace{1em}j\neq i; 
\hspace{0.5em} i,j 	=1,2,\dots,N,
\end{split}\end{equation}
where $[\theta_0, \theta_1]$ is an interval corresponding to a smooth portion of
the boundary $A(\y_i)\cap A(\y_j)$ and the summation is done along all
smooth arcs making up $A_{ij}$ (recall the Figures in Remarks \ref{2Sections}
and \ref{BoundTraceRem}).
Note that, just like in Section \ref{pred-corr}, $r'(\theta)$ can be obtained
using the Implicit Function theorem on the boundary equation
$F(r(\theta),\theta) = 0$, where $F$ is given in \eqref{Frtheta}, that is:
$r_\theta=-\frac{F_\theta}{F_r}$.
The 1-d integrals in \eqref{HessianPolar} are again computed by the composite adaptive
Simpson's rule.
	
%

Of course, due to the symmetric structure of the Hessian matrix, and the
special form of its diagonal (see \eqref{Hessian}), only the 
strictly upper-triangular part of $H$ needs to computed.
Finally, the Hessian will generally be a  very sparse matrix (which is especially 
relevant for large number of target points), since if 
$A(\y_i)\cap A(\y_j) = \emptyset$, then $\frac{\partial^2\Phi}{\partial w_i\partial w_j}=0$.

\subsection{Taking a Newton step: solving a system with $H$}
\label{HessianSolve}
Assume we have a (guess) $w^0$, satisfying the feasibility condition
\eqref{feas-w-Thm} and also such that its component add up to $0$.

Now, a typical Newton step would read
\begin{equation*}
\text{Given}\,\ w^0, \,\  \text{form} \,\ H(w^0)\,\ 
\text{solve for $s$}\,\ H(w^0)s=-\nabla \Phi(w^0),\,\ \text{update}
\,\ w^1=w^0+s\ .
\end{equation*}
However, there are at least two obvious issues that prevent implementing this
step as naively as above.  

For one thing, $H(w^0)$ is singular, see Theorem \ref{HessProp}.  
As said before, this is a reflection of the fact that every feasible vector
$w$ can be shifted by a multiple of $e$ without changing the
associated Laguerre cells $A(y_i)$'s.   Singularity
of the Hessian has of course been recognized by those working with Newton's
method, though dealt with in different ways; for example, in 
\cite{kitagawa2019convergence} the authors use the pseudo-inverse of $H$,
and in \cite{santambrogio2015optimal} an approach is adopted by holding
at $0$ one component of $w$ and eliminating the row and column of
that index from $H$.
However, we favor an approach that exploits that $H$ has
a 1-dimensional nullspace spanned by the vector 
$e=\tiny{\begin{bmatrix} 1 \\ \vdots \\ 1 \end{bmatrix}}\in \R^N$.
So, we select $w^0$ so that its components add up to $0$, 
and we will keep this property during Newton iteration.

To achieve the above, our approach is to seek the update $s\in \R^N$ by ``solving'' 
$H(w^0)s=\nabla \Phi(w^0)$ restricting the solution for $s$ in 
the (orthogonal) complement to $e$.  
In formulas, 
we form once and for all the orthogonal matrix $U$ below
(which is independent of $w$)
\begin{equation}\begin{split}\label{OrthComp}
& U=\begin{bmatrix} q & Q \end{bmatrix}\,\ \text{with}\,\ 
q=\frac{1}{\sqrt{N}}e ,\,\ 
Q= \{I_N - uu^T\}_{\{2,3,\dots,N\}}\in\mathbb{R}^{N\times N-1},\\
& \text{and where} \,\  u = \frac{1}{\sqrt{\sqrt{N}-1}}
(\frac{1}{\sqrt{N}}e - e_1)\in\mathbb{R}^N.
\end{split}\end{equation}
Then, letting $H^0:=H(w^0)$, and $b^0=-\nabla \Phi(w^0)$, 
we rewrite the problem as
$$H^0 s = b^0 \ \to \ 
(U^T H^0U) (U^T s)\ = \ U^T b^0 \ \to \ \begin{bmatrix}
0 & 0 \\ 0 & Q^TH^0Q \end{bmatrix} U^Ts=\begin{bmatrix}
q^T b \\ Q^Tb \end{bmatrix} ,
$$ 
since $U^T H(w^0)=\tiny{\begin{bmatrix} 0 \\ \vdots \\ 0 
\end{bmatrix}}$ and $H^0=(H^0)^T$.  Therefore, we must have
$q^Tb=0$ and thus
$$
\begin{bmatrix}	0 & 0 \\ 0 & Q^TH^0Q \end{bmatrix} 
\begin{bmatrix}	q^T s \\ Q^Ts \end{bmatrix}=
\begin{bmatrix}	0 \\ - Q^T\nabla \Phi(w^0)  \end{bmatrix}$$
from which multiplying both sides by the invertible matrix
$\begin{bmatrix}	1 & 0 \\ 0 & (Q^TH^0Q)^{-1} \end{bmatrix}$, we
obtain $\begin{bmatrix} q^T \\ Q \end{bmatrix} s=
\begin{bmatrix}  0 \\ -[Q^TH^0Q]^{-1}Q^T \nabla \Phi(w^0)
\end{bmatrix}$ and eventually our update $s$ is given by
\begin{equation}\label{NewtUpdate}
	s=-(H^0)^{I}\nabla \Phi(w^0), \,\ \text{where}\,\ 
	(H^0)^{I}=Q[Q^TH^0Q]^{-1}Q^T .
\end{equation}
Since $q^Ts=0$, then $e^Ts = 0$, and so the components of $w^1=w^0+s$
indeed will add up to $0$.

\subsubsection{Conditioning of semi-discrete OT problems}\label{condOT}
The second concern is that the updated shift values needs to 
be feasible,  in other words, see Theorem \ref{feas-w-Thm}, they
must satisfy the condition
\begin{equation}\label{NeedFeas}
\min_{i,j} c(\y_i,\y_j)-|w^1_i - w^1_j|> 0\ ,\,\ \forall i,j=[1,N],\, i\ne j\ .
\end{equation}
If this is satisfied, then the step is accepted, otherwise we reduce
by a factor of $1/2$ the update $s$ until the criterion \eqref{NeedFeas}
is satisfied (similar to damped Newton method, see below)
$$s\leftarrow s/2\ ,\,\ w^1\leftarrow w^0+s \ .$$
\begin{lem}
With feasible $w^0$, the above updating strategy 
will eventually lead to a feasible $w^1$.
\end{lem}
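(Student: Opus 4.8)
The plan is to use two facts: (i) feasibility of $w^0$, being a strict inequality \eqref{w-feasible} over the \emph{finite} set of index pairs, actually holds with a uniform positive margin; and (ii) repeated halving drives the update $s$ to $0$, so the perturbation $w^1-w^0=s$ eventually falls below that margin. Concretely, I would first set
$$\delta := \min_{1\le i<j\le N}\bigl(\,c(\y_i,\y_j)-|w^0_i-w^0_j|\,\bigr)\ ,$$
and note that $\delta>0$ because $w^0$ is feasible (so each term is strictly positive by Theorem \ref{feas-w-Thm}) and the minimum is over finitely many pairs.

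Next, for any candidate $s$ (i.e.\ after some number of halvings of the Newton step) and the resulting $w^1=w^0+s$, I would estimate, for each pair $i\ne j$,
$$|w^1_i-w^1_j| = \bigl|(w^0_i-w^0_j)+(s_i-s_j)\bigr|\le |w^0_i-w^0_j| + 2\|s\|_\infty\ ,$$
hence
$$c(\y_i,\y_j)-|w^1_i-w^1_j|\ \ge\ c(\y_i,\y_j)-|w^0_i-w^0_j|-2\|s\|_\infty\ \ge\ \delta-2\|s\|_\infty\ .$$
This bound is uniform in $(i,j)$, so $w^1$ satisfies \eqref{NeedFeas} as soon as $2\|s\|_\infty<\delta$.

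Finally, each halving replaces $\|s\|_\infty$ by $\|s\|_\infty/2$, so after at most $\bigl\lceil\log_2(2\|s\|_\infty/\delta)\bigr\rceil$ halvings (finitely many, and zero if the original $s$ was already small enough) we reach $2\|s\|_\infty<\delta$, and then $w^1=w^0+s$ is feasible; moreover $e^Ts=0$ is preserved throughout since halving does not change the orthogonality to $e$. There is essentially no obstacle here: the only point requiring a word of care is that it is the \emph{finiteness} of the index set that upgrades the open condition \eqref{w-feasible} at $w^0$ into the uniform margin $\delta$, which is what makes the halving loop terminate.
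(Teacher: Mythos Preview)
Your argument is correct and follows essentially the same approach as the paper's proof: both exploit that feasibility of $w^0$ gives a strictly positive margin $\epsilon=b-|Aw^0|$ (your $\delta$ is $\min_l \epsilon_l$), and then use the triangle inequality to show that halving $s$ enough times brings the perturbation below that margin. The only cosmetic difference is that the paper works componentwise in matrix notation $|Aw|<b$ and obtains the slightly sharper halving count $k=\max\{0,\lceil\max_l(\log_2|As|_l-\log_2\epsilon_l)\rceil\}$, whereas you bound everything uniformly via $\delta$ and $2\|s\|_\infty$.
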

\begin{proof}
We rewrite the feasibility requirement in matrix form as the component-wise
inequality $|Aw|<b$, where the matrix $A\in \R^{N(N-1)/2\times N}$ and
the vector $b\in \R^{N(N-1)/2}$ are given by
$$A=\tiny{\begin{bmatrix} 1 & -1 & 0 & 0 & \cdots & 0 \\
1 & 0 & -1 & 0 & \cdots & 0 \\
1 & 0 & 0 & -1 & \cdots & 0 \\
\vdots & \vdots & \vdots & \vdots & \vdots & \vdots \\
1 & 0 & 0 & 0 & \cdots & -1\\
0 & 1 & -1 & 0 & \cdots & 0\\
0 & 1 & 0 & -1 & \cdots & 0\\
\vdots&\vdots&\vdots&\vdots&\vdots&\vdots
\end{bmatrix}}, \hspace{1em}
b = \tiny{\begin{pmatrix}
c(\y_1, \y_2)\\
c(\y_1, \y_3)\\
c(\y_1, \y_4)\\
\vdots\\
c(\y_1, \y_N)\\
c(\y_2, \y_3)\\
c(\y_2, \y_4)\\
\vdots
\end{pmatrix}}
$$
According to the strategy above, 
consider $w^{1} = w^0 + \frac{1}{2^k}s$, where 
$s = -H(w^0)^{I}\nabla\Phi(w^0)$, and recall that $w^0$ is feasible
(hence, it satisfies \eqref{NeedFeas}).  We claim that $w^1$ is
guaranteed to be feasible for 
$k = \max{\{0, \lceil\max{(\log_2{|As|} - \log_2|\epsilon|)}\rceil\}}$, where 
$\epsilon = b - |Aw^0|$.  In fact:
\begin{equation*}\begin{split}
	& |A(w^0 + \frac{1}{2^k}s)| =  |Aw^0 + \frac{1}{2^k}As| \leq |Aw^0| + \frac{1}{2^k}|As| = b - \epsilon + \frac{1}{2^k}|As| \quad \text{and so} \\
	& \frac{1}{2^k}|As| - \epsilon<0 \iff |As| < 2^k\epsilon \iff \log_2|As|<k+\log_2\epsilon\iff k> \max{(\log_2{|As|} - \log_2|\epsilon|)}\ 
\end{split}\end{equation*}
as claimed.	
\end{proof}
\begin{rem}\label{FeasRem}
Of course, it is possible that $w^0$, albeit feasible, is far from the solution
of the nonlinear system; in this case, it is possible that the neighborhood
of feasibility of $w^0$ is small and
that to obtain $w^1$ by the above updating strategy
will lead to very tiny steps, and ultimately numerical failure.  If this is the case,
there are effectively two possibilities: (i) we should give an improved initial guess 
$w^0$, or (ii) the problem is inherently ill-conditioned, in that the neighborhood
of feasibility of the exact solution $w$ is just too small, and in finite precision
one cannot obtain arbitrary accuracy.  Below, we elaborate on both of these
aspects.
\end{rem}

Motivated by \ref{NeedFeas} and Remark \ref{FeasRem}, 
we introduce the {\emph{feasibility coefficient}} relative to a given
feasible $w$.  Namely, we let it be the quantity:
\begin{equation}\label{FeasCoeff}
\kappa(w) = \min_{i,j} \left[ 1-\frac{|w_{ij}|}{c(y_i,y_j)}\right] \ , 
\end{equation}
and of course $\kappa(w)$ has to be positive.
In our computations, we always monitor this quantity $\kappa(w)$
(during and at the end of the Newton iterations), and we have 
consistently observed that smaller values of $\kappa(w)$ correspond to more
difficult problems; e.g., see Example \ref{ImpactFeas}.
We also note that $\kappa(w)$ is a scaling-invariant quantity (i.e., it is not
changed by uniformly dilating or restricting the domain $\Omega$, since $w$ rescales
in the same way as the cost), 
and it only depends on the problem itself, that is the location
of the target points, the cost function, and the value of $w$, which ties
together the continuous and discrete probability densities.  
Thus, our proposal is: 

\centerline{\bf Regard $1/\kappa$ as 
condition number of semi-discrete optimal transport problems.}

\subsection{Getting an initial guess}\label{InitialGuess}
We have implemented and experimented with several different strategies
to obtain a good and feasible initial guesses for solving $\nabla \Phi(w)=0$.

\begin{itemize}
\item[(i)] {\sl Trivial initial guess}. This is the simplest strategy, 
we take $w^0 = 0$ as an initial guess. 
This corresponds to a classical Voronoi diagram (proximity problem), 
and we call $V_i$ the resulting value of $\mu(A(\y_i))$ obtained when
using $w=0$.  In spite of its simplicity, this approach often worked
remarkably well, especially in conjunction with damped Newton's
method (see below).
\item[(ii)] {\sl Grid Based}.  This technique is based on the work
\cite{Merigot2013a}, and it is quite similar to the well known Lloyd algorithm.  
First, consider a grid of meshsize  $h=(b-a)/N_h$ to discretize $\Omega$,
so that there are $N_h^2 = \frac{(b-a)^2}{h^2}$ squares on the grid.
Compute the coordinates of the $N_h^2$ centers of these squares.
For given shift values $w$, approximate $\mu(A(\y_i))$ as
\begin{gather*}
	\mu(A(\y_i)) \approx \sum_{l=1}^{M_i}\rho(x_l)h^2\ ,
\end{gather*}
where $x_l$, $l=1,2,\dots,M_i$, are the square centers 
which are contained in the Laguerre cell $A(\y_i)$.
Then, using this approximation we can adjust the values $\mu(A(\y_i))$ 
to satisfy $\mu(A(\y_i)) = \nu_i$ up to some tolerance by changing shift values.
For example, if the approximated $\mu(A(\y_i))$ value is greater (smaller) than 
the required $\nu_i$ value, then we force a decrease (increase) in the value of
$\mu(A(\y_i))$ by increasing (decreasing) $w_k$, $\forall k\neq i$;
we do this by  the same increment for all $k\ne i$. 
Note that the adjustment of $\mu(A(\y_i))$ is done one index at a time. 
In practice, we choose the tolerance to be the area of a square, 
$|\mu(A(\y_i))-\nu_i|<h^2$, and an initial increment to be 
$\sqrt{h}$. Depending on the problem, the desired tolerance might not be 
reachable for all $\mu(A(\y_i))$ values. Then, one can either increase the 
maximum  number of allowed iterations, or decrease the mesh size $h$.
See Algorithm \ref{Alg_GridInitialGuess}, \textit{GridInitialGuess},
and Algorithm \ref{Alg_GridMeasure}, \textit{GridMeasure},
for implementation details. 
\begin{itemize}
The next two approaches are classical homotopy techniques, which
we implemented as commonly done when solving nonlinear
systems (e.g., see \cite{Keller}).
\item[(iii)] {\sl Trivial Homotopy}.
We want to solve $g_1(w(\alpha))=0$, where the components of $g_1$ are
given by
\begin{equation*}
(g_1(w))_i = \mu(A(\y_i)) - [(1-\alpha)\mu(V_i) + \alpha \nu_i], \ \ i=1,\dots, N, \,\
0\le \alpha \le 1\ .
\end{equation*}
At $\alpha = 0$, $w = 0$ is a solution, which corresponds to the Voronoi 
diagram $\{V_i\}$ (see (i)), and at $\alpha=1$ we obtain the problem we are
trying to solve.  We observe that the problem can be viewed, for each 
$\alpha$, as finding $w$ such that
$$\mu(A(\y_i)) = \hat \nu_i \equiv
\alpha \nu_i+(1-\alpha)\mu(V_i) , \ \ i=1,\dots, N,$$
that is we have a semi-discrete problem where the target density is
given by $\sum_i \delta(\y_i)\hat \nu_i$, which --by virtue of
Theorem \ref{Thm_ExistUniq}-- is solvable giving a solution
$w(\alpha)$ (unique up to adding a constant to all
entries)\footnote{This homotopy path is the counterpart to what
was used in \cite{MEYRON201913}, where the homotopy was done to move
from an ``easy'' source density to the desired source density. }.
We use Newton's method to solve from $\alpha=0$ to $1$ choosing the
steps adaptively beginning with $\Delta^0=1/10$ (or $1/100$) and updating
it based on the 
number of Newton iterations $k$ required for convergence at the previous step:
\begin{align*}
\Delta^i = 2^{\frac{4-k}{3}}\Delta^{i-1}, \hspace{1em}\alpha^{i} = \alpha^{i-1} + \Delta^i\ .
\end{align*}
We note that the Hessian resulting from using Newton's method is  always 
the one in \eqref{Hessian}, hence it is always singular along
the homotopy path, with null vector given by $e$.

\item[(iv)] {\sl Non-Singular Homotopy}.
Here, we solve $g_2(w(\alpha))=0$, where the components of $g_2$ are
\begin{equation*}
(g_2(w(\alpha)))_i = (1-\alpha)w + \alpha(\mu(A(\y_i)) - \nu_i), \ i=1,\dots, N\ ,
\,\ 0\le \alpha \le 1\ .
\end{equation*}
At $\alpha=0$, we have the solution $w=0$, and for $\alpha>0$, we can
look at the problem as finding $w$ such that $\sum w_i=0$ and
\begin{equation*}
\mu(A(\y_i))=  \hat \nu_i \equiv \nu_i-\frac{1-\alpha}{\alpha}w_i , \ i=1,\dots, N\ ,
	\,\ 0< \alpha \le 1\ ;
\end{equation*}
however, although $\sum_i \hat\nu_i=1$, we now have no guarantee that
$\hat \nu_i>0$, and in fact we occasionally had difficulties with this
homotopy choice precisely because of this fact.
Otherwise, for as long as $\hat \nu_i>0$, 
we can use Newton's method to solve for $w(\alpha)$ from 
$0$ to $1$, and a main advantage now is that 
the Hessian (Jacobian of $g_2$ with respect to $w$)
is non-singular for $\alpha\in [0,1)$, since it is
$(1-\alpha)I +\alpha H(w(\alpha))$ and $H(\alpha)$ is positive semi-definite
from Theorem \ref{Hessian}.
\end{itemize}
\end{itemize}

On the same four problems (E1)-(E4) of Section \ref{CompExamp}, in
Table \ref{Tab_IGcomp} we show relative performance of the above
strategies to provide an initial guess $w^0$.  For the homotopy
techniques, ``Nsteps'' refer to the number of homotopy steps,
and ``Iterations'' to the total number of Newton's iterations.
Clearly, the ``Grid Based'' approach worked much better, and
for this reason all results we report on in Section \ref{CompExamp}
were obtained by using the
``Grid Based'' approach to provide the initial guess $w^0$.

\begin{table}[ht]
	\centering\scalebox{.75}{\renewcommand*{\arraystretch}{2}
		\begin{tabular}{||c||c|c||c|c||c|c||}\hline\hline
			& \multicolumn{2}{|c||}{\textbf{Grid Based} $h=0.05$} & \multicolumn{2}{|c||}{\textbf{Trivial Homotopy}} & \multicolumn{2}{|c||}{\textbf{Non-Singular Homotopy}}\\\hline\hline
			Example & \textbf{Iterations} & \textbf{Time}& \textbf{Nsteps}, \textbf{Iterations} &\textbf{Time} & \textbf{Nsteps}, \textbf{Iterations} &\textbf{Time}\\\hline\hline
			\eqref{E_1} & 2 & 7.6242 & 6, 16 & 50.767 & 7, 22 & 66.755\\\hline
			\eqref{E_2} & 7 & 40.951 & 7, 24 & 126.39 & 8, 31 & 148.63\\\hline
			\eqref{E_3} & 2 & 9.0191 & 5, 11 & 44.867 & 6, 18 & 68.335\\\hline
			\eqref{E_4} & 3 & 22.317 & 6, 19 & 154.22 & 7, 25 & 186.74\\\hline
			\hline
	\end{tabular}}
	\caption{Different strategies to give an initial guess for Newton's 
	method}\label{Tab_IGcomp}
\end{table}

 A standard modification of Newton's method is the well known ``Damped
 Newton's Approach''.    This can be viewed as a mean by which we improve on
 a feasible initial guess $w^0$ , e.g. $w^0=0$, to bring it closer to the solution
 of $\nabla \Phi(w)=0$.  As a general technique, damped Newton's 
 is routinely used in solving nonlinear
 systems, and was also used in \cite{kitagawa2019convergence} for solving
semi-discrete optimal transport problems for the 2-norm square cost.
Below, we simply give its justification for our case of singular Hessian,
particularly given the way we find the Newton's update.

\subsubsection{Damped Newton Method}\label{damped}
When one has to solve a nonlinear system $F(w)=0$, and the Jacobian at $w^0$
is invertible, then it is well known that the Newton direction
is a direction of descent for the function
$g(w):= \|F(w)\|_2^2=F(w)^TF(w)$,
in the sense that $g(w^0+\alpha s)<g(w^0)$ for some $0<\alpha \le 1$ and
$s$ is the solution of $DF(w^0)s=-F(w^0)$. 
Below, we show that also in our case the Newton direction from
\eqref{NewtUpdate} is an appropriate direction of descent in the subspace
complementary to the null-space of $H$.

\begin{lem}\label{DampedDescent}
If $\nabla \Phi(w)\ne 0$, then 
the direction $s$ given by \eqref{NewtUpdate} is a direction of descent
for the functional given by 
\begin{equation}\label{WhatDecreases}
g(w)= (Q^T \nabla \Phi(w))^T (Q^T\nabla \Phi(w))\ ,
\end{equation}
where $U=\begin{bmatrix} q & Q \end{bmatrix}$ is the orthogonal
matrix defined in \eqref{OrthComp}.  That is, there exists $0<\alpha\le 1$ 
such that $g(w^0+\alpha s)<g(w^0)$.
\end{lem}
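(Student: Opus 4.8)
The plan is to compute the gradient of $g(w)$ at $w^0$ and show that its inner product with the Newton direction $s$ from \eqref{NewtUpdate} is strictly negative, which is the defining property of a descent direction (and then existence of a suitable $0<\alpha\le 1$ is standard). First I would differentiate $g(w) = (Q^T\nabla\Phi(w))^T(Q^T\nabla\Phi(w))$ using the chain rule. Writing $H(w)$ for the Hessian of $\Phi$ (so that $H$ is the Jacobian of $\nabla\Phi$, and $H$ is symmetric by Theorem \ref{Thm_Hessian}), we get
\begin{equation*}
\nabla g(w) = 2\, H(w)\, Q\, Q^T \nabla\Phi(w)\ .
\end{equation*}
Evaluating at $w^0$ and abbreviating $H^0 = H(w^0)$, $\nabla\Phi^0 = \nabla\Phi(w^0)$, the directional derivative of $g$ along $s$ is $\nabla g(w^0)^T s = 2\,(\nabla\Phi^0)^T Q Q^T H^0 s$.

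Next I would substitute the Newton update $s = -(H^0)^I \nabla\Phi^0 = -Q[Q^TH^0Q]^{-1}Q^T\nabla\Phi^0$. Using symmetry of $H^0$, this gives
\begin{equation*}
\nabla g(w^0)^T s = -2\,(\nabla\Phi^0)^T Q Q^T H^0 Q [Q^T H^0 Q]^{-1} Q^T \nabla\Phi^0 = -2\,(\nabla\Phi^0)^T Q Q^T \nabla\Phi^0 = -2\,\| Q^T \nabla\Phi^0\|_2^2\ ,
\end{equation*}
where the middle simplification uses that $Q^T H^0 Q$ is invertible (this is exactly Theorem \ref{HessProp}(2): $H^0$ has rank $N-1$ with nullspace spanned by $e=\sqrt{N}\,q$, so its restriction $Q^T H^0 Q$ to the orthogonal complement of $q$ is nonsingular), and that $Q^T Q = I_{N-1}$. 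Hence $\nabla g(w^0)^T s = -2 g(w^0) \le 0$.

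It remains to rule out the degenerate case where this quantity is zero, i.e. to show $g(w^0)>0$ when $\nabla\Phi^0\ne 0$. Since $\nabla\Phi^0 = \mu(A(\y_i))-\nu_i$ has components summing to zero (both $\sum_i\mu(A(\y_i))$ and $\sum_i\nu_i$ equal $1$), $\nabla\Phi^0$ is orthogonal to $e$, hence lies in the column space of $Q$; therefore $Q^T\nabla\Phi^0 = 0$ would force $\nabla\Phi^0 = QQ^T\nabla\Phi^0 = 0$, contradicting the hypothesis. So $g(w^0)>0$ and $\nabla g(w^0)^T s < 0$. Finally, since $g$ is $\cont^1$ (it is a smooth function of $\nabla\Phi$, which is at least $\cont^1$ in $w$ by the Hessian formula), having a strictly negative directional derivative along $s$ guarantees $g(w^0+\alpha s)<g(w^0)$ for all sufficiently small $\alpha>0$, and in particular for some $0<\alpha\le 1$, which completes the proof. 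The only mildly delicate point is the invertibility of $Q^TH^0Q$ used in the cancellation step, but this is already supplied by Theorem \ref{HessProp} applied to the feasible vector $w^0$; everything else is routine chain-rule computation.
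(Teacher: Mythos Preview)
Your proof is correct and follows essentially the same approach as the paper: compute the directional derivative of $g$ along $s$, simplify using the explicit form of $s$ and invertibility of $Q^TH^0Q$ to obtain $\nabla g(w^0)^Ts=-2g(w^0)$, and then argue that $g(w^0)>0$ via the fact that $\nabla\Phi(w^0)$ has zero-sum components (the paper phrases this last step through the kernel of $Q^T$ rather than the column space of $Q$, but the content is identical). Your explicit invocation of Theorem~\ref{HessProp} for the invertibility of $Q^TH^0Q$ is a nice touch that the paper leaves implicit.
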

\begin{proof}
Expand:
\begin{equation*}\begin{split}
g(w+\alpha s)& =g(w)+\alpha (\nabla g(w))^Ts+ \mathcal{O}(\alpha^2) \\
& =g(w)+2\alpha (\nabla \Phi(w))^TQQ^TH(w)s+ \mathcal{O}(\alpha^2)\ .
\end{split}\end{equation*}
Then, using \eqref{NewtUpdate}, we have
$$g(w+\alpha s)-g(w)=-2\alpha (\nabla \Phi(w))^TQ(Q^TH(w)Q)
(Q^TH(w)Q)^{-1}Q^T\nabla \Phi(w)+ 
\mathcal{O}(\alpha^2)= -2\alpha g(w)+\mathcal{O}(\alpha^2) .$$
Therefore, the claim follows if $g(w)>0$.  Next, we show that $g(w)=0$
only if $\nabla \Phi(w)=0$ and the result will follow.
In fact:
$$g(w)=0\ \longleftrightarrow \ Q^T\nabla \Phi(w)=0 \ \longleftrightarrow \ 
\nabla \Phi(w)=ce\ ,\,\ c\in \R\ ,$$
since the kernel of $Q^T$ is 1-dimensional and spanned by $e$.  Now,
since $\nabla \Phi(w)=\left(\mu(A_i)-\nu_i\right)_{i=1,\dots, N}$, then
$\sum_{i=1}^N (\nabla \Phi(w))_i =0$ and therefore we must have
$c=0$.
\end{proof}
Lemma \ref{DampedDescent} justifies use of a full damped Newton's method
to solve the problem $\nabla \Phi (w)=0$.  And, 
in the end, using damped Newton's method revealed to be the best method
(among those we implemented) to perform the full iteration in order to solve
$\nabla \Phi(w)=0$.  Namely, we always attempt to take the full Newton step, 
and damp it --if needed-- to force a decrease in the functional $g(w)$ in
\eqref{WhatDecreases}. 
Details of our implementation are provided in Algorithm 
\ref{Alg_DampedNewton}, \textit{DampedNewton}.  That said, most of the time
we found no need to damp the Newton step, see Section \ref{CompExamp}.

\section{Computational Examples}\label{CompExamp}

Here we present results of our algorithms.  We pay special attention to
give Examples that can be replicated by other approaches, and give
quantitative results, rather than just pictures.

\renewcommand\theequation{E\arabic{equation}} \setcounter{equation}{0}

All of our computations were performed with {\tt Matlab}.  The key quantities
we monitor are the {\bf Error}, which is $\|\nabla \Phi(w)\|_\infty$ at convergence,
the number of Newton {\bf Iterations} and further the
number of {\bf Damped} steps if required, the execution {\bf Time}, 
and the {\bf Feasibility Coefficient} $\kappa$ as needed.  The key
quantities by which we control accuracy are two tolerance values, one to
control accuracy of the computation of $\mu(A(\y_i))$ (and, therefore of the
gradient $\nabla \Phi$), the other to measure convergence of the Newton's
process, as assessed by either $\|\nabla \Phi\|_\infty$ or the $\infty$-norm
of the Newton's updates.  The default values are $10^{-12}$ for the approximation
of $\mu(A(\y_i))$ and $10^{-8}$ for Newton's convergence.

The next four examples have been used to produce Tables \ref{Tab_Hesscomp}
and \ref{Tab_IGcomp} as well as Table \ref{Tab_Algcomp} below.  They are
fairly easy problems, mostly used for testing purposes.  For all of them,
the domain is $\Omega = [0,1]\times [0,1]$, the cost is
the $2$-norm, $c(\x,\y) = ||\x-\y||_2$, and $\rho$, $\nu$,  and the locations
$\y_i$'s are given below.  See Figure below for their solution.

\begin{gather}\label{E_1}
\rho(x) = 1,\ \nu = \frac{1}{2}\small{\begin{pmatrix}1\\1\end{pmatrix}},\ 
\y = \bigg{\{}\small{\begin{pmatrix}0.125\\0.125\end{pmatrix},\ 
\begin{pmatrix}0.5\\0.5\end{pmatrix}}\bigg{\}},\ .
\end{gather}
\begin{gather}\label{E_2}
	\rho(x) = 4x_1x_2,\ \nu= \frac{1}{4} 
	\small{\begin{pmatrix}1\\\vdots\\1\end{pmatrix}},\
\y = \bigg{\{}\small{\begin{pmatrix}0.25\\0.25\end{pmatrix},\ 
	\begin{pmatrix}0.75\\0.25\end{pmatrix},\ 
	\begin{pmatrix}0.5\\0.25(1+\sqrt{3})\end{pmatrix},\ 
	\begin{pmatrix}0.5\\0.25(1+\frac{\sqrt{3}}{3})\end{pmatrix}}\bigg{\}}.
\end{gather}
\begin{gather}\label{E_3}
\rho(x) = 1,\ \nu = \frac{1}{5}\small{\begin{pmatrix}1\\\vdots\\1\end{pmatrix}},\ 
		\y = \frac{1}{4096}\bigg{\{}\small{\begin{pmatrix}646\\3491\end{pmatrix},\ \begin{pmatrix}3480\\3686\end{pmatrix},\ \begin{pmatrix}1364\\2737\end{pmatrix},\ \begin{pmatrix}609\\857\end{pmatrix},\ \begin{pmatrix}2967\\509\end{pmatrix}}\bigg{\}}.
\end{gather}
\begin{gather}\label{E_4}
\rho(\x) = 1,\ \nu = \frac{1}{8}\small{\begin{pmatrix}1\\ \vdots\\1\end{pmatrix}},\ 
\y = \text{8 random uniform points using seed $\mathtt{rng(2)}$}.
\end{gather}

\begin{figure}[ht]
	\centering
	\begin{subfigure}[b]{0.245\linewidth}
		\centering
		\includegraphics[width=\linewidth]{./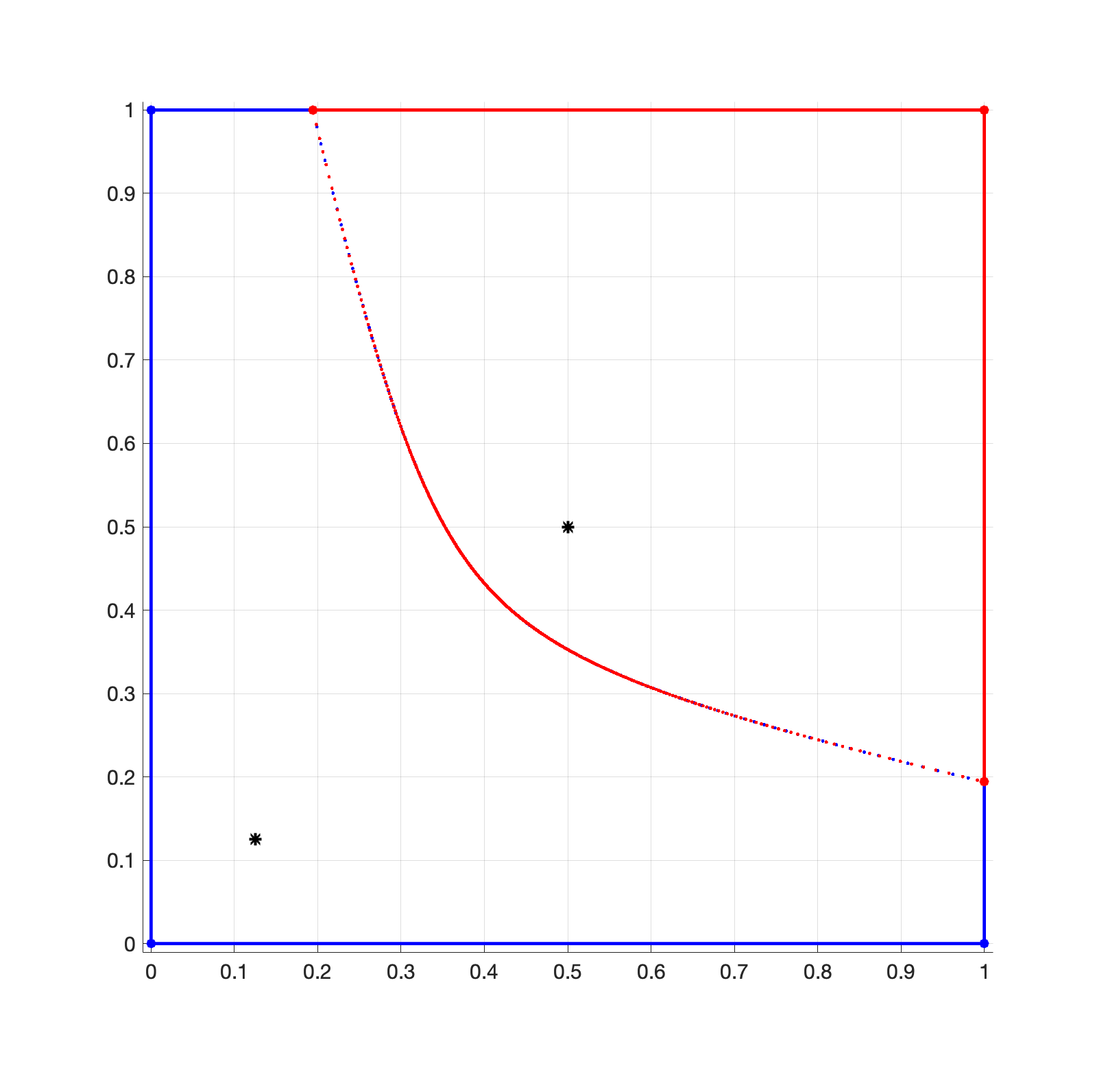}
		\caption{Example \eqref{E_1}}\label{Fig_E_1}
	\end{subfigure}
	\hfill
	\begin{subfigure}[b]{0.245\linewidth}
		\centering
		\includegraphics[width=\linewidth]{./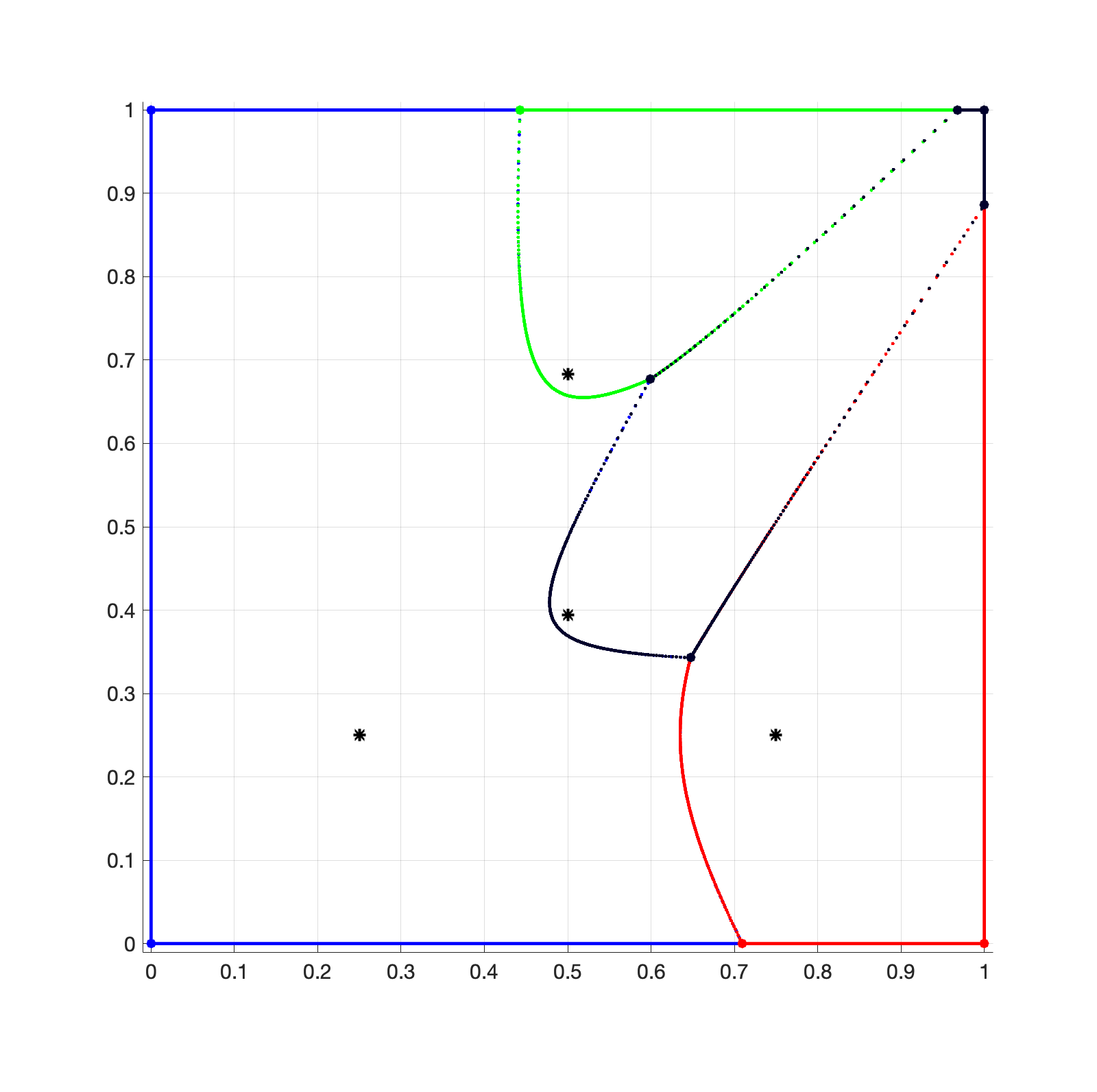}
		\caption{Example \eqref{E_2}}\label{Fig_E_2}
	\end{subfigure}
	\hfill
	\begin{subfigure}[b]{0.245\linewidth}
		\centering
		\includegraphics[width=\linewidth]{./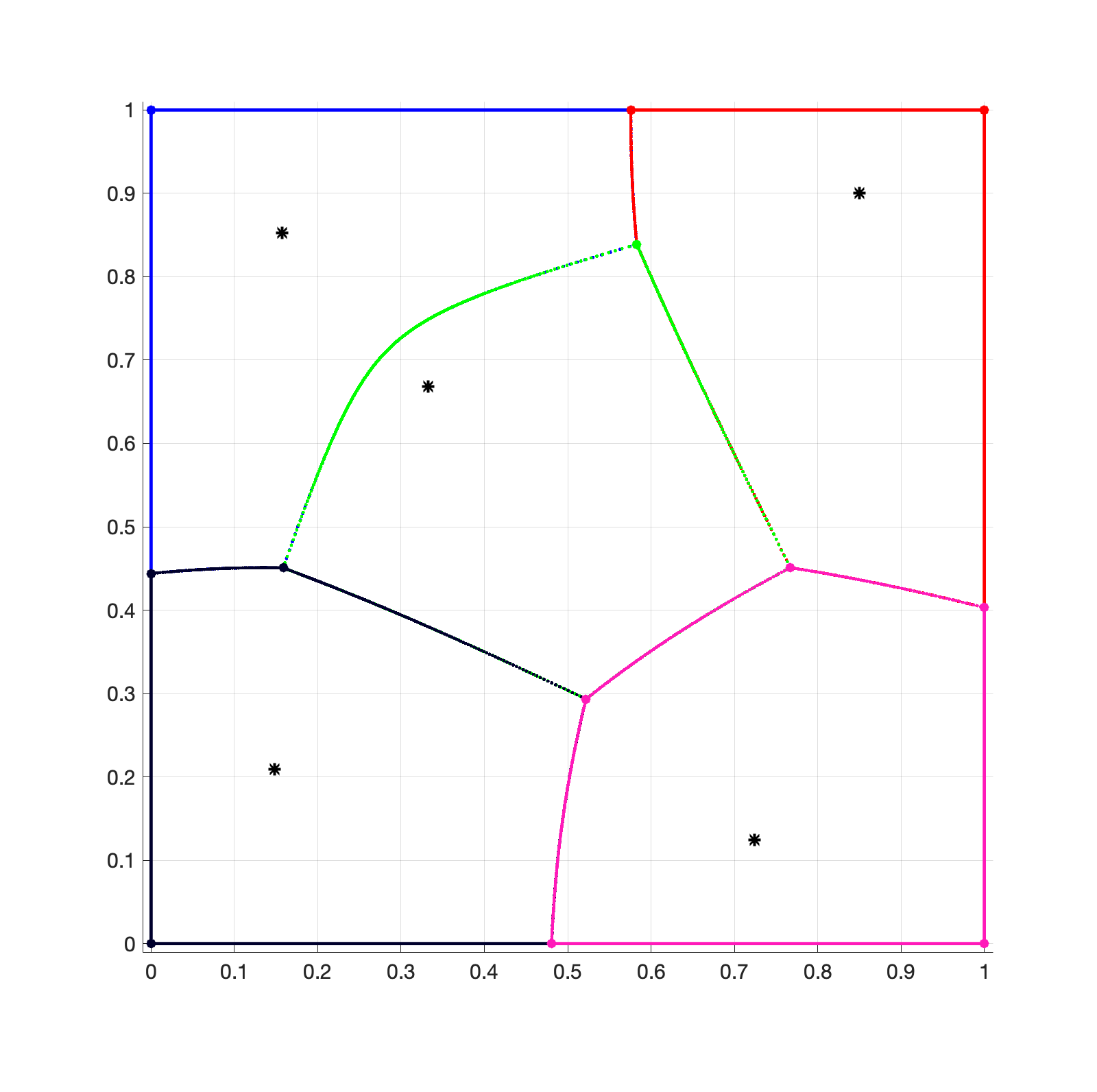}
		\caption{Example \eqref{E_3}}\label{Fig_E_3}
	\end{subfigure}
	\hfill
	\begin{subfigure}[b]{0.245\linewidth}
		\centering
		\includegraphics[width=\linewidth]{./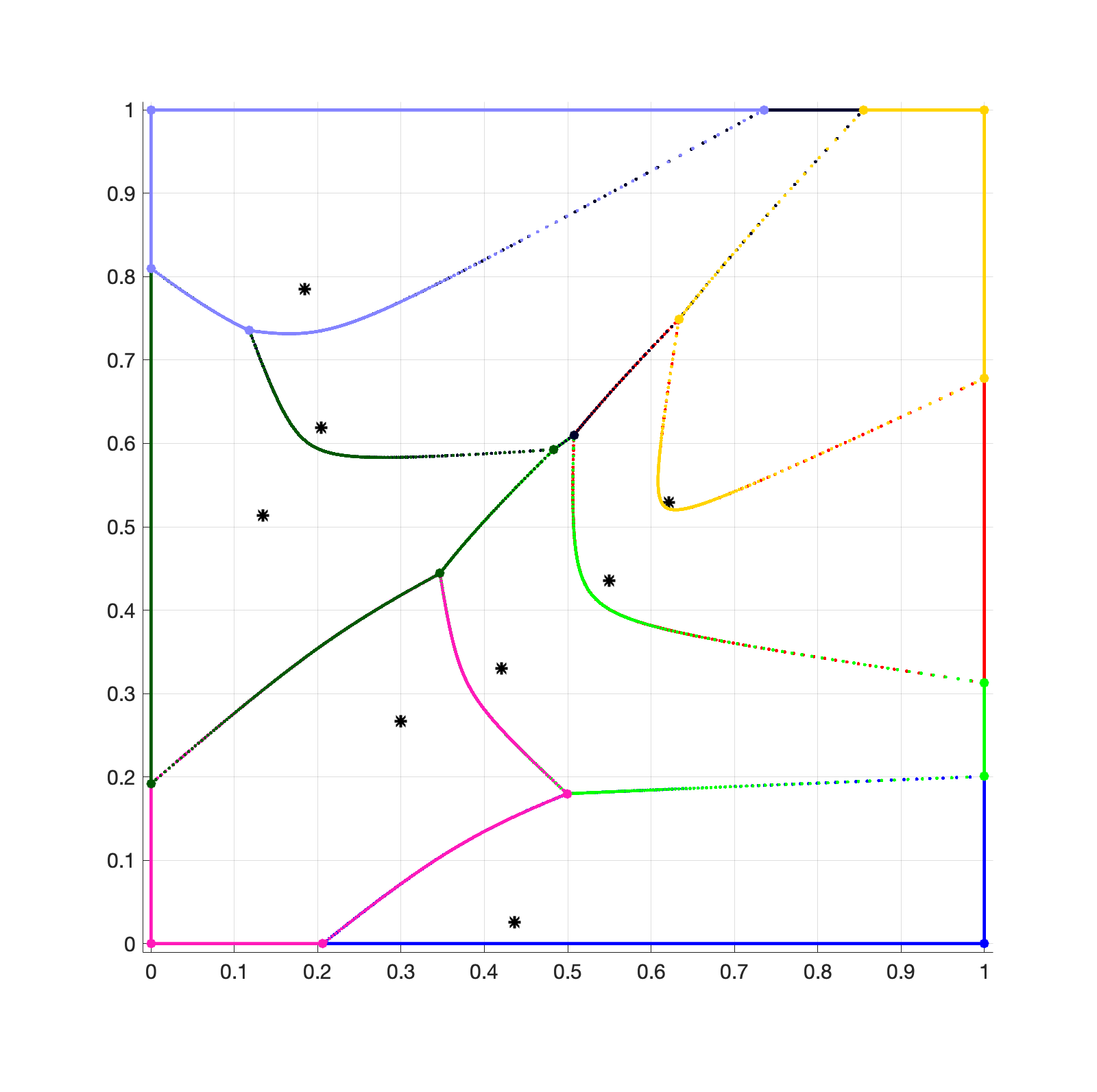}
		\caption{Example \eqref{E_4}}\label{Fig_E_4}
	\end{subfigure}
\end{figure}

\begin{exm}\label{CompareOthers}
In Table \ref{Tab_Algcomp},  we compare performance of our method with 
two other techniques, the ``boundary method'' of \cite{LucaJD} and the
minimization approach via use of a quasi-Newton's method 
as in \cite{Hartmann2020SemidiscreteOT}.  In the former case,
the results are those obtained by the C++-code kindly provided
by J.D. Walsh and the {\bf Time} in this case refers to that
of the C++-code, neglecting the final time required to measure
the {\bf Error} since the final areas computation is done by
our approach.  In the case of the BFGS quasi-Newton method
we used the tried and true {\tt Matlab} routine {\tt fminunc}.
First of all, with the boundary method we could not reach the required 
accuracy of ${\mathtt{TOL}}=10^{-8}$ and thus had to lower
that to $10^{-5}$, and even so we had to decrease considerably the
minimum allowed grid-size.  Secondly, with BFGS we could never obtain
answers more accurate than a couple of digits and the code 
{\tt fminunc} always returned a message that was not able
to achieve further reduction in the error.

\rm{
\begin{table}[ht]
	\centering\scalebox{.75}{\renewcommand*{\arraystretch}{2}
		\begin{tabular}{||c||c|c|c||c|c|c||c|c|c||}\hline\hline
			& \multicolumn{3}{|c||}{\textbf{Our method}} & \multicolumn{3}{|c||}{\textbf{Boundary method of \cite{LucaJD}}} & \multicolumn{3}{|c||}{\textbf{Quasi-Newton (BFGS) using \tt{fminunc}}}\\\hline\hline
			Example & \textbf{Error} & \textbf{Iterations} & \textbf{Time} & \textbf{Error} & \textbf{Grid size} & \textbf{Time} & \textbf{Error} & \textbf{Iterations} & \textbf{Time}\\\hline\hline
			\eqref{E_1} & $4.1959*10^{-11}$ & 2 & 7.9786 & $7.0657*10^{-6}$ & $2^{-15}$ & 1.01 & $1.0365*10^{-2}$ & 1 & 645.81 \\\hline
			\eqref{E_2} & $1.5987*10^{-9}$ & 6 & 37.807 & $7.8626*10^{-6}$ & $2^{-18}$ & 34.5 & $5.0087*10^{-2}$ & 7 & 591.15 \\\hline
			\eqref{E_3} & $7.2028*10^{-6}$ & 1 & 6.2608 & $5.3557*10^{-6}$ & $2^{-16}$ & 6.06 & $1.3930*10^{-3}$ & 3 & 500.25 \\\hline
			\eqref{E_4} & $3.3980*10^{-7}$ & 2 & 16.170 & $8.3461*10^{-7}$ & $2^{-20}$ & 772  & $7.6671*10^{-3}$ & 1 & 430.38\\\hline
			\hline
	\end{tabular}}
	\caption{Different Methods}\label{Tab_Algcomp}
\end{table}
}
\end{exm}

\begin{exm}[Example with Different Source Densities]\label{DifferentSources}
Here we keep $\Omega = [0,1]\times [0,1]$, the cost is $c(\x,\y) = ||\x-\y||_2$,
the target points are located at
$\bigg{\{}\small{\begin{pmatrix}0.25\\0.25\end{pmatrix},\ \begin{pmatrix}0.5\\0.75\end{pmatrix},\ \begin{pmatrix}0.75\\0.25\end{pmatrix},\ \begin{pmatrix}0.5\\0.3\end{pmatrix}}\bigg{\}}$, $\nu$ is uniform, and we take four different source densities to illustrate the
impact of $\rho$ on the overall error and execution time.  The source
densities are
\begin{gather*}
	\rho_1(x) = 1,\,\ \text{uniform},\,\ \ 
	\rho_2(x) = 4x_1x_2,\,\ \text{non-uniform},\,\ 
	\rho_3(x) = \gamma e^{-10(x_1-0.5)^2 - 
		10(x_2 - 0.5)^2},\,\ \text{Gaussian} \\
	\rho_4(x) =  \begin{cases} 
		\frac{1}{2} & 0 \leq x_1 \leq 0.3 \ ,\, \forall x_2 \\
		g(x) & 0.3 < x_1 < 0.7\ ,\, \forall x_2 \\
		\frac{3}{2} & 0.7 \leq x_1 \leq 1 \ ,\, \forall x_2 
	\end{cases}
\,\,\, \text{non smooth, smoothed out}
\end{gather*}
where $\gamma$ is the normalization constant and 
\begin{gather*}
	g(x_1,x_2) = \frac{1}{2} + \frac{(500 x_1 (4 x_1 (175 (x_1-3) x_1+594)-1203)+115173) (10 x_1-3)^5}{131072}\ .
\end{gather*}
We notice that $g(x)$ is obtained as a $\cont^4$ interpolant on a
non-smooth density given by $1/2$ before $x_1=1/2$ and by $3/2$
past it.  For all problems, 
the initial guess is obtained with grid size $h=0.05$.

\rm{
\begin{table}[ht]
	\centering\scalebox{.75}{\renewcommand*{\arraystretch}{2}
		\begin{tabular}{||c|c|c|c|c|c||}\hline\hline
			\textbf{Source Density} & \textbf{Error} &  \textbf{Iterations} & \textbf{Damping} & \textbf{Time} & \textbf{Feasibility Coefficient} $\mathbf{\kappa}$ \\ \hline\hline
			Uniform & $3.7612*10^{-9}$ & 2 & 0 & $12.806$ & $0.45594$ \\\hline
			Non-Uniform & $3.4750*10^{-14}$ & 7 & 2 & $46.448$ & $0.13112$ \\\hline
			Gaussian & $7.3751*10^{-11}$ & 4 & 0 & $53.691$ & $0.66334$ \\\hline
			Non smooth & $4.9682*10^{-14}$ & 6 & 0 & $119.99$ & $0.34405$ \\\hline
			\hline
	\end{tabular}}
	\caption{Example with Different Source Densities}\label{Tab_DiffSource}
\end{table}
}

\begin{figure}[ht]
	\centering
	\begin{subfigure}[b]{0.24\linewidth}
		\centering
		\includegraphics[width=\linewidth]{./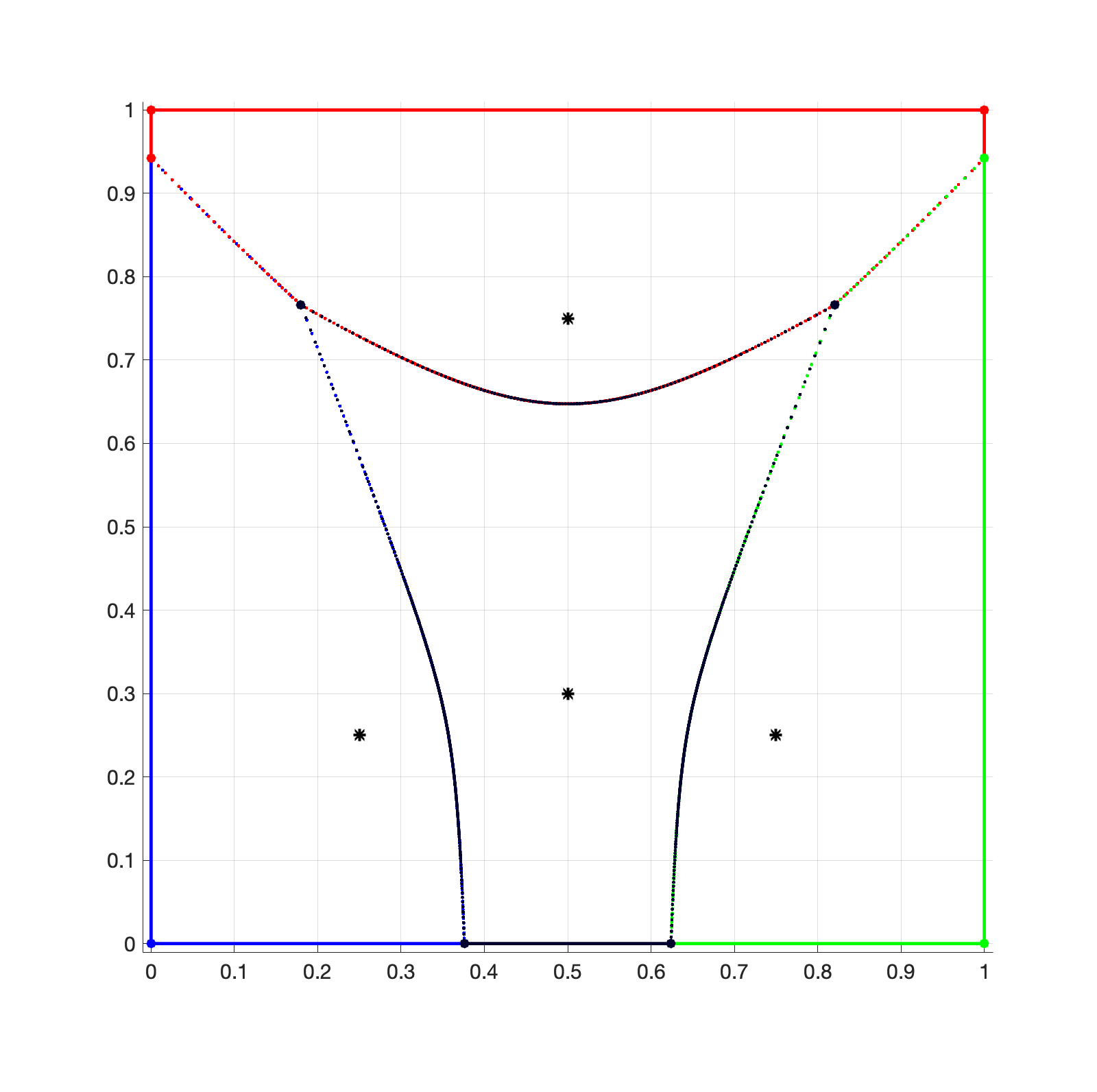}
		\caption{Uniform}
	\end{subfigure}
	\hfill
	\begin{subfigure}[b]{0.24\linewidth}
		\centering
		\includegraphics[width=\linewidth]{./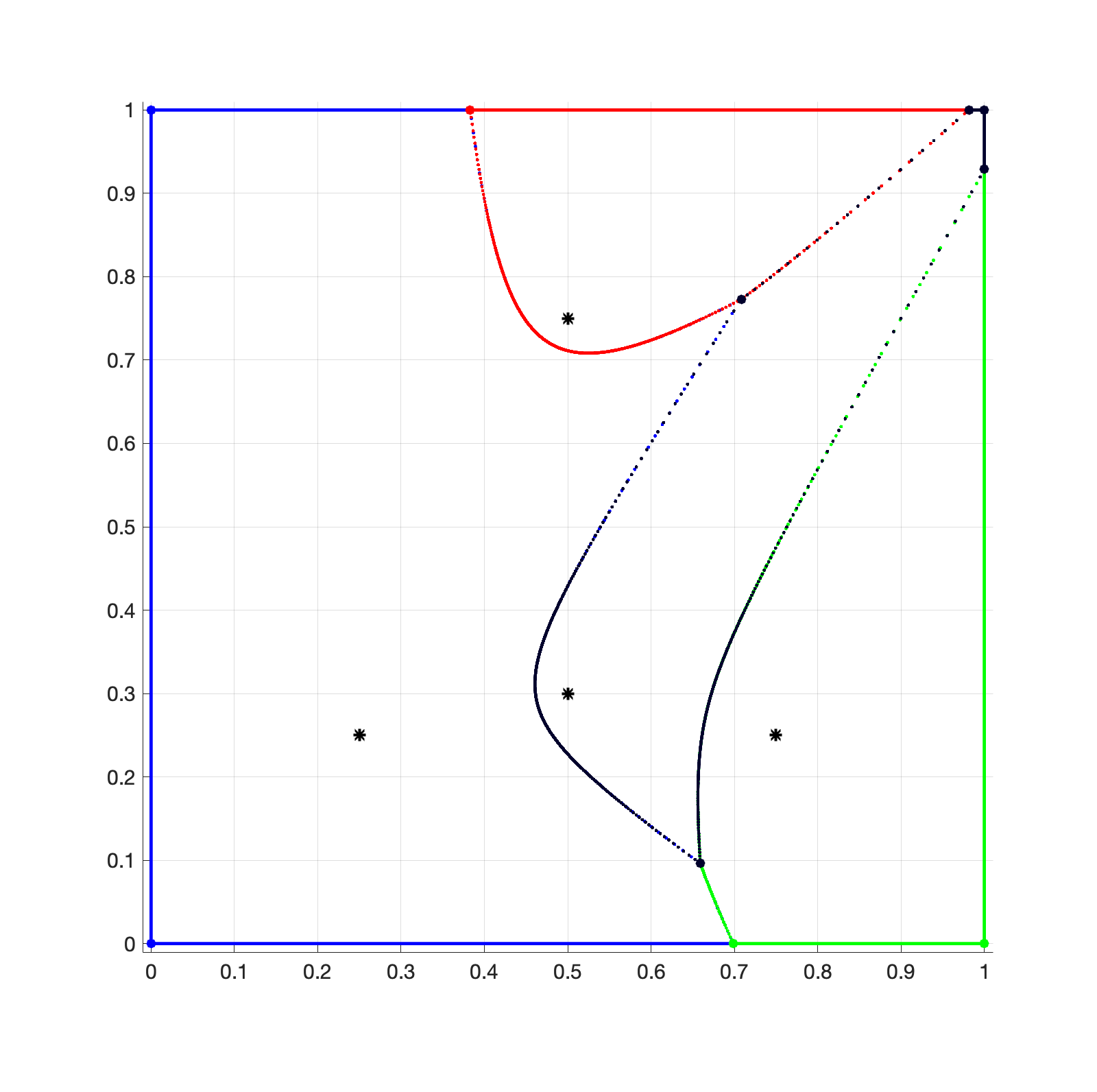}
		\caption{Non-Uniform}
	\end{subfigure}
	\hfill
	\begin{subfigure}[b]{0.24\linewidth}
		\centering
		\includegraphics[width=\linewidth]{./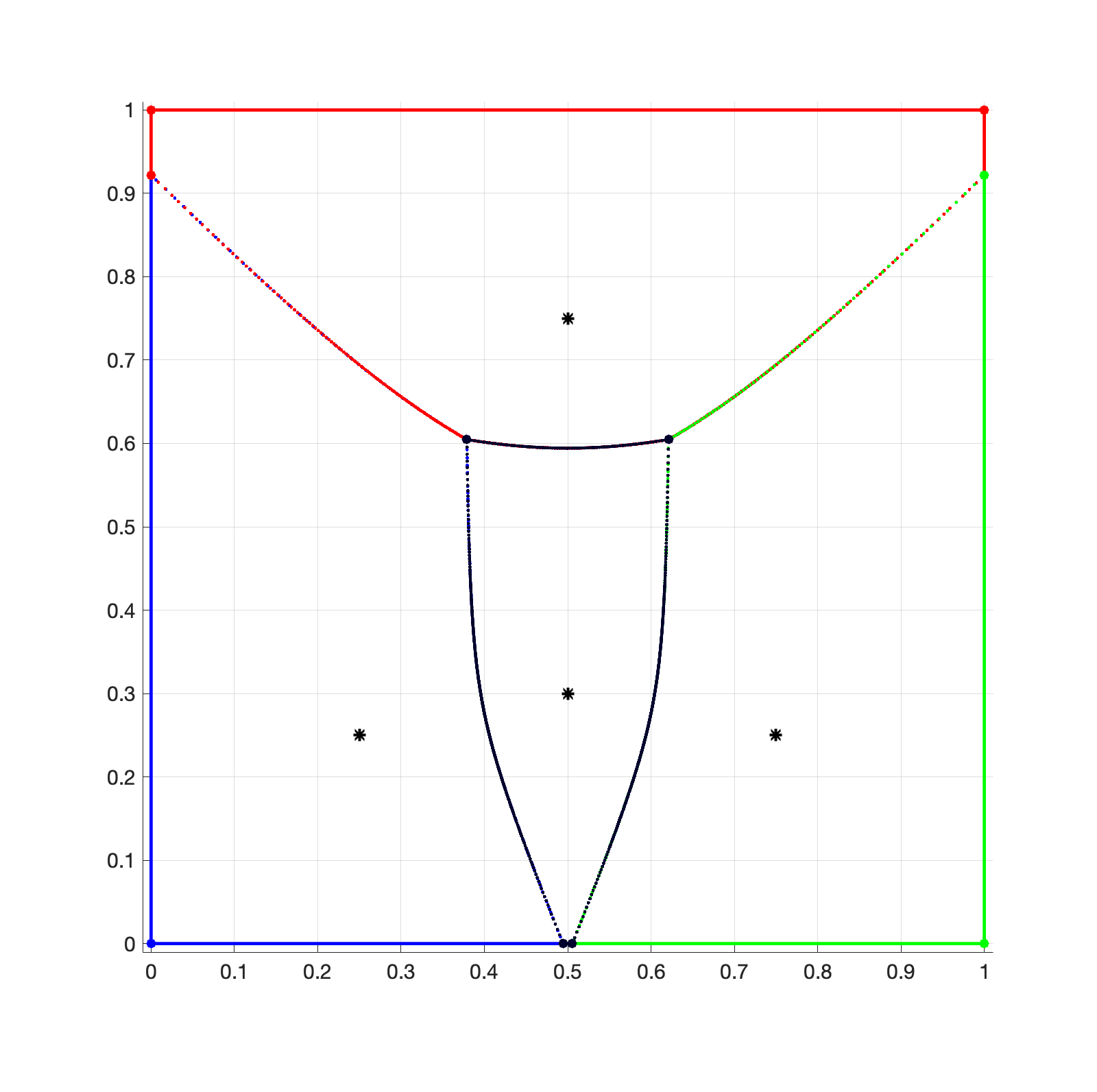}
		\caption{Gaussian}
	\end{subfigure}
	\hfill
	\begin{subfigure}[b]{0.24\linewidth}
		\centering
		\includegraphics[width=\linewidth]{./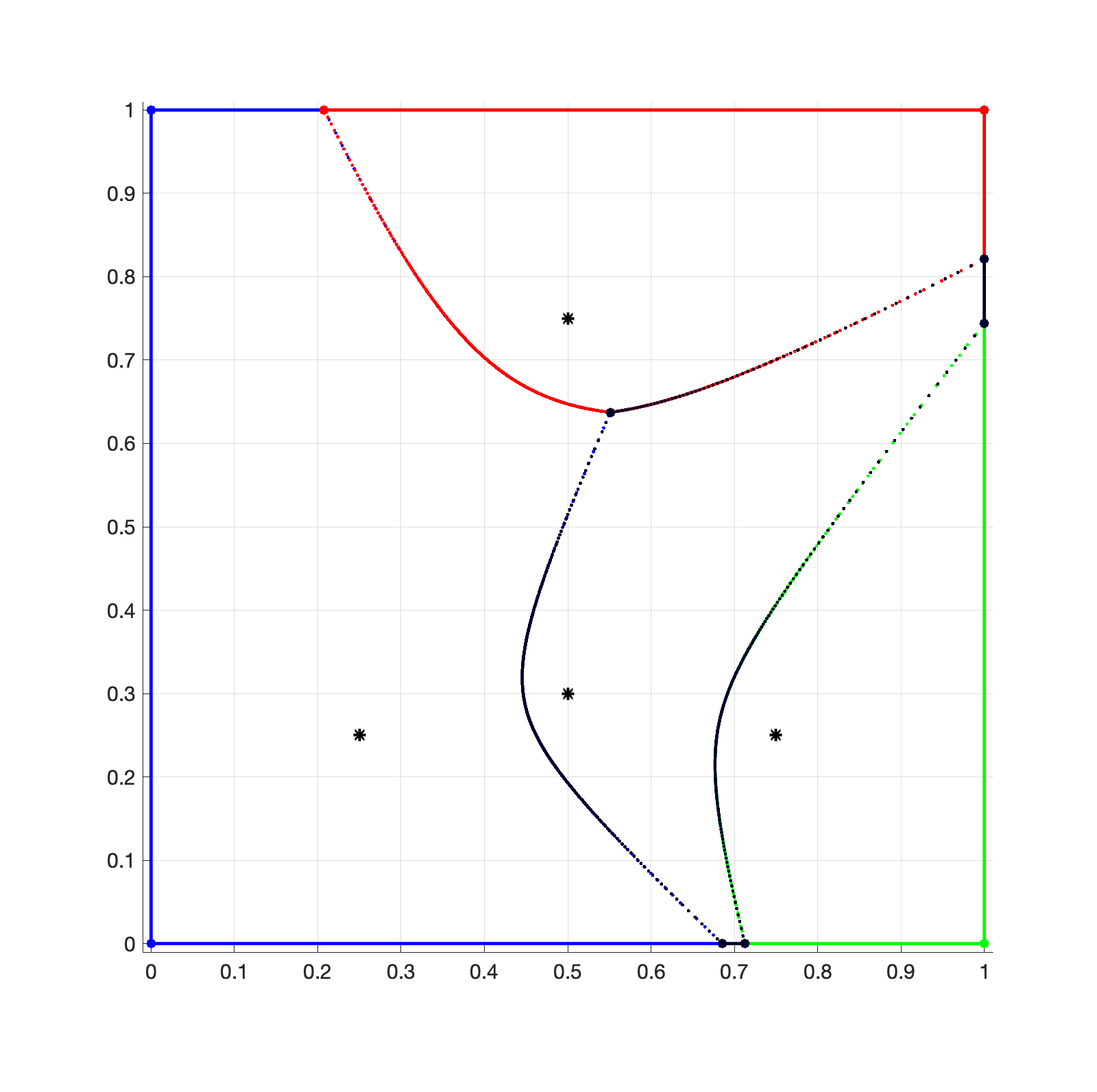}
		\caption{Non smooth}
	\end{subfigure}
\end{figure}
{\it
Clearly, the non-smooth case is the most computationally expensive due to the
very large values of the derivatives of $\rho$.
}
\end{exm}

\begin{exm}[Different $p$-norms]\label{DifferentCosts}
Here we want to assess the impact of different $p$-norms for the cost,
in particular we are interested in assessing the impact of using odd
value of $p$ (which gives reduced smoothness with respect
to the even values of $p$) and of ``approaching'' the $\infty$-norm
and the $1$-norm.  Although with costs given by the 
$\infty$-norm and the $1$-norm the problem is not well posed
(see Example \ref{Rem_CountExamp}), we will see that as we approach
these values the algorithm selects clearly defined Laguerre
tessellations, and we conjecture that this is a typical scenario
and deserves further study.  (Incidentally, also for Example \ref{Rem_CountExamp}, the
approach just outlined select a well defined tessellation).

We fix $\Omega = [0,1]\times [0,1]$, 
$\y = \bigg{\{}\small{\begin{pmatrix}0.25\\0.25\end{pmatrix},\ 
\begin{pmatrix}0.5\\0.75\end{pmatrix},\ 
\begin{pmatrix}0.75\\0.25\end{pmatrix}}\bigg{\}}$, take $\rho$ and $\nu$
uniform, and we consider the following cost functions: 
Cost function:
\begin{enumerate}
	\item $c(\x,\y)=\|\x-\y\|_3$; $c(\x,\y) = \frac{1}{2}(||\x-\y||_2 +||\x-\y||_4)$); $c(\x,\y) = ||\x-\y||_3 + ||\x-\y||_5 + ||\x-\y||_7$;
	\item toward the $\infty$-norm: $c(\x,\y)=\|\x-\y\|_{2^k}$, $k = 1, 2, 3, 4, 5$;
	\item toward the $1$-norm: $c(\x,\y)=\|\x-\y\|_{1+2^{-k}}$, 
	$k = 1, 2, 3, 4, 5$.
\end{enumerate}
All results obtained with initial guess from the grid based approach 
with $h=0.05$.

\rm{
\begin{table}[ht]
	\centering\scalebox{.75}{\renewcommand*{\arraystretch}{2}
		\begin{tabular}{||c|c|c|c|c|c||}\hline\hline
			\textbf{Cost Function} & \textbf{Error} &  \textbf{Iterations} & \textbf{Damping} & \textbf{Time} & \textbf{Feasibility Coefficient} $\mathbf{\kappa}$\\\hline\hline
			$\|\cdot \|_3$ & $1.9444*10^{-10}$ & 2 & 0 & $8.6622$ & $0.74508$ \\\hline
			$\frac{1}{2}(||\cdot ||_2 +||\cdot ||_4)$ & $1.0988*10^{-12}$ & 2 & 0 & $10.756$ & $0.74652$\\\hline
			$\|\cdot\|_3+\|\cdot\|_y+\|\cdot\|_7$ & $8.1481*10^{-10}$ & 2 & 0 & $10.742$ & $0.74023$\\\hline
			\hline
	\end{tabular}}
	\caption{Example with Different $p$-Norm Cost Functions}\label{Tab_DiffNorm}
\end{table}
}

\it{
\noindent
No appreciable difference is seen with the above three different costs, as
confirmed by Figure \ref{Fig_DiffNorm}.}

\rm{
\begin{table}[ht]
	\centering\scalebox{.75}{\renewcommand*{\arraystretch}{2}
		\begin{tabular}{||c|c|c|c|c|c||}\hline\hline
			\textbf{} & \textbf{Error} & \textbf{Iterations} & \textbf{Damping} & \textbf{Time} & \textbf{Feasibility Coefficient} $\mathbf{\kappa}$\\\hline\hline
			$k=1$ & $1.5999*10^{-10}$ & 2 & 0 & $6.539$ & $0.74940$ \\\hline
			$k=2$ & $7.9542*10^{-10}$ & 2 & 0 & $7.6051$ & $0.74083$ \\\hline
			$k=3$ & $4.1729*10^{-9}$ & 2 & 0 & $7.8891$ & $0.73576$ \\\hline
			$k=4$ & $7.0427*10^{-9}$ & 2 & 0 & $8.1167$ & $0.73452$ \\\hline
			$k=5$ & $6.1689*10^{-11}$ & 3 & 0 & $320.97$ & $0.73414$ \\\hline\hline
	\end{tabular}}
	\caption{Approaching the $\infty$-norm.  Cost is 
		$\|\x-\y \|_{2^k}$}\label{Tab_LargeNorm}
\end{table}
}

\it{\noindent
For larger values of $p$, computation of the integrals becomes expensive
because of large derivative values.  Figure \ref{Fig_LargeNorm} clearly
shows the decrease in smoothness. }
\rm{
\begin{table}[ht]
	\centering\scalebox{.75}{\renewcommand*{\arraystretch}{2}
		\begin{tabular}{||c|c|c|c|c|c||}\hline\hline
			\textbf{} & \textbf{Error} & \textbf{Iterations} & \textbf{Damping} & \textbf{Time} & \textbf{Feasibility Coefficient} $\mathbf{\kappa}$\\\hline\hline
			$k=1$ & $5.1627*10^{-9}$ & 2 & 0 & $46.634$ & $0.74426$ \\\hline
			$k=2$ & $1.6436*10^{-11}$ & 3 & 0 & $214.18$ & $0.73291$ \\\hline
			$k=3$ & $3.0620*10^{-9}$ & 2 & 0 & $165.3$ & $0.7261$ \\\hline
			$k=4$ & $3.4457*10^{-10}$ & 2 & 0 & $249.13$ & $0.72406$\\\hline
			$k=5$ & $2.6122*10^{-11}$ & 2 & 0 & $283.11$ & $0.72312$\\\hline
			\hline
	\end{tabular}}
	\caption{Approaching the $1$-norm. Cost is 
		$\|\x-\y \|_{1+2^{-k}}$}\label{Tab_SmallNorm}
\end{table}
}

\it{
\noindent
Also for costs approaching $1$-norm, there is a clear increase in the
computation time, again due to the decreased smoothness
of the boundary.  Figure \ref{Fig_SmallNorm} shows this clearly.

It is very interesting that the value of $\kappa$ is effectively the same
across all of the above experiments, and does not betray ill-conditioning
of the problem.  This is reflected in the fact that
only 2 (or occasionally 3) Newton iterations are required for all of
the above costs, and that the value of the Error is small.  The difficulties
here are entirely due to the decrease in smoothness of the boundary.}

\begin{figure}[ht]
	\centering
	\begin{subfigure}[b]{0.3\linewidth}
		\centering
		\includegraphics[width=\linewidth]{./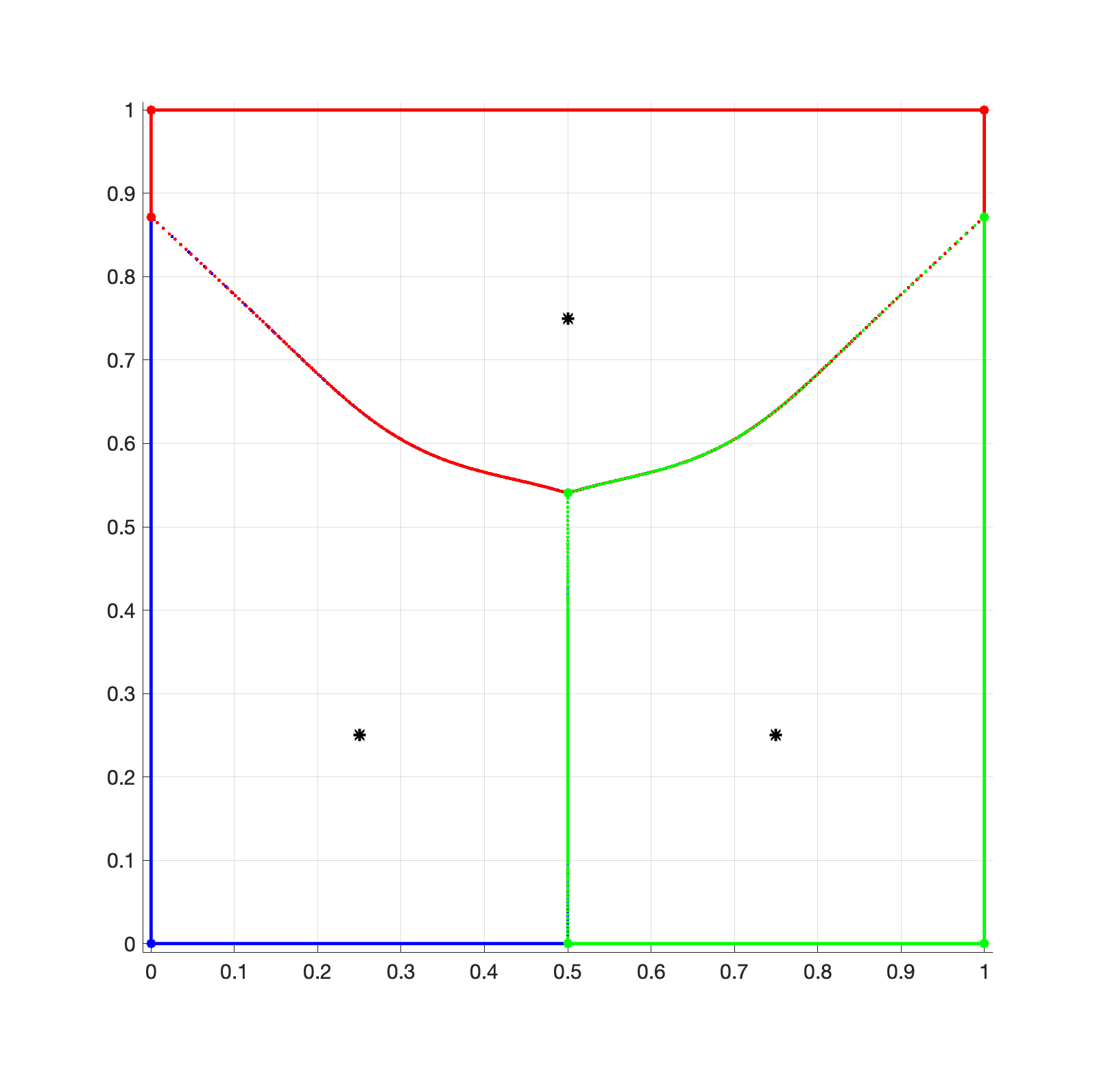}
		\caption{$\|\cdot\|_3$}\label{Fig_DiffNorm_3}
	\end{subfigure}
	\hfill
	\begin{subfigure}[b]{0.3\linewidth}
		\centering
		\includegraphics[width=\linewidth]{./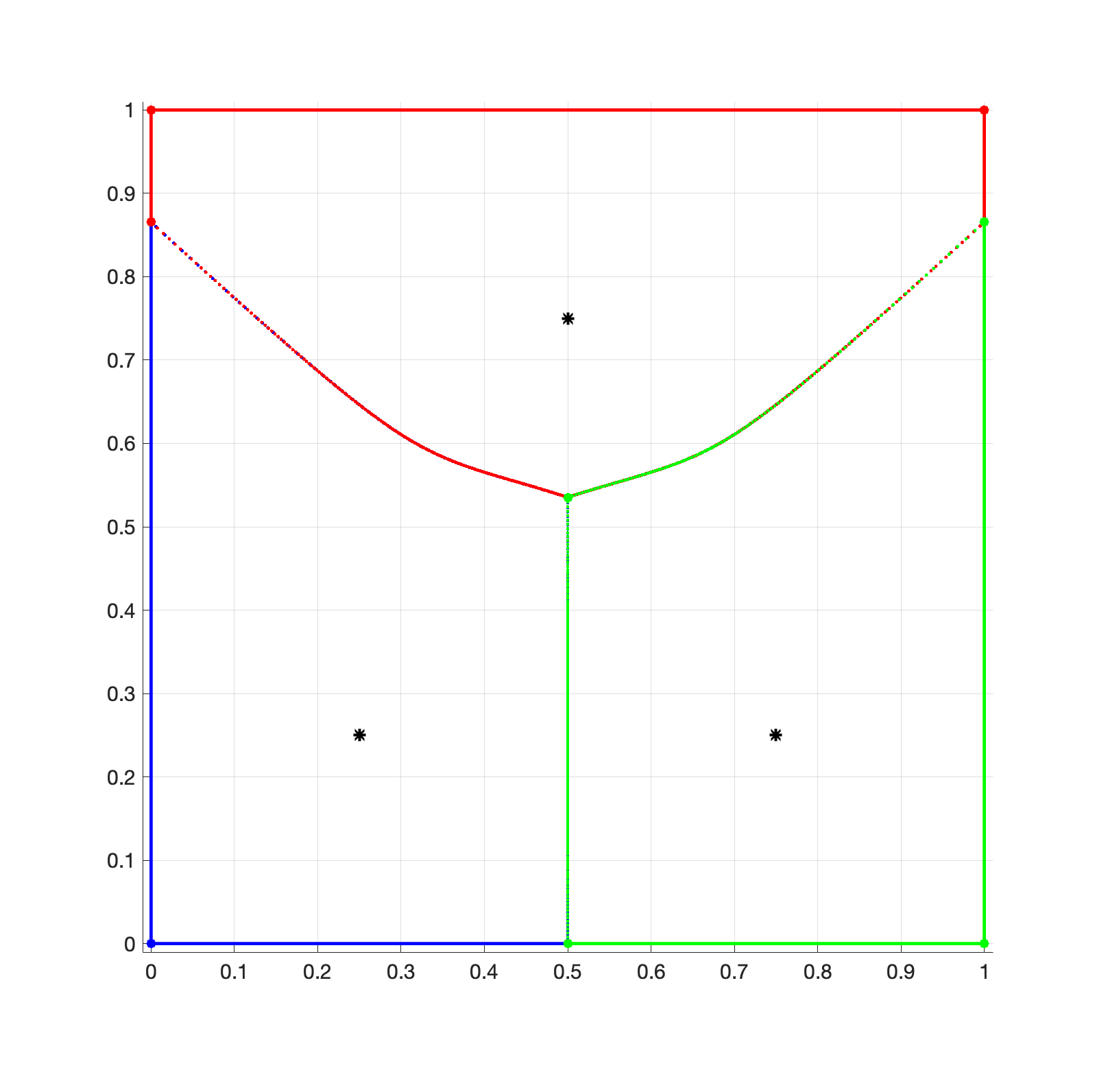}
		\caption{$\frac{1}{2}(\|\cdot\|_2+\|\cdot\|_4)$}\label{Fig_SmallNorm_2-4}
	\end{subfigure}
	\hfill
	\begin{subfigure}[b]{0.3\linewidth}
		\centering
		\includegraphics[width=\linewidth]{./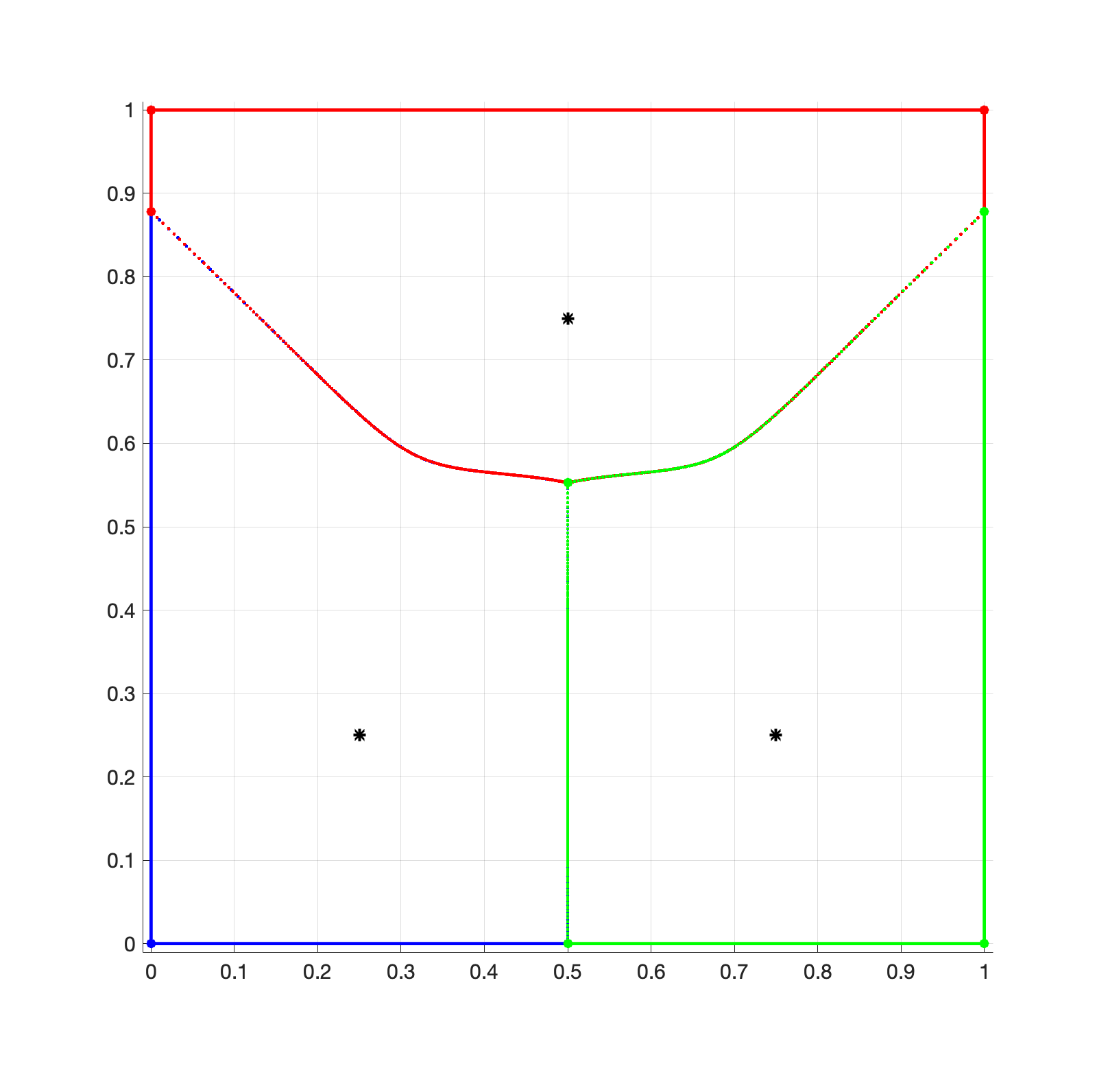}
		\caption{$\|\cdot\|_3+\|\cdot\|_5+\|\cdot\|_7$}\label{Fig_SmallNorm_3-5-7}
	\end{subfigure}
	\caption{Different $p$-norm costs}\label{Fig_DiffNorm}
\end{figure}

\begin{figure}[ht]
	\centering
	\begin{subfigure}[b]{0.3\linewidth}
		\centering
		\includegraphics[width=\linewidth]{./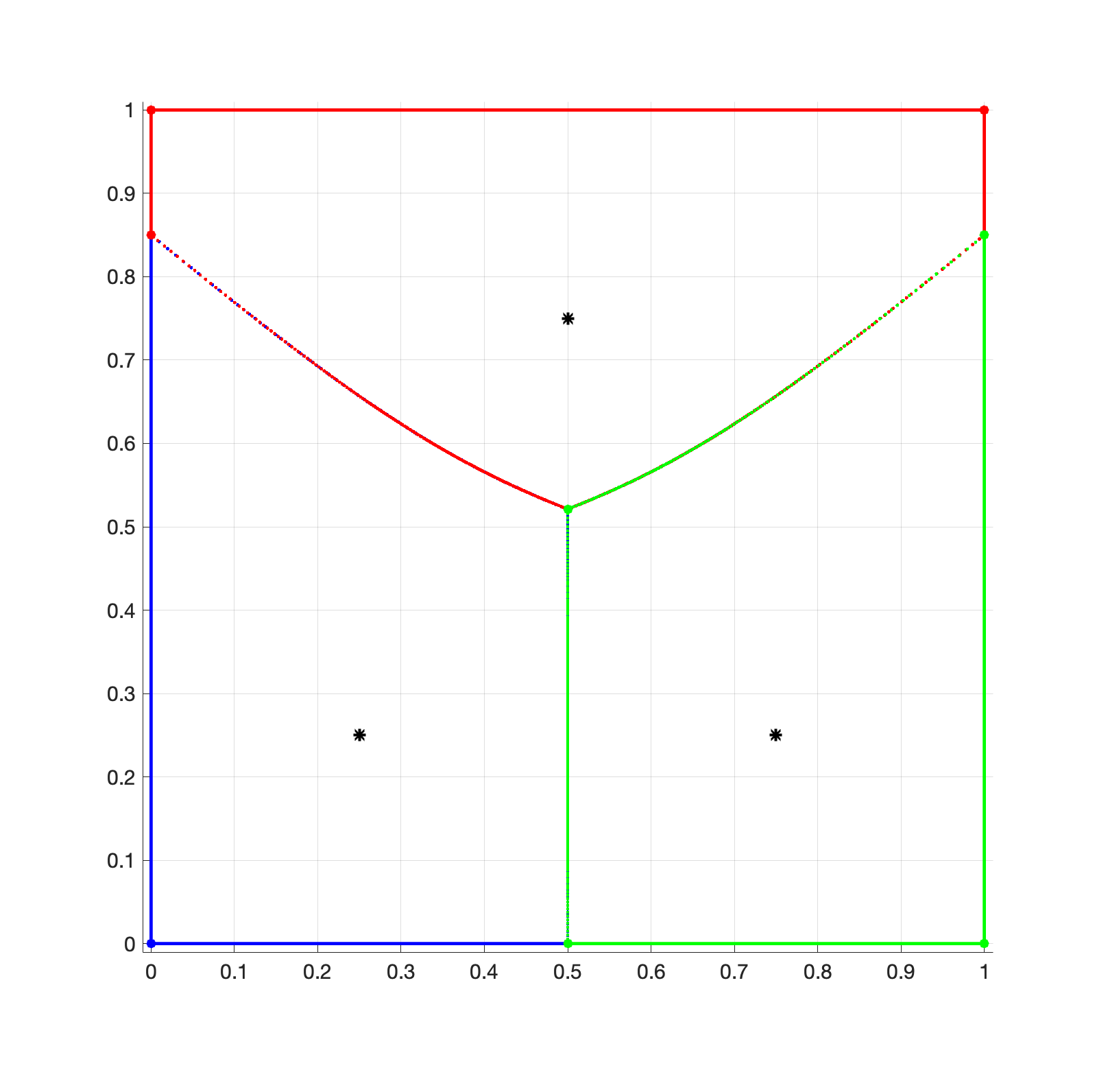}
		\caption{$\|\cdot \|_2$}\label{Fig_DiffNorm_2}
	\end{subfigure}
	\hfill
	\begin{subfigure}[b]{0.3\linewidth}
		\centering
		\includegraphics[width=\linewidth]{./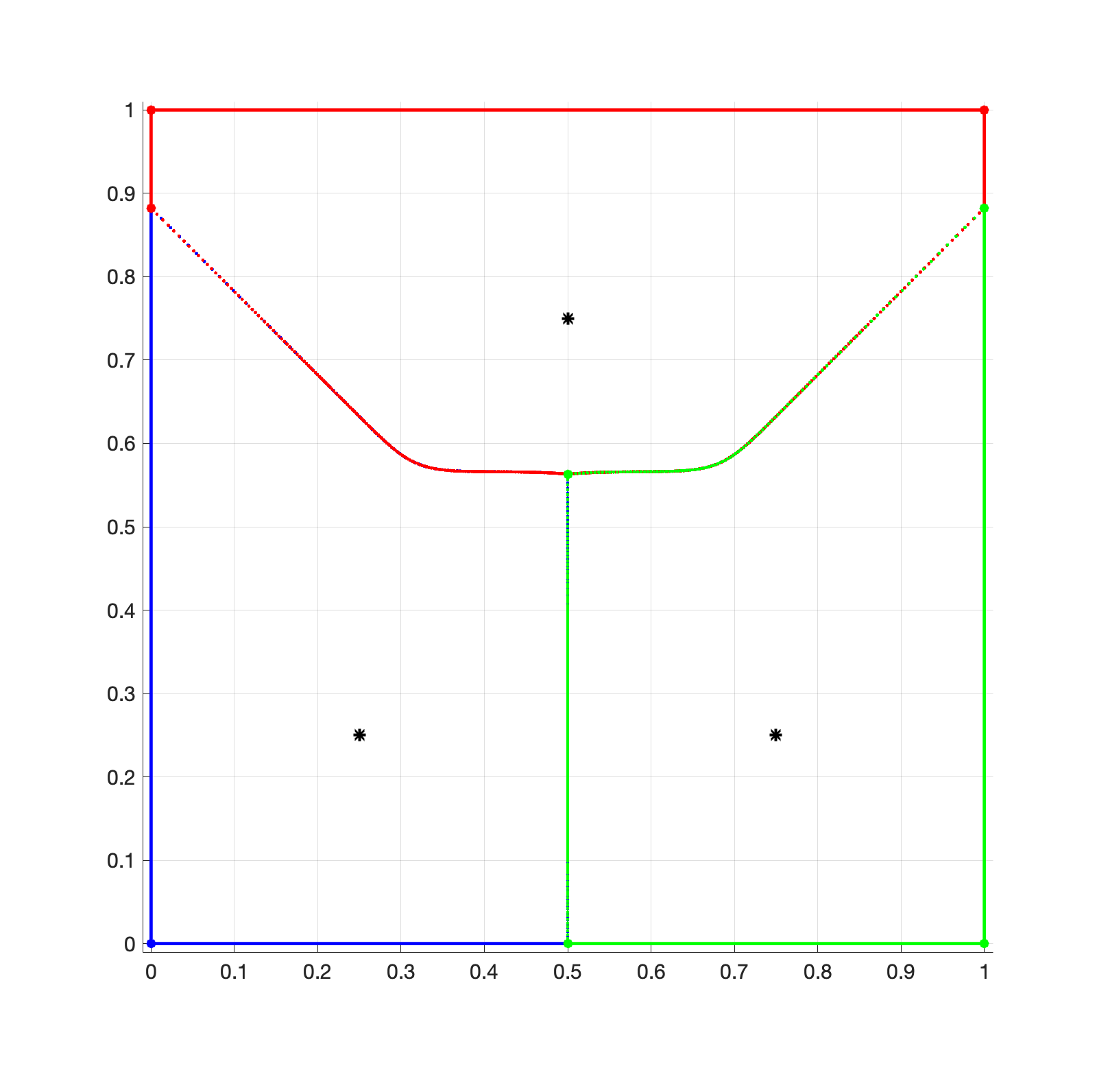}
		\caption{$\|\cdot \|_8$}\label{Fig_LargeNorm_8}
	\end{subfigure}
	\hfill
	\begin{subfigure}[b]{0.3\linewidth}
		\centering
		\includegraphics[width=\linewidth]{./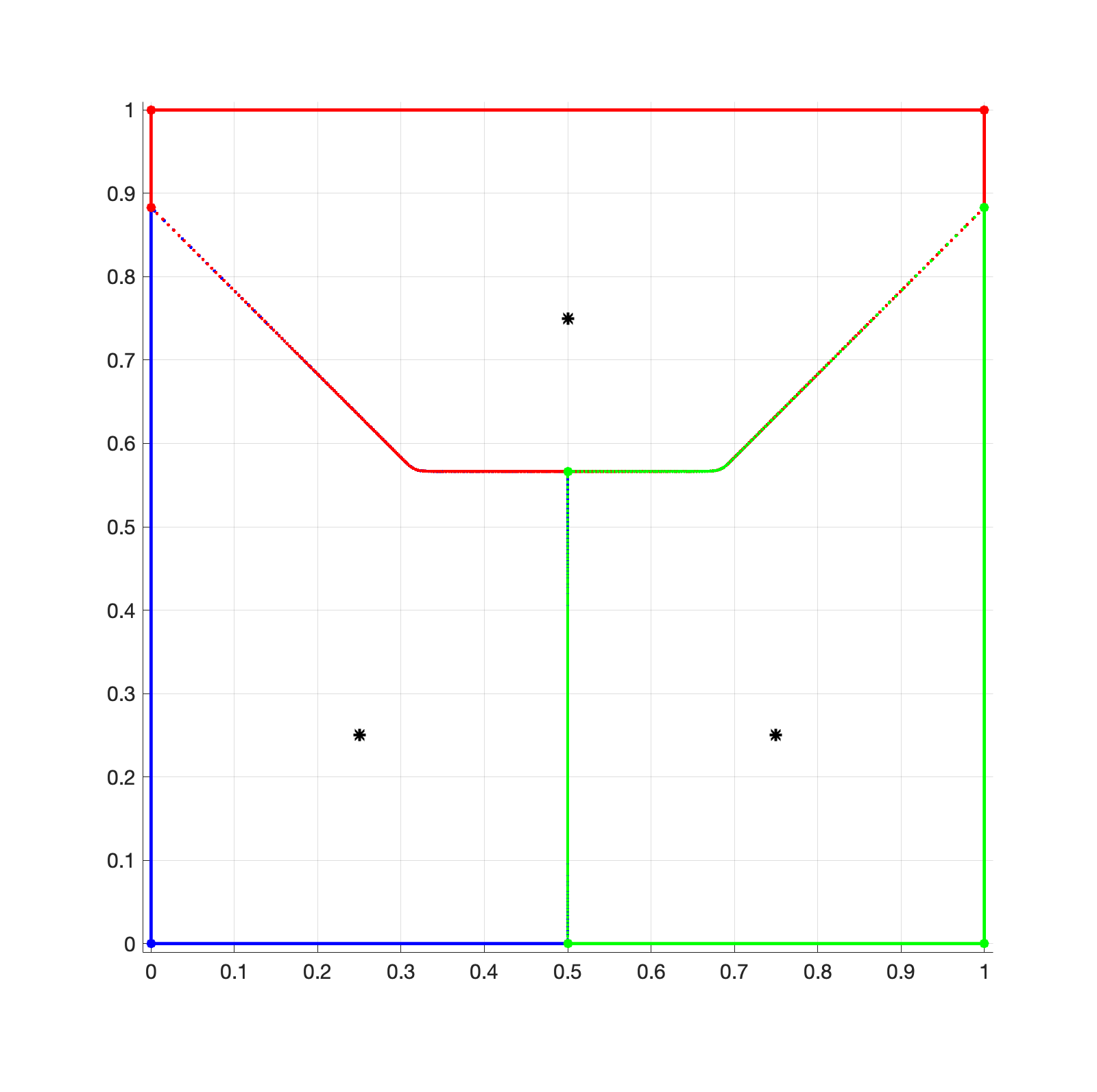}
		\caption{$\|\cdot \|_{32}$}\label{Fig_LargeNorm_32}
	\end{subfigure}
	\caption{$p$-norm costs for growing $p$}\label{Fig_LargeNorm}
\end{figure}

\begin{figure}[ht]
	\centering
	\begin{subfigure}[b]{0.3\linewidth}
		\centering
		\includegraphics[width=\linewidth]{./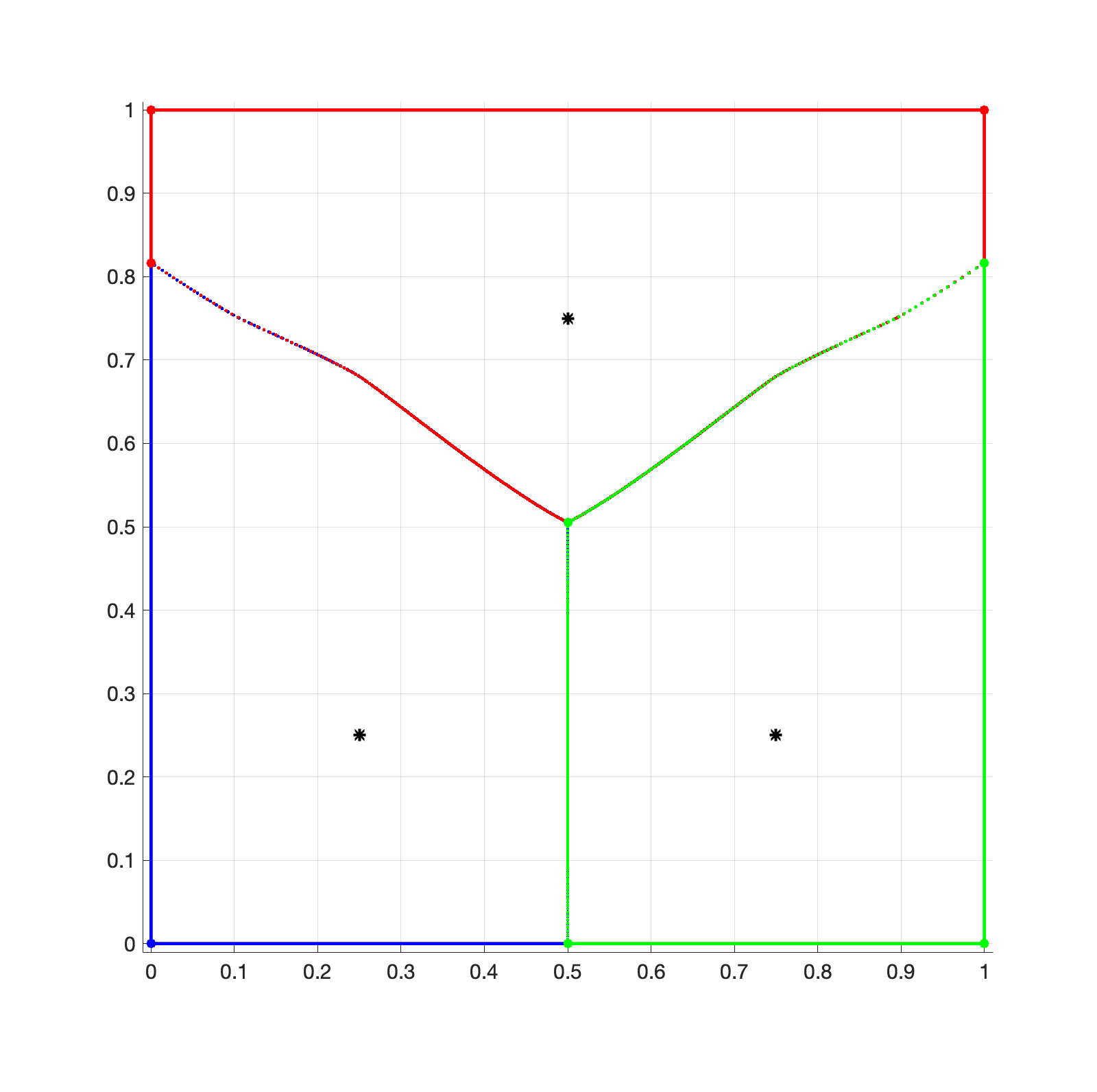}
		\caption{$\|\cdot \|_{1+\frac{1}{2}}$}\label{Fig_SmallNorm_1.5}
	\end{subfigure}
	\hfill
	\begin{subfigure}[b]{0.3\linewidth}
		\centering
		\includegraphics[width=\linewidth]{./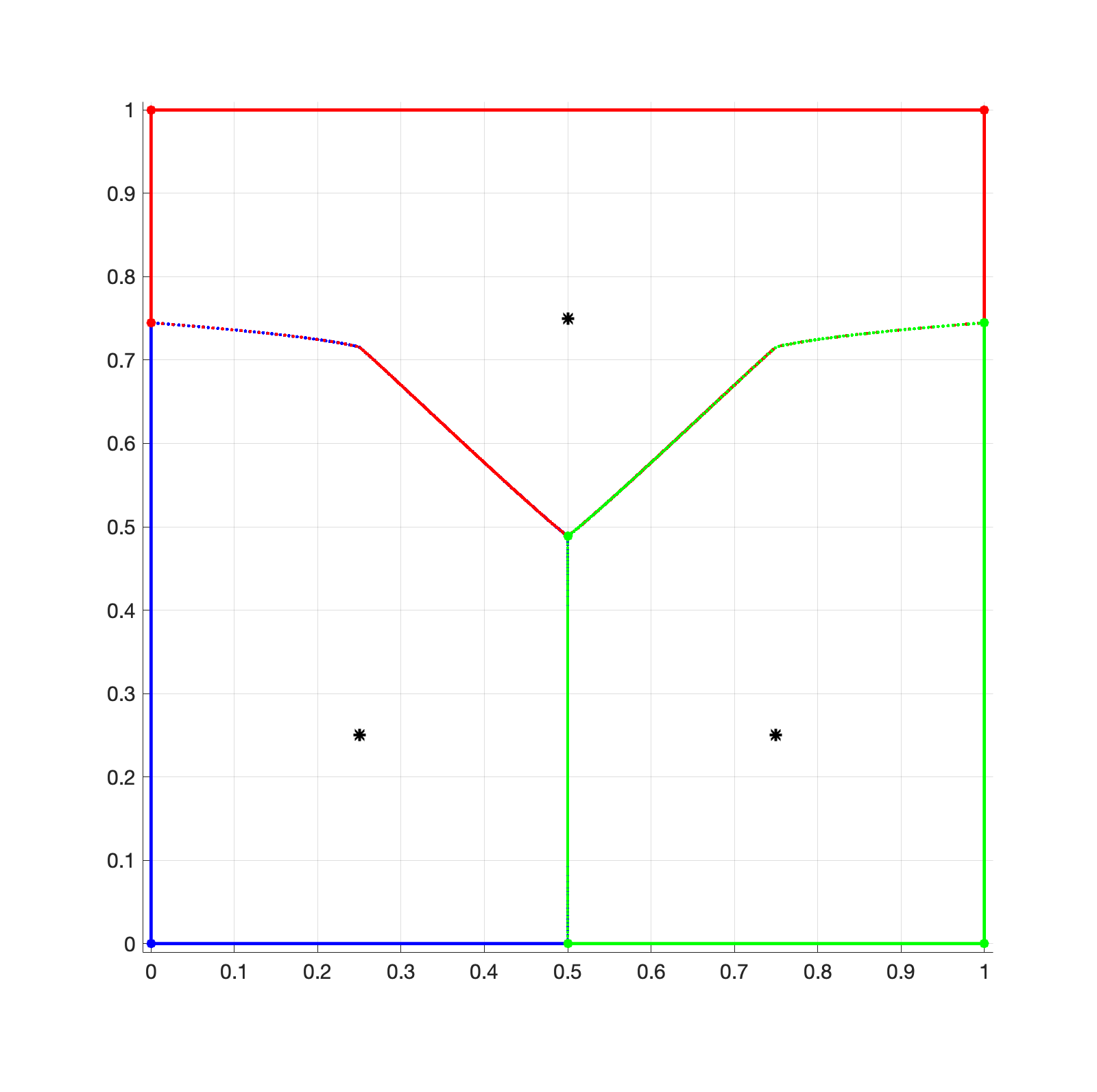}
		\caption{$\|\cdot \|_{1+\frac{1}{8}}$}\label{Fig_SmallNorm_1.0125}
	\end{subfigure}
	\hfill
	\begin{subfigure}[b]{0.3\linewidth}
		\centering
		\includegraphics[width=\linewidth]{./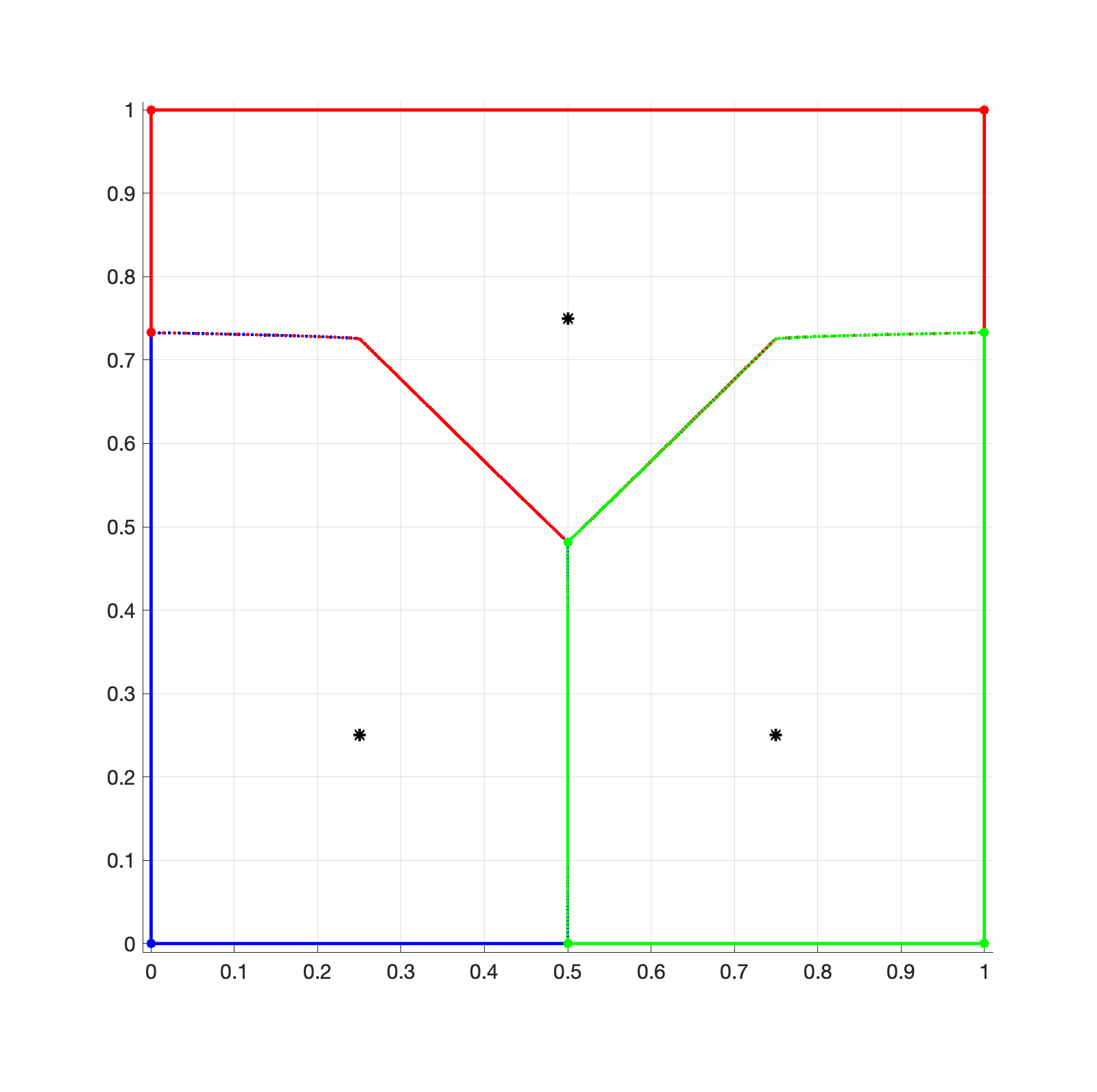}
		\caption{$\|\cdot \|_{1+\frac{1}{32}}$}\label{Fig_SmallNorm_1.003125}
	\end{subfigure}
	\caption{$p$-norm costs for $p$ decreasing toward $1$}\label{Fig_SmallNorm}
\end{figure}
\end{exm}

\begin{exm}[Impact of Feasibility]\label{ImpactFeas}
This is a difficult problem for all methods we tested, in that it
becomes arbitrarily ill-conditioned .  We have
\begin{gather*}\
	\Omega = [0,1]\times [0,1],\ c(\x,\y) = ||\x-\y||_2,\  \y = \bigg{\{}\small{\begin{pmatrix}0.25\\0.5\end{pmatrix},\ \begin{pmatrix}0.75\\0.5\end{pmatrix}}\bigg{\}},\ \rho(x) = 1,
\end{gather*}
and a non-uniform target density 
$\nu = (\frac{1}{2^k}, 1 - \frac{1}{2^k})$, $k = [1,\dots, 10]$).
Results in Table \ref{Tab_Feasibility} are all obtained with initial guess from the grid-based 
approach with $h=0.1$.  The tessellation is shown in Figure \ref{Fig_Feas}.

\rm{
\begin{table}[ht]
	\centering\scalebox{.6}{\renewcommand*{\arraystretch}{2}
		\begin{tabular}{||c|c|c|c|c|c||}\hline\hline
			\textbf{Target Density} & \textbf{Error} & \textbf{Iterations} & \textbf{Damping} & \textbf{Time} & \textbf{Feasibility Coefficient} $\mathbf{\kappa}$\\\hline\hline
			$(\frac{1}{2},\ 1 - \frac{1}{2})$ & $3.3307*10^{-14}$ & 0 & 0 & $2.2765$ & $1$ \\\hline
			$(\frac{1}{2^{2}},\ 1 - \frac{1}{2^{2}})$ & $4.6629*10^{-14}$ & 3 & 0 & $11.363$ & $4.0243*10^{-1}$ \\\hline
			$(\frac{1}{2^{3}},\ 1 - \frac{1}{2^{3}})$ & $5.6483*10^{-14}$ & 2 & 0 & $9.6461$ & $2.0029*10^{-1}$ \\\hline
			$(\frac{1}{2^{4}},\ 1 - \frac{1}{2^{4}})$ & $2.9511*10^{-10}$ & 2 & 0 & $10.27$ & $7.9527*10^{-2}$ \\\hline
			$(\frac{1}{2^{5}},\ 1 - \frac{1}{2^{5}})$ & $4.1078*10^{-14}$ & 4 & 0 & $17.766$ & $2.4611*10^{-2}$ \\\hline
			$(\frac{1}{2^{6}},\ 1 - \frac{1}{2^{6}})$ & $9.0354*10^{-10}$ & 3 & 0 & $20.834$ & $6.6039*10^{-3}$ \\\hline
			$(\frac{1}{2^{7}},\ 1 - \frac{1}{2^{7}})$ & $2.7367*10^{-11}$ & 9 & 0 & $59.625$ & $1.6834*10^{-3}$ \\\hline
			$(\frac{1}{2^{8}},\ 1 - \frac{1}{2^{8}})$ & $2.7842*10^{-12}$ & 10 & 0 & $120.6$ & $4.2294*10^{-4}$ \\\hline
			$(\frac{1}{2^{9}},\ 1 - \frac{1}{2^{9}})$ & $3.9706*10^{-12}$ & 10 & 0 & $201.91$ & $1.0587*10^{-4}$ \\\hline
			$(\frac{1}{2^{10}},\ 1 - \frac{1}{2^{10}})$ & $1.2863*10^{-10}$ & 12 & 4 & $1759$ & $2.6475*10^{-5}$ \\\hline\hline
	\end{tabular}}
	\caption{Impact of $\kappa$ on Newton iterations and 
	time}\label{Tab_Feasibility}
\end{table}
}
\it{The increase in time is entirely due to the difficulty in computing
$\mu(A_i)$'s.  We also note that the initial guess for $k\ge 7$ is
just not good, as reflected in the increase in Newton's iterations and
the need for damped Newton for $k=10$.
	
\begin{figure}[ht]
	\centering
	\includegraphics[width=0.5\linewidth]{./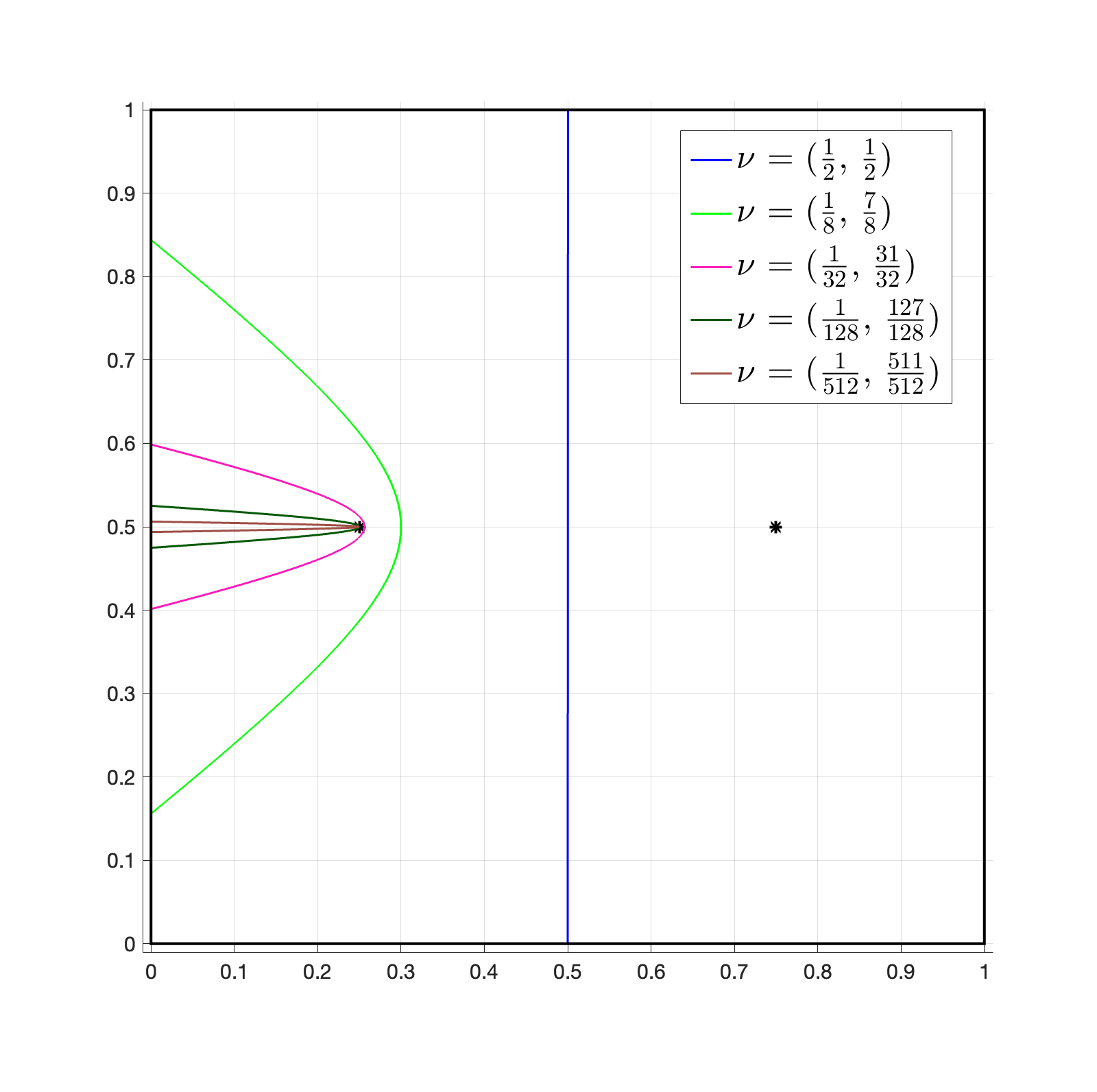}
	\caption{Impact of small $\kappa$ on the tessellation}\label{Fig_Feas}
\end{figure}
}
\end{exm}

\begin{exm}[Interplay of location, $\mathbf{\mu}$ and $\mathbf{\nu}$]
\label{Interplay}
Here we highlight the role played by consistency of the location of the target 
points with respect to the continuous
and discrete densities.  We fix
\begin{gather*}
	\Omega = [0,1]\times [0,1],\ c(\x,\y) = ||\x-\y||_2,\ \y = \bigg{\{}\small{\begin{pmatrix}0.8\\0.8\end{pmatrix},\ \begin{pmatrix}0.8\\0.9\end{pmatrix},\ \begin{pmatrix}0.9\\0.9\end{pmatrix},\ \begin{pmatrix}0.9\\0.8\end{pmatrix}}\bigg{\}},
\end{gather*}
and select three different densities
\begin{itemize}
\item[(A)] Uniform source and uniform target.  Here, the location of the target points is not 
consistent with either $\rho$ or $\nu$.
\item[(B)] Uniform source ($\rho(x) = 1$) and non-uniform target 
($\nu = (0.75, 0.1, 0.05, 0.1)^T$).  Here, the location of the
target points is consistent with $\nu$ but not with $\mu$.
\item[(C)] Non-Uniform source ($\rho(x) = 16x_1^3x_2^3$) and uniform target ($\nu_i = \frac{1}{4}$, $\forall i$).  Here, the location of the
target points is consistent with $\mu$ but not with $\nu$.
\end{itemize}
For all cases, the initial guess was obtained with the
grid-based approach with $h=0.05$.

\rm{
\begin{table}[ht]
	\centering\scalebox{.75}{\renewcommand*{\arraystretch}{2}
		\begin{tabular}{||c|c|c|c|c|c||}\hline\hline
			\textbf{Densities} & \textbf{Error} & \textbf{Iterations} & \textbf{Damping} & \textbf{Time} & \textbf{Feasibility Coefficient} $\mathbf{\kappa}$\\\hline\hline
			Uniform to Uniform & $3.2677*10^{-9}$ & 2 & 0 & $89.095$ & $0.02198$ \\\hline
			Uniform to Non-Uniform & $1.2056*10^{-12}$ & 3 & 0 & $45.295$ & $0.14509$ \\\hline
			Non-Uniform to Uniform & $8.3026*10^{-9}$ & 3 & 0 & $144.78$ & $0.86597$ \\\hline\hline
	\end{tabular}}
	\caption{Interplay between Source and Target Densities with Location of Points}\label{Tab_Interplay}
\end{table}
}
\it{
The small value of $\kappa$ in case (A) reflects the inconsistent location
of the target points with respect to $\mu$ and $\nu$.  The high execution
time for case (C), instead is entirely due to the cost of computing $\mu(A_i)$'s
by our algorithm because of the change of variable we do for
$(x_1,x_2)$, namely 
$\rho(x_1,x_2)=\rho(Y_1,+D\cos(\theta), Y_2+D\sin(\theta))$.
}
\begin{figure}[ht]
	\centering
	\begin{subfigure}[b]{0.3\linewidth}
		\centering
		\includegraphics[width=\linewidth]{./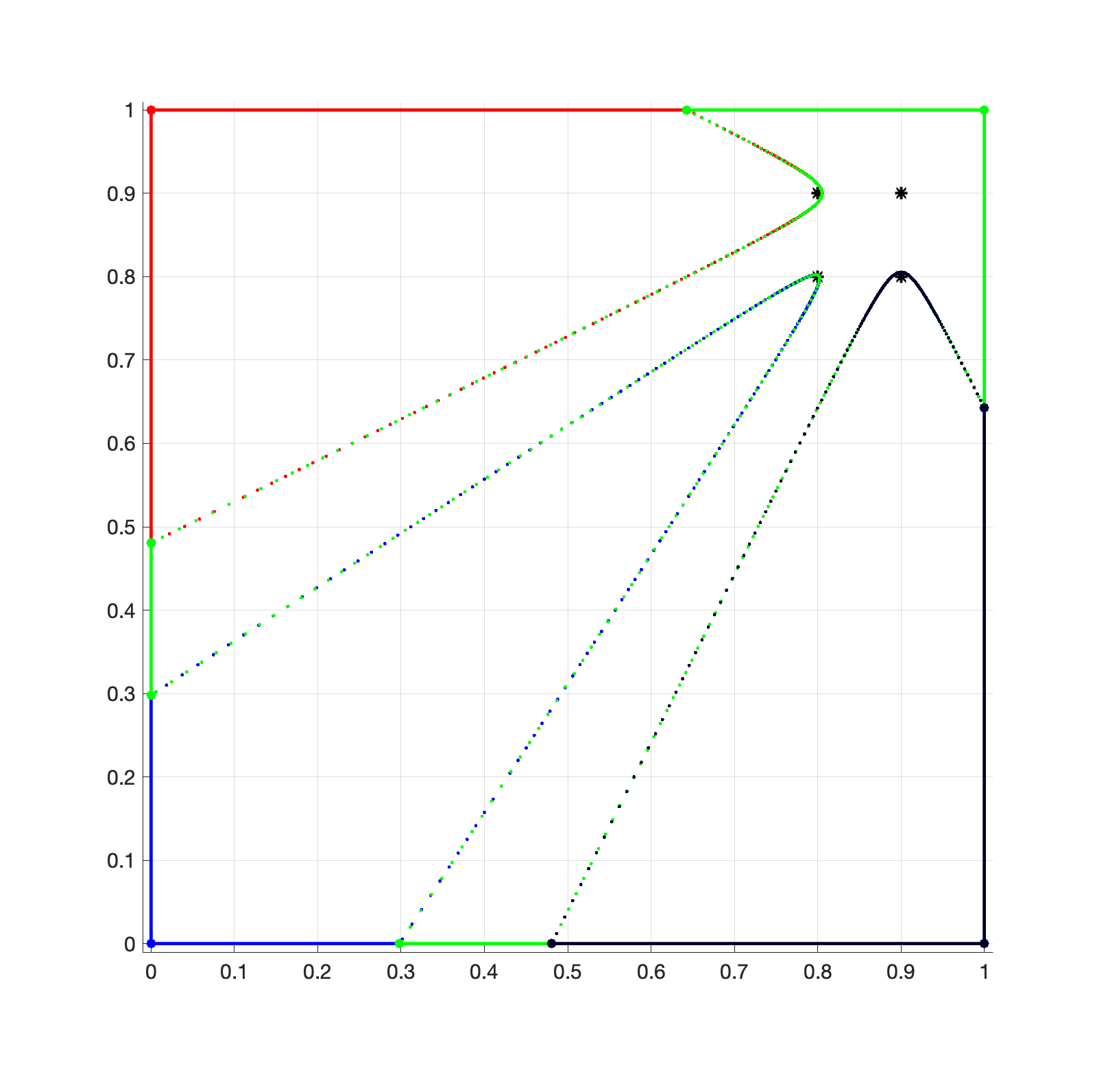}
		\caption{Uniform Source and Target Densities}\label{Fig_Inter_Uniform-Uniform}
	\end{subfigure}
	\hfill
	\begin{subfigure}[b]{0.3\linewidth}
		\centering
		\includegraphics[width=\linewidth]{./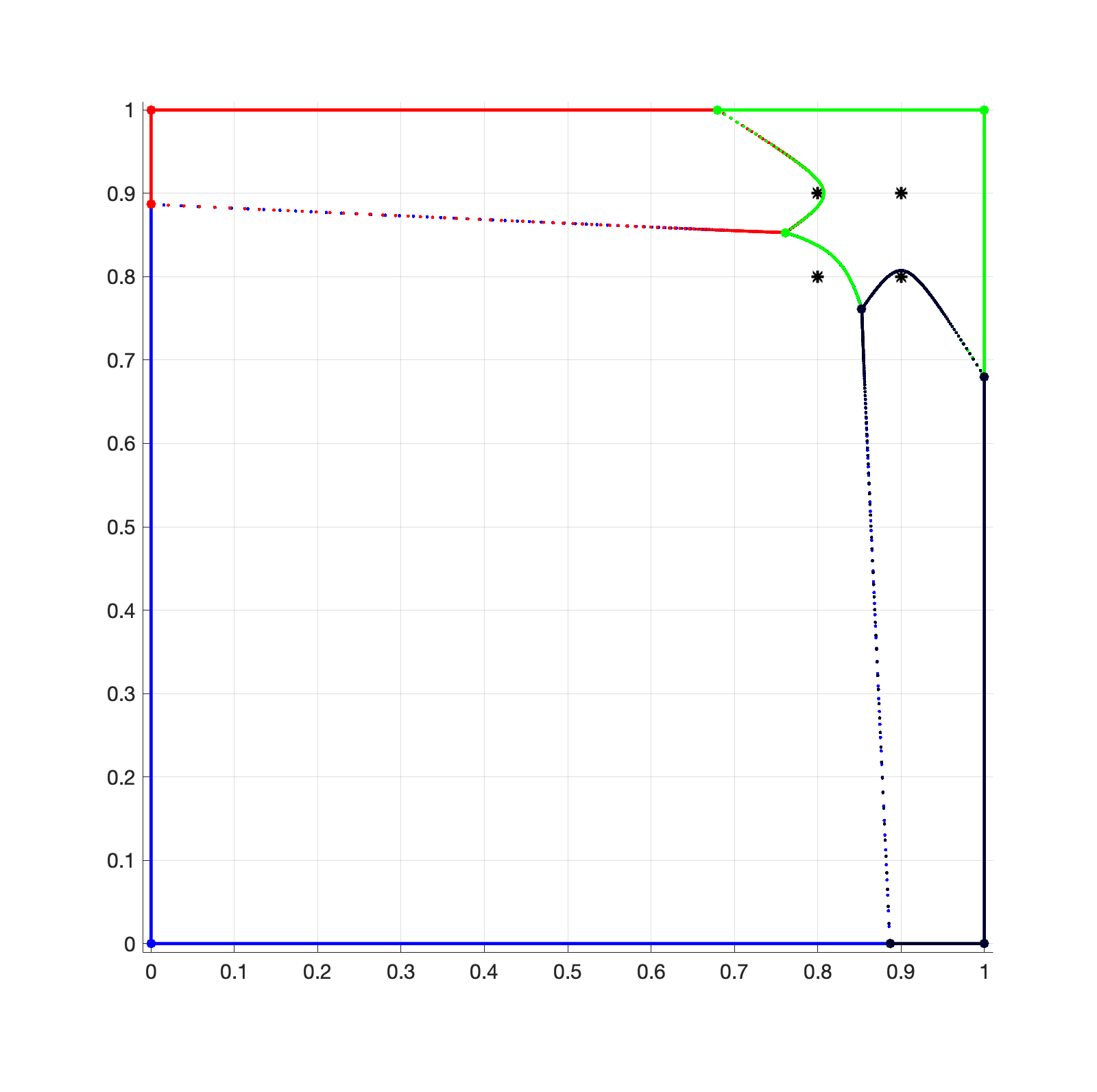}
		\caption{Uniform Source and Non-Uniform Target Densities}\label{Fig_Inter_Uniform-NonUniform}
	\end{subfigure}
	\hfill
	\begin{subfigure}[b]{0.3\linewidth}
		\centering
		\includegraphics[width=\linewidth]{./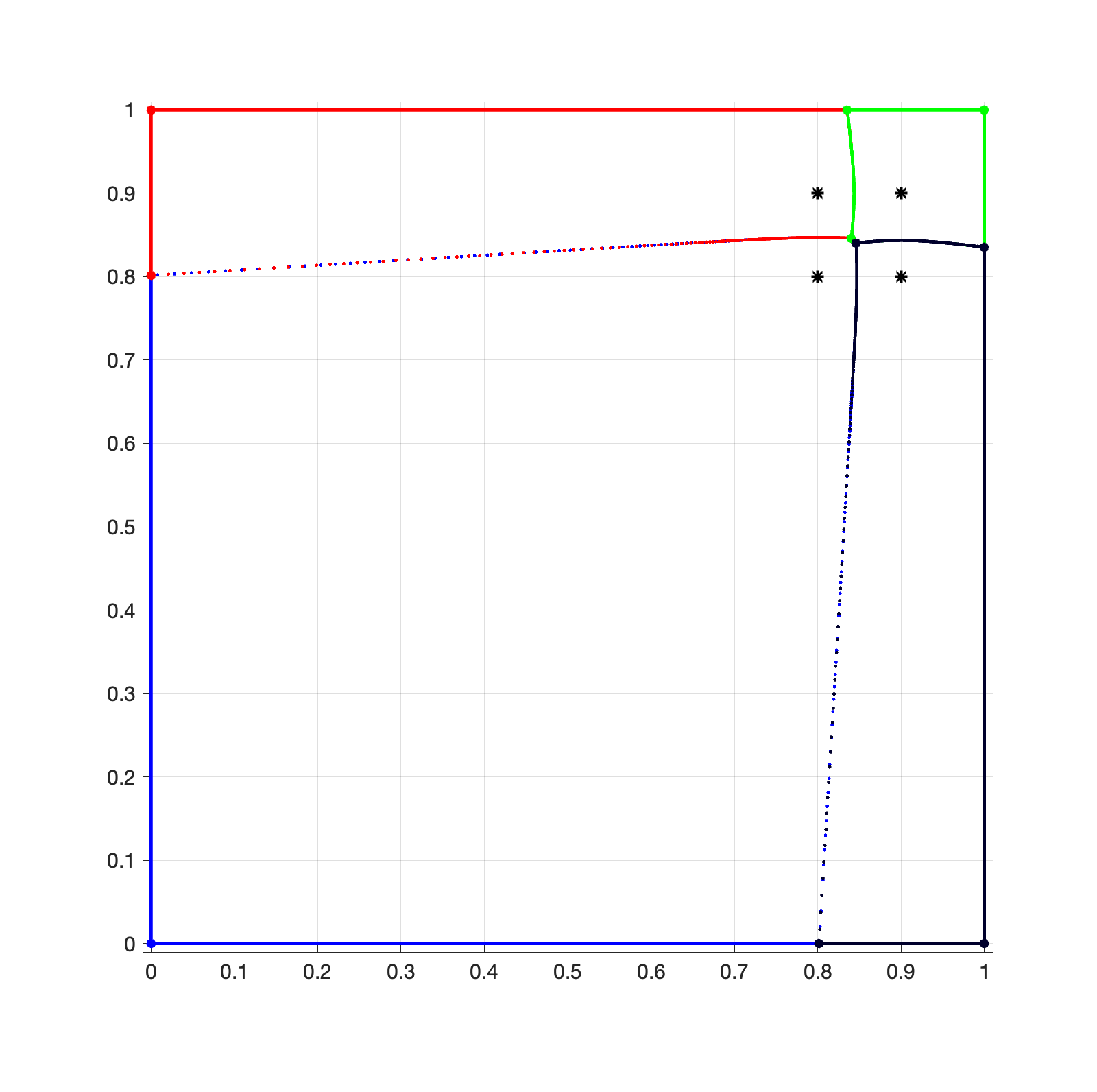}
		\caption{Non-Uniform Source and Uniform Target Densities}\label{Fig_Inter_NonUniform-Uniform}
	\end{subfigure}
	\caption{Interplay between Source and Target Densities with Location of Points}\label{Fig_Interplay}
\end{figure}
\end{exm}

\begin{exm}[Different Domains]\label{DifferentDomains}
Here we show that our algorithms can be easily adapted to work with
non-square domains, with no noticeable difference in performance.
For $\y$ we take 10 random points in domain $\Omega_i$,
obtained by retaining the first $10$ random points uniformly distributed 
in the square circumscribed to $\Omega_i$ that fall inside $\Omega_i$.  Further, we take
$\ c(\x,\y) = ||\x-\y||_2$, $\nu$ is uniform, and so is $\mu$:
$\rho(x) = \frac{1}{|\Omega_i|},\ \forall i$.
As domains we considered:
\begin{itemize}
\item[\rm{(A)}] 
$\Omega_1$ is the equilateral triangle centered at the origin with 
circumradius of $1$;
\item[\rm{(B)}] 
$\Omega_2$ is the regular pentagon centered at the origin with circumradius
equal to $1$;
\item[\rm{(C)}] 
$\Omega_3$ is an irregular pentagon of centroid at 
$(0.44239, 0.45993)$ and circumradius equal to $0.69813$;
\item[\rm{(D)}] 
$\Omega_4$ is the circle centered at the origin with radius equal to $1$;
\end{itemize}
As initial guesses we take $w^0=0$.

\rm{
\begin{table}[ht]
	\centering\scalebox{.75}{\renewcommand*{\arraystretch}{2}
		\begin{tabular}{||c|c|c|c|c|c||}\hline\hline
			\textbf{Domain} & \textbf{Error} & \textbf{Iterations} & \textbf{Damping} & \textbf{Time} & \textbf{Feasibility Coefficient} $\mathbf{\kappa}$\\\hline\hline
			Equilateral Triangle & $4.2979*10^{-11}$ & 6 & 0 & $97.79$ & $0.07999$ \\\hline
			Regular Pentagon     & $2.7436*10^{-10}$ & 6 & 0 & $130.1$ & $0.11848$ \\\hline
			Irregular Pentagon   & $5.2235*10^{-11}$ & 5 & 0 & $59.33$ & $0.25617$ \\\hline
			Circle               & $2.0969*10^{-11}$ & 4 & 0 & $22.251$ & $0.51896$ \\\hline
			\hline
	\end{tabular}}
	\caption{Different domains and uniform source density}\label{Tab_Domains_Unif}
\end{table}

}
\it{Interestingly, the most difficult problems are those with the most regular
domains, equilateral triangle and regular pentagon.  However, all cases
are solved fairly easily.  See Figure \ref{Fig_Domains_Unif}.}

\begin{figure}[ht]
	\centering
	\begin{subfigure}[b]{0.24\linewidth}
		\centering
		\includegraphics[width=\linewidth]{./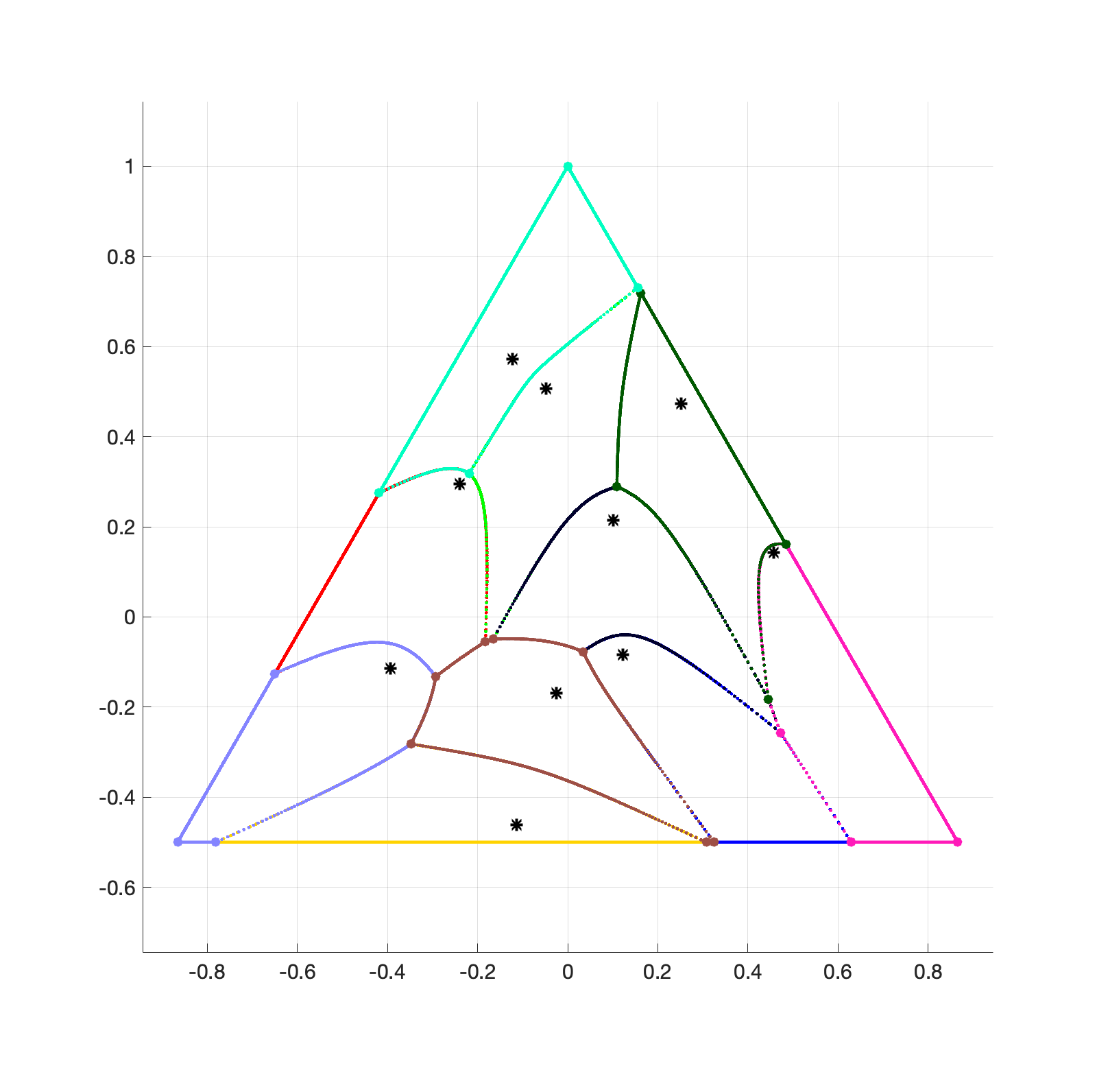}
		\caption{Equilateral Triangle}\label{Fig_Triangle_Unif}
	\end{subfigure}
	\hfill
	\begin{subfigure}[b]{0.24\linewidth}
		\centering
		\includegraphics[width=\linewidth]{./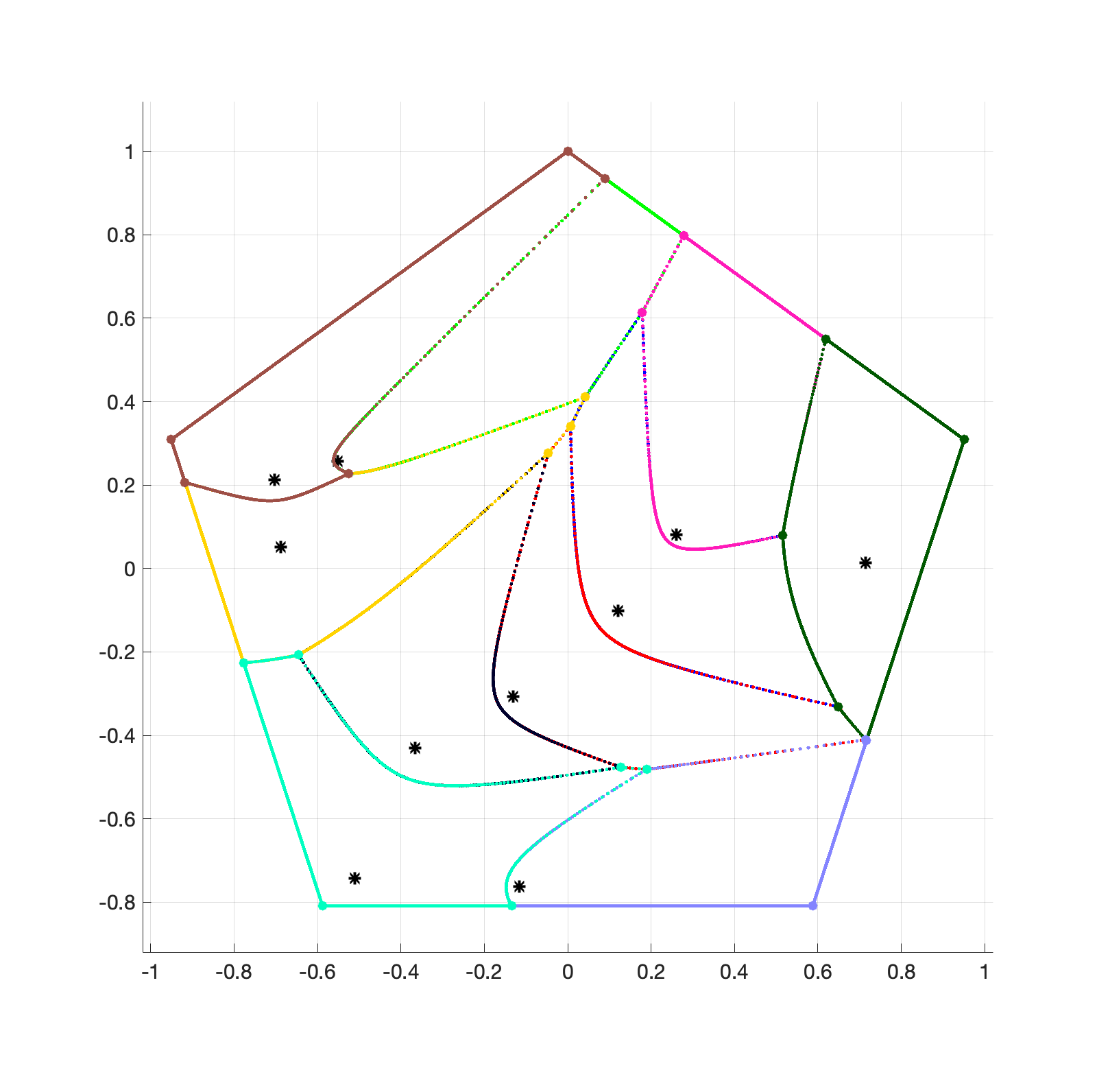}
		\caption{Regular Pentagon}\label{Fig_RegPentagon_Unif}
	\end{subfigure}
	\hfill
	\begin{subfigure}[b]{0.24\linewidth}
		\centering
		\includegraphics[width=\linewidth]{./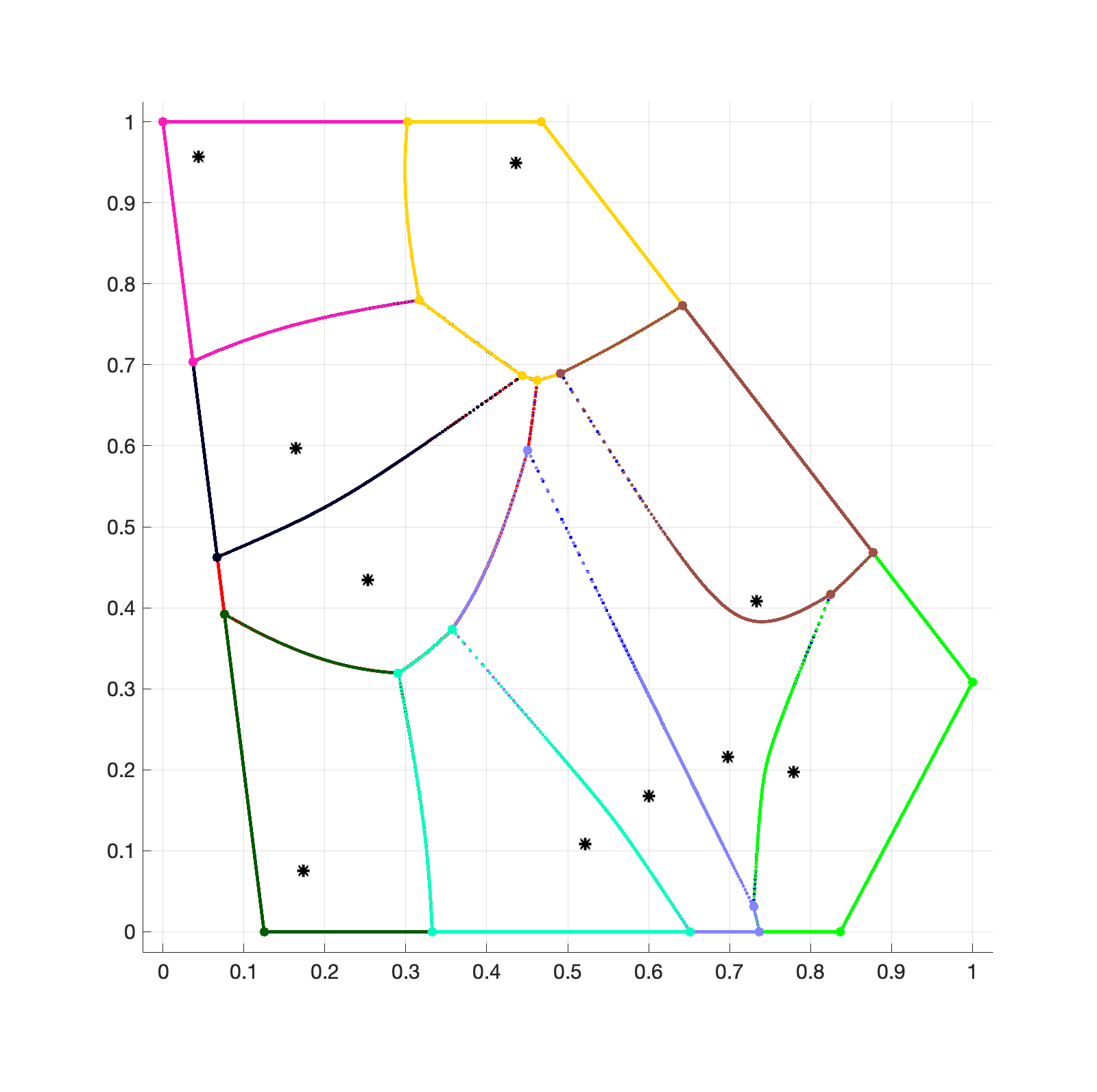}
		\caption{Irregular Pentagon}\label{Fig_IrregPentagon_Unif}
	\end{subfigure}
	\hfill
	\begin{subfigure}[b]{0.24\linewidth}
		\centering
		\includegraphics[width=\linewidth]{./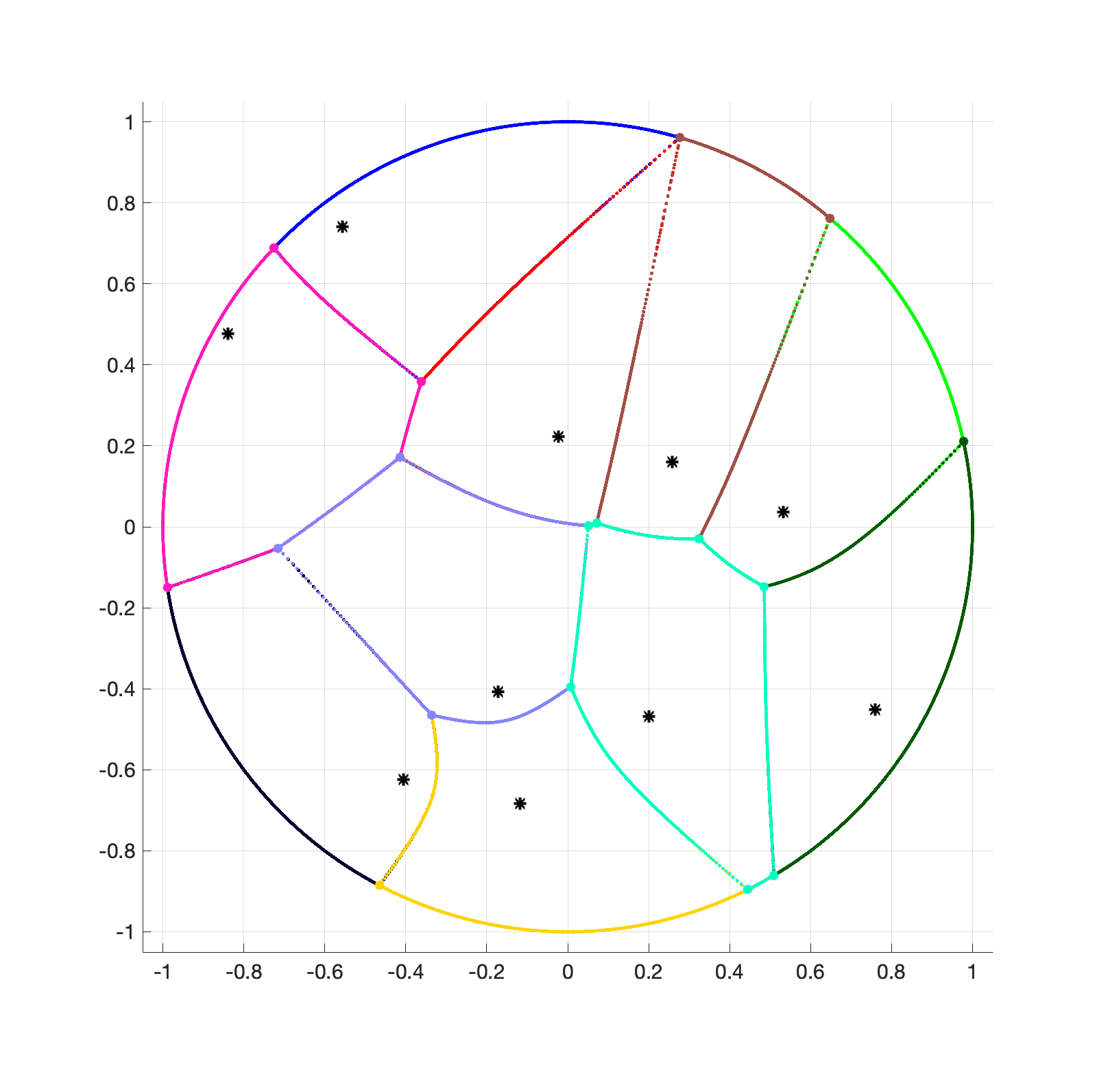}
		\caption{Circle}\label{Fig_Circle_Unif}
	\end{subfigure}
	\caption{Examples with Different domains and uniform source density}\label{Fig_Domains_Unif}
\end{figure}

\end{exm}

\section{Conclusions}\label{Closing}
In this work, we presented a novel implementation of Newton's method for
solving semi-discrete optimal transport problems for cost functions given
by a (positive combination of) $p$-norm(s), $1<p<\infty$, and several
choices of continuous and discrete densities.  To date, there appeared
to have been no implementation of Newton's method for these problems,
and we succeeded in making some progress by proving that the Laguerre
cells providing the solution of the OT problem are star shaped with respect
to the target points and by exploiting this fact in our algorithmic development.
We gave a detailed description of all algorithms we implemented and provided
quantitative testing on several examples, as well as comparison with competing approaches,
well beyond simply showing some qualitatively faithful figures.
Our algorithms proved much more robust and accurate than other methods,
being based on rigorously justified adaptivity and error control, which --in principle--
allows to solve the OT problem at any desired level of accuracy.  The price
we pay for this is the need for the continuous density to be sufficiently smooth.
Future developments call for algorithms suited for 3-d, as well as for development
of Newton-like methods better suited when we only have limited smoothness of the
continuous density.

\bibliography{reflist}{}
\bibliographystyle{plain}

\section*{Appendix}
\begin{claim}\label{p-even}
Let the cost function be a $p-$norm, where $p$ is even:
$c(\x,\y) = ||\x-\y||_p$, $p= 2k$, $k=1,2,\dots$. 
If $|w|<||\y_1 - \y_2||_p$, then $\nabla F(\x_0)\neq 0$, $\forall \x_0$ such that $F(\x_0)=w$, where
\begin{align*}
    F(\x) = ||\x-\y_1||_p - ||\x-\y_2||_p\ .
\end{align*}
\end{claim}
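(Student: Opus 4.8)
The plan is to deduce the non-vanishing of $\nabla F$ from the fact that the gradients of the two norm terms defining $F$ cannot agree on the level set $\{F=w\}$.  Write $g_i(\x)=\norm{\x-\y_i}_p$, so that $F=g_1-g_2$ and $\nabla F(\x_0)=\nabla g_1(\x_0)-\nabla g_2(\x_0)$.  First I would note that $\x_0$ is neither $\y_1$ nor $\y_2$: indeed $F(\y_1)=-\norm{\y_1-\y_2}_p$ and $F(\y_2)=\norm{\y_1-\y_2}_p$, and both values are ruled out by $|w|<\norm{\y_1-\y_2}_p$.  Consequently $u:=\x_0-\y_1$ and $v:=\x_0-\y_2$ are both nonzero, and since $p$ is even the map $z\mapsto\norm{z}_p=(\sum_j z_j^p)^{1/p}$ is $\cont^\infty$ near each of them (cf.\ Lemma \ref{SmothNorms}), with gradient
\[
	\nabla\norm{z}_p=\frac{1}{\norm{z}_p^{\,p-1}}\bigl(z_1^{p-1},z_2^{p-1},\dots,z_n^{p-1}\bigr)^{T}.
\]

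Next I would argue by contradiction.  Assume $\nabla g_1(\x_0)=\nabla g_2(\x_0)$, i.e.\ $\nabla\norm{z}_p$ takes the same value at $z=u$ and at $z=v$; note $u-v=\y_2-\y_1\neq 0$.  Comparing the formula above component by component yields
\[
	\left(\frac{u_k}{\norm{u}_p}\right)^{p-1}=\left(\frac{v_k}{\norm{v}_p}\right)^{p-1},\qquad k=1,\dots,n.
\]
Since $p$ is even, $p-1$ is odd, so $t\mapsto t^{p-1}$ is injective on $\R$; hence $u_k/\norm{u}_p=v_k/\norm{v}_p$ for all $k$, that is $u=\lambda v$ with $\lambda=\norm{u}_p/\norm{v}_p>0$.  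Because $\y_1\neq\y_2$ we cannot have $u=v$, so $\lambda\neq 1$.

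Finally I would extract the contradiction with the feasibility hypothesis.  From $u=\lambda v$ and $\lambda>0$ we get $F(\x_0)=\norm{u}_p-\norm{v}_p=(\lambda-1)\norm{v}_p$, while $\norm{\y_1-\y_2}_p=\norm{u-v}_p=|\lambda-1|\,\norm{v}_p$; therefore $|w|=|F(\x_0)|=\norm{\y_1-\y_2}_p$, contradicting $|w|<\norm{\y_1-\y_2}_p$.  Hence $\nabla g_1(\x_0)\neq\nabla g_2(\x_0)$, and $\nabla F(\x_0)\neq 0$.  The only mildly delicate point is the gradient formula together with the smoothness it relies on, but for even $p$ these quantities are assembled from the polynomials $z_k^p$ and present no real difficulty; the structural heart of the argument is the observation that equality of the two gradients forces $u$ and $v$ to be positively proportional, after which homogeneity of the norm closes the argument.
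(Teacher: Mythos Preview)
Your proof is correct and follows essentially the same route as the paper's: both argue by contradiction, equate the gradient components, use that $t\mapsto t^{p-1}$ is injective for odd $p-1$ to get $\frac{\x_0-\y_1}{\|\x_0-\y_1\|_p}=\frac{\x_0-\y_2}{\|\x_0-\y_2\|_p}$, and then deduce $|w|=\|\y_1-\y_2\|_p$. Your presentation via the positive proportionality $u=\lambda v$ is a clean repackaging of the paper's direct algebraic substitution, and your explicit check that $\x_0\neq\y_1,\y_2$ (so the gradient formula applies) is a nicety the paper leaves implicit.
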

\begin{proof}
Assume, by contradiction, that there exists a point $x_0$ where both
$F(\x_0) = w$ and $\nabla F(\x_0) = 0$. An explicit computation gives
\begin{gather*}
    \nabla F(x) = 
    \frac{1}{||\x-\y_1||_p^{p-1}}\begin{pmatrix} (\x-\y_1)_1 ^{p-1} \\ \vdots \\
    	(\x-\y_1)_d^{p-1}\end{pmatrix} - 
    \frac{1}{||\x-\y_2||_p^{p-1}}\begin{pmatrix} (\x-\y_2)_1 ^{p-1} \\ \vdots \\
    	(\x-\y_2)_d^{p-1} \end{pmatrix} \quad \text{and so} \\
    \nabla F(\x_0) = 0\implies \frac{(\x_0-\y_1)_j^{p-1}}{||\x_0-\y_1||_p^{p-1}} 
    = \frac{(\x_0-\y_2)_j^{p-1}}{||\x_0-\y_2||_p^{p-1}}\implies 
    \frac{(\x_0-\y_1)_j}{||\x_0-\y_1||_p} = \frac{(\x_0-\y_2)_j}{||\x_0-\y_2||_p},
    \,\ j=1,\dots, d\ .
\end{gather*}
Since $F(\x_0)=0$, then $||\x_0-\y_1||_p = w + ||\x_0-\y_2||_p$ and so
\begin{gather*}
    \frac{(\x_0-\y_1)_j}{w + ||\x_0-\y_2||_p} = 
    \frac{(\x_0-\y_2)_j}{||\x_0-\y_2||_p}\implies w(\x_0-\y_2)_j = ||x_0-\y_2||_p(\y_2 - \y_1)_j\ ,\quad j=1,\dots, d\ .
\end{gather*}
Taking the $p-$norm of the vectors on both sides of the last relation gives
\begin{gather*}
    |w|\cdot||\x_0-\y_2||_p = ||x_0-\y_2||_p||\y_2 - \y_1||_p \quad \text{or}\quad 
     |w| = ||\y_2 - \y_1||_p\ ,
\end{gather*}
which is a contradiction and the proof follows.
\end{proof}

\begin{lem}\label{Lem_NNZGrad}
Let $\Omega\subset\Rn$ satisfy Assumption \ref{WhatOmega},
$\y_i\in\Omega^\mathrm{o}$ and $i=1,2,\dots,N$, be $N$
distinct target points, and let the 
shift values be feasible: $|w_{ij}|<c(\y_i,\y_j)$, $\forall i,j = 
1,2,\cdots,N$, $i\neq j$.  Consider the cost function
given by a $p$-norm with $p$ even.   Then, there 
$\exists \epsilon>0$ such that $\forall i,j=1,2,\dots, N,$ $i\neq j$
\begin{gather*}
||\nabla_x c(x,y_i) - \nabla_x c(x,y_j)||\geq \epsilon, \ \ \forall \x\in 
\LL_{ij}\cap \Omega ,
\end{gather*}
where $\|\cdot \|$ is the standard Euclidean distance $\|\cdot \|_2$.
\end{lem}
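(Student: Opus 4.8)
The plan is to combine the pointwise non-vanishing of the gradient difference, which is essentially the content of Claim \ref{p-even} in the Appendix, with a compactness argument that upgrades it to a uniform lower bound.

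First I would observe that, for a $p$-norm cost with $p$ even, $\nabla_{\x} c(\x,\y_i) - \nabla_{\x} c(\x,\y_j) = \nabla F_{ij}(\x)$, where $F_{ij}(\x) = c(\x,\y_i) - c(\x,\y_j)$ as in \eqref{FunctionF}. Since the shift values are feasible we have $|w_{ij}| < c(\y_i,\y_j) = \|\y_i-\y_j\|_p$, so the hypothesis of Claim \ref{p-even} is met for each pair $i\ne j$; hence $\nabla F_{ij}(\x)\ne 0$ at every $\x$ with $F_{ij}(\x) = w_{ij}$, and in particular at every $\x\in \LL_{ij}$, because $\LL_{ij}\subseteq \{F_{ij} = w_{ij}\} = \LL(\y_i)\cap\LL(\y_j)$. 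Thus the function $g_{ij}(\x) := \|\nabla_{\x} c(\x,\y_i) - \nabla_{\x} c(\x,\y_j)\|_2$ is strictly positive on $\LL_{ij}$.

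Next I would check that $g_{ij}$ is genuinely continuous on $\LL_{ij}\cap\Omega$. By Lemma \ref{SmothNorms}, $\x\mapsto \nabla_{\x}\|\x-\y\|_p$ is $\cont^\infty$ away from $\x=\y$, so it suffices to verify $\y_i,\y_j\notin \LL_{ij}$. If $\y_i\in \LL_{ij}$, then $c(\y_i,\y_i) - w_i = c(\y_i,\y_j) - w_j$, i.e.\ $w_j - w_i = c(\y_i,\y_j)$, contradicting feasibility \eqref{w-feasible}; symmetrically $\y_j\notin \LL_{ij}$. (Equivalently, one may invoke Lemma \ref{Lem_yi_bound} together with Theorem \ref{feas-w-Thm}.) Hence $g_{ij}$ is continuous on $\LL_{ij}\cap\Omega$.

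Finally, $\LL_{ij}$ is closed, being cut out by one equation and $N-2$ non-strict inequalities among continuous functions, and $\Omega$ is compact, so $\LL_{ij}\cap\Omega$ is compact. If it is empty there is nothing to prove for that pair; otherwise the extreme value theorem gives a minimum $\epsilon_{ij} := \min_{\x\in \LL_{ij}\cap\Omega} g_{ij}(\x) > 0$. Setting $\epsilon = \min\{\epsilon_{ij} : i\ne j,\ \LL_{ij}\cap\Omega\ne\emptyset\}$, a minimum over finitely many positive numbers, yields the asserted uniform bound. The only real obstacle is the bookkeeping needed to keep the singular points $\y_i,\y_j$ out of $\LL_{ij}$ so that $g_{ij}$ is continuous there; once Claim \ref{p-even} is available, the rest is a routine compactness argument.
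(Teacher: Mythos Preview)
Your proof is correct and genuinely different from the paper's. The paper proceeds constructively: it writes out $\nabla_{\x}\|\x-\y_i\|_p-\nabla_{\x}\|\x-\y_j\|_p$ explicitly, factors the componentwise difference $a^{p-1}-b^{p-1}$ as a Hadamard product, invokes two auxiliary claims to bound the symmetric factor $a^{p-2}+a^{p-3}\circ b+\dots+b^{p-2}$ below, and then exploits $\|\x-\y_i\|_p=\|\x-\y_j\|_p+w_{ij}$ together with $\|\x-\y_i\|_p\le C\,\mathrm{diam}(\Omega)$ to produce an explicit $\epsilon_{ij}$ proportional to $\big|\|\y_i-\y_j\|_p-|w_{ij}|\big|/\mathrm{diam}(\Omega)$; norm equivalence then converts the $p$-norm bound to a $2$-norm bound.

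Your route---pointwise nonvanishing from Claim~\ref{p-even}, continuity of $g_{ij}$ on $\LL_{ij}\cap\Omega$ after checking $\y_i,\y_j\notin\LL_{ij}$, and a compactness/extreme-value argument---is shorter and cleaner, and the bookkeeping you flag (excluding the singular points) is exactly the only place care is needed. What the paper's approach buys is an explicit quantitative bound on $\epsilon$ in terms of the feasibility margin $c(\y_i,\y_j)-|w_{ij}|$ and $\mathrm{diam}(\Omega)$, which connects the lemma directly to the feasibility coefficient $\kappa(w)$ of \eqref{FeasCoeff}; your argument yields existence of $\epsilon$ but no formula for it.
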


\begin{proof}
Firstly, we are going to show that exists a constant, which depends on $p$, 
call it $\epsilon_{ij}(p)>0$, such that
\begin{gather*}
||\nabla_x ||x-y_i||_p - \nabla_x ||x-y_j||_p||_p\geq \epsilon_{ij}(p), \ \ 
\forall \x\in L_{ij}\cap \Omega.
\end{gather*}
By direct computation, as in Claim \ref{p-even}, we have
\begin{equation*}\begin{split}
& ||\nabla_x ||x-y_i||_p - \nabla_x ||x-y_j||_p||_p=
\norm{ \begin{pmatrix} a_1^{p-1} \\ \vdots \\
a_d^{p-1}\end{pmatrix} - 
\begin{pmatrix} 
b_1 ^{p-1} \\ \vdots \\
b_d^{p-1} \end{pmatrix} }_p \quad \text{where} \\
& 
a_j=\frac{(\x-\y_1)_j}{||\x-\y_1||_p}\ ,\,\ 
b_j=\frac{(\x-\y_2)_j}{||\x-\y_2||_p}\ ,\,\ j=1,\dots, d\ .
\end{split}\end{equation*}
Now, abusing of notation, we will
write $a^{p-1}$ for the Hadamard product of the vector $a$ with
itself $(p-1)$-times, and thus
$a^{p-1}-b^{p-1}=(a-b)\circ (a^{p-2}+a^{p-3}\circ b+\dots + b^{p-2})$,
where ''$\circ$'' is the Hadamard product\footnote{$a\circ b=
 \begin{pmatrix} a_1b_1 \\ \vdots \\
a_db_d \end{pmatrix}$}.  Next, 
using Claim \ref{Claim_NNZGrad} we have
\begin{gather*}
	\bigg{|}\bigg{|}\frac{(\x-\y_i)^{p-1}}{||\x-\y_i||_p^{p-1}} - 
	\frac{(\x-\y_j)^{p-1}}{||\x-\y_j||_p^{p-1}}\bigg{|}\bigg{|}_p \geq  
	\Delta\bigg{|}\bigg{|}\frac{(\x-\y_i)}{||\x-\y_i||_p} - 
	\frac{(\x-\y_j)}{||\x-\y_j||_p}\bigg{|}\bigg{|}_p.
\end{gather*}
Using that $\x\in L_{ij}\cap \Omega\implies ||\x-\y_i||_p = ||\x-\y_j||_p+w_{ij}$:
\begin{gather*}
	\frac{||||\x-\y_j||_p(\x-\y_i)-||\x-\y_i||_p(\x-\y_j)||_p}{||\x-\y_i||_p||\x-\y_j||_p} = \frac{||||\x-\y_j||_p(\x-\y_i)-(||\x-\y_j||_p+w_{ij})(\x-\y_j)||_p}{||\x-\y_i||_p||\x-\y_j||_p},\\
	\implies \frac{||||\x-\y_j||_p(\y_j-\y_i)-w_{ij}(\x-\y_j)||_p}{||\x-\y_i||_p||\x-\y_j||_p}\geq \frac{\big{|}||\x-\y_j||_p||\y_j-\y_i||_p-|w_{ij}|||\x-\y_j||_p\big{|}}{||\x-\y_i||_p||\x-\y_j||_p},\\
	\implies ||\nabla_x ||x-y_i||_p - \nabla_x ||x-y_j||_p||_p \geq \Delta\frac{\big{|}||\y_j-\y_i||_p-|w_{ij}|\big{|}}{||\x-\y_i||_p}
\end{gather*}
Using that $\x\in \Omega\implies ||\x-\y_i||_p \leq 
C(\y_i,\Omega,p)\text{diam}(\Omega)\implies \frac{1}{||\x-\y_i||_p} \geq 
\frac{1}{C(\y_i,\Omega,p)\text{diam}(\Omega)}$, 
where $C(\y_i,\Omega,p)>0$ is a constant depending on $\y_i,\Omega$
and $p$, and so:
\begin{gather*}
||\nabla_x ||x-y_i||_p - \nabla_x ||x-y_j||_p||_p \geq 
\Delta\frac{\big{|}||\y_j-\y_i||_p-|w_{ij}|\big{|}}{C(\y_i,\Omega,p)\text{diam}
(\Omega)}\equiv \epsilon_{ij}(p)>0 .
\end{gather*}
Next, using equivalency of $p$-norm and $2$-norm,
there is a constant which depends on $p$, call it
$\alpha(p)$, such that
$\|\cdot \|_2 \ge \alpha(p)\|\cdot \|_p$, $\alpha(p)>0$,  
and so we get 
\begin{gather*}
||\nabla_x ||x-y_i||_p - \nabla_x ||x-y_j||_p||\geq \alpha(p) ||\nabla_x 
||x-y_i||_p - \nabla_x ||x-y_j||_p||_p = \alpha(p) \epsilon_{ij}(p) \equiv
\epsilon_{ij}\ .
\end{gather*}
Finally, it follows that $\forall\ i,j=1,2,\dots, N,$ $i\neq j$,
\begin{gather}
||\nabla_x ||x-y_i||_p - \nabla_x ||x-y_j||_p||\geq \epsilon,
\end{gather}
where $\epsilon = \min_{i,j}\epsilon_{ij}>0$.
\end{proof}

\begin{claim}\label{Claim_NNZGrad}
With the notation from Lemma \ref{Lem_NNZGrad},
there exists $\Delta >0$ such that
\begin{align*}
	||a^{p-2}+a^{p-3}\circ b+\dots + b^{p-2}||_p\geq \Delta>0
\end{align*}
where $p$ is an even number and
\begin{align*}
a,b\in \R^n,\,\ 	a_l=\frac{(\x-\y_i)_j}{||\x-\y_i||_p}\ ,\,\ 
	b_l=\frac{(\x-\y_j)_j}{||\x-\y_j||_p}\ ,\,\ l=1,\dots, d\ .
\end{align*}
\end{claim}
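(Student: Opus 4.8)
The plan is to exploit that $a$ and $b$ are unit vectors for the $p$-norm, $\|a\|_p=\|b\|_p=1$, and to reduce the estimate to a coordinatewise positivity statement followed by a compactness argument. First I would observe that the $l$-th entry of the vector $S:=a^{p-2}+a^{p-3}\circ b+\dots+b^{p-2}$ is the finite sum $S_l=\sum_{k=0}^{p-2}a_l^{\,p-2-k}\,b_l^{\,k}$, which has the closed form
\[
S_l=\begin{cases}\dfrac{a_l^{\,p-1}-b_l^{\,p-1}}{a_l-b_l}\,,& a_l\neq b_l\,,\\ (p-1)\,a_l^{\,p-2}\,,& a_l=b_l\,.\end{cases}
\]
Here the parity of $p$ enters decisively in two ways: since $p$ is even, $p-2$ is even, so $(p-1)a_l^{\,p-2}\ge 0$; and $p-1$ is odd, so $t\mapsto t^{p-1}$ is strictly increasing on $\R$, whence $a_l^{\,p-1}-b_l^{\,p-1}$ and $a_l-b_l$ carry the same sign and are both nonzero when $a_l\neq b_l$. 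Thus every entry satisfies $S_l\ge 0$, and in fact $S_l>0$ unless $a_l=b_l=0$. Since $\|a\|_p=1$ forces some coordinate $a_{l_0}$ to be nonzero, the pair $(a_{l_0},b_{l_0})$ is not $(0,0)$, hence $S_{l_0}>0$ and therefore $\|S\|_p>0$ for every such pair $(a,b)$.

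To turn this strict positivity into a bound $\Delta$ independent of $\x$, I would invoke compactness. The map $(a,b)\mapsto\bigl\|\,a^{p-2}+a^{p-3}\circ b+\dots+b^{p-2}\,\bigr\|_p$ is continuous on $\R^n\times\R^n$, and all pairs $(a,b)$ occurring in Lemma \ref{Lem_NNZGrad} lie in the compact set $\{\,a:\|a\|_p=1\,\}\times\{\,b:\|b\|_p=1\,\}$; by the previous paragraph this continuous function is strictly positive there, so it attains a positive minimum, which we take as $\Delta$.

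The only real obstacle is bookkeeping: one must treat carefully the degenerate coordinates where $a_l=b_l$ — including $a_l=b_l=0$, where $S_l$ genuinely vanishes, so the estimate must be stated (as it is) for $\|S\|_p$ rather than coordinatewise — and one must make sure the minimum defining $\Delta$ is taken over the full $p$-unit sphere and not along the $\x$-dependent trajectory, so that $\Delta$ is uniform. The hypothesis that $p$ is even is indispensable throughout: for odd $p$ the map $t\mapsto t^{p-1}$ is not monotone, $S_l$ may vanish or change sign even when $a_l\neq b_l$, and the argument collapses.
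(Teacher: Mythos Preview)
Your argument is correct. The closed-form identity $S_l=(a_l^{p-1}-b_l^{p-1})/(a_l-b_l)$ together with the strict monotonicity of $t\mapsto t^{p-1}$ (since $p-1$ is odd) gives $S_l\ge 0$ with equality only at $a_l=b_l=0$; then $\|a\|_p=1$ forces some $S_{l_0}>0$, and compactness of the product of the two $p$-unit spheres yields a uniform positive minimum $\Delta$. One small point worth making explicit is that the normalizations $\|a\|_p=\|b\|_p=1$ are well defined because on $L_{ij}\cap\Omega$ with $|w_{ij}|<c(\y_i,\y_j)$ one has $\x\neq\y_i$ and $\x\neq\y_j$; this is implicit in the setting of Lemma~\ref{Lem_NNZGrad} but should be stated.

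Your route is genuinely different from the paper's. The paper fixes the index $k$ where $|a_k|=\|a\|_\infty$, performs a four-case sign analysis of $a_k,b_k$ to extract the inequality $|S_k|\ge |a_k|^{p-2}$ (or $|b_k|^{p-2}$), and then invokes a separate claim (Claim~\ref{Claim_NNZGrad2}) to bound $|a_k|,|b_k|$ explicitly in terms of $\xi=w_{ij}+\|\y_i-\y_j\|_p$ and $\mathrm{diam}(\Omega)$. That yields a constructive $\Delta$ depending on the problem data $(\y_i,\y_j,w_{ij},\Omega,p)$. Your compactness argument is shorter and gives a universal $\Delta=\Delta(n,p)$ independent of the geometry, at the cost of being nonconstructive. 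Either is sufficient for the way the claim is used downstream in Lemma~\ref{Lem_NNZGrad}, but the paper's explicit bound would be preferable if one wanted a quantitative estimate on the Hessian integrand; yours is preferable for brevity and for making transparent that the bound depends on nothing beyond $n$ and $p$.
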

\begin{proof}
\begin{align*}
	||a^{p-2}+a^{p-3}\circ b+\dots + b^{p-2}||_p\geq |a_k^{p-2}+a_k^{p-3}b_k+\dots + b_k^{p-2}|,
\end{align*}
for an index $k$ such that $|a_k| = ||a||_{\infty}$. Next, using Claim \ref{Claim_NNZGrad2} we have
\begin{align*}
	|a_k| = \frac{(|\x-\y_i|)_k}{||\x-\y_i||_p} \geq 
	\frac{\delta_a}{C(\y_i,\Omega,p)\text{diam}(\Omega)}>0.
\end{align*}
Next we are going to consider cases:
\begin{itemize}
	\item $a_k<0$ and $b_k\leq 0$:
	\begin{gather*}
		|a_k^{p-2}+a_k^{p-3}b_k+\dots + b_k^{p-2}|\geq |a_k^{p-2}|\geq \frac{\delta_a^{p-2}}{[C(\y_i,\Omega,p)\text{diam}(\Omega)]^{p-2}}.
	\end{gather*}
	\item $a_k>0$ and $b_k\geq 0$:
	\begin{gather*}
		|a_k^{p-2}+a_k^{p-3}b_k+\dots + b_k^{p-2}|\geq |a_k^{p-2}|\geq \frac{\delta_a^{p-2}}{[C(\y_i,\Omega,p)\text{diam}(\Omega)]^{p-2}}.
	\end{gather*}
	\item $a_k<0$ and $b_k> 0$:
	\begin{gather*}
		|a_k^{p-2}+a_k^{p-3}b_k+\dots + b_k^{p-2}|\geq |a_k^{p-2}+a_k^{p-3}b_k+\dots + a_kb_k^{p-3}| = |a_k||a_k^{p-3}+a_k^{p-4}b_k+\dots + b_k^{p-3}|\geq\\
		\geq |a_k||a_k^{p-3}+a_k^{p-4}b_k+\dots + a_kb_k^{p-4}| = |a_k|^2|a_k^{p-4}+a_k^{p-5}b_k+\dots + b_k^{p-4}|\geq \dots \geq |a_k|^{p-2}\implies \\
		\implies |a_k^{p-2}+a_k^{p-3}b_k+\dots + b_k^{p-2}|\geq |a_k|^{p-2}\geq \frac{\delta_a^{p-2}}{[C(\y_i,\Omega,p)\text{diam}(\Omega)]^{p-2}}.
	\end{gather*}
	\item $a_k>0$ and $b_k< 0$: using Claim \ref{Claim_NNZGrad2} we have
	\begin{align*}
		|b_k| = \frac{(|\x-\y_j|)_k}{||\x-\y_j||_p} \geq \frac{\delta_b}{C(\y_j,\Omega,p)\text{diam}(\Omega)}>0.
	\end{align*}
	Then it follows that
	\begin{gather*}
		|b_k^{p-2}+b_k^{p-3}a_k+\dots + a_k^{p-2}|\geq |b_k^{p-2}+b_k^{p-3}a_k+\dots + b_ka_k^{p-3}| = |b_k||b_k^{p-3}+b_k^{p-4}a_k+\dots + a_k^{p-3}|\geq\\
		\geq |b_k||b_k^{p-3}+b_k^{p-4}a_k+\dots + b_ka_k^{p-4}| = |b_k|^2|b_k^{p-4}+b_k^{p-5}a_k+\dots + a_k^{p-4}|\geq \dots \geq |b_k|^{p-2}\implies \\
		\implies |a_k^{p-2}+a_k^{p-3}b_k+\dots + b_k^{p-2}|\geq |b_k|^{p-2}\geq \frac{\delta_b^{p-2}}{[C(\y_j,\Omega,p)\text{diam}(\Omega)]^{p-2}}.
	\end{gather*}
\end{itemize}
As a result, it follows that
\begin{align*}
	||a^{p-2}+a^{p-3}b+\dots + b^{p-2}||_p\geq \Delta
\end{align*}
where
\begin{equation*}
	\Delta=
	\begin{cases}
		\frac{\delta_b^{p-2}}{[C(\y_j,\Omega,p)\text{diam}(\Omega)]^{p-2}}, & \text{if $a_k>0$ and $b_k< 0$},\\
		\frac{\delta_a^{p-2}}{[C(\y_i,\Omega,p)\text{diam}(\Omega)]^{p-2}}, & \text{otherwise.}
	\end{cases}
\end{equation*}
Thus, the result follows.
\end{proof}

\begin{claim}\label{Claim_NNZGrad2}
	With the notation from Lemma \ref{Lem_NNZGrad} and Claim \ref{Claim_NNZGrad},
	there exists $\delta_a >0$ and $\delta_b>0$ such that
	\begin{align*}
		||a||_{\infty}\geq \frac{\delta_a}{C(\y_i,\Omega,p)\text{diam}(\Omega)}>0, \ \ ||b||_{\infty}\geq \frac{\delta_b}{C(\y_j,\Omega,p)\text{diam}(\Omega)}>0
	\end{align*}
	where $p$ is an even number and
	\begin{align*}
		a,b\in \R^n,\,\ 	a_l=\frac{(\x-\y_i)_l}{||\x-\y_i||_p}\ ,\,\ 
		b_l=\frac{(\x-\y_j)_l}{||\x-\y_j||_p}\ ,\,\ l=1,\dots, d\ .
	\end{align*}
\end{claim}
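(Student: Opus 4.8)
The plan is to estimate, uniformly over $x\in L_{ij}\cap\Omega$, the numerator and denominator of $\|a\|_\infty=\|x-y_i\|_\infty/\|x-y_i\|_p$ separately, and to do the same for $b=b(x)$. The denominator is easy: since $x$ and $y_i$ both lie in the compact set $\Omega$, one has $\|x-y_i\|_2\le \text{diam}(\Omega)$, and by equivalence of the $p$- and $2$-norms on $\R^n$ there is a constant $C(y_i,\Omega,p)>0$ (which may in fact be taken to depend only on $n$ and $p$) with $\|x-y_i\|_p\le C(y_i,\Omega,p)\,\text{diam}(\Omega)$; likewise $\|x-y_j\|_p\le C(y_j,\Omega,p)\,\text{diam}(\Omega)$.

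The one substantive point is a uniform \emph{lower} bound on the numerator, and here the feasibility hypothesis $|w_{ij}|<c(y_i,y_j)$ is essential: it forces the active boundary $L_{ij}$ to stay a positive distance away from the target point $y_i$. Indeed, for $x\in L_{ij}$ one has $c(x,y_i)-c(x,y_j)=w_{ij}$, and the triangle inequality gives $c(x,y_j)\ge c(y_i,y_j)-c(x,y_i)$, so that
\[
w_{ij}\;=\;c(x,y_i)-c(x,y_j)\;\le\;2\,c(x,y_i)-c(y_i,y_j),
\]
whence $c(x,y_i)=\|x-y_i\|_p\ge \tfrac12\bigl(w_{ij}+c(y_i,y_j)\bigr)>0$ by strict feasibility. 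Applying norm equivalence once more, $\|x-y_i\|_\infty\ge n^{-1/p}\|x-y_i\|_p\ge n^{-1/p}\cdot\tfrac12\bigl(w_{ij}+\|y_i-y_j\|_p\bigr)=:\delta_a>0$. Exchanging the roles of $y_i$ and $y_j$ (using $w_{ji}=-w_{ij}$ and $-w_{ij}<c(y_i,y_j)$) gives in the same way $\|x-y_j\|_\infty\ge n^{-1/p}\cdot\tfrac12\bigl(-w_{ij}+\|y_i-y_j\|_p\bigr)=:\delta_b>0$.

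Combining the two estimates yields $\|a\|_\infty=\|x-y_i\|_\infty/\|x-y_i\|_p\ge \delta_a/\bigl(C(y_i,\Omega,p)\,\text{diam}(\Omega)\bigr)$ and likewise $\|b\|_\infty\ge \delta_b/\bigl(C(y_j,\Omega,p)\,\text{diam}(\Omega)\bigr)$, which is the assertion. I do not expect any real obstacle; the only care needed is in bookkeeping the constants and in checking that every quantity that appears under a ``$>0$'' is genuinely positive, which is precisely where \emph{strict} feasibility enters. (One could alternatively invoke the cheap bound $\|a\|_\infty\ge n^{-1/p}$ coming directly from $\|a\|_p=1$, but the displayed form is the one that meshes with the constants already used in the proofs of Lemma~\ref{Lem_NNZGrad} and Claim~\ref{Claim_NNZGrad}.)
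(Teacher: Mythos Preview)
Your proof is correct and follows essentially the same approach as the paper's: bound the denominator $\|x-y_i\|_p$ above via compactness of $\Omega$ and norm equivalence, and bound the numerator $\|x-y_i\|_\infty$ below by combining the boundary equation $c(x,y_i)-c(x,y_j)=w_{ij}$ with the triangle inequality and strict feasibility, then pass from the $p$-norm to the $\infty$-norm. The paper parametrizes $w_{ij}=\xi-\|y_i-y_j\|_p$ and arrives at $\|x-y_i\|_p\ge \xi/2$, which is exactly your $\tfrac12(w_{ij}+\|y_i-y_j\|_p)$; your symmetric treatment of the $b$-bound (swapping $i\leftrightarrow j$) is in fact a bit cleaner than the paper's, which handles that case with a slightly different inequality.
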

\begin{proof}
	Firstly, using assumption from Lemma \ref{Lem_NNZGrad}, let $w_{ij} = \xi - ||\y_i - \y_j||_p$, $0<\xi<||\y_i - \y_j||_p$. Then
	\begin{gather*}
		\x\in e_{ij}\cap \Omega\implies ||\x-\y_i||_p = ||\x-\y_j||_p+w_{ij} = ||\x-\y_j||_p+\xi - ||\y_i - \y_j||_p\\
		||\x-\y_i||_p \geq \xi - ||\x-\y_i||_p \implies  ||\x-\y_i||_p\geq \frac{\xi}{2}>0
	\end{gather*}
	Next, using equivalency of $p$-norm and $\infty$-norm:
	$\|\cdot \|_\infty \ge \gamma(p)\|\cdot \|_p$, $\gamma(p)>0$,  
	we get
	\begin{gather*}
		||\x-\y_i||_{\infty} \geq \gamma(p)\frac{\xi}{2}\implies
		||a||_{\infty} = \frac{||\x-\y_i||_{\infty}}{||\x-\y_i||_p} \geq \frac{\delta_a}{C(\y_i,\Omega,p)\text{diam}(\Omega)}>0
	\end{gather*}
	where $\delta_a = \gamma(p)\frac{\xi}{2}$. Similarly, it follows that
	\begin{gather*}
		\x\in e_{ij}\cap \Omega\implies ||\x-\y_j||_p = ||\x-\y_i||_p-w_{ij} = ||\x-\y_i||_p-\xi + ||\y_i - \y_j||_p\geq  ||\y_i - \y_j||_p - \xi >0\\
		||\x-\y_j||_{\infty} \geq \gamma(p)(||\y_i - \y_j||_p - \xi)\implies
		||b||_{\infty} = \frac{||\x-\y_j||_{\infty}}{||\x-\y_j||_p} \geq \frac{\delta_b}{C(\y_j,\Omega,p)\text{diam}(\Omega)}>0
	\end{gather*}
	where $\delta_b = \gamma(p)(||\y_i - \y_j||_p - \xi)$.
\end{proof}

\bigskip

\section*{Algorithms Pseudocodes}

\begin{algorithm}\small{
		\caption{\textit{Shooting} -- Shooting Technique}\label{Alg_Shooting}
		\begin{algorithmic}
			\Require $\theta_0$, \{$Y$, $W$\}, \{$\y_j$, $w_j$\}, $\forall j$, 
			$\mathtt{TOL}$\Comment{Default value $\mathtt{TOL}= 10^{-14}$}
			\Ensure $r$ value up to a tolerance $\mathtt{TOL}$ and index $k$
			\State $r_0 \gets 0$, $d \gets \frac{\min_j ||Y-\y_j||_p}{10},\ \forall j\neq i$
			\While{$d \geq \mathtt{TOL}$}
			\State $r_0\gets r_0 + d$
			\State $\x \gets Y + r_0\tiny{\begin{pmatrix}\cos{(\theta_0)}\\\sin{(\theta_0)}\end{pmatrix}}$
			\If {$\x\in A(\y_j)$}
			\State $r^* \gets BisectR(\theta_0, \{r_0-d, r_0\}, \{Y,W\}, \{\y_j,w_j\}, \mathtt{TOL})$ and $k \gets j$
			\State $\x \gets Y + (r^*-\mathtt{TOL})\tiny{\begin{pmatrix}\cos{(\theta_0)}\\\sin{(\theta_0)}\end{pmatrix}}$
			\If{$\x\in A(Y)$}
			\State \Return $r^*$ and $k \gets j$
			\Else
			\State $d\gets \frac{d}{2}$
			\EndIf
			\ElsIf{$\x\not\in \Omega$}
			\State $\{r^*, k\} \gets Bound(\theta_0, Y)$
			\State $\x \gets Y + (r^*-\mathtt{TOL})\tiny{\begin{pmatrix}\cos{(\theta_0)}\\\sin{(\theta_0)}\end{pmatrix}}$
			\If{$\x\in A(Y)$}
			\State \Return $r^*$ and $k$
			\Else
			\State $d\gets \frac{d}{2}$
			\EndIf
			\EndIf
			\EndWhile
		\end{algorithmic}
	}
\end{algorithm}

\begin{algorithm}\small{
		\caption{\textit{BisectR} -- Bisection Method on $r$ Value}\label{Alg_BisectR}
		\begin{algorithmic}
			\Require $\theta_0$, $\{r_a, r_b\}$, \{$Y$, $W$\}, \{$\y_j$, $w_j$\}, $\mathtt{TOL}$\Comment{$\mathtt{TOL}$ is passed by \textit{Shooting}}
			\Ensure $r$ value up to a tolerance $\mathtt{TOL}$
			\While{$r_b - r_a \geq \mathtt{TOL}$}
			\State $r \gets \frac{1}{2}(r_b + r_a)$
			\If {$F(r,\theta_0)<0$}
			\State $r_a \gets r$
			\ElsIf{$F_{b}(r)>0$}
			\State $r_b \gets r$
			\Else \State \Return r
			\EndIf
			\EndWhile
		\end{algorithmic}
	}
\end{algorithm}

\begin{algorithm}\small{
		\caption{\textit{Bound} -- Boundary Point Computation}\label{Alg_Bound}
		\begin{algorithmic}
			\Require $\theta$, $Y$
			\Ensure $r$ value and index $k$
			\State $r_1 \gets \frac{a - Y_1}{\cos(\theta)}$, $y_1 = Y_2 + r_1\sin(\theta)$
			\State $r_2 \gets \frac{b - Y_1}{\cos(\theta)}$, $y_2 = Y_2 + r_2\sin(\theta)$
			\State $r_3 \gets \frac{a - Y_2}{\sin(\theta)}$, $x_3 = Y_1 + r_3\cos(\theta)$
			\State $r_4 \gets \frac{b - Y_2}{\sin(\theta)}$, $x_4 = Y_1 + r_4\cos(\theta)$
			\If{$r_1>0$ \& $y_1\geq a$ \& $y_1\leq b$}
			\State $r\gets r_1$ and $k \gets -4$
			\ElsIf{$r_2>0$ \& $y_2\geq a$ \& $y_2\leq b$}
			\State $r\gets r_2$ and $k \gets -2$
			\ElsIf{$r_3>0$ \& $x_3\geq a$ \& $x_3\leq b$}
			\State $r\gets r_3$ and $k \gets -1$
			\ElsIf{$r_4>0$ \& $x_4\geq a$ \& $x_4\leq b$}
			\State $r\gets r_4$ and $k \gets -3$
			\EndIf
		\end{algorithmic}
	}
\end{algorithm}

\begin{algorithm}\small{
		\caption{\textit{PredCorr} -- Predictor-Corrector Step}\label{Alg_PredCorr}
		\begin{algorithmic}
			\Require $r_0$ $\theta_0$, $\Delta$, \{$Y$, $W$\}, \{$\y_j$, $w_j$\},
			$\mathtt{TOL}$\Comment{Default value $\mathtt{TOL}= 10^{-14} $}
			\Ensure $r_1 = r(\theta_0 + \Delta)$ value up to a tolerance $\mathtt{TOL}$  and increment $\Delta_{new}$
			\State $r^0_1 \gets r_0 - \Delta \frac{F_{\theta}(r_0, \theta_0)}{F_r(r_0, \theta_0)}$, $r_1\gets r_1^0$ \Comment{Tangent Predictor Step}
			\While{$F(r_1, \theta+\Delta)\geq \mathtt{TOL}$}
			\State $r_1 \gets r_1 - \frac{F(r_1, \theta+\Delta)}{F_r(r_1, \theta+\Delta)}$\Comment{Newton Corrector}
			\EndWhile
			\State $\Delta_{new} \gets \frac{10^{-3}}{2\sqrt{r_1 - r_1^0}}\Delta$
		\end{algorithmic}
	}
\end{algorithm}

\begin{algorithm}\small{
		\caption{\textit{BisectTheta} -- Bisection Method on $\theta$ Value}\label{Alg_BisectTheta}
		\begin{algorithmic}
			\Require $r_0$, $\theta_0$, $\theta_1$, \{$Y$, $W$\}, \{$\y_j$, $w_j$\}, $\mathtt{TOL}$\Comment{Default value $\mathtt{TOL}= 10^{-14} $}
			\Ensure $r$ and $\theta$ values up to a tolerance $\mathtt{TOL}$
			\While{$\theta_1 - \theta_0 \geq \mathtt{TOL}$}
			\State $\theta \gets \frac{1}{2}(\theta_0 + \theta_1)$
			\State $r \gets PredCorr(r_0, \theta_0, \theta-\theta_0,\{Y, W\}, \{\y_j, w_j\}, \mathtt{TOL})$
			\State $\x \gets Y + r\tiny{\begin{pmatrix}\cos{(\theta)}\\\sin{(\theta)}\end{pmatrix}}$
			\If {$\x\in A(Y)$}
			\State $\theta_0 \gets \theta$ and $r_0 \gets r$
			\Else
			\State $\theta_1 \gets \theta$
			\EndIf
			\EndWhile
		\end{algorithmic}
	}
\end{algorithm}

\begin{algorithm}\small{
		\caption{\textit{CurveBound} -- Breakpoint Between Curve and Domain Boundary}\label{Alg_CurveBound}
		\begin{algorithmic}
			\Require $k$, $\theta_0$, $\theta_1$, \{$Y$, $W$\}, \{$\y_j$, $w_j$\}, $\mathtt{TOL}$\Comment{$k$ is the index of domain boundary}
			\Ensure $r^*$ and $\theta^*$ values up to a tolerance $\mathtt{TOL}$
			\Comment{Default value $\mathtt{TOL}= 10^{-14}$}
			\If{$k = -4$}
			\State $r(\theta) \gets \frac{a - Y_1}{\cos(\theta)}$
			\ElsIf{$k = -2$}
			\State $r(\theta) \gets \frac{b - Y_1}{\cos(\theta)}$
			\ElsIf{$k = -1$}
			\State $r(\theta) \gets \frac{a - Y_2}{\sin(\theta)}$
			\ElsIf{$k = -3$}
			\State $r(\theta) \gets \frac{b - Y_2}{\sin(\theta)}$
			\EndIf
			\State $\theta^* \gets \mod(\textbf{fzero}(F(r(\theta),\theta), [\theta_0,\ \theta_1], \mathtt{TOL}),\ 2\pi)$
			\State $r^*\gets r(\theta^*)$
		\end{algorithmic}
	}
\end{algorithm}

\begin{algorithm}\small{
		\caption{\textit{BoundBound} -- Breakpoint Between Two Domain Boundary Branches}\label{Alg_BoundBound}
		\begin{algorithmic}
			\Require $k_1$, $k_2$
			\Ensure $r^*$ and $\theta^*$ values
			\If{($k_1 = -4$ \& $k_2 = -1$) | ($k_1 = -1$ \& $k_2 = -4$)}
			\State $\x \gets \tiny{\begin{pmatrix}a\\a\end{pmatrix}}$
			\ElsIf{($k_1 = -1$ \& $k_2 = -2$) | ($k_1 = -2$ \& $k_2 = -1$)}
			\State $\x \gets \tiny{\begin{pmatrix}b\\a\end{pmatrix}}$
			\ElsIf{($k_1 = -2$ \& $k_2 = -3$) | ($k_1 = -3$ \& $k_2 = -2$)}
			\State $\x \gets \tiny{\begin{pmatrix}b\\b\end{pmatrix}}$
			\ElsIf{($k_1 = -3$ \& $k_2 = -4$) | ($k_1 = -4$ \& $k_2 = -3$)}
			\State $\x \gets \tiny{\begin{pmatrix}a\\b\end{pmatrix}}$
			\EndIf
			\State $\theta^* \gets \mod(\arctan(\frac{x_2 - Y_2}{x_1-Y_1}),\ 2\pi)$
			\State $r^*\gets \frac{x_1 - Y_1}{\cos(\theta^*)}$
		\end{algorithmic}
	}
\end{algorithm}

\begin{algorithm}\small{
		\caption{\textit{AdaptSimpson} -- Adaptive Integral Computation}\label{Alg_AdaptSimpson}
		\begin{algorithmic}
			\Require $f(x)$, $[a, b]$, $\mathtt{TOL}$
			\Comment{Default value $\mathtt{TOL}= 10^{-12} $}
			\Ensure $I = \int_a^b f(x)dx$ up to a tolerance $\mathtt{TOL}$
			\State $check \gets 0$;  $N_{max} \gets 1024$; $i_{max}\gets \lfloor \log_2(\mathtt{TOL}/\mathtt{eps})\rfloor$\Comment{$\mathtt{eps}$ is machine
				precision}
			\State $I \gets 0$; $Err \gets 0$
			\For{$i \gets 0$ to $i_{max}$}
			\State $x_i \gets [a:(b-a)/(2^i):b]$
			\State $\mathtt{TOL}_i \gets \mathtt{TOL}/(2^i)$
			\For{$j \gets 1$ to $2^i$}
			\If{$check(j)= 0$}
			\State $[I_j, err_j]\gets CompSimpson(f(x), x_i(j),x_i(j+1), 
			\mathtt{TOL}_i, N_{max})$
			\If{$Err_j < \mathtt{TOL}_i$}
			\State $I \gets I + I_j$; $Err \gets Err + err_j$; $check(j)\gets 1$
			\EndIf
			\EndIf
			\EndFor
			\If{$\min(check) = 1$}
			\State \Return $I$
			\Else
			\State $check \gets \mathbf{repelem}(check, 2)$ 
			\Comment {Repeat each of $check$ vector's elements two times}
			\EndIf
			\If {$\mod(i,3)=2$}
			\State $N_{max}\gets 8\times N_{max}$
			\EndIf
			\EndFor
	\end{algorithmic}}
\end{algorithm}

\begin{algorithm}\small{
		\caption{\textit{CompSimpson} -- Integral Computation using Composite Simpson's 1/3 Rule}\label{Alg_CompSimpson}
		\begin{algorithmic}
			\Require $f(x)$, $[a, b]$, $\mathtt{TOL}$, $N_{max}$
			\Comment{$\mathtt{TOL}$ is passed by {\it{AdaptSimpson}}}
			\Ensure $I = \int_a^b f(x)dx$ up to a tolerance $\mathtt{TOL}$ and relative error value $err$
			\State $i_{max}\gets \lfloor \log_2(N_{max})\rfloor$; $n \gets 2$
			\State $I_n \gets Simpson(f(x), [a, b], n)$ \Comment{Computes integral using Simpson's 1/3 Rule with $n$ sub-intervals} 
			\For{$i \gets 1$ to $i_{max}$}
			\State $I_{2n} \gets Simpson(f(x), [a, b], 2n)$
			\State $err \gets 16|I_{2n}-I_n|/15$; $N \gets n(err/\mathtt{TOL})^{1/4}$; $N\gets 2\lfloor N/2\rfloor$
			\If{$err< \mathtt{TOL}$  $\lor$ $N>N_{max}$}
			\State \Return $I_{2n}$ and $err$
			\Else
			\State $n\gets 2n$; $I_n \gets I_{2n}$
			\EndIf
			\EndFor
	\end{algorithmic}}
\end{algorithm}

\begin{algorithm}\small{
		\caption{\textit{Simpson} -- Integral Computation using Simpson's 1/3 Rule}\label{Alg_Simpson}
		\begin{algorithmic}
			\Require $f(x)$, $[a, b]$, $N$
			\Ensure $I = \int_a^b f(x)dx$ approximation using Simpson's 1/3 Rule with $N$ sub-intervals
			\State $h\gets \frac{b-a}{N}$, $I \gets \frac{1}{3}hf(a)$
			\For{$i \gets 2$ to $N$}
			\If{$\mod(i,2) = 0$}
			\State $I \gets I + \frac{4}{3}hf(a+(i-1)h)$
			\ElsIf{$\mod(i,2)= 1$}
			\State $I \gets I + \frac{2}{3}hf(a+(i-1)h)$
			\EndIf
			\EndFor
			\State $I \gets I + \frac{1}{3}hf(b)$
	\end{algorithmic}}
\end{algorithm}

\begin{algorithm}\small{
		\caption{\textit{GridInitialGuess} -- Initial Guess Computation on a Grid}\label{Alg_GridInitialGuess}
		\begin{algorithmic}
			\Require $h$, $(a, b)$, \{$\y_i$, $\nu_i$\}, $\forall i$, $\mathtt{MAXIT}$
			\Comment{Default is $\mathtt{MAXIT=50}$}
			\Ensure $w^0$ value
			\State $w^0 \gets 0$, $N_h \gets \frac{b-a}{h}$, \Comment{$N_h$ is the number of squares on the grid}
			\State $X_1 \gets [a+\frac{h}{2}:h:b-\frac{h}{2}]$, $X_2 \gets [a+\frac{h}{2}:h:b-\frac{h}{2}]$, \Comment{Coordinates of centers of squares}
			\For{$iter \gets 1$ to $\mathtt{MAXIT}$}
			\State $M \gets GridMeasure$($h$, $N_h$, $\{X_1, X_2\}$, \{$\y_i$, $w^0_i$\}, $\forall i$)
			\For{$k \gets 1$ to $N$} \Comment{$N$ is the number of target points}
			\State $M^0 \gets GridMeasure$($h$, $N_h$, $\{X_1, X_2\}$, \{$\y_i$, $w^0_i$\}, $\forall i$)
			\State $incr \gets \sqrt{h}$
			\While{$|M^0(k)-\nu_k|>h^2$ $\land$ $incr > h^4$}
			\State $w^0(i)\gets w^0(i) + sign(M^0(k)-\nu_k)\frac{incr}{N-1}$, $\forall i\neq k$
			\State $M^1 \gets GridMeasure$($h$, $N_h$, $\{X_1, X_2\}$, \{$\y_i$, $w^0_i$\}, $\forall i$)
			\If{$|M^1(k)-\nu_k|>|M^0(k)-\nu_k|$}
			\State $incr \gets \sqrt{h}\times incr$
			\EndIf
			\State $M^0 \gets M^1$
			\EndWhile
			\EndFor
			\If{$\max_i|M^0(i)-\nu_i|<h^2$ $\lor$ $\max_i|M^0(i)-\nu_i|\geq \max_i|M(i)-\nu_i|$}
			\State \Return $w^0$
			\EndIf
			\EndFor
		\end{algorithmic}
	}
\end{algorithm}

\begin{algorithm}\small{
		\caption{\textit{GridMeasure} -- Measure Approximation on the Grid}\label{Alg_GridMeasure}
		\begin{algorithmic}
			\Require $h$, $N_h$, $\{X_1, X_2\}$, \{$\y_i$, $w_i$\}, $\forall i$
			\Ensure $M$ values, which approximate $\mu(A(\y_i))$ values on the grid
			\State $M \gets 0$
			\For{$l \gets 1$ to $N_h$}
			\For{$m \gets 1$ to $N_h$}
			\State $x \gets \tiny{\begin{pmatrix}X_1(l)\\X_2(m)\end{pmatrix}}$
			\State $ind \gets \arg\min_i c(x,y_i) - w_i$
			\State $M(ind) \gets M(ind) + \rho(x)h^2$
			\EndFor    
			\EndFor
		\end{algorithmic}
	}
\end{algorithm}

\begin{algorithm}\small{
		\caption{\textit{DampedNewton} -- Damped Newton's Method}\label{Alg_DampedNewton}
		\begin{algorithmic}
			\Require $w^0$, $\mu$, \{$\y_i$, $\nu_i$\}, $\forall i$, $\mathtt{MAXIT}$, $\mathtt{TOL}$
			\Comment{Default $\mathtt{MAXIT=20}$, ${\mathtt{TOL}}=10^{-8}$}			
			\Ensure $w^k$ value such that $\max_{i}|\mu(A(\y_i,w^k))-\nu_i|\leq TOL$
			\State $err^0 \gets \mu(A(\y_i,w^0)) - \nu_i$,  $\forall i$
			\For{$k \gets 1$ to $\mathtt{MAXIT}$}
			\State $H^0 \gets \frac{\partial^2\Phi}{\partial w_i^0\partial w_j^0}$, $\forall i,j$
			\State $s \gets - (H^0)^{I}err^0$, $w^1 \gets w^0 + s$
			\While{$\min_{i,j}\left\|1-|w^1_i - w^1_j|/c(\y_i,\y_j)\right\|\leq 0$}
			\Comment{See \eqref{NeedFeas}}
			\State $s \gets \frac{1}{2}s$, $w^1 \gets w^0 + s$
			\EndWhile
			\State $err^1 \gets \mu(A(\y_i,w^1)) - \nu_i$,  $\forall i$
			\While{$||err^1||_2^2 > ||err^0||_2^2$ $\land$ $||s||_{\infty}\geq \frac{\mathtt{TOL}}{2^{\mathtt{MAXIT}}}$}
			\Comment{Damped Newton}
			\State $s \gets \frac{1}{2}s$, $w^1 \gets w^0 + s$
			\State $err^1 \gets \mu(A(\y_i,w^1)) - \nu_i$,  $\forall i$
			\EndWhile
			\If{$||err^1||_{\infty}\leq \mathtt{TOL}$ $\lor$ $||s||_{\infty}\leq \frac{\mathtt{TOL}}{2^{\mathtt{MAXIT}}}$}
			\State \Return $w^1$
			\Else
			\State $w^0 \gets w^1$, $err^0 \gets err^1$
			\EndIf
			\EndFor
	\end{algorithmic}}
\end{algorithm}

\end{document}